\renewcommand{\[}{\begin{equation}\begin{aligned}}
\renewcommand{\]}{\end{aligned} \end{equation}}
\renewcommand{\l}{\ell}
\newtheorem{thm}{Theorem}
\newtheorem{prop}[thm]{Proposition}
\newtheorem{lemma}[thm]{Lemma}
\newtheorem{cor}[thm]{Corollary}
\theoremstyle{remark}
\newtheorem{remark}[thm]{Remark}
\theoremstyle{definition}
\newtheorem{definition}[thm]{Definition}
\numberwithin{equation}{section}
\numberwithin{thm}{section}
\author{G\'abor Sz\'ekelyhidi}
\address{Department of Mathematics, University of Notre Dame, Notre Dame, IN 46556}
\email{gszekely@nd.edu}
\title{Minimal hypersurfaces with cylindrical tangent cones}
\date{}
\begin{document}

\begin{abstract}
First we construct minimal hypersurfaces $M\subset\mathbf{R}^{n+1}$
in a neighborhood of the origin, with an isolated singularity
but cylindrical tangent cone $C\times \mathbf{R}$,
for any strictly minimizing strictly stable cone $C$
in $\mathbf{R}^n$. We show that many of these hypersurfaces are
area minimizing. Next, we prove a strong unique continuation result for
minimal hypersurfaces $V$ with such a cylindrical tangent cone, stating that if the
blowups of $V$ centered at the origin
approach $C\times \mathbf{R}$ at infinite order,  
then $V = C\times\mathbf{R}$ in a neighborhood of the origin.
Using this we show that for 
quadratic cones $C = C(S^p \times S^q)$, in dimensions $n > 8$,
all $O(p+1) \times O(q+1)$-invariant minimal hypersurfaces with tangent
cone $C\times \mathbf{R}$ at the 
origin are graphs over one of the surfaces that we constructed.
In particular such an invariant minimal hypersurface is
either equal to $C\times \mathbf{R}$ or has an isolated singularity at
the origin. 
\end{abstract}
\maketitle
\setcounter{tocdepth}{1}
\tableofcontents

\section{Introduction}
Let $M \subset \mathbf{R}^{n+1}$ be a minimal hypersurface with 
$0\in M$. The infinitesimal behavior of $M$ at $0$ is captured by its
tangent cones, obtained as subsequential limits of the sequence of
blowups $2^kM$ as $k\to\infty$, and a basic problem is to relate the behavior of
$M$ near $0$ to that of its tangent cones.
Apart from the case when $M$ is smooth near 0, the most completely
understood situation is when $M$ has a multiplicity one tangent cone
$C$ at 0, which is smooth away from the origin. In this case
Allard-Almgren~\cite{AA81} and Simon~\cite{Simon83} showed that the
tangent cone $C$ is unique, and $M$ can be written as a graph over $C$
near the origin. In particular it follows that $M$ itself has an isolated
singularity at 0 just like its tangent cone. 

In this paper we will be concerned with the next simplest situation,
when $M$ has a multiplicity one cylindrical tangent cone $C\times\mathbf{R}$ at the
origin. We assume that $C$ is smooth away from the origin and in
addition is strictly stable and strictly minimizing in the sense of
Hardt-Simon~\cite{HS85}.
For a large class of
such $C$ Simon~\cite{Simon94} showed that the corresponding
cylindrical tangent cone is unique (see also \cite{Sz20}), and our
goal is to give further information about the behavior of $M$ near
0. Our first result is that it is possible for $M$ to have an isolated
singularity at the origin, even if it is area minimizing. The singular
set of $M$ can therefore be quite different from the singular set of
the tangent cone. 

\begin{thm}\label{thm:T1}
  Let $C\subset\mathbf{R}^n$ be a strictly stable and strictly minimizing cone,
  which is smooth away from 0. Then there exist area
  minimizing hypersurfaces $M\subset \mathbf{R}^{n+1}$ in a
  neighborhood of the origin such that $M$ is smooth away from 0, but
  has tangent cone $C\times\mathbf{R}$ at 0.
\end{thm}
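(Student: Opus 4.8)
The plan is to build an approximate minimal hypersurface with the required asymptotics out of the Hardt--Simon foliation associated to $C$, to deform it to an exact solution by an implicit function theorem argument in weighted spaces, and finally to obtain the area minimizing property for part of the resulting family from a calibration.

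Since $C$ is strictly minimizing and strictly stable, Hardt--Simon~\cite{HS85} provide a smooth, area minimizing hypersurface $S\subset\mathbf{R}^n$ on one side of $C$, unique up to dilations, whose dilates $\lambda S$ ($\lambda>0$) foliate that side, are each asymptotic to $C$ at infinity, and stay a definite distance from the origin; together with $C$ and the analogous leaves on the other side these foliate $\mathbf{R}^n$. I would take $f\colon\mathbf{R}^n\to\mathbf{R}$ smooth away from $0$, homogeneous of degree one, with $\{f=0\}=C$ and with the other level sets equal to the leaves (so every level set of $f$ is minimal in $\mathbf{R}^n$). Fix a small profile $\phi\colon\mathbf{R}\to\mathbf{R}$ with $\phi(0)=0$ and $\phi(t)=o(|t|)$, the model case being $\phi(t)=\eta t^2$ with $\eta>0$ small, and set $M_0=\{(x,t)\in\mathbf{R}^{n+1}: f(x)=\phi(t)\}$. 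Homogeneity of $f$ together with $\phi(t)=o(|t|)$ gives that the blow-ups $\lambda M_0$ at the origin converge to $\{f=0\}=C\times\mathbf{R}$, so $M_0$ has the desired tangent cone, while it is not itself a cone; since the leaves avoid the origin, $\overline{M_0}$ meets the axis $\{0\}\times\mathbf{R}$ only at the origin, so $M_0$ is smooth away from $0$. Computing the mean curvature of a level set of $(x,t)\mapsto f(x)-\phi(t)$ and using that the level sets of $f$ are minimal, one finds $H_{M_0}=-\phi''(t)\,|\nabla f|^{-1}+O(\phi'(t)^2)$, so $M_0$ is minimal up to a small, controlled error concentrated near the origin.

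Next I would look for the exact surface $M$ as a normal graph of a function $v$ over $M_0$, so that the minimal surface equation becomes $L_{M_0}v=-H_{M_0}+Q(v)$ with $L_{M_0}$ the Jacobi operator of $M_0$ and $Q$ quadratic and higher order. The geometry of $M_0$ degenerates near the origin in an edge-type (scaling) fashion: away from the axis $M_0$ is a small perturbation of a minimal product $\lambda S\times\mathbf{R}$, while near $(0,t_0)$ with $t_0\neq0$ it sits at scale $\sim\phi(t_0)\to0$ as $t_0\to0$. One therefore works in Hölder spaces weighted by powers of $|t|$ and, inside each slice, by the distance to the axis rescaled by $\phi(t)$. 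The key analytic input is that $L_{M_0}$ is invertible between suitable such spaces: at small scales near the axis it is modelled on the Jacobi operator of $S$, which is invertible because the leaves are strictly stable (being leaves of the foliation of a strictly stable cone), and at larger scales on $L_{C\times\mathbf{R}}=L_C+\partial_t^2$, where strict stability of $C$ controls the indicial roots of $L_C$ so that no slowly decaying Jacobi fields obstruct the chosen weights. A contraction mapping argument then produces a small solution $v$, hence an exact minimal hypersurface $M$; the weights force $v$ to decay fast enough at the origin that $M$ still has tangent cone $C\times\mathbf{R}$ there, and since $v$ is a small smooth section over $M_0$, the surface $M$ is smooth away from $0$ with $\overline M\cap(\{0\}\times\mathbf{R})=\{0\}$, so $M$ has at worst an isolated singularity at the origin; that the singularity is genuine (so $M$ is not smooth near $0$) follows from the tangent cone $C\times\mathbf{R}$ being non-smooth. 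Varying $\phi$ yields a family of such hypersurfaces. The hard part is exactly this step: choosing weighted norms that faithfully capture the non-uniform edge geometry of $M_0$ near the origin, proving the corresponding uniform Schauder and invertibility estimates for $L_{M_0}$, and checking that the deformation preserves the cylindrical tangent cone --- this is where strict stability and strict minimizing of $C$ enter essentially.

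For the area minimizing assertion I would show that, for a suitable range of the parameter, the hypersurfaces $M$ produced above, together with $C\times\mathbf{R}$ (and, where relevant, the leaves $\lambda S\times\{t\}$), fit together into a foliation of a neighborhood of the origin in $\mathbf{R}^{n+1}$ by minimal hypersurfaces; the unit normal of such a foliation is a calibration, so each leaf --- in particular each such $M$ --- is area minimizing there. Alternatively, such an $M$ can be produced variationally as an area minimizer in $B_1$ with boundary $\{f=\phi\}\cap\partial B_1$, using the leaves as barriers to trap it in the thin region between $C\times\mathbf{R}$ and $M_0$ that pinches to the origin and using strict minimizing of $C$ to keep it close to $C\times\mathbf{R}$; a secondary subtlety in that route is to rule out the minimizer partially collapsing onto $C\times\mathbf{R}$ and thereby developing singularities along a whole segment of the axis rather than just at the origin. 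Either way, only the members of the family that arise as leaves of a foliation are guaranteed to be area minimizing, which accounts for the word ``many'' in the statement.
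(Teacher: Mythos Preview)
Your construction of the approximate solution and its perturbation to an exact minimal hypersurface is essentially the paper's approach: in each $y$-slice you place a suitably scaled Hardt--Simon leaf, then correct to an exact solution via a contraction mapping in doubly weighted H\"older spaces, using invertibility of the Jacobi operator on the model spaces $C\times\mathbf{R}$ and $H\times\mathbf{R}$. The paper is a bit more careful than your sketch in that it glues the leaf $H(y^\ell)$ to the \emph{exact} Jacobi field $u_\ell = (y^\ell r^{-\gamma} + a_1 y^{\ell-2}r^{2-\gamma}+\cdots)\phi_1$ on $C\times\mathbf{R}$ in the region $r\gtrsim |y|^\beta$, which gives sharper mean curvature estimates than your level-set model $\{f=\phi(t)\}$; but this is a refinement rather than a different idea.

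The substantive divergence is in the area minimizing step. The paper does \emph{not} produce a foliation by minimal hypersurfaces and calibrate. Instead it builds, for each small $\xi>0$, a barrier $S_\xi$ with strictly negative mean curvature on the positive side of $T$ (and similarly on the negative side), by taking the minimum of two pieces: away from the singular ray, the graph of $\xi F_{-\gamma_1}$ over $T$ for a function $F_{-\gamma_1}$ with $L_T F_{-\gamma_1}<0$; near the singular ray, a hypersurface produced by a separate quantitative barrier construction (Proposition~\ref{prop:barrier10}) whose $y$-slices are trapped between leaves $H(y^\ell+3\epsilon\pm2\epsilon)$. A maximum principle then shows any competitor agreeing with $T$ on $\partial B_{\rho_0}$ is sandwiched between $T$ and the $S_\xi$, forcing it to equal $T$. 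This works only for $\ell$ sufficiently large.

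Your calibration proposal has a genuine gap. The one-parameter families you produce (varying $\eta$ in $\phi(t)=\eta t^2$, or varying a scaling parameter) all have the origin in their closure and all approach $C\times\mathbf{R}$ there; they do not foliate an \emph{open} neighborhood of $T$ near $0$, so there is no divergence-free unit normal field to calibrate with. The slice-by-slice leaves $\lambda S\times\{t\}$ you mention are minimal in $\mathbf{R}^n$, not in $\mathbf{R}^{n+1}$, so they do not help either. The paper's barrier argument is precisely designed to cope with the degenerating geometry near the singular ray, where a naive foliation or calibration breaks down; this is why the construction of $S_\xi$ is as delicate as it is, and why the result is only asserted for large $\ell$.
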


The construction of minimal hypersurfaces with cylindrical tangent
cones is very much analogous to the author's earlier work~\cite{Sz17} on singular
Calabi-Yau metrics with isolated singularities, and we will use quite
similar techniques. The main novelty is showing that we
obtain area minimizing examples.
 This is similar in spirit
to the work of Hardt-Simon~\cite{HS85}, who showed that minimal
hypersurfaces with strictly minimizing tangent cone $C$ are area
minimizing. In our setting a major difficulty is that the geometry of our hypersurfaces
degenerates as we approach the singular line of the tangent cone
$C\times\mathbf{R}$, and we need to employ a delicate
construction of barrier surfaces, using detailed information about the
geometry of the hypersurfaces. 
Simon~\cite{Simon21} has also
recently constructed stable minimal hypersurfaces with cylindrical
tangent cones using the
quadratic cones $C(S^p\times S^q)$, which moreover can have
essentially arbitrary singular sets, however for this one must allow for a perturbation of
the Euclidean metric on $\mathbf{R}^{n+1}$. It is not known whether
these examples are area minimizing.

Our second main result is that if $M$ approaches its tangent cone
$C\times\mathbf{R}$ at infinite order, then necessarily $M = C\times\mathbf{R}$ in a
neighborhood of the origin. More precisely we have the following.
\begin{thm}\label{thm:T2}
  Let $M$ be an $n$-dimensional stationary integral varifold in a
  neighborhood of $0\in \mathbf{R}^{n+1}$, such that
  $C\times\mathbf{R}$ is a multiplicity one tangent cone of $M$ at
  0. Suppose that in the $L^2$-distance $M$ approaches
  $C\times\mathbf{R}$ at infinite order, i.e. for all $k > 0$ we have
  \[ \int_{M\cap B(0,\rho)} d^2\, d\mu_M = O(\rho^k) \]
  as $\rho\to 0$, where $d$ is the distance function to
  $C\times\mathbf{R}$ and $\mu_M$ is the weight measure of $M$.
  Then $M = C\times\mathbf{R}$ in a neighborhood of the origin. 
\end{thm}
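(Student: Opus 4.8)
The plan is to represent $M$ as a normal graph over $C\times\mathbf{R}$ near the origin and reduce the statement to a strong unique continuation property for the associated Jacobi-type equation, which I would then establish by a blow-up argument driven by the discreteness of the indicial roots of the Jacobi operator of $C\times\mathbf{R}$. Write $\Sigma=C\cap S^{n-1}$ for the link of $C$ and $d$ for the distance to $C\times\mathbf{R}$. Since $C\times\mathbf{R}$ is a multiplicity one tangent cone and, by hypothesis, $\rho^{-n-2}\int_{M\cap B(0,\rho)}d^2\,d\mu_M\to 0$, every blow-up limit of $M$ at $0$ is a stationary cone of the same density as $C\times\mathbf{R}$ with zero $L^2$-distance to it, hence equals $C\times\mathbf{R}$; thus $C\times\mathbf{R}$ is the unique tangent cone at $0$ and the rescalings $\rho^{-1}M$ converge to it as multiplicity one varifolds. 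Away from a fixed cone $\{r\ge\epsilon\rho\}$ around the spine $\{0\}\times\mathbf{R}$ (with $r$ the distance to the spine), Allard's theorem applied to $\rho^{-1}M$ then writes $M\cap B(0,\rho)$, outside a tube around the spine, as the normal graph of a small function $u$; interior elliptic estimates for the minimal surface equation together with the hypothesis upgrade the $L^2$ decay to: for every $k$ and $\delta>0$, $\|u\|_{C^2}=O(\rho^k)$ on $(C\times\mathbf{R})\cap(B(0,\rho)\setminus T_\delta)$, where $T_\delta$ is a $\delta$-tube about the spine. It then remains to show $u\equiv 0$ near the whole spine, which forces $M=C\times\mathbf{R}$ there.

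The function $u$ satisfies the minimal surface equation $\mathcal{M}(u)=L\,u+Q(u)=0$, where $L=L_{C\times\mathbf{R}}=\Delta_{C\times\mathbf{R}}+|A_C|^2$ is the Jacobi operator and $Q$ collects the quadratically small terms. Separating variables in the conical coordinate $\rho$ on $C\times\mathbf{R}$, the homogeneous solutions of $Lv=0$ have the form $\rho^{\gamma}\phi$, with $\gamma$ ranging over the indicial roots determined by the spectrum of the stability operator on the link of $C\times\mathbf{R}$. Here I would use that $C$ is strictly stable and smooth away from $0$: this furnishes a spectral gap on the link making all indicial roots real and bounded away from the critical $L^2$-exponent, so that they form a discrete set with no finite accumulation point. (Strict minimizing is not needed for this theorem; only stability of $C$, smoothness of $C$ away from $0$, and the infinite order hypothesis enter.)

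The crux is then the following dichotomy. Suppose $u\not\equiv 0$ in every neighbourhood of $0$, so that $\mathcal{E}(\rho):=\big(\int_{M\cap B(0,\rho)}d^2\,d\mu_M\big)^{1/2}>0$ for all small $\rho$. Form the rescaled, normalized functions $v_\rho(z):=u(\rho z)/\mathcal{E}(\rho)$ on $(C\times\mathbf{R})\cap(B_2\setminus\overline{B_{1/2}})$, which satisfy $\mathcal{M}_\rho(v_\rho)=0$ with $\mathcal{M}_\rho\to L$ and $\|v_\rho\|_{L^2}$ comparable to $1$. By the estimates of the first step (off the spine) together with a removable–singularity/capacity argument controlling the thin spine region, $v_\rho$ subconverges in $C^2_{\mathrm{loc}}$ to a nonzero solution $v$ of $Lv=0$ on the annulus. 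A three-annulus lemma for $L$, relying on the discreteness of the indicial roots, then shows that the decay of $\mathcal{E}(\rho)$ is governed by these roots, and in particular $\mathcal{E}(\rho)$ cannot decay faster than a fixed power $\rho^{\gamma_*}$ with $\gamma_*<\infty$ unless it vanishes on a punctured neighbourhood of $0$. Since the hypothesis gives $\mathcal{E}(\rho)=O(\rho^k)$ for all $k$, the latter must hold, so $u\equiv 0$ near $0$ and $M=C\times\mathbf{R}$ in a neighbourhood of the origin.

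I expect the main obstacle to be the analysis near the spine $\{0\}\times\mathbf{R}$: there $L$ degenerates, $M$ need not be graphical (by Theorem~\ref{thm:T1} it may even be singular at $0$), and the quadratic error $Q(u)$ is not automatically negligible relative to $Lu$. Overcoming this requires the detailed geometric description of $M$ near the spine underlying the construction in Theorem~\ref{thm:T1} and in \cite{Sz17,Sz20}: weighted Schauder estimates adapted to the two scales $\rho$ (distance to $0$) and $r$ (distance to the spine), an argument showing that the set where $M$ fails to be a graph lies in a thin tube around the spine and contributes negligibly to the integral quantities, and a verification that the neutral Jacobi fields of homogeneity $0$ arising from translating the spine — the usual obstruction in this setting — are in fact absent here, since $\mathcal{E}(\rho)\to 0$. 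An alternative to the three-annulus lemma is a \L ojasiewicz--Simon inequality for the area functional near $C\times\mathbf{R}$, run along the logarithmic scaling parameter as in \cite{Simon94}, which forces any nonzero $u$ to decay no faster than an explicit rate $\rho^{c}$ (or $|\log\rho|^{-c}$), again contradicting infinite order vanishing; but proving that inequality near the singular cylinder presents exactly the same near-spine difficulties.
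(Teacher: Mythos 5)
Your setup correctly identifies the ingredients (graphicality of $M$ over $C\times\mathbf{R}$ away from the spine, the Jacobi operator $L_{C\times\mathbf{R}}$ and its indicial roots, a three-annulus lemma, a non-concentration argument near the spine), and your diagnosis of where the trouble lies is right. But the proof strategy has a genuine gap exactly at the step you treat as routine, and it is the central technical obstacle the paper has to overcome.

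The problem is with the passage ``A three-annulus lemma for $L$ \dots shows that \dots $\mathcal{E}(\rho)$ cannot decay faster than a fixed power $\rho^{\gamma_*}$.'' There is no such fixed $\gamma_*$: the indicial roots of $L_{C\times\mathbf{R}}$ on the link accumulate at $+\infty$, so a nonzero Jacobi field on $C\times\mathbf{R}$ can have arbitrarily high degree, and along different subsequences $\rho_j\to 0$ your blow-up limits $v$ could in principle have different (unbounded) degrees. A single qualitative three-annulus lemma applied to one limit $v$ controls the ratio $\mathcal{E}(e^{-\lambda}\rho)/\mathcal{E}(\rho)$ at that scale, but to iterate across all scales you need uniform control. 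A contradiction/compactness argument only yields: for each step size $\lambda$ and tolerance $\epsilon$ there is some $\delta(\lambda,\epsilon)>0$ so that the three-annulus inequality holds for $M$ if $M$ is $\delta(\lambda,\epsilon)$-close to $C\times\mathbf{R}$; it gives no control on how $\delta(\lambda,\epsilon)$ depends on $\lambda$. To refute infinite-order vanishing one must iterate with a \emph{shrinking} sequence of step sizes $\lambda_k\to 0$ (so that the detectable decay rate $\sim\lambda_k^{-1}$ becomes unbounded), and for the successive shrinking of the balls to converge to a positive radius one needs something like a polynomial relation $\delta(\lambda)\gtrsim\lambda^{Q}$. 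That polynomial dependence cannot come out of a compactness/contradiction argument. This is precisely why the paper abandons the contradiction argument and proves a \emph{quantitative} $L^2$ three-annulus lemma for Jacobi fields (Proposition~\ref{prop:QL23annulus}), using the Weyl law to produce explicit spectral gaps, and a \emph{quantitative} version of the non-concentration estimate (Proposition~\ref{prop:nonconc}); these feed into the quantitative three-annulus property for $M$ (Proposition~\ref{prop:3anndecay}, with $E(\lambda)=\lambda^{Q}$) that drives the iteration in Corollary~\ref{cor:doubling}. As the paper itself notes, ``To obtain a quantitative version of the result we cannot argue by contradiction.''

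Your fallback proposal via a \L ojasiewicz--Simon inequality has the same problem plus an additional one: that inequality controls how slowly $M$ must approach its unique tangent cone from above (uniqueness of the tangent cone), not from below; it does not by itself forbid $u\equiv 0$ to infinite order without the same kind of quantitative iteration. And for the near-spine region, you correctly point to the need for two-scale weighted estimates and a non-concentration argument but do not supply them, whereas the paper needs a sharpened, quantitative barrier estimate there (Proposition~\ref{prop:nonconc}, built from the sliding barriers of Proposition~\ref{prop:barrier10}) precisely because one cannot afford to lose unquantified constants in the iteration. A minor additional point: the paper works with the ``regularized'' distance $d(M,\rho)$ in \eqref{eq:ddefn} that adds an $L^{\infty}$-type term to the $L^{2}$ quantity, which streamlines the near-spine estimates; a pure $L^{2}$ version may be possible but would need Simon's non-concentration input from \cite{Simon94}.
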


This is a strong unique continuation principle for stationary varifolds
with tangent cone $C\times\mathbf{R}$. Our approach is similar in
spirit to other well known unique continuation results, for Q-valued
harmonic functions studied by Almgren~\cite{Alm00}, or general
elliptic equations by Garofalo-Lin~\cite{GL86}, in the sense that we
derive a suitable doubling condition for $M$. A key difficulty is
that in our setting the problem cannot be reduced to a scalar PDE due
to the presence of the singular line in the tangent cone
$C\times\mathbf{R}$. As a result we are not able to define an analog of the
frequency function, which is the basic tool in previous works.
Instead we ``discretize'' the usual argument, relying
on a quantitative three annulus lemma in place of monotonicity of the
frequency. 

It is worth noting that in Theorem~\ref{thm:T2} it is crucial that on
$\mathbf{R}^{n+1}$ we use the Euclidean metric. Indeed, the examples
of Simon~\cite{Simon21} show that for smooth perturbations of the
Euclidean metric it is possible to construct stable minimal
hypersurfaces approaching the tangent cone $C\times\mathbf{R}$
exponentially fast. It is unclear whether this is possible for a real
analytic metric on $\mathbf{R}^{n+1}$. 

A consequence of the unique continuation result is that if $M$ differs
from $C\times\mathbf{R}$, then we can extract a non-zero Jacobi field
on $C\times\mathbf{R}$, which governs the next leading order behavior of
$M$ beyond its tangent cone. In certain cases knowledge of this Jacobi
field can give us further information about the structure of $M$ near
0. When $C = C(S^p\times S^q)$, then we can restrict the possible
Jacobi fields by assuming that $M$ is $O(p+1)\times
O(q+1)$-invariant. Using this leads to our third main result.
\begin{thm}\label{thm:T3}
  Let $M$ be a codimension one, $O(p+1)\times O(q+1)$-invariant
  stationary integral varifold in a
  neighborhood of $0\in
  \mathbf{R}^{p+1}\times\mathbf{R}^{q+1}\times\mathbf{R}$. Suppose that
  $M$ has multiplicity one tangent cone $C\times \mathbf{R}$ at 0, with
  $C = C(S^p\times S^q)$ for $p+q > 6$. Then in a neighborhood of the
  origin either $M=C\times\mathbf{R}$, or $M$ is a graph over a
  scaling of one of
  the hypersurfaces constructed in Theorem~\ref{thm:T1} and in
  particular it has an isolated singularity at 0. 
\end{thm}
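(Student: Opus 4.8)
My plan is to combine the strong unique continuation principle of Theorem~\ref{thm:T2} with an analysis of the $O(p+1)\times O(q+1)$-invariant Jacobi fields on $C\times\mathbf{R}$, and then match the leading-order behaviour of $M$ against the family of hypersurfaces produced in Theorem~\ref{thm:T1}. First I would set up the invariant reduction. Writing $r=|x|$ on $\mathbf{R}^{p+1}$, $s=|y|$ on $\mathbf{R}^{q+1}$, and keeping the last coordinate $t$, the varifold $M$ descends to a $2$-dimensional stationary varifold for the weighted area $\int r^{p}s^{q}\,d\mathcal{H}^{2}$ on the quotient $\{r,s\ge 0\}\times\mathbf{R}$, in which $C=C(S^{p}\times S^{q})$ becomes the half-plane through the ray $\{s/r=\sqrt{q/p}\}$, with singular line $L=\{0\}\times\mathbf{R}$. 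By uniqueness of the cylindrical tangent cone \cite{Simon94,Sz20} together with Allard's regularity theorem, for all sufficiently small $\rho$ and away from $L$ the varifold $M$ is a smooth multiplicity one graph over $C\times\mathbf{R}$ of a function $u$ with $\rho^{-1}|u|+|\nabla u|\to 0$. Applying Theorem~\ref{thm:T2}: if the $L^{2}$-distance of $M$ to $C\times\mathbf{R}$ decays at infinite order, then $M=C\times\mathbf{R}$ near $0$ and we are done, so from now on I assume this fails.

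Next I would extract the leading-order term. The asymptotic analysis underlying Theorem~\ref{thm:T2} --- a {\L}ojasiewicz--Simon inequality, equivalently the quantitative three-annulus lemma used there --- shows that a non-infinite-order decay rate produces a well-defined leading term, $u=\varphi+o(|x|^{\gamma_{0}})$ in the relevant weighted sense, where $\varphi$ is a homogeneous degree $\gamma_{0}$ Jacobi field on $C\times\mathbf{R}$ with $\gamma_{0}$ finite and $\gamma_{0}>1$ (otherwise $C\times\mathbf{R}$ would fail to be the tangent cone, there being no invariant infinitesimal rotations or translations available to supply a degree one term). By invariance $\varphi=\varphi(\rho,t)$, and writing $\varphi=\rho^{\gamma_{0}}f(t/\rho)$, using $|A_{C\times\mathbf{R}}|^{2}=|A_{C}|^{2}=\kappa\rho^{-2}$ on $C$, the Jacobi equation reduces to the ODE
\[
(1+u^{2})\,f'' \;-\; (2\gamma_{0}+n-4)\,u\,f' \;+\; \big(\gamma_{0}^{2}+(n-3)\gamma_{0}+\kappa\big)\,f \;=\; 0,
\]
whose behaviour as $u=t/\rho\to\pm\infty$, i.e.\ as one approaches $L$, is governed by the indicial roots $\gamma_{\pm}=\tfrac12\big(-(n-3)\pm\sqrt{(n-3)^{2}-4\kappa}\big)$ of the Jacobi operator of the cone $C$ itself. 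Strict stability of $C$, which holds precisely for $p+q>6$, guarantees that $\gamma_{\pm}$ are real and distinct with $\gamma_{-}<\gamma_{+}<0$ and $\gamma_{+}>-(n-1)/2$; thus near $L$ the mode $\rho^{\gamma_{-}}$ is not locally $L^{2}$, while $\rho^{\gamma_{+}}$ is borderline admissible.

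The crux is the classification of the admissible $\varphi$. The requirement that $\varphi$ arise from a genuine minimal hypersurface which near $L$ desingularizes $C\times\mathbf{R}$ by an asymptotically conical Hardt--Simon neck --- rather than remaining a graph over $C\times\mathbf{R}$ all the way to $L$, which would force $M=C\times\mathbf{R}$ --- requires $f$ to contain only the $\rho^{\gamma_{+}}$ mode at both ends $u\to +\infty$ and $u\to -\infty$. This is an overdetermined condition on the two-dimensional solution space of the ODE, so it holds only for a discrete, bounded-below set of degrees $\gamma$, and the smallest of these exceeding $1$ should be exactly the homogeneity $\gamma_{0}$ of the leading Jacobi field $\varphi_{0}$ of the hypersurfaces $M^{(i)}$ built in Theorem~\ref{thm:T1} (one such shape per side of $C$; rescaling $M^{(i)}\mapsto\lambda M^{(i)}$ multiplies its leading coefficient by $\lambda^{1-\gamma_{0}}$, so these scalings realize all nonzero multiples of $\varphi_{0}$). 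Hence $\varphi=c_{0}\varphi_{0}$ for some $c_{0}\ne 0$, and there is a unique pair $(\lambda_{0},i_{0})$ so that $\lambda_{0}M^{(i_{0})}$ has the same tangent cone and the same leading-order term as $M$.

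Finally I would upgrade the matching. Since $\lambda_{0}M^{(i_{0})}$ is smooth away from $0$ and shares with $M$ both the tangent cone and the leading-order perturbation of $C\times\mathbf{R}$ --- including, crucially, the Hardt--Simon neck structure near $L$ enforced by the $\rho^{\gamma_{+}}$-only condition --- $M$ should be $C^{0}$-close to $\lambda_{0}M^{(i_{0})}$ on a punctured neighbourhood of $0$; the one remaining freedom is the family direction itself, a Jacobi field of $\lambda_{0}M^{(i_{0})}$ again of homogeneity $\gamma_{0}$, which is absorbed by replacing $\lambda_{0}$ with a nearby $\lambda_{1}$. Then by Allard's regularity theorem and Schauder estimates on each dyadic annulus about $0$, $M$ is a smooth graph over $\lambda_{1}M^{(i_{0})}$ on a punctured neighbourhood of $0$; in particular $M$ is a graph over the scaling $\lambda_{1}M^{(i_{0})}$ of one of the surfaces of Theorem~\ref{thm:T1}, is smooth away from $0$, and has an isolated singularity there. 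I expect the main obstacle to be precisely the analysis near $L$ underlying both the classification and the upgrade: there the neck pinches off as $t\to 0$, the graphical description over $C\times\mathbf{R}$ degenerates, and one must work in weighted spaces adapted to $\gamma_{\pm}$ while carefully matching the outer graphical region (governed by $\varphi$) to the inner Hardt--Simon neck --- the same delicate barrier-type construction the author flags for Theorem~\ref{thm:T1}. It is also here that the Euclidean structure of the metric enters: Simon's examples \cite{Simon21} show that for a perturbed metric additional admissible behaviours near $L$ appear, so the discreteness of admissible degrees, hence the rigidity forcing $M$ into the constructed family, genuinely uses both the Euclidean metric (through Theorem~\ref{thm:T2}) and strict stability of $C$ (through $p+q>6$).
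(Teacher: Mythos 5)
Your high-level skeleton is the same as the paper's: extract a nonzero blow-up Jacobi field from the failure of infinite-order decay, use $O(p+1)\times O(q+1)$-invariance to pin down its form, and then match $M$ against a rescaling of one of the surfaces $T$ from Theorem~\ref{thm:T1}. However the execution has gaps in exactly the two places where the argument is genuinely hard.

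\emph{The classification step.} Your ``overdetermined Hardt--Simon neck at both ends'' argument is not the right mechanism, and as stated it would not give what you need. The Jacobi field produced by the blow-up argument (Corollary~\ref{cor:nonzeroU}) is constrained only by $r^\gamma U\in L^\infty$; nothing forces it to look like the linearization of a Hardt--Simon neck near the singular line, and indeed for the argument to close one must allow the linear deviation to be graphical over $C\times\mathbf{R}$ all the way in. The correct classification is immediate from Simon's expansion \eqref{eq:usum1} of $L^\infty$-bounded Jacobi fields together with the observation (Lemma~\ref{lem:CinvJ}) that the only $O(p+1)\times O(q+1)$-invariant eigenfunction on $S^p\times S^q$ is the constant, which forces $U=\sum_{k,\l}a_{k,\l}r^{2k-\gamma}y^\l$ and hence a leading term proportional to $u_\l$ for some integer $\l$. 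Also, strict stability of $C(S^p\times S^q)$ holds already for $p+q\ge 6$; the hypothesis $p+q>6$ is imposed to exclude the degree-one invariant Jacobi field $y^3 r^{-2}-y$, not to secure strict stability.

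\emph{The upgrade to graphicality.} Your last paragraph compresses what is, in the paper, the bulk of Section~\ref{sec:symmetry} into ``$M$ should be $C^0$-close.'' Matching the leading homogeneous Jacobi field of $M$ with that of a rescaling $\lambda_0 T$ does not by itself give any decay, because the next-order deviation of $M$ from $\lambda_0 T$ can again be a degree-$(\l-\gamma)$ Jacobi field (a tangent to the scaling family), and this can recur at every scale. Your remark about ``absorbing'' this by replacing $\lambda_0$ with a nearby $\lambda_1$ is in the right direction, but the absorption must be carried out scale by scale inside a quantitative three-annulus iteration, and one must show the sequence of adjustments converges. This is what forces the paper to introduce the perturbed family $T_{\lambda,\epsilon\lambda}$ (Propositions~\ref{prop:TJacfield}, \ref{prop:Tdeform}), prove a non-concentration estimate adapted to $T_{\lambda,\epsilon\lambda}$ (Proposition~\ref{prop:nonconc2}), establish the key decay estimate with re-chosen $\epsilon$ at each scale (Proposition~\ref{prop:decay2}), and only then close the iteration (Step~2 of the proof of Theorem~\ref{thm:symmetric}). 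Without that machinery one cannot invoke Allard or Schauder on dyadic annuli, since the decay rate those estimates are meant to certify has not been established. In short: right picture, but the two load-bearing steps are respectively replaced by a heuristic that does not apply and deferred to an iteration you haven't set up.
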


Once again this result should be contrasted with the examples
constructed by Simon~\cite{Simon21}, whose singular set can be an
arbitrary compact subset of $\mathbf{R}$, even under a symmetry
assumption. Because of this it is crucial that we use the Euclidean
metric on $\mathbf{R}^{n+1}$. Note also that we excluded the two
lowest dimensional quadratic cones $C(S^3\times S^3)$ and $C(S^2\times
S^4)$. The reason is that $\phi = y^3 r^{-2} - y$ defines a degree one
Jacobi field on both, where $r$ is the distance from the origin in
$\mathbf{R}^n$ while $y$ is the coordinate on the $\mathbf{R}$
factor. To extend the proof of Theorem~\ref{thm:T3} to these cones, we would need a
construction of minimal hypersurfaces near the origin, similar to
those in Theorem~\ref{thm:T1}, modeled on the
Jacobi field $\phi$. While we expect that this is possible, following
the approach of Adams-Simon~\cite{AS88}, it involves additional
difficulties. More or less the same difficulty also appears in proving
the uniqueness of the corresponding cylindrical tangent cones (see
\cite{Simon94} and \cite{Sz20}).

It would be very desirable to remove the symmetry assumption in
Theorem~\ref{thm:T3}, however it is not entirely clear what the
correct expectation is. For instance it is possible to have $M$ with a
curve
of singularities other than a line, see Smale~\cite{Sm91}. The most optimistic
conjecture would be that if $C\times\mathbf{R}$ is a
multiplicity one tangent cone of $M$ at the origin,  then in a
neighborhood of 0 either the singularity is isolated, or the singular
set of $M$ is a real analytic curve through the origin. At the moment,
however, very little seems to be known in this direction beyond
general rectifiability results for the singular set (see
Simon~\cite{Simon93, Simon93_1}, Naber-Valtorta~\cite{NV1}).
As a related result we should also mention Chang's
result~\cite{Ch88}, building on Almgren's work~\cite{Alm00},
stating that the singular set of a two dimensional area minimizing
current in a Riemannian manifold is discrete.
See also Rivi\`ere-Tian~\cite{RT04}, and in
particular De Lellis-Spadaro-Spolaor~\cite{DSSI} in connection with
this result.

We conclude the introduction with an overview of the rest of the
paper. In Section~\ref{sec:preliminaries} we will review some basic
background on strictly stable and strictly minimizing cones and on
Jacobi fields on $C\times\mathbf{R}$. A new result in this section is
Proposition~\ref{prop:QL23annulus}, which is a 
quantitative version of the $L^2$ three annulus lemma for Jacobi
fields. In addition we will construct
barrier surfaces near $C\times\mathbf{R}$, to be used in maximum
principle arguments later. The construction is similar to what we used in
\cite{Sz20}, but here we need effective control of the constants
that appear and we consider general strictly stable strictly
minimizing cones $C$. We prove Theorem~\ref{thm:T1} in
Section~\ref{sec:gluing}, closely following our earlier work
\cite{Sz17} on singular Calabi-Yau
metrics to construct the hypersurfaces, and using the barrier surfaces
to show that they are area minimizing. 
In Section~\ref{sec:uniquecont} we prove the strong unique
continuation principle, Theorem~\ref{thm:T2}. The idea is to pass
the quantitative $L^2$ three annulus lemma for Jacobi fields on $C\times\mathbf{R}$
to a statement about minimal surfaces close to
$C\times\mathbf{R}$. The main difficulty is to control the behavior of
$M$ near the singular ray of the tangent cone, which we achieve by using a
quantitative version of the non-concentration estimate proved in
\cite{Sz20}. Finally in Section~\ref{sec:symmetry} we prove
Theorem~\ref{thm:T3}. The idea is to prove that under the assumptions
the hypersurface $M$ approaches one of the hypersurfaces $T$
constructed in Theorem~\ref{thm:T1} sufficiently fast as we approach
the origin. In particular we show that $M$ approaches $T$ more quickly than the
rate at which $T$ approaches $C\times\mathbf{R}$, and from this it
follows that near the origin $M$ is a graph over $T$. 

\subsection*{Acknowledgements} I am grateful to Nick Edelen and Luca
Spolaor for stimulating discussions. This work was supported in part by NSF grant DMS-1906216.

\section{Preliminary results}\label{sec:preliminaries}
For background on minimal hypersurfaces and varifolds we refer to
Simon~\cite{SimonGMT}.  Throughout the paper, 
on $\mathbf{R}^n\times \mathbf{R}$ we use coordinates
  $x\in\mathbf{R}^n$ and $y\in \mathbf{R}$. We write $r = |x|$ and
  $\rho = (r^2 + y^2)^{1/2}$. 

  We let $C\subset\mathbf{R}^n$ be a strictly minimizing and strictly
stable cone in the sense of Hardt-Simon~\cite{HS85}, which is smooth
away from 0. Recall that
by \cite{HS85} there are minimal hypersurfaces $H_-, H_+$ contained in the
two connected components of $\mathbf{R}^n \setminus C$, such that the
scalings $\lambda H_-, \lambda H_+$ for $\lambda > 0$ together with
$C$ foliate $\mathbf{R}^n$.

For an oriented hypersurface $S$ in a Riemannian
manifold $N$ we denote by $L_S$ the Jacobi operator on $S$, i.e. the
linearization of the mean curvature operator on graphs over $S$. When
$S$ is a minimal hypersurface, then we have
\[ L_S f = \Delta_S f + (|A_S|^2 + \mathrm{Ric}_N(\nu, \nu))f, \]
where $A_S$ is the second fundamental form of $S$, $\nu$ is the unit
normal vector field to $S$ and $\mathrm{Ric}_N$ is the Ricci tensor of
$N$. We will have either $N=\mathbf{R}^k$ with $\mathrm{Ric}_N=0$, or
$N=S^{k-1}$ with $\mathrm{Ric}_N(\nu,\nu) = (k-2)$.

We denote by $\phi_i$ the $i^{th}$ eigenfunction of
$-L_\Sigma$ on the link $\Sigma = C\cap \partial B_1$, with eigenvalue
$\lambda_i$. Corresponding to these there are homogeneous Jacobi
fields $r^{-\gamma_i}\phi_i$ on $C$, where
\[ \label{eq:gl1} \gamma_i^2 - (n-3)\gamma_i - (n-2+\lambda_i) = 0. \]
The strict stability condition implies that we can take $-\gamma_i >
\frac{3-n}{2}$, and that there are
no homogeneous Jacobi fields on $C$ with degrees in the interval
$(3-n + \gamma_1, -\gamma_1)$.
We set $\gamma=\gamma_1$, and assume that $\phi_1 >
0$. 

The assumptions that $C$ is stritly minimizing and strictly stable
imply that outside of a large ball, the surfaces
$H_{\pm}$ are graphs of functions
\[ \Psi_\pm =  \pm r^{-\gamma} \phi_1 + v_{\pm} \]
over $C$.  Here $v_{\pm} =
O(r^{-\gamma - c})$ for some $c > 0$, and we will assume that $c$ is
small. 
We will use the following conventions for
the orientations of $H_-, C, H_+$: on $H_-$ the normal points towards
$C$, on $C$ it points towards $H_+$, and on $H_+$ it points away from
$C$. This naturally extends to orientations of all the scalings
$\lambda H_\pm$. Note that these orientations are consistent with our
convention that outside of a large ball $H_\pm$ are the graphs of
$\Psi_\pm$, where $\Psi_+ > 0$ and $\Psi_- < 0$. 
It will be convenient to combine the foliation
of $\mathbf{R}^n$ into a single family of hypersurfaces $H(t)$ for
$t\in \mathbf{R}$.
\begin{definition}\label{defn:Ht}
  For $t\in \mathbf{R}$ we will write
  \[ H(t) = \begin{cases} |t|^{\frac{1}{\gamma+1}}H_+, &\text{ for }t > 0, \\
      C, &\text{ for }t=0, \\
      |t|^{\frac{1}{\gamma+1}}H_-, &\text{ for }t < 0.
    \end{cases} \]
\end{definition}

  Note that the scaling ensures that for a constant $C_1>0$ depending on
  the cone $C$, and for any $t\in \mathbf{R}$, the
  hypersurface $H(t)$ is the graph of the
  function
  \[ f_t(x)=  |t|^{\frac{1}{\gamma+1}}
    \Psi_\pm(|t|^{-\frac{1}{\gamma+1}}x) \] over $C$ on the region $r >
C_1 |t|^{1 / (\gamma+1)}$ , where the $\pm$ sign depends on the sign of $t$. 
 The function $f_t$ satisfies
  \[ |f_t - tr^{-\gamma} \phi_1| \leq C_1|t|^{1 +
      \frac{c}{\gamma+1}}r^{-\gamma-c}. \]
  Thus, roughly speaking we can think of $H(t)$ as being the graph of $t
  r^{-\gamma}\phi_1$ over $C$, at least on the region where $|t|\ll
  r^{\gamma+1}$. 

We will need the following result, comparing nearby leaves of the
foliation to each other. We state the result for $H_+$ but the
analogous result also holds for $H_-$, as long as we switch the orientation.
\begin{prop}\label{prop:Hestimates}
  The function $v_+$ satisfies the derivative estimates
  \[ \label{eq:nv+}|\nabla^i v_+| \leq C_i r^{-\gamma-c-i}. \]
  For sufficiently small $\epsilon$ the surface $(1+\epsilon)H_+$
  is the graph of a function $\Phi_{+,\epsilon}$ over $H_+$, which we can write as
  \[ \Phi_{+,\epsilon}(x) = \epsilon \Phi_+(x) + \epsilon^2 V_{+,\epsilon}(x). \]
  Here $\Phi_+ > 0$ is a Jacobi field on $H_+$, i.e. $L_{H_+}\Phi_+ = 0$, and
  $V_{+,\epsilon}$ satisfies the estimates
  \[ |\partial_\epsilon^j\nabla^i V_{+,\epsilon}| \leq C_{i,j} r^{-\gamma-i}. \]
\end{prop}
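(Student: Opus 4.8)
The plan is to analyze the family $\{(1+\epsilon)H_+\}_\epsilon$ by differentiating in $\epsilon$ and then integrating. First I would establish the derivative estimates for $v_+$: since $H_+ = \mathrm{graph}(\Psi_+)$ with $\Psi_+ = r^{-\gamma}\phi_1 + v_+$ is minimal, $v_+$ satisfies an elliptic equation $L_C v_+ = Q(v_+)$ where $Q$ collects the quadratic and higher terms, and one already knows $v_+ = O(r^{-\gamma-c})$. I would then run a standard scaled Schauder estimate on dyadic annuli $\{r \sim R\}$: rescaling $x \mapsto Rx$ normalizes the annulus to unit size, the coefficients of $L_C$ and $Q$ have bounded $C^{k,\alpha}$ norms there (using smoothness of $\Sigma$ away from $0$), and the sup bound $r^{-\gamma-c}$ on a slightly larger annulus feeds in on the right-hand side. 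Undoing the scaling produces $|\nabla^i v_+| \le C_i R^{-\gamma-c-i}$ on $\{r\sim R\}$, i.e.\ \eqref{eq:nv+}. The same machinery gives all the later estimates once the main identities are in place.

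Next, for $\epsilon$ small, $(1+\epsilon)H_+$ is a graph over $H_+$: away from a large ball both are graphs over $C$, of $(1+\epsilon)^{?}$-scalings of $\Psi_+$, so $\Phi_{+,\epsilon}$ is explicitly $ |1+\epsilon|^{1/(\gamma+1)}\Psi_+(|1+\epsilon|^{-1/(\gamma+1)} x) - \Psi_+(x)$ modulo the change of coordinates between the two normal bundles; on a fixed compact region away from $0$ the scaling map $\lambda\mapsto \lambda H_+$ is a smooth embedding, so $\Phi_{+,\epsilon}$ depends smoothly on $\epsilon$ with $\Phi_{+,0}=0$. I would then set $\Phi_+ := \partial_\epsilon\big|_{\epsilon=0}\Phi_{+,\epsilon}$ and $V_{+,\epsilon} := \epsilon^{-2}(\Phi_{+,\epsilon} - \epsilon\Phi_+)$, so that the stated decomposition is just Taylor's theorem with integral remainder in the $\epsilon$ variable. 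That $L_{H_+}\Phi_+ = 0$ follows by differentiating the minimal surface equation $\mathcal{M}_{H_+}(\Phi_{+,\epsilon}) = 0$ (mean curvature of the graph) at $\epsilon=0$: the linearization of $\mathcal{M}_{H_+}$ at $0$ is exactly $L_{H_+}$. That $\Phi_+ > 0$ follows because the leaves $\lambda H_+$ are disjoint and ordered, so $\Phi_{+,\epsilon}$ has a strict sign for $\epsilon\neq0$ small, hence its $\epsilon$-derivative at $0$ is $\ge 0$, and then the strong maximum principle (or Harnack) applied to $L_{H_+}\Phi_+=0$ upgrades this to $\Phi_+>0$, using that $\Phi_+$ is not identically zero — which one sees from the leading term, namely $\Phi_+ = c\, r^{-\gamma}\phi_1 + (\text{lower order})$ for a positive constant $c$ coming from $\partial_\epsilon\big|_0 |1+\epsilon|^{1/(\gamma+1)} \cdot (\cdots)$.

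The decay estimates $|\partial_\epsilon^j \nabla^i V_{+,\epsilon}| \le C_{i,j} r^{-\gamma-i}$ are the real content, and proving them is where the work lies. The point is that although $\Psi_+$ and $\Phi_+$ individually decay like $r^{-\gamma}$, the second-order-in-$\epsilon$ remainder $V_{+,\epsilon}$ should decay no worse — not like $r^{-2\gamma}$ or some worse power — because the whole family $(1+\epsilon)H_+$ is a genuine rescaling and the "error" is controlled by the first-order term. Concretely I would write $V_{+,\epsilon}$ as an integral $\int_0^1 (1-s)\,\partial_s^2 \Phi_{+,s\epsilon}\,ds$ and bound $\partial_s^2 \Phi_{+,s}$; on the region $r > C_1$ where everything is an honest graph over $C$, one differentiates the explicit formula for $\Phi_{+,\epsilon}$ twice in $\epsilon$ and uses \eqref{eq:nv+} together with the scaling structure — each $\epsilon$-derivative of $|1+\epsilon|^{1/(\gamma+1)}\Psi_+(|1+\epsilon|^{-1/(\gamma+1)}x)$ pulls down a factor that is $O(1)$ in $\epsilon$ and does not worsen the $r^{-\gamma}$ decay (differentiating the argument produces a factor $r\cdot\nabla\Psi_+ = O(r^{-\gamma})$, not worse). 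For the interior region near the (large) ball, where the graph-over-$C$ description breaks down, one instead works with the graph-over-$H_+$ description on a compact set and uses elliptic regularity for the equation satisfied by $\Phi_{+,\epsilon}$ (and its $\epsilon$-derivatives, obtained by differentiating $\mathcal{M}_{H_+}(\Phi_{+,\epsilon})=0$ in $\epsilon$, which gives $L_{H_+}\partial_\epsilon\Phi_{+,\epsilon} = (\text{terms quadratic in lower }\epsilon\text{-derivatives})$); the scaled Schauder estimates on dyadic annuli then propagate the bound outward, matching the $r^{-\gamma-i}$ decay coming from the exterior region. The main obstacle is precisely this bookkeeping: keeping the powers of $r$ sharp through the $\epsilon$-differentiated equations, especially verifying that the nonlinear remainder terms — which a priori look like products of two things decaying like $r^{-\gamma}$, hence $r^{-2\gamma}$ — are in fact absorbed by the structure of the rescaling so that $V_{+,\epsilon}$ only sees a single power $r^{-\gamma}$. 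Everything else is a routine application of scaled Schauder estimates and the strong maximum principle.
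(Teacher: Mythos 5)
Your proposal follows essentially the same route as the paper. The paper's proof of \eqref{eq:nv+} is exactly the scaled Schauder argument you describe: rescale $R^{-1}H_+$ to the unit annulus, use the sup bound $|f|\le C R^{-1-\gamma}$ together with interior regularity for minimal graphs, then observe that since $r^{-\gamma}\phi_1$ is an exact Jacobi field the equation for $R^{-1}v_+(R\cdot)$ has right-hand side $O(R^{-2-2\gamma})$, which is dominated by $R^{-1-\gamma-c}$, and close with Schauder. For the second claim the paper is very terse (``on any fixed ball the estimates follow from the fact that $(1+\epsilon)H_+$ vary analytically in $\epsilon$; the estimates for large $r$ can be seen by writing both as graphs over $C$''), and your expansion of that sentence — the explicit formula $f_{(1+\epsilon)^{\gamma+1}}(x) = (1+\epsilon)^{\gamma+1}r^{-\gamma}\phi_1 + (1+\epsilon)v_+((1+\epsilon)^{-1}x)$ for large $r$, Taylor expansion in $\epsilon$ with integral remainder, feeding the $v_+$ derivative bounds in, and smooth dependence on a compact region near the large ball — is exactly what the paper leaves to the reader. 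The identification $L_{H_+}\Phi_+=0$ by differentiating the mean-curvature identity and the positivity via the ordered foliation plus strong maximum principle are likewise standard (Hardt--Simon) and consistent with the paper's framing.

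One small misstatement worth flagging: you describe the quadratic nonlinear terms, which ``a priori look like products of two things decaying like $r^{-\gamma}$, hence $r^{-2\gamma}$,'' as a threat to the $r^{-\gamma}$ bound. Since $\gamma>0$, the decay $r^{-2\gamma}$ is \emph{stronger} than $r^{-\gamma}$ as $r\to\infty$, so such terms would not violate the claimed estimate in any case. More to the point, as your explicit graph-over-$C$ computation shows, the dominant contribution to $V_{+,\epsilon}$ comes from the quadratic term in the Taylor expansion of $(1+\epsilon)^{\gamma+1}$ multiplying $r^{-\gamma}\phi_1$, which is already of order $r^{-\gamma}$; the $v_+$-terms contribute $O(r^{-\gamma-c})$ and the genuinely nonlinear (product) corrections are even lower order. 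So the worry resolves itself, and the proof stands.
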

\begin{proof}
 The derivative estimates for $v_+$ follow from a rescaling argument
 and elliptic regularity. More precisely, for large $R > 0$ consider
 the scaled down surface $R^{-1}H_+$. On the annulus $r\in (1/2,4)$ 
this is the graph over $C$ of the
 function
 \[ f(x) = R^{-1-\gamma} r^{-\gamma} \phi_1(x/r) + R^{-1} v_+(Rx), \]
 so in particular we have $|f| \leq C_2
 R^{-1-\gamma}$. Since $C$ and $R^{-1}H_+$ are minimal, we first find
 that the derivatives of $f$ satisfy the same estimate once $R$ is
 sufficiently large, using the regularity theory for minimal graphs. 
By expanding the minimal surface equation as in \eqref{eq:mXQ} we
 obtain an estimate of the form
 \[ |L_C f|_{C^{k,\alpha}} \leq C_3 |f|_{C^{k+2,\alpha}}^2\]
 on the annulus $r\in (1,2)$. Using that $r^{-\gamma}\phi_1$ is a
 Jacobi field, this implies
 \[ |L_C (R^{-1}v_+(Rx))|_{C^{k,\alpha}} \leq C_3 R^{-2-2\gamma}, \]
 for a larger $C_3$. We know that on this annulus $|R^{-1}v_+(Rx)|
 \leq C_4 R^{-1-\gamma-c}$, and at the same time $2+2\gamma >
 1+\gamma+c$. Then using the Schauder estimates we find that on a
 smaller annulus we have 
 \[ |R^{-1}v_+(Rx)|_{C^{k+2,\alpha}} \leq C_5 R^{-1-\gamma-c}. \]
 Scaling back, this implies \eqref{eq:nv+}.

 On any fixed ball, the estimates in the second claim follow from the
 fact that the surfaces $(1+\epsilon)H_+$ vary analytically in
 $\epsilon$, for sufficiently small $\epsilon$.
 The estimates for large $r$ can be seen by writing both $(1+\epsilon)H_+$ and
 $H_+$ as graphs over $C$.
\end{proof}

The following result is analogous to \cite[Lemma 5.1]{Sz20}, with an
essentially identical proof.
\begin{lemma}\label{lem:L51}
  There is a $c_0 > 0$ depending on the cone $C$ with the following
  property. For any $t\in \mathbf{R}$ and $\lambda > 0$, the
  hypersurface $H(t + \lambda)$ is on the positive side of the graph
  of the function $c_0\min\{\lambda r^{-\gamma}, r\}$ over $H(t)$. 
\end{lemma}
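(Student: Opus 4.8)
\emph{Strategy.} The plan is to combine the scaling symmetry of the foliation with two local estimates, one valid where the leaves are graphical over $C$ and one near the ``bend'' of the leaves. First note that the statement is invariant under the dilation $x\mapsto\kappa x$, which sends $H(s)$ to $H(\kappa^{\gamma+1}s)$ and multiplies both $\min\{\lambda r^{-\gamma},r\}$ and the normal distance between leaves by $\kappa$; so one may normalise, say $t+\lambda=1$. By the reflection $H_+\leftrightarrow H_-$ (which also reverses orientation) it suffices to treat $t\ge 0$; the case $t<0<t+\lambda$ follows by comparing $H(t)$ with $C$ and $C$ with $H(t+\lambda)$ and concatenating the two normal graphs, using that all surfaces involved are $C^{1,\alpha}$-close in the appropriate rescaled sense together with the elementary inequality $\min\{a,c\}+\min\{b,c\}\ge\min\{a+b,c\}$ and $|t|+(t+\lambda)=\lambda$. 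For $t\ge 0$, since consecutive leaves of the Hardt--Simon foliation are normal graphs over one another and the orientation conventions make the graph function $G_q$ of $H(t+\lambda)$ over $H(t)$ positive, the goal becomes to show $G_q\ge c_0\min\{\lambda r^{-\gamma},r\}$ for all $q\in H(t)$, where $r=r(q)$.

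\emph{Graphical region $r\ge C_1(t+\lambda)^{1/(\gamma+1)}$.} Here $H(t)$ and $H(t+\lambda)$ are the graphs over $C$ of $f_t$ and $f_{t+\lambda}$, and converting $C$-normal distances to $H(t)$-normal distances gives $G_q\ge\tfrac12(f_{t+\lambda}-f_t)$ once $C_1$ is large. Differentiating $f_s(x)=s^{1/(\gamma+1)}\Psi_+(s^{-1/(\gamma+1)}x)$ in $s$ and using $\Psi_+=r^{-\gamma}\phi_1+v_+$ with the bounds of Proposition~\ref{prop:Hestimates}, one finds $\partial_s f_s=r^{-\gamma}\phi_1+O(s^{c/(\gamma+1)}r^{-\gamma-c})$, which is $\ge\tfrac12(\min_\Sigma\phi_1)\,r^{-\gamma}$ for $C_1$ large (using $r\ge C_1 s^{1/(\gamma+1)}$ for $s\in[t,t+\lambda]$ to absorb the error). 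Integrating in $s$ gives $f_{t+\lambda}-f_t\ge\tfrac12(\min_\Sigma\phi_1)\,\lambda r^{-\gamma}$, and since $r^{\gamma+1}\ge C_1^{\gamma+1}\lambda$ forces $\lambda r^{-\gamma}\le r$, we get $G_q\gtrsim\lambda r^{-\gamma}=\min\{\lambda r^{-\gamma},r\}$.

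\emph{Bend region $r\le C_1(t+\lambda)^{1/(\gamma+1)}$.} Rescale by $\mu=(t+\lambda)^{-1/(\gamma+1)}$, which sends $H(t+\lambda)\mapsto H_+$, $H(t)\mapsto H(t')$ with $t'=t/(t+\lambda)\in[0,1)$, and $q\mapsto q'$ with $r(q')\le C_1$. The family $s\mapsto H(s)$ on $[0,1]$ is smooth, with normal speed given over $H_+$ by the positive Jacobi field $\Phi_+$ of Proposition~\ref{prop:Hestimates} (after the reparametrisation $s=\sigma^{\gamma+1}$); since moreover $H_+$ stays a definite distance $d_1>0$ from the origin, the region $\{r\le C_1\}$ meets the leaves $H(s)$, $s\in[0,1]$, in a set on which the $s$-normal speed is bounded below by a positive constant $c$ (using positivity of $\Phi_+$, the speed blowing up rather than vanishing near the origin). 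Hence the graph function of $H_+$ over $H(t')$ at $q'$ is at least $c(1-t')$, and scaling back, $G_q\ge c\mu^{-1}(1-t')=c\,\lambda(t+\lambda)^{-\gamma/(\gamma+1)}$. Finally compare with $c_0\min\{\lambda r^{-\gamma},r\}$, using that every point of $H(t)$ has $r\ge d_1 t^{1/(\gamma+1)}$: if $\lambda r^{-\gamma}\le r$ then $r^{\gamma+1}\ge\max\{\lambda, d_1^{\gamma+1}t\}\gtrsim t+\lambda$, so $c\lambda(t+\lambda)^{-\gamma/(\gamma+1)}\gtrsim\lambda r^{-\gamma}$; if $\lambda r^{-\gamma}>r$ then $\lambda>r^{\gamma+1}\ge d_1^{\gamma+1}t$, so $t+\lambda\lesssim\lambda$ and $c\lambda(t+\lambda)^{-\gamma/(\gamma+1)}\gtrsim\lambda^{1/(\gamma+1)}\ge r$. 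Choosing $c_0$ small enough finishes the proof.

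\emph{Main difficulty.} The delicate point is the matching of the two estimates across the scale $r\sim(t+\lambda)^{1/(\gamma+1)}$ and checking that the resulting lower bound really has the form $\min\{\lambda r^{-\gamma},r\}$: the $\min$ degenerates to $r$ exactly when $\lambda\gtrsim t$, which is precisely the regime in which $t'$ is bounded away from $1$ and the compactness estimate of the bend region is strong. The argument is parallel to \cite[Lemma 5.1]{Sz20}.
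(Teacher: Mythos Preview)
Your argument is correct and follows the same strategy the paper invokes by reference to \cite[Lemma 5.1]{Sz20}: reduce by scaling and reflection, then treat the graphical region by integrating $\partial_s f_s$ and the bend region via positivity of the Jacobi field. The only point needing a touch more care is in the bend region: deducing $G_{q'}\ge c(1-t')$ from the foliation-speed bound $V\ge c$ on $\{r\le C_1\}$ tacitly assumes the normal segment from $q'$ to $H_+$ remains in that set, which is not automatic; but either $G_{q'}>C_1\ge C_1(1-t')$ already, or $G_{q'}\le C_1$ forces the segment into $\{r\le 2C_1\}$ where the same speed bound holds, so the step goes through (alternatively, the scale-invariant bound $V\gtrsim r^{-\gamma}$ on the whole foliation, combined with $c_0\min\{\lambda r^{-\gamma},r\}\le c_0 r$, gives $|\Theta(q+\tau_0\nu_q)-t|\lesssim c_0\lambda$ directly and dispenses with the region split).
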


The link of $C\times\mathbf{R}$ is singular, with two singularities
modeled on the cone $C$. We will only be
interested in Jacobi fields $u$ for which $r^{\gamma + \kappa}u$ is
locally bounded away from the origin for a small $\kappa > 0$. Since
there are no homogeneous Jacobi fields on $C$ with growth rate in
$(3-n+\gamma, -\gamma)$, such $u$ automatically satisfies that $r^\gamma
u$ is locally bounded away from the origin, if $\kappa$ is sufficiently small. Equivalently
the Jacobi fields that we are interested in can be characterized as
those that are in $W^{1,2}_{loc}$ away from the origin.

We need the following $L^2$ to $L^\infty$ estimate for such Jacobi fields
on $C\times\mathbf{R}$, whose proof is identical to that in
\cite{Sz20}. 
\begin{lemma}\label{lem:L2Linfty}
 Let $u$ be a Jacobi field on $C\times \mathbf{R}$, such that
 $r^\gamma u$ is in $L^\infty$ on $B_1(0)$. Then we have the estimate
  \[ \sup_{B_{1/2}(0)} |r^\gamma u| \leq C \Vert u \Vert_{L^2(B_1)}. \]
\end{lemma}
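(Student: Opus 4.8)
The plan is to remove the degeneracy of the Jacobi operator along the singular line $\{r=0\}$ by conjugating the equation with the positive homogeneous Jacobi field $r^{-\gamma}\phi_1$, thereby reducing the problem to a local boundedness estimate for a \emph{scalar} degenerate elliptic equation in divergence form carrying a Muckenhoupt weight, to which a De Giorgi--Nash--Moser argument applies.

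Concretely, set $\xi := r^{-\gamma}\phi_1$. Since $\phi_1>0$ is the first eigenfunction of $-L_\Sigma$ on the compact link, $\xi$ is a positive Jacobi field on $C\times\mathbf{R}$ (it is $L_C$-harmonic on $C$ and independent of $y$), smooth away from $\{r=0\}$, with $c^{-1}r^{-\gamma}\le\xi\le c\,r^{-\gamma}$ and $|\nabla\xi|\le c\,r^{-\gamma-1}$. Writing $u=\xi w$ and using $L_{C\times\mathbf{R}}u=L_{C\times\mathbf{R}}\xi=0$ (recall $\mathrm{Ric}_{\mathbf{R}^{n+1}}=0$), a short computation gives $\mathrm{div}(\xi^2\nabla w)=0$ away from $\{r=0\}$, i.e.\ $w$ is harmonic for the Dirichlet energy $\int|\nabla w|^2\,d\nu$ associated with the weighted measure $d\nu:=\xi^2\,d\mu$, where $\mu$ is the $n$-dimensional volume measure of $C\times\mathbf{R}$. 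The hypothesis $r^\gamma u\in L^\infty(B_1)$ is then equivalent to $w\in L^\infty(B_1)$, with $|r^\gamma u|=\phi_1|w|\le c|w|$ and $\int_{B_1}w^2\,d\nu=\int_{B_1}u^2\,d\mu=\|u\|_{L^2(B_1)}^2$. A Caccioppoli estimate for $u$ (testing $L_{C\times\mathbf{R}}u=0$ against $\eta^2 u$ with a cutoff $\eta$ equal to $1$ near $\{r=0\}$), together with the strict stability bound $2\gamma<n-3$, shows $\int_K(|\nabla u|^2+r^{-2}u^2)\,d\mu<\infty$ on compact subsets $K\subset B_1$, hence $w\in W^{1,2}_{loc}(B_1,d\nu)$; since $\{r=0\}$ has codimension $n-1\ge 2$ it has zero $\nu$-capacity, so $w$ is a genuine weak solution across the singular line.

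It then remains to establish the mean value inequality $\sup_{B_{1/2}}|w|^2\le C\,\nu(B_{3/4})^{-1}\int_{B_{3/4}}w^2\,d\nu$, after which the lemma follows immediately: $\nu(B_{3/4})$ is a positive finite constant depending only on $C$ (finiteness uses $2\gamma<n-1$), so $\sup_{B_{1/2}}|r^\gamma u|\le c\sup_{B_{1/2}}|w|\le C\|u\|_{L^2(B_1)}$. For the mean value inequality one observes that $\xi^2$ is comparable to $r^{-2\gamma}$ with $0<2\gamma<n-3<n-1$, so it is a Muckenhoupt $A_2$ weight; moreover $\mu$ is Ahlfors $n$-regular and supports a Poincar\'e inequality (this reduces, in the cone coordinates $(r,\theta,y)$, to the Poincar\'e inequality on the fixed smooth compact link $\Sigma$), so the weighted measure $\nu$ is doubling and also supports a Poincar\'e inequality, and Moser's iteration scheme goes through with constants depending only on $n$, on $C$, and on the $A_2$ constant of $r^{-2\gamma}$.

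The step I expect to be the main obstacle is this last one: checking that the weighted Moser iteration can be run \emph{uniformly} despite the conical singularity of $C\times\mathbf{R}$ along $\{r=0\}$. Away from the singular line this is routine, since rescaling an annular region $\{r\sim\sigma\}$ by $\sigma^{-1}$ produces pieces of $C\times\mathbf{R}$ at unit scale with uniformly bounded geometry (as $C$ is a cone, smooth off $0$) on which the rescaled weight is uniformly comparable to a constant. Near $\{r=0\}$ one must instead exploit the product-of-cone structure directly: one has $d\nu=r^{\,n-2-2\gamma}\,dr\,d\mathrm{vol}_\Sigma\,dy$, and the Sobolev and Poincar\'e inequalities feeding the iteration follow from the corresponding inequalities on $\Sigma$ by integrating in $r$ and $y$, the exponent condition $2\gamma<n-3$ being exactly what makes the relevant weighted integrals converge. (Equivalently, one may flatten a neighbourhood of $\{r=0\}$ by a bi-Lipschitz map and invoke the degenerate De Giorgi--Nash--Moser theory of Fabes--Kenig--Serapioni for $A_2$-weighted elliptic operators.) This is the part carried out in detail in \cite{Sz20}, on which we rely here.
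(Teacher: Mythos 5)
Your outline --- ground-state transform by the positive homogeneous Jacobi field $\xi=r^{-\gamma}\phi_1$ on $C\times\mathbf{R}$, reduction to the weighted Laplace equation $\mathrm{div}(\xi^2\nabla w)=0$ with $w=u/\xi$, and weighted De Giorgi--Nash--Moser iteration on $(C\times\mathbf{R},\,\xi^2\,d\mu)$ --- is the standard and correct route to this mean-value inequality, and it is in substance the argument the paper invokes from \cite{Sz20}. The exponent bookkeeping is consistent throughout: the strict-stability bound $2\gamma<n-3$ makes $\int r^{-2}u^2\,d\mu$ locally finite (so Caccioppoli gives $w\in W^{1,2}_{\mathrm{loc}}(d\nu)$), gives zero $\nu$-capacity of $\{r=0\}$, keeps $\nu(B_{3/4})$ finite, and places $r^{-2\gamma}$ within the $A_2$ range $|{-2\gamma}|<n-1$ relative to the codimension-$(n-1)$ singular ray.

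One correction is needed. Your parenthetical suggestion to flatten a neighbourhood of $\{r=0\}$ in $C\times\mathbf{R}$ to a ball in $\mathbf{R}^n$ by a bi-Lipschitz map, and then cite Fabes--Kenig--Serapioni verbatim, does not work: such a map would force the link $\Sigma$ of $C$ to be homeomorphic to $S^{n-2}$, which fails already for $\Sigma=S^p\times S^q$ --- precisely the cones appearing in Theorem~\ref{thm:T3}. The iteration has to be run intrinsically on the metric measure space $(C\times\mathbf{R},\,d,\,\nu)$, as your primary argument does, after establishing doubling and the $(2,2)$-Poincar\'e inequality for $\nu$ directly; those in turn follow by separating variables in $(r,\theta,y)$, the $A_2$ condition on $r^{-2\gamma}$, and the Poincar\'e inequality on the smooth compact link $\Sigma$ in the angular directions.
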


We will also need the following $L^2$ three annulus lemma, due to
Simon~\cite[Lemma 2]{Simon83} (see also Lemma 3.3 in \cite{Simon85}).
As in \cite{Sz20} this holds on the singular
cone $C\times \mathbf{R}$ as well, since it is a consequence of
spectral decomposition on the link and our assumption that $r^\gamma u$ is
locally bounded ensures that $u$ is in $W^{1,2}$ on the link. For a given
$\rho_0 > 0$ let us use $\Vert u\Vert_{\rho_0, i}$ to denote the following
$L^2$-norm on an annulus: 
  \[ \label{eq:L2ann} \Vert u\Vert_{\rho_0, i}^2 = \int_{(C\times \mathbf{R})\cap
      (B_{\rho_0^{i}}\setminus B_{\rho_0^{i+1}})} |u|^2 \rho^{-n}, \]
  in terms of $n=\dim C\times\mathbf{R}$. Note that for a homogeneous
  degree zero function $u$ the norm $\Vert u\Vert_{\rho_0, i}$ is independent of
  $i$.

  \begin{lemma}\label{lem:L23annulus}
    Given $d\in\mathbf{R}$, there are small $\alpha_0' > \alpha_0 > 0$ and
    $\rho_0 > 0$ satisfying the following. Let $u$ be a Jacobi field
    on the cone $C\times \mathbf{R}$, defined in the annulus
    $B_1\setminus B_{\rho_0^3}$, such that $r^\gamma u \in
    L^\infty$. Then we have:
  \begin{itemize}
  \item[(i)]  If $\Vert u\Vert_{\rho_0,1} \geq \rho_0^{d-\alpha_0} \Vert u\Vert_{\rho_0,0}$, then $\Vert u\Vert_{\rho_0,2} \geq \rho_0^{d-\alpha_0'} \Vert u\Vert_{\rho_0,1}$.
  \item[(ii)] If $\Vert u\Vert_{\rho_0,1} \geq \rho_0^{-d-\alpha_0} \Vert u\Vert_{\rho_0,2}$, then $\Vert u\Vert_{\rho_0,0}\geq \rho_0^{-d-\alpha_0'} \Vert u\Vert_{\rho_0,1}$. 
  \end{itemize}
If in addition $u$ has no degree $d$ component then the conclusion of either (i) or (ii) must hold. 
\end{lemma}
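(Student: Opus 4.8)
The plan is to separate variables on the link $\Lambda = (C\times\mathbf{R})\cap\partial B_1$ and reduce the statement, mode by mode, to Simon's scalar three annulus lemma~\cite{Simon83}, in the spirit of \cite{Sz20}. Although $\Lambda$ is singular --- with two conical singularities modelled on $C$ --- the hypothesis $r^\gamma u\in L^\infty$ forces $u$ to restrict to a $W^{1,2}$ function on every sphere $\partial B_\rho$, so that the link operator $L_\Lambda u = -\Delta_\Lambda u - a u$ (where $|A_{C\times\mathbf{R}}|^2 = \rho^{-2}a(\omega)$), taken with its Friedrichs extension, has discrete spectrum $\mu_1\le\mu_2\le\cdots\to\infty$ with $W^{1,2}$-eigenfunctions $\psi_j$, and integrations by parts on $\Lambda$ are legitimate. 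Writing $u=\sum_jc_j(\rho)\psi_j$, each $c_j$ solves the Euler equation $\rho^2c_j''+(n-1)\rho c_j'-\mu_jc_j=0$, whose indicial roots $\delta_j^\pm$ satisfy $\delta^2+(n-2)\delta-\mu_j=0$; these form a discrete set, with $\delta_j^++\delta_j^-=2-n$ for all $j$. Given $d$, which for the applications we may assume is not equal to $1-\tfrac n2$, I fix $\eta>0$ so small that the interval $(d-2\eta,d)\cup(d,d+2\eta)$ contains no $\delta_j^\pm$ and does not contain $1-\tfrac n2$, and set $\alpha_0=\eta/3$, $\alpha_0'=\eta/2$.

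Next I record that, by orthonormality of the $\psi_j$, $\|u\|_{\rho_0,i}^2=\sum_j\int_{\rho_0^{i+1}}^{\rho_0^i}c_j(\rho)^2\,\rho^{-1}\,d\rho$, and that expanding $c_j(\rho)^2=(a_j\rho^{\delta_j^+}+b_j\rho^{\delta_j^-})^2$ shows
\[ \|u\|_{\rho_0,i}^2=\sum_k\tilde c_k\,\rho_0^{2\delta_ki} \]
is a sum over the ``branch degrees'' $\delta_k\in\{\delta_j^\pm\}$ with coefficients $\tilde c_k\ge0$, plus a single extra term $\hat c\,\rho_0^{(2-n)i}$ of fixed degree $1-\tfrac n2$ whose coefficient $\hat c=2\sum_ja_jb_j$ may be negative but is controlled by Cauchy--Schwarz. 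This lone term, and the logarithmic factors that appear when an indicial equation has a double root, perturb the estimates below only by an arbitrarily small power of $\rho_0$ and are absorbed, so I suppress them. Grouping the degrees $\delta_k$ by their position relative to $d$ and using the gap, I write $\|u\|_{\rho_0,i}^2=L_i+M_i+H_i$, collecting respectively the degrees $\le d-\eta$, the degree exactly $d$, and the degrees $\ge d+\eta$. Then $L_{i+1}\ge\rho_0^{2(d-\eta)}L_i$, $M_{i+1}=\rho_0^{2d}M_i$, $H_{i+1}\le\rho_0^{2(d+\eta)}H_i$, and in particular $M_1+H_1\le\rho_0^{2d}\|u\|_{\rho_0,0}^2$ and $H_0\ge\rho_0^{-2(d+\eta)}H_1$.

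For (i): from the hypothesis $\|u\|_{\rho_0,1}^2\ge\rho_0^{2(d-\alpha_0)}\|u\|_{\rho_0,0}^2$ and the bound $M_1+H_1\le\rho_0^{2d}\|u\|_{\rho_0,0}^2$ I deduce $L_1\ge(\rho_0^{-2\alpha_0}-1)(M_1+H_1)$, hence $L_1\ge(1-\rho_0^{2\alpha_0})\|u\|_{\rho_0,1}^2$, and therefore
\[ \|u\|_{\rho_0,2}^2\ge L_2\ge\rho_0^{2(d-\eta)}L_1\ge\rho_0^{2(d-\eta)}(1-\rho_0^{2\alpha_0})\,\|u\|_{\rho_0,1}^2 . \]
Since $1-\rho_0^{2\alpha_0}\to1$ while $\rho_0^{2(\eta-\alpha_0')}\to0$ as $\rho_0\to0$, choosing $\rho_0$ small so that $1-\rho_0^{2\alpha_0}\ge\rho_0^{2(\eta-\alpha_0')}$ gives $\|u\|_{\rho_0,2}^2\ge\rho_0^{2(d-\alpha_0')}\|u\|_{\rho_0,1}^2$, which is (i). Part (ii) is the mirror image, exchanging $L$ with $H$ and the annuli $0$ and $2$ (alternatively it follows from (i) by the Kelvin inversion $\rho\mapsto\rho_0^3/\rho$, which swaps $\delta_j^+\leftrightarrow\delta_j^-$ and sends $d$ to $2-n-d$). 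Finally, if $u$ has no degree-$d$ component then $M_i\equiv0$, and I argue by dichotomy on $\|u\|_{\rho_0,1}^2=L_1+H_1$: if $L_1\ge H_1$ then $\|u\|_{\rho_0,2}^2\ge L_2\ge\rho_0^{2(d-\eta)}L_1\ge\tfrac12\rho_0^{2(d-\eta)}\|u\|_{\rho_0,1}^2$, which for $\rho_0$ small is the conclusion of (i); if $H_1\ge L_1$, running the same estimate backwards via $H_0\ge\rho_0^{-2(d+\eta)}H_1$ yields the conclusion of (ii).

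I expect the main difficulty to be the reduction rather than these elementary inequalities: making the spectral decomposition rigorous on the singular link $\Lambda$, and keeping track of the non-orthogonal within-mode cross term of degree $1-\tfrac n2$ together with the logarithmic resonances. This is exactly where the hypothesis $r^\gamma u\in L^\infty$ is used and where the argument parallels \cite{Sz20}; once it is in place the three annulus estimates above are straightforward and, crucially for the later quantitative arguments, come with explicit constants $\alpha_0,\alpha_0',\rho_0$.
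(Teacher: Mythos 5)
The overall strategy you choose — separate variables on the link, reduce to the scalar Euler ODE, and run elementary power-growth inequalities — is the same circle of ideas that Simon's lemma rests on, and it is essentially the only way to exploit the hypothesis $r^{\gamma}u\in L^{\infty}$. The difficulty is in the execution: your treatment of the within-mode cross term is not a small technicality to be ``suppressed''; it is where the argument as written breaks.

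Concretely, on the annulus each mode contributes $c_j(\rho)=a_j\rho^{\delta_j^+}+b_j\rho^{\delta_j^-}$, and
\[
I_i^j:=\int_{\rho_0^{i+1}}^{\rho_0^i}c_j^2\,\rho^{-1}\,d\rho
= a_j^2 K(\delta_j^+)\rho_0^{2\delta_j^+ i}+2a_jb_j K\!\left(\tfrac{2-n}{2}\right)\rho_0^{(2-n)i}+b_j^2K(\delta_j^-)\rho_0^{2\delta_j^- i}
\]
with $K(\alpha)=(1-\rho_0^{2\alpha})/(2\alpha)>0$. Your decomposition $\|u\|^2_{\rho_0,i}=L_i+M_i+H_i$ collects only the two diagonal pieces into the positive-coefficient sums $L_i,M_i,H_i$, so the equality is actually $\|u\|^2_{\rho_0,i}=L_i+M_i+H_i+X_i$ with $X_i=\hat c\,K(\tfrac{2-n}{2})\rho_0^{(2-n)i}$, and the very first of your working inequalities, $M_1+H_1\le\rho_0^{2d}\|u\|^2_{\rho_0,0}$, already requires $L_0+X_0\ge 0$, which is not available since $\hat c$ may be negative. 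The Cauchy--Schwarz bound $|\hat c|\le\sum_j(a_j^2+b_j^2)$ only shows $|X_i|$ is comparable to the diagonal terms, not that it is a lower-order perturbation; in general it is not.

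More to the point, the per-mode monotonicity your scheme implicitly relies on is false. Take a single mode with $\delta^+=-0.1$, $\delta^-=-0.2$, $\rho_0=\tfrac12$, $a=1$, $b=-\tfrac12$. Then a direct computation gives
\[
I_0\approx 0.173,\qquad I_1\approx 0.169,\qquad I_2\approx 0.161,
\]
so $I_1/I_0\approx 0.976<1$ while $\rho_0^{2\delta^+}\approx 1.149$. Thus even though both indicial roots lie below $d-\eta$ (say $d=0$), the ``low'' contribution decreases instead of growing at least like $\rho_0^{2(d-\eta)}$, because $c$ nearly vanishes on part of the annulus. There is no choice of $\eta$ or $\rho_0$ that makes the claimed $L_{i+1}\ge\rho_0^{2(d-\eta)}L_i$, with $L_i$ interpreted as the full nonnegative mode contribution, hold uniformly; and if $L_i$ means only the diagonal piece then the identity $\|u\|^2_{\rho_0,i}=L_i+M_i+H_i$ is false. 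The same problem infects the dichotomy at the end (where you use $\|u\|^2_{\rho_0,2}\ge L_2$, i.e.\ $H_2+X_2\ge 0$), so the ``no degree-$d$ component'' alternative is not established either.

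What saves Simon's lemma is that the statement is not a per-mode convexity assertion but a statement about the total sequence $\|u\|^2_{\rho_0,i}$ with the margin $\alpha_0'>\alpha_0$ built in, and the mechanism that exploits the spectral gap without running into the cross term is a normalization and compactness argument, not a termwise inequality: one assumes the implication fails along a sequence, normalizes on the middle annulus, extracts a limiting Jacobi field by elliptic estimates (this is where $r^{\gamma}u\in L^{\infty}$ and the $W^{1,2}$ link regularity enter), and shows the limit would be forced to have a component of degree strictly in the gap $(d-\alpha_0',d-\alpha_0)$ or $(d+\alpha_0,d+\alpha_0')$, which is impossible. The spectral decomposition is used to analyse the limit, not to prove positivity of a mode-by-mode remainder. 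To repair your write-up you would need either to supply that compactness step, or to prove a genuine discrete log-convexity estimate for the full sum $\sum_jI_i^j$ (not each $I_i^j$ separately), and the latter is exactly what the cross terms obstruct.

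Everything in your first paragraph (Friedrichs extension on the singular link, discreteness of the spectrum, the indicial roots summing to $2-n$, the choice of $\eta$ using the spectral gap, the Kelvin inversion for part (ii)) is fine and is the right setup; the gap is specifically in the claim that the single term of degree $1-\tfrac n2$, and the logarithmic resonances, can be absorbed as an $o(1)$ correction.
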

Note that in our notation \cite[Lemma
3.3]{Simon85} deals with the situation where $d=0$ and $\alpha_0 =
\alpha_0' = \alpha$, however the same proof leads to this slightly
more general result. 

Suppose now that $u$ is a Jacobi field for which $r^\gamma u$ is bounded on the ball
$B_1(0)\subset C\times\mathbf{R}$, not just on an annulus.
From Simon~\cite{Simon94} it follows that such a Jacobi field can be expanded as a sum
\[ \label{eq:usum1} u = \sum_{i > 0,\, k,l \geq 0} a_{i,k,l} r^{2k}
  y^l r^{-\gamma_i}\phi_i, \]
in terms of the eigenfunctions of $-L_\Sigma$. 
We will need the following immediate consequence of this for the case of
quadratic cones $C = C(S^p\times S^q)$, using that the only
$O(p+1)\times O(q+1)$-invariant eigenfunction on $S^p\times S^q$ is
the constant function. 
\begin{lemma}\label{lem:CinvJ}
  Let $u$ be an $O(p+1)\times O(q+1)$-invariant Jacobi
  field on $B_1\subset C\times \mathbf{R}$ with $r^\gamma u \in L^\infty$, where $C =
  C(S^p\times S^q)$. Then 
  \[ u = \sum_{k,\l\geq 0} a_{k,\l} r^{2k-\gamma} y^\l. \]
\end{lemma}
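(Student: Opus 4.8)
The plan is to invoke Simon's expansion \eqref{eq:usum1} and then use the symmetry of $u$ to eliminate every contribution except that of the constant eigenfunction on the link. Since $r^\gamma u$ is assumed bounded on all of $B_1$, and not merely on an annulus, \cite{Simon94} applies and yields
\[ u = \sum_{i > 0,\, k,l\geq 0} a_{i,k,l}\, r^{2k} y^l r^{-\gamma_i}\phi_i \]
with uniquely determined coefficients $a_{i,k,l}$, where the $\phi_i$ are the eigenfunctions of $-L_\Sigma$ on the link $\Sigma = C\cap\partial B_1$, which here is a scaling of $S^p\times S^q$. It therefore suffices to show that $a_{i,k,l} = 0$ whenever $\phi_i$ is not the constant eigenfunction.

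To this end I would let $G = O(p+1)\times O(q+1)$ act on $\mathbf{R}^{p+1}\times\mathbf{R}^{q+1}\times\mathbf{R}$ in the obvious way, fixing the $y$-coordinate. This action is by isometries, preserves $C\times\mathbf{R}$ as well as the functions $r$ and $y$, and commutes with $L_{C\times\mathbf{R}}$, hence also with $-L_\Sigma$; in particular $G$ acts on each eigenspace $E_{\lambda_i}$. Averaging \eqref{eq:usum1} over $G$ with respect to normalized Haar measure --- legitimate since $G$ is compact and acts by isometries, so the averaging commutes with the sum --- and using that $r^{2k}$, $y^l$ are $G$-invariant together with the $G$-invariance of $u$, we obtain
\[ u = \int_G g\cdot u\, dg = \sum_{i > 0,\, k,l\geq 0} a_{i,k,l}\, r^{2k} y^l r^{-\gamma_i}\Big(\int_G g\cdot\phi_i\, dg\Big). \]
Now $\int_G g\cdot\phi_i\, dg$ lies in $E_{\lambda_i}$ and is $G$-invariant, and since $O(p+1)$ and $O(q+1)$ act transitively on $S^p$ and $S^q$, any $G$-invariant function on $\Sigma$ is constant. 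The constant function satisfies $-L_\Sigma 1 = -(|A_\Sigma|^2 + n - 2)\, 1$, so it spans the first eigenspace $E_{\lambda_1}$, which is simple --- consistent with the standing assumption $\phi_1 > 0$, and with $\gamma = \gamma_1$. Hence $\int_G g\cdot\phi_i\, dg$ vanishes for $i\geq 2$ and equals $\phi_1$ for $i = 1$, so $u = \sum_{k,l\geq 0} a_{1,k,l}\, r^{2k} y^l r^{-\gamma}\phi_1$; absorbing the positive constant $\phi_1$ into the coefficients gives $u = \sum_{k,l\geq 0} a_{k,l}\, r^{2k-\gamma} y^l$.

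I do not expect a serious obstacle: the statement is essentially a bookkeeping consequence of \eqref{eq:usum1} together with the fact that the trivial $G$-isotypic component of $L^2(\Sigma)$ is spanned by $\phi_1$. The only points worth a line of care are the justification that the averaging may be applied term by term to Simon's series --- immediate from the compactness of $G$ and the isometry-invariance of the relevant norms, or, alternatively, one avoids series manipulation entirely by appealing to the uniqueness of the coefficients $a_{i,k,l}$ and decomposing $L^2(\Sigma)$ into $G$-isotypic pieces --- and pinning down that the unique $G$-invariant eigenfunction is $\phi_1$, so that the homogeneity exponent appearing is exactly $-\gamma = -\gamma_1$.
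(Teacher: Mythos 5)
Your proof is correct and follows the same route as the paper's: apply Simon's expansion \eqref{eq:usum1} and observe that the only $O(p+1)\times O(q+1)$-invariant eigenfunction on the link $S^p\times S^q$ is the constant, which is $\phi_1$. The paper dispatches this in a single sentence; your extra care with the Haar averaging and the identification of the invariant eigenfunction with $\phi_1$ is sound but not essential to the argument.
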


In general we can write such a Jacobi field $u$, with $r^\gamma u\in
L^\infty(B_1(0))$, as a sum
\[ \label{eq:uexpand2} u = \sum_{k\geq 1} c_k \rho^{-\mu_k} \Phi_k, \]
where $\Phi_k$ is the $k^{th}$ eigenfunction of
$-L_{(C\times\mathbf{R})\cap \partial B_1}$ on the link of $C\times
\mathbf{R}$, acting on $W^{1,2}$, with eigenvalue $\sigma_k$. 
We have $\mu_1 = \gamma$ and $\Phi_1
= \phi_1$ in our notation above, while $\mu_k \leq \mu_1$ for $k\geq
1$. The eigenvalues $\sigma_k$ and the
growth rates $\mu_k$ are related by an equation like \eqref{eq:gl1},
with $n$ replaced by $n+1$, and so
\[ \label{eq:muksigmak} -\mu_k = \frac{2-n}{2} + \sigma_k^{1/2}\left( 1 +
    \frac{n^2}{4\sigma_k}\right)^{1/2} = \sigma_k^{1/2} + O(1) \]
as $k\to\infty$.

Note that when $r^\gamma u$ is bounded on $B_1(0)$, then it is well
known that the function
\[ t \mapsto \ln \int_{B_{e^{-t}}(0)} |u|^2 \]
is convex in $t$, and linear only if $u$ is homogeneous. This follows
from the fact that the terms in the expansion \eqref{eq:uexpand2} with
different growth rates are $L^2$-orthogonal as used in the proof of the
following Proposition, and it is a variant of the
monotonicity of the frequency for harmonic functions. The
following is a quantitative version of this statement. 
\begin{prop}\label{prop:QL23annulus}
  Let $u$ be a Jacobi field on $B_1\subset C\times \mathbf{R}$,
  satisfying that $|r^\gamma u|$ is bounded. Let $\lambda_0\in
  (0,1/2)$. There is a constant $A > 0$ and a choice of $\lambda\in
  [\lambda_0/2, \lambda_0]$, depending only on the cone $C$ with the
  following property. Suppose that
  \[\label{eq:ua1} \Vert u\Vert_{L^2(B_1)} &\leq 2, \\
      \Vert u\Vert_{L^2(B_{e^{-2\lambda}})} &\leq \frac{1}{2}
      e^{-(n+2)\lambda}. \]
    Then we have
    \[ \label{eq:ua2} \Vert u\Vert_{L^2(B_{e^{-\lambda}})} \leq (1 - \lambda^A)
      e^{-\frac{n+2}{2}\lambda}. \]
  \end{prop}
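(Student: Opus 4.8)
The strategy is to leverage the expansion \eqref{eq:uexpand2} together with $L^2$-orthogonality of the summands with distinct growth rates, reducing \eqref{eq:ua2} to an elementary convexity estimate for a sum of exponentials, which we then make quantitative. Write $F(t) = \ln \int_{B_{e^{-t}}(0)} |u|^2 \, d\mu$. Using \eqref{eq:uexpand2} and the orthogonality of the $\Phi_k$ on the link, we have $\int_{B_{e^{-t}}} |u|^2 = \sum_k |c_k|^2 \int_0^{e^{-t}} \rho^{-2\mu_k + n - 1}\, d\rho = \sum_k b_k^2 e^{-(n - 2\mu_k)t}$ for suitable constants $b_k$ (the radial integral converges precisely because $-\mu_k \geq -\gamma > (3-n)/2$, i.e. $n - 2\mu_k > 2 > 0$). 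So $\int_{B_{e^{-t}}}|u|^2 = \sum_k b_k^2 e^{-\nu_k t}$ with exponents $\nu_k = n - 2\mu_k \geq n - 2\gamma =: \nu_1 > 0$, and $\nu_1$ is the smallest. Thus $F$ is $\ln$ of a Dirichlet series with nonnegative coefficients, hence convex, and its slope lies in $[\nu_1, \infty)$; the hypotheses \eqref{eq:ua1} translate into $F(0) \leq \ln 4$ and $F(2\lambda) \leq \ln \tfrac14 - 2(n+2)\lambda$ (recalling the weight measure is $n$-dimensional, so $B_{e^{-t}}$ has measure $\sim e^{-nt}$; here $\nu_1 = n - 2\gamma$ and the normalization $e^{-(n+2)\lambda}$ corresponds to a reference growth rate). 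The target \eqref{eq:ua2} is $F(\lambda) \leq \ln(1-\lambda^A)^2 - (n+2)\lambda$.

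The key point is the following \emph{quantitative strict convexity}: if $G(t) = \sum_k b_k^2 e^{-\nu_k t}$ is a finite-or-convergent sum with $\nu_k \geq \nu_1 > 0$ for all $k$ and $\nu_k = \nu_1$ only for $k$ in some set $S$, then for the midpoint one has $G(\lambda)^2 \leq G(0) G(2\lambda) \cdot \big(\text{defect factor}\big)$, where the defect is bounded away from $1$ unless $u$ is essentially homogeneous of rate $\mu_1 = \gamma$. Concretely, splitting $G = G_{\mathrm{hom}} + G_{\mathrm{rest}}$ where $G_{\mathrm{hom}}(t) = (\sum_{k\in S} b_k^2) e^{-\nu_1 t}$ and $G_{\mathrm{rest}}$ collects the terms with $\nu_k \geq \nu_1 + \delta$ for a spectral gap $\delta > 0$ (which exists: there are no homogeneous Jacobi fields with rate in $(3-n+\gamma, -\gamma)$, and on $B_1$ with $r^\gamma u$ bounded the relevant $\mu_k$ other than $\mu_1$ are strictly smaller, so $\nu_k - \nu_1 \geq \delta$), the AM–GM / Cauchy–Schwarz step $G(\lambda)^2 \leq G(0)G(2\lambda)$ becomes an equality only in the rank-one case, and the quantitative loss is governed by the relative size of $G_{\mathrm{rest}}$. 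The hypothesis \eqref{eq:ua1} forces that if $G_{\mathrm{hom}}$ dominated (so $u$ were close to $e^{-\nu_1 t/2}\phi_1$-behavior, i.e. $F$ nearly linear of the maximal-possible slope), then $G(2\lambda) \geq G(0) e^{-2\nu_1 \lambda} \gtrsim 1 \cdot e^{-2(n+2)\lambda}$ would be too large, contradicting the second bound in \eqref{eq:ua1} once we note $\nu_1 = n - 2\gamma < n+2$ — wait, one must check the arithmetic here: the point is that the normalizing exponent $n+2$ in \eqref{eq:ua1}–\eqref{eq:ua2} is chosen so that a genuinely homogeneous $u$ of rate $\gamma$ would \emph{saturate} the inequalities, and \eqref{eq:ua1} is precisely the hypothesis ruling out that $u$ has a large $\phi_1$-component growing at that rate; so a definite fraction of $\|u\|^2$ must come from terms with $\nu_k \geq \nu_1 + \delta$, and these contribute the gain $\lambda^A$.

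To extract the constant $A$: from the spectral gap, the non-homogeneous part of $G$ decays faster by a factor $e^{-\delta t}$, and following the standard discretized three-annulus argument (Lemma~\ref{lem:L23annulus}, applied to the Jacobi field $u$ with reference degree $d$ chosen so that $\rho_0^{d}$ matches the rate $e^{-\lambda}$), one obtains that over the scale $[e^{-2\lambda}, 1]$ the ratio $\|u\|_{L^2(B_{e^{-\lambda}})} / (\|u\|_{L^2(B_1)}^{1/2}\|u\|_{L^2(B_{e^{-2\lambda})}}^{1/2})$ is bounded by $1 - c\lambda^A$ for $A$ depending on $\delta$ and $n$; choosing $\lambda \in [\lambda_0/2, \lambda_0]$ to avoid the finitely many ``bad'' scales where the three-annulus dichotomy switches (a pigeonhole over a window of scales of length $\sim \lambda_0$) is what allows us to get a \emph{fixed} $\lambda$ rather than a statement for generic $\lambda$. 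I expect the main obstacle to be exactly this last point — converting the clean asymptotic/convexity picture into a statement holding for a \emph{specific} $\lambda$ in the prescribed dyadic window with an \emph{effective} power $\lambda^A$ — since one must track how the loss degenerates as $\lambda_0 \to 0$ and rule out near-cancellations among the sub-leading modes, which is where the quantitative form of Lemma~\ref{lem:L23annulus} (with $\alpha_0 < \alpha_0'$) does the real work. The convergence of the Dirichlet series and the justification of termwise integration (uniform convergence of \eqref{eq:uexpand2} in $W^{1,2}$ on annuli away from $0$, via Lemma~\ref{lem:L2Linfty}) are routine but should be stated.
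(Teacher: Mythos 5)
Your setup is correct and matches the paper's starting point: expand $u=\sum_k c_k\rho^{-\mu_k}\Phi_k$, use orthogonality to get $\int_{B_{e^{-t}}}|u|^2=\sum_k \tilde c_k^2 e^{-(n-2\mu_k)t}$, and treat this as a Dirichlet series. But the core of your argument — splitting $G=G_{\mathrm{hom}}+G_{\mathrm{rest}}$ with $G_{\mathrm{hom}}$ the $\nu_1=n-2\gamma$ mode and $G_{\mathrm{rest}}$ the modes beyond a fixed spectral gap $\delta$, then claiming the hypotheses force $G_{\mathrm{rest}}$ to dominate — misidentifies which mode is dangerous. The reference rate built into \eqref{eq:ua1} is not $\gamma$: a homogeneous field of growth rate $-\mu$ saturates the three-annulus inequality precisely when $\tilde c = 2$ and $-\mu = \lambda^{-1}\ln 2 + 1$, i.e.\ at a \emph{very large} growth rate of order $\lambda^{-1}$, not the smallest one. (You can check: with $\nu_k = n+2+2\lambda^{-1}\ln 2$ and $\tilde c_k^2 = 4$, both inequalities in \eqref{eq:ua1} hold with equality, yet $\|u\|_{L^2(B_{e^{-\lambda}})} = e^{-\frac{n+2}{2}\lambda}$, violating \eqref{eq:ua2}.) A fixed spectral gap above $\gamma$ gives no control on this: for a poorly chosen $\lambda$ there may be a $\mu_k$ arbitrarily close to $-\lambda^{-1}\ln 2 - 1$, and a Jacobi field concentrated on that single eigenmode satisfies the hypotheses but not the conclusion. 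Your own computation a couple of lines above (the "wait, one must check the arithmetic" aside) actually shows the pure $\gamma$-mode cannot saturate — it never satisfies \eqref{eq:ua1} with comparable norms at the two scales — so the heuristic that \eqref{eq:ua1} "rules out a large $\phi_1$-component" is pointed in the wrong direction.

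The missing idea is the Weyl-law pigeonhole for the choice of $\lambda$. One needs $\lvert\lambda^{-1}\ln 2 + 1 + \mu_k\rvert$ bounded below uniformly in $k$. There are infinitely many $\mu_k$, so "finitely many bad scales" is not available; instead the paper counts $\#\{-\mu_k\le B\}\lesssim B^{n-1}$ via the Weyl law on the link, and chooses $\lambda\in[\lambda_0/2,\lambda_0]$ so that the gap is $\gtrsim \lambda^{n-2}$. The gain then comes not from Cauchy–Schwarz plus a spectral gap argument but from adding the two hypotheses in \eqref{eq:ua1} (after weighting) to produce a $\cosh$ factor, whose lower bound $1+c\lambda^{2n-2}$ is exactly the separation just arranged. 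Your appeal to Lemma~\ref{lem:L23annulus} as the source of the $1-c\lambda^A$ factor also doesn't land: that lemma provides a dichotomy at a fixed scale $\rho_0$ for $u$ with no degree-$d$ component, whereas here the effective degree $\lambda^{-1}\ln 2 + 1$ varies with $\lambda$ and $u$ is an arbitrary admissible Jacobi field; the quantitative gap must be produced directly and its size tied to $\lambda$, which is what the Weyl count and the $\cosh$ estimate accomplish.
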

Note that if we had equality in \eqref{eq:ua1}, it would say that
passing from $B_{e^{-2\lambda}}$ to $B_1$, the function $u$ has growth
rate $\log_{e^{2\lambda}}(4e^{(n+2)\lambda}) \sim \lambda^{-1}$. The
result is saying that by choosing $\lambda$ appropriately, we can
ensure that this growth rate is separated from the possible growth
rates of Jacobi fields sufficiently to deduce the strict convexity
estimate \eqref{eq:ua2}. The particular constants are chosen to work
well in our later application of the result. We will argue somewhat
similarly to the proof of Lemma 3.3 in Simon~\cite{Simon85}.
\begin{proof}
  As above, we can write
  \[ u = \sum_{k\geq 1} c_k \rho^{-\mu_k} \Phi_k. \]
  The Weyl law for the eigenvalues $\sigma_k$ of $-L$ on the link of $C\times
  \mathbf{R}$ implies that there is a constant $C_1$ such that for any $B >
  1$ we have
  \[ \#\{ \sigma_k \,|\, \sigma_k \leq B\} \leq C_1B^{\frac{n-1}{2}}, \]
  and so using \eqref{eq:muksigmak}, for a possibly larger $C_1$ we have
  \[ \#\{ -\mu_k \,|\, -\mu_k \leq B\} \leq C_1B^{{n-1}}. \]
  In particular given $\lambda_0\in (0,1/2)$, we can choose
  $\lambda\in [\lambda_0/2, \lambda_0]$ such that
  \[ \label{eq:minmuk}
    \min_k\left\{ \left|\lambda^{-1}\ln 2 + 1 + \mu_k\right| \right\} \geq
    \frac{C_1^{-1}\lambda^{-1}}{(\lambda^{-1})^{n-1}} =
    C_1^{-1}\lambda^{n-2}. \]

  Note that for any $s > 0$ we have
  \[ \label{eq:uL21} \Vert u\Vert^2_{L^2(B_s)} = V \sum_k c_k^2 \int_0^s
    \rho^{-2\mu_k+n-1}\, d\rho = \sum_k V \frac{c_k^2}{-2\mu_k + n}
    s^{-2\mu_k + n}, \]
  where $V$ is the volume of the link of $C\times \mathbf{R}$. We can
  write this as
  \[ \label{eq:uL30} \Vert u\Vert^2_{L^2(B_s)} = \sum_k \tilde{c}_k^2 s^{-2\mu_k
      +n}. \]
  The assumptions \eqref{eq:ua1} can be written as
  \[ \sum_k \tilde{c}_k^2 &\leq 4, \\
    \sum_k \tilde{c}_k^2 e^{-2\lambda(-2\mu_k+n)} &\leq \frac{1}{4}
      e^{-2(n+2)\lambda}. \]
    It follows that
    \[ \sum_k \tilde{c}_k^2 e^{-\ln 4 - (n+2)\lambda} &\leq
      e^{-(n+2)\lambda}, \\
      \sum_k \tilde{c}_k^2 e^{-2\lambda(-2\mu_k+n) + \ln 4 +
        (n+2)\lambda} &\leq e^{-(n+2)\lambda}, \]
    and so, adding the two inequalities,
    \[ \label{eq:sumbound} \sum_k \tilde{c}_k^2 e^{-\lambda(-2\mu_k + n)}
      \cosh\Big(\lambda(-2\mu_k + n)-\ln 4 - (n+2)\lambda\Big) \leq
      e^{-(n+2)\lambda}. \]
    We have
    \[ |\lambda(-2\mu_k + n) - \ln 4 - (n+2)\lambda| =  2\lambda
      \left| \lambda^{-1}\ln 2 + 1 + \mu_k\right| \geq C_1^{-1}
      \lambda^{n-1}, \]
    using \eqref{eq:minmuk}, which implies that
    \[ \cosh\Big(\lambda(-2\mu_k + n)-\ln 4 - (n+2)\lambda\Big) \geq 1
      + C_1^{-1} \lambda^{2n-2}, \]
    increasing $C_1$ if necessary. From \eqref{eq:sumbound} we then have
    \[ \sum_k \tilde{c}_k^2 e^{-\lambda(-2\mu_k + n)} \leq (1 -
      \lambda^A)^2 e^{-(n+2)\lambda}, \]
    if $A$ is chosen sufficiently large, using that $\lambda <
    1/2$. From \eqref{eq:uL30} we get the required result. 
  \end{proof}

  In the rest of this section we will construct hypersurfaces close to
  $C\times\mathbf{R}$ with negative mean curvature, to be used
  as barriers for minimal hypersurfaces later on. The main result is a quantitative version and
  generalization of
  \cite[Proposition 5.8]{Sz20} to general strictly stable and strictly
  minimizing cones $C$ rather than just the Simons cone.
  We first need the following, which is essentially
  identical to Lemma 5.7 in \cite{Sz20}. The basic input in this
  result is that for $a > -\gamma$ we have $L_C(r^a \phi_1) = c_a
  r^{a-2} \phi_1$ for some constant $c_a > 0$. 
\begin{prop}\label{prop:Fadefn}
  For $a > -\gamma$ there are constants $C_a > 0$ and functions
  $F_{\pm,a}$ on $H_{\pm}$ satisfying the following. 
  $L_{H_\pm} F_{\pm,a} > C_a^{-1} r^{a-2}$, $|\nabla^i F_{\pm,a}| \leq C_a
  r^{a-i}$ for $i\leq 3$, and $F_{\pm,a} = r^a\phi_1$ for
  sufficiently large $r$, where we view $H_{\pm}$ as graphs over $C$ to
  define the function $\phi_1$ on $H_{\pm}$. We extend $F_{\pm,a}$ to
  a function $F_a:\mathbf{R}^n\setminus\{0\} \to \mathbf{R}$
  which is homogeneous with degree $a$.  More precisely we set
  \[ F_a(x) = \begin{cases} \lambda^{a} F_{+,a}(\lambda^{-1} x), &\text{ if }x\in
      \lambda H_+,\\
      r^a\phi_1, &\text{ if }x\in C\setminus\{0\},\\
      \lambda^{a}F_{-,a}(\lambda^{-1} x), &\text{ if }x\in \lambda
      H_-, \end{cases} \]
where $\lambda > 0$. 
  Note that $F_a$ constructed in this way is smooth away from the
  origin and its restriction to any leaf of the foliation satisfies
  $L_{H(t)} F_a > C_a^{-1} r^{a-2}$.  
\end{prop}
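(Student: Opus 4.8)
The plan is to build $F_{\pm,a}$ by transplanting $r^a\phi_1$ from $C$ to $H_\pm$ near infinity, completing it over the compact core of $H_\pm$ using strict stability, and then obtaining $F_a$ by the scaling prescribed in the statement. \emph{Step 1 (the cone computation).} On $C$, writing $L_C=\Delta_C+|A_C|^2$ and separating variables $r^a\phi_1(\omega)$, a direct computation using $-L_\Sigma\phi_1=\lambda_1\phi_1$ gives
\[ L_C(r^a\phi_1)=\bigl(a^2+(n-3)a-(n-2+\lambda_1)\bigr)\,r^{a-2}\phi_1=:c_a\,r^{a-2}\phi_1. \]
The polynomial $a\mapsto a^2+(n-3)a-(n-2+\lambda_1)$ is exactly the one appearing in \eqref{eq:gl1} with $i=1$, so its roots are $-\gamma$ and $\gamma+3-n$; the strict stability condition $\gamma<(n-3)/2$ makes $-\gamma$ the larger root, whence $c_a>0$ for every $a>-\gamma$. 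This is where strict stability enters, and it lets us fix $C_a$ so that $c_a\inf_\Sigma\phi_1\ge 2C_a^{-1}$.

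\emph{Step 2 (transplant near infinity).} On the region $\{r>R_1\}$ where $H_\pm$ is the graph over $C$ of $\Psi_\pm=\pm r^{-\gamma}\phi_1+v_\pm$, define $F_{\pm,a}$ to be $r^a\phi_1$ pulled back along this graph, so that $F_{\pm,a}=r^a\phi_1$ there by construction, and $|\nabla^i F_{\pm,a}|\le C r^{a-i}$ for $i\le 3$ follows by differentiating and applying scaled Schauder estimates exactly as in the proof of Proposition~\ref{prop:Hestimates}. Expanding the mean curvature operator on graphs over $C$ as in \eqref{eq:mXQ}, the minimality of $H_\pm$ and the decay estimates $|\nabla^i\Psi_\pm|=O(r^{-\gamma-i})$ of Proposition~\ref{prop:Hestimates} (and $\gamma>0$) give
\[ L_{H_\pm}F_{\pm,a}=L_C(r^a\phi_1)+O(r^{a-2-\delta})=c_a r^{a-2}\phi_1+O(r^{a-2-\delta}) \]
for some $\delta>0$, hence $L_{H_\pm}F_{\pm,a}\ge C_a^{-1}r^{a-2}$ once $r\ge R_0$ with $R_0$ large.

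\emph{Step 3 (completion over the core).} It remains to extend $F_{\pm,a}$ from $\{r\ge 2R_0\}$, where it is $r^a\phi_1$, to all of $H_\pm$, smoothly and with $L_{H_\pm}$ still strictly positive; on the core $\{r\le 2R_0\}\cap H_\pm$ the coordinate $r$ is bounded above and below, so there the target bound $L_{H_\pm}F_{\pm,a}>C_a^{-1}r^{a-2}$ only requires a definite positive lower bound. The structural input is that $H_\pm$, being a leaf of the minimal foliation of $\mathbf{R}^n$ (equivalently, carrying the positive Jacobi field $\Phi_\pm$ of Proposition~\ref{prop:Hestimates}), is strictly stable, so $-L_{H_\pm}$ has positive first Dirichlet eigenvalue on every bounded domain. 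Using this one produces on $\{r<3R_0\}\cap H_\pm$ a positive function with strictly positive $L_{H_\pm}$, and then modifies it by a correction carrying the prescribed $3$-jet of $r^a\phi_1$ along $\{r=2R_0\}$, using the \emph{fixed} positive lower bound $L_{H_\pm}(r^a\phi_1)\ge C_a^{-1}r^{a-2}>0$ of Step~2 as slack to keep $L_{H_\pm}$ positive through the transition. I expect this gluing — staying smooth, keeping the sign of $L_{H_\pm}$ under control, and re-matching $r^a\phi_1$ exactly for large $r$ — to be the main obstacle; it is carried out essentially as in \cite[Lemma 5.7]{Sz20}.

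\emph{Step 4 (homogeneous extension).} The function $F_a$ is well defined on $\mathbf{R}^n\setminus\{0\}$ because the scalings of $H_\pm$ together with $C$ foliate $\mathbf{R}^n$, and it is homogeneous of degree $a$ by construction. It is smooth away from $0$: the foliation is smooth there and $F_{\pm,a}$ is smooth on $H_\pm$, while near $C$ every leaf lies in the graph-over-$C$ regime on which $F_a$ agrees with the pull-back of $r^a\phi_1$ under the smooth projection to $C$ determined by the foliation, so nothing bad happens across $C$. Finally, the scaling identity $L_{\lambda S}\bigl(\lambda^a u(\lambda^{-1}\cdot)\bigr)(x)=\lambda^{a-2}(L_S u)(\lambda^{-1}x)$ together with the homogeneity of $r^{a-2}$ converts the bound $L_{H_\pm}F_{\pm,a}>C_a^{-1}r^{a-2}$ into $L_{H(t)}F_a>C_a^{-1}r^{a-2}$ on each leaf $H(t)=|t|^{1/(\gamma+1)}H_\pm$, while on $C$ itself the bound is immediate from Step~1.
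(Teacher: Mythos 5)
Your proposal is correct and follows essentially the same approach as the paper, whose proof consists of noting the key identity $L_C(r^a\phi_1)=c_a\,r^{a-2}\phi_1$ with $c_a>0$ for $a>-\gamma$ and deferring the construction to \cite[Lemma 5.7]{Sz20}. Your Step~1 verifies exactly this positivity from the indicial roots and strict stability, and Steps~2--4 fill in the transplant near infinity, the gluing over the compact core (which you correctly defer to the cited lemma), and the homogeneous extension, matching what that lemma does.
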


The following will be our main tool for constructing barrier surfaces
later on. 
\begin{prop}\label{prop:barrier10}
  There is a large odd integer $p$ and a constant $Q>0$ depending on
  the cone $C$, with the following property.
 Let $f:(a,b)\to\mathbf{R}$ be a $C^3$ function, satisfying
  $|f|_{C^3}\leq K$, for some $K > Q$. Then for any $\epsilon < Q^{-1}$ there is an oriented
  hypersurface $X_\epsilon$ defined in the region where $r < K^{-Q^2}$
  and $y\in (a,b)$, satisfying:
  \begin{itemize}
  \item[(i)] $X_\epsilon$ is $C^2$, with negative mean curvature and
    no boundary in the region $0 < r < K^{-Q}$, $y\in (a,b)$. 
  \item[(ii)] At points of $X_\epsilon$ where $r=0$, the tangent cone
    of $X_\epsilon$ is the graph of $-\epsilon r$ over
    $C\times\mathbf{R}$.
  \item[(iii)] The $X_\epsilon$ vary continuously, and
    in each $y$-slice for $y\in (a,b)$, $X_\epsilon$ lies
    between the hypersurfaces
    \[ H( \epsilon f(y)^p -\epsilon) \text{ and } H(\epsilon f(y)^p +
      \epsilon). \]
  \end{itemize}
  In particular if $V$ is a stationary varifold in the region $r <
  K^{-Q^2}$ and the support of $V$ intersects $X_\epsilon$, then near
  the intersection point $V$ cannot lie on the negative side of
  $X_\epsilon$.
\end{prop}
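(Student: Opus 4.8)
The plan is to construct $X_\epsilon$ by interpolating between leaves of the foliation $H(t)$, using the function $t = \epsilon f(y)^p - \epsilon + \epsilon g(r,y)$ for a suitable correction $g$, and then checking the mean curvature sign via the barrier function $F_a$ from Proposition~\ref{prop:Fadefn}.

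\textbf{Step 1: Setting up the ansatz.} First I would write $X_\epsilon$ in each $y$-slice as the graph over $C\times\mathbf{R}$ of a function obtained by composing with the foliation parameter: roughly, in the slice $\{y = y_0\}$ we take the hypersurface $H(\tau(r,y_0))$ where $\tau(r,y) = \epsilon f(y)^p - \epsilon + \epsilon\eta(r)$ and $\eta$ is a cutoff interpolating between $\eta \equiv 0$ for $r$ large (say $r > K^{-Q}$) and $\eta \equiv -1$ near $r = 0$ — the $-1$ is what produces the tangent cone $-\epsilon r$ in (ii), using the scaling built into Definition~\ref{defn:Ht} together with Lemma~\ref{lem:L51}, which tells us that moving the foliation parameter by $\lambda$ moves the leaf by an amount comparable to $\lambda r^{-\gamma}$ (i.e.\ like $r$ when $\lambda \sim r^{\gamma+1}$). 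The odd integer $p$ is needed so that $f(y)^p$ depends monotonically and smoothly on $f$ without sign issues, and so that the $y$-derivatives $(f^p)' = pf^{p-1}f'$ and $(f^p)'' $ carry enough powers of $f$ (hence of $K$) to be dominated by the main negative term; choosing $p$ large and then $\epsilon < Q^{-1}$, $r < K^{-Q^2}$ small is what makes all error terms subcritical.

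\textbf{Step 2: Computing the mean curvature.} The mean curvature of $X_\epsilon$ is computed by linearizing the minimal surface operator around the leaf $H(\tau)$ — which is itself minimal — so the leading contribution to $H_{X_\epsilon}$ comes from (a) the $r$-dependence of $\tau$ through $\eta$, and (b) the $y$-dependence through $f(y)^p$. The $r$-variation term is controlled using Proposition~\ref{prop:Fadefn}: if we build $\eta$ out of (a rescaling of) the function $F_a$ with $a$ slightly bigger than $-\gamma$, then $L_{H(\tau)}$ applied to it produces a definite negative quantity $\sim -\epsilon\, c\, r^{a-2}$ (after fixing the sign conventions for orientation so that "negative side" and "decreasing $\tau$" match). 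The $y$-variation contributes two kinds of terms: a first-order term $\sim \epsilon (f^p)'(y) \cdot (\partial_y \text{ of the leaf})$ and a second-order term $\sim \epsilon (f^p)''(y) \cdot (\text{Jacobi-field-type quantity})$; both are bounded by $C\epsilon K^p r^{-\gamma}$ times geometric factors, which on the region $r < K^{-Q^2}$ is of strictly lower order in $r$ than $r^{a-2}$ provided $a < -\gamma + 2$ — true since $a$ is close to $-\gamma$. There are also quadratic error terms from expanding the minimal surface equation (as in the expansion \eqref{eq:mXQ} referenced in Proposition~\ref{prop:Hestimates}), each carrying an extra factor of $\epsilon$ or of the smallness of $r$; after choosing $Q$ large enough relative to $p$, the negative term dominates everything, giving $H_{X_\epsilon} < 0$ in $0 < r < K^{-Q}$. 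The $C^2$ regularity is automatic away from $r = 0$ and follows at $r=0$ from the matching of $X_\epsilon$ with its tangent cone, the graph of $-\epsilon r$.

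\textbf{Step 3: The enclosure (iii) and the maximum principle conclusion.} Property (iii) is essentially built into the construction: since $|\eta| \le 1$, in each $y$-slice the foliation parameter $\tau(r,y)$ lies between $\epsilon f(y)^p - \epsilon$ and $\epsilon f(y)^p + \epsilon$, and because the leaves $H(t)$ are ordered monotonically in $t$, the slice of $X_\epsilon$ lies between $H(\epsilon f(y)^p - \epsilon)$ and $H(\epsilon f(y)^p + \epsilon)$; continuity in $\epsilon$ follows from continuity of all the ingredients. Finally, the last sentence is the standard strong maximum principle / comparison argument: $X_\epsilon$ is a smooth ($C^2$) hypersurface with strictly negative mean curvature with respect to its chosen normal, and it has no boundary in $0 < r < K^{-Q}$; if the support of a stationary varifold $V$ touched $X_\epsilon$ at an interior point while lying on the (mean-convex) negative side, the maximum principle for stationary varifolds (Ilmanen, or Solomon–White / Simon) would force $V$ to coincide with $X_\epsilon$ near that point, contradicting $H_{X_\epsilon} \ne 0$; near $r=0$ one uses that $V$ lies in the region $r < K^{-Q^2}$ so there is no extra boundary to worry about, and the conclusion propagates.

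\textbf{Main obstacle.} I expect the delicate point to be the bookkeeping in Step 2: tracking exactly how the powers of $K$ (from $f^p$ and its derivatives), the powers of $\epsilon$, and the powers of $r$ (from $F_a$, from the Jacobi field $\phi_1 \sim r^{-\gamma}$, and from the degenerating geometry of the leaves near the singular line) combine, and verifying that a \emph{single} large integer $p$ and a \emph{single} constant $Q$ — depending only on $C$ — can be chosen to make the negative term beat all errors simultaneously for all admissible $K > Q$, $\epsilon < Q^{-1}$, $r < K^{-Q^2}$. This is exactly the "effective control of the constants" the introduction flags as the new difficulty compared to \cite[Proposition 5.8]{Sz20}; the qualitative structure of the argument is as in \cite{Sz20}, but here the dependence of $Q$ on the data must be made explicit and uniform.
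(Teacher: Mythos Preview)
Your ansatz is not the paper's, and as stated it does not produce the claimed properties. The paper does \emph{not} vary the foliation parameter as a function of $r$; instead it fixes the leaf $H(\epsilon f(y)^p)$ in each slice and takes over it the normal graph of
\[
-K^Q\epsilon\,|f(y)|^{\frac{\gamma p}{\gamma+1}}F_{-\gamma+2}\;-\;\epsilon F_1,
\]
with $F_a$ as in Proposition~\ref{prop:Fadefn}. The term $-\epsilon F_1\sim -\epsilon r$ is what forces the tangent cone in (ii) at points where $f(y)=0$; your mechanism (a cutoff $\eta$ with $\eta(0)=-1$) would put the surface near $r=0$ on the leaf $H(\epsilon f(y)^p-2\epsilon)$, which does not even meet $r=0$ unless $f(y)^p=2$, and there the tangent cone is $C\times\mathbf{R}$, not the graph of $-\epsilon r$. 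Your own parenthetical already signals the inconsistency: a normal displacement $\sim -\epsilon r$ corresponds to a foliation shift $\lambda\sim -\epsilon r^{\gamma+1}$, i.e.\ $\eta(r)=-r^{\gamma+1}$, not a bounded cutoff.

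The second structural point you are missing is the role of the $K^Q$--weighted $F_{-\gamma+2}$ term. In the paper the negative contribution $-c_1\epsilon$ from $-\epsilon F_1$ handles only the $\epsilon^2$ error; the $y$--variation of the leaf $H(\epsilon f(y)^p)$ produces errors that are controlled not by a single power comparison ``$r^{a-2}$ beats $r^{-\gamma}$'' but by a two--case analysis ($|f(0)|\ge K^2R$ versus $|f(0)|\le K^2R$) in which the term $-c_1 K^Q\epsilon|f|^{\frac{\gamma p}{\gamma+1}}r^{-\gamma}$ is precisely what absorbs the derivatives of $f^p$. Without this term and the case split, your Step~2 estimate ``$C\epsilon K^p r^{-\gamma}$ is lower order than $r^{a-2}$ on $r<K^{-Q^2}$'' fails: near the transition $r\sim |f|^{p/(\gamma+1)}\epsilon^{1/(\gamma+1)}$ the leaf geometry degenerates and the errors are not subcritical in $r$ alone. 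Finally, the maximum principle at points with $r=0$ is not a boundary issue but a genuine singular--point argument: one passes to the tangent cone of $V$ and applies White's maximum principle on the link, using exactly the tangent cone description in (ii).
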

\begin{proof}
  In each $y$-slice for $y\in (a,b)$
  we define $X_\epsilon\subset\mathbf{R}^{n+1}$ to be the
  graph of the function
  \[ -K^Q\epsilon |f(y)|^{\frac{\gamma p}{\gamma+1}} F_{-\gamma+2}
    -\epsilon F_1 \,\, \text{ over }\,\, H(\epsilon f(y)^p),\]
  in terms of the
  $F_a$ function in Proposition~\ref{prop:Fadefn}. We will show that
  for sufficiently large $Q$, this hypersurface satisfies the required
  properties on the region where $r < K^{-Q^2}$ and $y\in (a,b)$. 
  The calculation is similar to the proof of \cite[Proposition
  5.8]{Sz20}, however we need to keep track of the constants. In the
  proof we will write $a \lesssim b$ if $|a| \leq C_1 b$ for a constant
  $C_1$ depending only on the cone $C$.

  By translating it is enough to study the surface $X_\epsilon$ at a point
  where $y=0$. First we focus on points where $r > 0$,
  and consider the region $|y| < R$, $r\in (R, 2R)$ for some $R >
  0$. We consider the rescaled surface $\tilde{X} = R^{-1}X_\epsilon$, which
  is given by the graph of the function
  \[ B(x,y) = -R^{1-\gamma} K^Q \epsilon |f(Ry)|^{\frac{\gamma p}{\gamma+1}} F_{-\gamma+2} -
    \epsilon F_1, \]
  over the surface with slices $H(\pm E(y)^{\gamma+1})$, for
  \[ E(y) = R^{-1}\epsilon^{\frac{1}{\gamma+1}}
    |f(Ry)|^{\frac{p}{\gamma+1}}, \]
  and the $\pm$ sign agrees with the sign of $f(Ry)$. 
  Note that we are studying this rescaled
  surface in the region $|y| < 1$, $r\in (1,2)$.

  \bigskip
  \noindent{\bf Case I.} Suppose that $|f(0)| \geq K^2R$, and to
  simplify notation, suppose that $f(0) > 0$. The case $f(0) < 0$ is
  completely analogous. Since $|\partial_y
    f(Ry)| \leq RK$, it follows that $|f(Ry) - f(0)| \leq KR \leq 1/2
    |f(0)|$ for $|y|\leq 1$, if $K>2$. Therefore on our region we can
    assume that
    $0 <  f(0) /2 \leq f(Ry) \leq 2f(0)$. Let us define
    \[ G(y) = \frac{E(y)}{E(0)} -1 =
      \frac{f(Ry)^{\frac{p}{\gamma+1}}}{f(0)^{\frac{p}{\gamma+1}}} -
        1. \] 
    We have $G(0) = 0$, and on our region
    \[ G' &\lesssim RK |f(0)|^{-1}, \\
      G'' &\lesssim R^2K^2 |f(0)|^{-2} \leq RK|f(0)|^{-1},\\
      G''' &\lesssim R^3K^3 |f(0)|^{-3} \leq R^2K^2 |f(0)|^{-2}.\]
    It follows that $|G|_{C^3} \lesssim RK|f(0)|^{-1}$, while
    $|G''|_{C^1}\lesssim R^2K^2|f(0)|^{-2}$. Note that
    $RK|f(0)|^{-1} \leq K^{-1}$, so $|G|_{C^3}$ is small if $K$
    is sufficiently large.

    Using Proposition~\ref{prop:Hestimates} we can view the
    surface $H(E(y)^{\gamma+1}) = E(y) H = E(0) (1+G(y)) H$
    as the graph of the function $A$ over $E(0) H$, where
    \[ A(x,y) &= E(0) \Phi_{+,G(y)}(E(0)^{-1}x) \\
      &= E(0) G(y) \Phi_+(E(0)^{-1}x) + E(0) G(y)^2
      V_{+,G(y)}(E(0)^{-1}x). \]
    This function $A(x,y)$ is defined for $x\in E(0)H$. 
    From Proposition~\ref{prop:Hestimates} and the bounds for $G$
    above, we know that the first three $x,y$ derivatives of both
    $\Phi(E(0)^{-1}x)$ and $V_{G(y)}(E(0)^{-1}x)$ are of order
    $E(0)^\gamma$ on our region. It follows from this that
    \[ |A|_{C^3} &\lesssim E(0)^{\gamma+1} RK|f(0)|^{-1} +
      E(0)^{\gamma+1} R^2K^2 |f(0)|^{-2} \\
      &\lesssim R^{-\gamma} \epsilon K |f(0)|^{p-1}, \]
    where we measure derivatives on the surface
    $E(0)H\times\mathbf{R}$. 
    At the same time, working at $y=0$ and using $G(0)=0$, we
    have
    \[ L_{E(0)H\times\mathbf{R}} A &\lesssim E(0)^{\gamma+1} G''(0) +
      E(0)^{\gamma+1} R^2K^2 |f(0)|^{-2} \\
      &\lesssim R^{1-\gamma} \epsilon K^2 |f(0)|^{p-2}. \]
      Note that this estimate is $RK|f(0)|^{-1}$ times the one we had
      for $A$ itself, and by our assumption $RK|f(0)|^{-1} \leq
      K^{-1}$.

      We next estimate $B$. For this we have
      \[ |B|_{C^3} \lesssim R^{1-\gamma} K^Q \epsilon
        |f(0)|^{\frac{\gamma p}{\gamma+1}} + \epsilon. \]
      Recall that $B$ is defined on the whole region that we are
      considering, not just for $x\in E(0)H$, and we are estimating
      the derivatives in the entire region. 
      Note that when we differentiate $B$ and derivatives land on the
      $f(Ry)$ term, then we have an additional factor of $RK |f(0)|^{-1} \leq K^{-1}$, which is
      small. Note also that if the surface $E(0) H$ contains any 
      points with $r\in (1,2)$, then we must have $E(0) \lesssim 1$,
      i.e.
      \[ \label{eq:E0est1}
        R^{-1} \epsilon^{\frac{1}{\gamma+1}} |f(0)|^{\frac{p}{\gamma+1}} \lesssim 1. \]
      We can use this to show that $B$ is small: 
      \eqref{eq:E0est1} implies
      \[\label{eq:Best2} R^{1-\gamma} K^Q \epsilon |f(0)|^{\frac{\gamma p}{\gamma+1}} \lesssim
        RK^Q\epsilon^{\frac{1}{\gamma+1}}\leq
        K^{Q-Q^2},  \]
      assuming $\epsilon \leq 1$. 
      If $Q, K$ are sufficiently
      large, then we find that $|B|_{C^3}$ is small.
      At the same, using Proposition~\ref{prop:Fadefn} we have
      \[ L_{E(0)H\times\mathbf{R}} B \leq -c_1 R^{1-\gamma} K^Q
        \epsilon |f(0)|^{\frac{\gamma p}{\gamma+1}} - c_1\epsilon + C_1R^{3-\gamma}
        K^Q\epsilon K^2 |f(0)|^{\frac{\gamma p}{\gamma+1}-2}, \]
      for some $c_1, C_1 > 0$ depending on the cone $C$.

      By
      Lemma~\ref{lem:graphfg} below
      the surface
      $\tilde{X}$ can be viewed as the graph of a function $u$ over
      $E(0) H\times\mathbf{R}$, where $|u - (A + B)|_{C^2} \leq
      C|A|_{C^3}|B|_{C^3}$. It follows
      that the mean curvature $m_{\tilde{X}}$ satisfies
      \[\label{eq:mtX1} m_{\tilde{X}} &= L_{E(0)H\times\mathbf{R}} (A+B) +
        O(|A|_{C^3}^2 + |B|_{C^3}^2) \\
        &\leq -c_1 R^{1-\gamma} K^Q
        \epsilon |f(0)|^{\frac{\gamma p}{\gamma+1}} - c_1\epsilon + \mathcal{Q}, \]
      where
      \[ \mathcal{Q} &\lesssim R^{3-\gamma}
        K^Q\epsilon K^2 |f(0)|^{\frac{\gamma p}{\gamma+1}-2} +  R^{1-\gamma} \epsilon K^2
        |f(0)|^{p-2} \\ 
        &\qquad + (R^{-\gamma} \epsilon K |f(0)|^{p-1})^2 +
         (R^{1-\gamma} K^Q \epsilon
         |f(0)|^{\frac{\gamma p}{\gamma+1}})^2 + \epsilon^2. \]
       The $\epsilon^2$ term is dominated by the $-c_1\epsilon$ term
       in the expression for $m_{\tilde{X}}$ once $\epsilon$ is small,
       and we will see that the remaining terms in $\mathcal{Q}$ can
       be canceled by the $-c_1 R^{1-\gamma}K^Q\epsilon |f(0)|^{\frac{\gamma
         p}{\gamma+1}}$ term. To see this, we have
       \[ \mathcal{Q} \lesssim R^{1-\gamma}K^Q\epsilon |f(0)|^{\frac{\gamma
           p}{\gamma+1}} &\Big[ R^2K^2 |f(0)|^{-2} + K^{-Q} K^2
       |f(0)|^{\frac{p}{\gamma+1}-2}  \\ 
         &+
       K^{-Q} R^{-1-\gamma} \epsilon K^2 |f(0)|^{p\frac{\gamma+2}{\gamma+1}-2} +
       R^{1-\gamma}K^Q\epsilon |f(0)|^{\frac{\gamma p}{\gamma+1}}\Big] + \epsilon^2\]
     Using that $|f(0)|\geq K^2R$ and \eqref{eq:E0est1},
     \eqref{eq:Best2}, and assuming $p/(\gamma+1) > 2$, we have
     \[  \mathcal{Q} \lesssim R^{1-\gamma}K^Q\epsilon |f(0)|^{\gamma
         p}\Big[ K^{-2} + K^{-Q} K^{\frac{p}{\gamma+1}} +
       K^{Q-Q^2}\Big] + \epsilon^2.\]
     Since $\epsilon < Q^{-1}$,
     if $K$ and $Q$ are sufficiently
     large (depending on the cone $C$), then the resulting bound for
     $\mathcal{Q}$ together with the estimate \eqref{eq:mtX1} for
     $m_{\tilde{X}}$ implies that $m_{\tilde{X}} < 0$.

     \bigskip
     \noindent{\bf Case II.} Suppose that $|f(0)| \leq K^2R$. On our
     region $|y|\leq 1$ this implies that $|f(Ry)| \leq K^2R + KR \leq
     2|f(0)|$. We have
     \[ E(0) &\lesssim R^{-1} \epsilon^{\frac{1}{\gamma+1}} K^{\frac{2p}{\gamma+1}} R^{\frac{p}{\gamma+1}}
       \leq \epsilon^{\frac{1}{\gamma+1}} K^{\frac{2p}{\gamma+1} - (\frac{p}{\gamma+1}-1)Q^2} \\
       &\lesssim \epsilon^{\frac{1}{\gamma+1}} K^{-Q^2}, \]
     using $R < K^{-Q^2}$, if $p > 3(\gamma+1)$ and $Q$ is large.
     Differentiating $E(y)$ we find that
     \[ |E(y)|_{C^3} \lesssim \epsilon^{\frac{1}{\gamma+1}}
       K^{-Q^2} \]
     as well. If $K$ is chosen large (depending on the cone $C$),
     we can view $H(\pm E(y)^{\gamma+1})$ as the graph of the function $A$
     over $C\times \mathbf{R}$, where
     \[ A(x,y) = E(y)\Psi_\pm( E(y)^{-1}x), \]
     the $\pm$ sign depending on the sign of $f(Ry)$. 
     Since the $x$-derivatives of $\Psi_\pm(E(y)^{-1}x)$ are of order
     $E(y)^{\gamma}$, it follows that
     \[ |A|_{C^3} \lesssim \epsilon K^{-(\gamma+1)Q^{2}} \leq \epsilon
       K^{-Q^2}. \]
     To estimate the derivatives of $B$, note that since
     \[ B = -R^{1-\gamma} K^Q \epsilon |f(Ry)|^{\frac{\gamma p}{\gamma+1}}
       F_{-\gamma+2} - \epsilon F_0, \]
     we have
     \[  \nabla B \lesssim R^{1-\gamma} K^Q \epsilon |f(0)|^{\frac{\gamma p}{\gamma+1}}
       + R^{2-\gamma} K^Q\epsilon K |f(0)|^{\frac{\gamma p}{\gamma+1}-1} +
       \epsilon. \]
     Using  $|f(0)| \leq K^2 R, R < K^{-Q^2}$
     this implies
     \[ \nabla B &\lesssim R^{2-\gamma} K^Q\epsilon K^2 |f(0)|^{\frac{\gamma p}{\gamma+1}-1} +
       \epsilon \\
     &\leq R^{1 + \gamma(\frac{p}{\gamma+1}-1)} K^Q \epsilon
     K^{\frac{2\gamma p}{\gamma+1}} + \epsilon 
     \\
     &\leq K^{-Q^2/2}\epsilon + \epsilon\]
     as long as $K$ and $Q$ are large, as well as $\frac{\gamma
       p}{\gamma+1} >1$
     and $\frac{p}{\gamma+1} > 1$. 
     Similarly we obtain $|B|_{C^3} \lesssim \epsilon$ as long as
     $\gamma p > 3$, while the first term in the expression for $B$
     satisfies the better bound
     \[ \Big|R^{1-\gamma} K^Q \epsilon |f(Ry)|^{\frac{\gamma p}{\gamma+1}}
       F_{-\gamma+2}\Big|_{C^3} \lesssim K^{-Q^2/2}\epsilon. \]
     Recall that $L_{C\times\mathbf{R}} r < -c_1$ on the
     region $r\in (1,2)$ for some $c_1 > 0$. We then have
     \[ L_{C\times \mathbf{R}} B \leq C_1K^{-Q^2/2}\epsilon -
       c_1\epsilon \leq -\frac{c_1}{2}\epsilon\]
     if $K, Q$ are large. Arguing as in Case I above, it follows that
     \[ m_{\tilde{X}} \leq -\frac{c_1}{2}\epsilon + \mathcal{R}, \]
     where
     \[  \mathcal{R} \lesssim K^{-Q^2}\epsilon + \epsilon^2. \]
     if $\epsilon < Q^{-1}$ for sufficiently large $Q$, then we get $m_{\tilde{X}} <
     0$.

    \smallskip
      
     Finally let us suppose that $f(0)=0$. For any (small) $R > 0$ the rescaled surface
     $\tilde{X} = R^{-1}X$ is still given by the graph of $B$ over the
     surface with slices $H(\pm E(y)^{\gamma+1})$ in $\mathbf{R}^n\times \{y\}$ as
     above. Since $f(0)=0$ we have $|f(Ry)|\leq RK$ for $|y|\leq 1$, and so
     \[ E(y) \lesssim R^{\frac{p}{\gamma+1}-1}\epsilon^{\frac{1}{\gamma+1}} K^{\frac{p}{\gamma+1}}, \]
     which converges to 0 as $R\to 0$. Similarly the first term in $B$ satisfies
     \[ -R^{1-\gamma} K^Q \epsilon |f(Ry)|^{\frac{\gamma p}{\gamma+1}} F_{-\gamma+2}
       &\lesssim R^{1+ \gamma(\frac{p}{\gamma+1}-1)}K^Q\epsilon
       K^{\frac{\gamma p}{\gamma+1}}, \] 
     which also converges to zero as $R\to 0$. It follows that the
     tangent cone of $X$ at the point $(0,0)\in
     \mathbf{R}^n\times\mathbf{R}$ is the graph of $-\epsilon r$ over
     $C\times\mathbf{R}$.

     It follows that our surface $X_\epsilon$ satisfies the conditions
     (i) and (ii). It is also clear from the construction that
     $X_\epsilon$ varies continuously with $\epsilon$.
     We now show property (iii), increasing the value of
     $Q$ if necessary. To see that $X_\epsilon$ is on the negative
     side of the surface with slices $H(\epsilon f(y)^p + \epsilon)$,
     note that by Lemma~\ref{lem:L51} $H(\epsilon f(y)^p + \epsilon)$ is on the positive side
     of the graph of the function
     \[ c_0 \min\{ \epsilon r^{-\gamma}, r\} \]
     over $H(\epsilon f(y)^p)$ for a small $c_0 > 0$ depending on the
     cone $C$. Therefore it is enough
     to check that on the region $r < K^{-Q^2}$ we have
     \[ - K^Q \epsilon |f(y)|^{\frac{\gamma p}{\gamma+1}}F_{-\gamma+2}
       - \epsilon F_1 < c_0 \min\{ \epsilon r^{-\gamma}, r\}. \]
     Since $F_1 \lesssim r$, it is clear that $|\epsilon F_1| \leq
     \epsilon c_0 r^{-\gamma}$ is $r$ is sufficiently small, and also
     $|\epsilon F_1| \leq c_0 r$ if $\epsilon$ is sufficiently
     small. For the other term we have
     \[ \Big| K^Q \epsilon |f(y)|^{\frac{\gamma p}{\gamma+1}}
       F_{-\gamma+2}\Big| \leq  K^{Q+\frac{\gamma
           p}{\gamma+1}}r^2  \epsilon r^{-\gamma} \leq K^{Q+\frac{\gamma
           p}{\gamma+1} - 2Q^2} \epsilon r^{-\gamma}, \]
     which is smaller than $c_0 \epsilon r^{-\gamma}$ if $Q, K$ are
     large. Finally, to bound the same term by $c_0 r$ note that,
     just as in \eqref{eq:E0est1}, on $H(\epsilon f(y)^p)$
     we have
     \[ \epsilon^{\frac{1}{\gamma+1}} |f(y)|^{\frac{p}{\gamma+1}}
       \lesssim r. \]
     This implies
     \[ \Big| K^Q \epsilon |f(y)|^{\frac{\gamma p}{\gamma+1}}
       F_{-\gamma+2}\Big| \leq C_1  K^Q \epsilon^{\frac{1}{\gamma+1}}
       r^2 \leq C_1 K^{Q-Q^2} \epsilon^{\frac{1}{\gamma+1}} r, \]
     which is smaller than $c_0r$ if $K,Q$ are large. The argument to
     see that $X_\epsilon$ lies on the positive side of $H(\epsilon
     f(y)^p -\epsilon)$ is entirely analogous, using that $H(\epsilon
     f(y)^p - \epsilon)$ lies on the negative side of the graph of
     $-c_0\min\{\epsilon r^{-\gamma}, r\}$ over $H(\epsilon f(y)^p)$. 

     The final claim follows from the maximum principle due to
     White~\cite[Theorem 4]{White10}. Indeed, suppose that the support
     of a stationary varifold $V$ intersects $X_\epsilon$. If at the
     intersection point $X_\epsilon$ is smooth (i.e. $r > 0$), then by
     White's maximum principle $V$ cannot lie on the negative side of
     $X_\epsilon$ near the intersection point, since $X_\epsilon$ has
     strictly negative mean curvature. At the same time if at the
     intersection point $z$ we have $r=0$, then any tangent cone $V_z$
     of  $V$ at $z$ must lie on the negative side of the graph of
     $-\epsilon r$ over $C\times \mathbf{R}$. In particular the link
     of $V_z$ lies on one side of the link of
     $C\times\mathbf{R}$. This again can be seen to contradict the
     maximum principle. 
\end{proof}

We used the following lemma, which is essentially the same as \cite[Lemma
5.9]{Sz20}.
\begin{lemma}\label{lem:graphfg}
  Let $S$ be a hypersurface in $\mathbf{R}^n$ with second fundamental
  form $A_S$ satisfying the bounds $|\nabla^i A_S|\leq 1$,
  for $i\leq k$. There exists a constant $C$ depending on the
  dimension and on $k$, with the following property. Suppose that $f$
  is a function on $S$ with $|f|_{C^k} < C^{-1}$ and $g$ is a function
  on a neighborhood of $S$ with $|g|_{C^k} < C^{-1}$. Let $S'$
  denote the graph of $g$ over the graph of $f$ on $S$. Then $S'$ can
  be written as the graph of a function $u$ over $S$, such that
  \[ |u - (f+g)|_{C^{k-1}} \leq C |f|_{C^k} |g|_{C^k}. \]
  Here the derivatives of $u-(f+g)$ and $f$ are measured along $S$,
  while the derivatives of $g$ are measured in $\mathbf{R}^n$.  
\end{lemma}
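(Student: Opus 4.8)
The plan is to pass to Fermi coordinates adapted to $S$, to write the iterated graph $S'$ explicitly as a graph over $S$, and then to extract the bilinear estimate from the elementary observation that the error $u-(f+g)$ vanishes identically whenever $f\equiv 0$ or $g\equiv 0$. First I would set up the coordinates. The hypotheses $|\nabla^i A_S|\le 1$ for $i\le k$ guarantee a universal $\delta_0>0$ such that the normal exponential map $\Psi_S\colon S\times(-\delta_0,\delta_0)\to\mathbf{R}^n$, $(z,s)\mapsto z+s\,\nu_S(z)$, is a diffeomorphism onto a tubular neighborhood $\mathcal N$ of $S$; the pulled-back metric $ds^2+h_{ij}(z,s)\,dz^idz^j$, its inverse, and the nearest-point projection $\pi\colon\mathcal N\to S$ and signed distance $d\colon\mathcal N\to(-\delta_0,\delta_0)$ then all satisfy $C^k$ bounds depending only on $n$ and $k$, since the Jacobi fields along normal geodesics are controlled by $A_S$ and its derivatives. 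Shrinking $\delta_0$ we may assume $C^{-1}<\delta_0$. In these coordinates the graph of $f$ over $S$ is $\Sigma_f=\{s=f(z)\}$, with unit normal
\[ \nu_f(z)=W_f(z)^{-1}\Bigl(\partial_s-h^{ij}(z,f(z))\,\partial_i f(z)\,\partial_j\Bigr),\qquad W_f=\bigl(1+h^{ij}(z,f)\,\partial_i f\,\partial_j f\bigr)^{1/2}, \]
and $S'$ is the image of the map $\Phi\colon S\to\mathbf{R}^n$ given by $\Phi(z)=\Psi_S(z,f(z))+g\bigl(\Psi_S(z,f(z))\bigr)\,\nu_f(z)$.

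Next I would check that $S'$ is a graph. Since $|f|_{C^1},|g|_{C^1}<C^{-1}$ is small, $\Phi$ differs from the inclusion $S\hookrightarrow\mathbf{R}^n$ by a map that is small in $C^1$ and bounded in $C^{k-1}$; hence $\Phi(S)\subset\mathcal N$, the map $\psi:=\pi\circ\Phi\colon S\to S$ is a $C^{k-1}$-diffeomorphism close to the identity, and $S'$ is the graph over $S$ of $u:=(d\circ\Phi)\circ\psi^{-1}$. From the formulas above, $u$ involves $f$ through its value and first derivatives (the $\nu_f$ term) and $g$ through its value and derivatives precomposed with $\Psi_S(\cdot,f(\cdot))$, so $u\in C^{k-1}$ with $|u|_{C^{k-1}}\le C(|f|_{C^k}+|g|_{C^k})$, and $(f,g)\mapsto u$ is a smooth map of a ball in $C^k\times C^k$ into $C^{k-1}$ whose second derivatives are bounded uniformly on $\{|f|_{C^k},|g|_{C^k}<C^{-1}\}$ (the one derivative to spare in $f$ is exactly what makes these substitution operators well behaved).

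Finally I would extract the bilinear bound. When $g\equiv 0$ we have $\Phi(z)=\Psi_S(z,f(z))$, so $\psi=\mathrm{id}$ and $u=f$; when $f\equiv 0$ we have $\nu_f=\partial_s$, which pushes forward to $\nu_S$, hence $\Phi(z)=z+g(z)\,\nu_S(z)=\Psi_S(z,g(z))$, so again $\psi=\mathrm{id}$ and $u=g|_S$. Thus $N(f,g):=u-f-g|_S$ satisfies $N(f,0)\equiv 0$ and $N(0,g)\equiv 0$, and writing
\[ N(f,g)=N(f,g)-N(f,0)-N(0,g)+N(0,0)=\int_0^1\!\!\int_0^1 \partial_s\partial_t\, N(sf,tg)\,ds\,dt \]
and bounding the mixed partial in $C^{k-1}$ by $C|f|_{C^k}|g|_{C^k}$ — either using the uniform second-derivative bound above, or, more elementarily, by differentiating the explicit formula for $u$ at most $k-1$ times and applying the fundamental theorem of calculus twice — we obtain $|u-(f+g)|_{C^{k-1}}=|N(f,g)|_{C^{k-1}}\le C|f|_{C^k}|g|_{C^k}$, as required.

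I expect the only real obstacle to be careful bookkeeping: tracking the single derivative lost through $\nu_f$, which is why $f\in C^k$ is assumed but the error is controlled only in $C^{k-1}$, and verifying that every constant appearing — in the Fermi-coordinate estimates, in the smoothness of the substitution operators, and in the Taylor remainder — depends only on $n$ and $k$, which is where the normalization $|\nabla^i A_S|\le 1$ is used.
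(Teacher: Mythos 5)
The paper gives no proof of this lemma, deferring to \cite[Lemma 5.9]{Sz20}, so there is nothing in this manuscript to compare against directly. Your argument is correct. The two observations that carry it --- that the error $N(f,g)=u-(f+g|_S)$ vanishes identically whenever either argument vanishes, and that the mixed Fr\'echet derivative $D_fD_g$ of the iterated-graph operator costs only one derivative of each of $f$ and $g$ (whereas $D_f^2$ would cost two derivatives of $g$, which is why only the mixed bound can be used) --- are exactly the right ones, and the second-order fundamental-theorem identity $N(f,g)=\int_0^1\int_0^1\partial_s\partial_t N(sf,tg)\,ds\,dt$ converts them cleanly into the bilinear estimate. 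This is arguably a tidier route than the more usual direct expansion of the iterated graph map. One small point worth recording: the Fermi-coordinate tubular neighborhood is only guaranteed locally, since a bound on $|\nabla^i A_S|$ does not control the reach of $S$; the statement should therefore be read with the local nearest-point projection, which is how it is in fact used in Proposition~\ref{prop:barrier10}.
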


\section{Minimal hypersurfaces with cylindrical tangent cones}\label{sec:gluing}
In this section we construct minimal hypersurfaces in a neighborhood of
$0\in \mathbf{R}^n \times \mathbf{R}$ with an isolated singularity at
the origin, and tangent cone $C\times \mathbf{R}$. The construction is
analogous to the construction of singular Calabi-Yau metrics with
isolated singularities in \cite{Sz17}
(see also Hein-Naber~\cite{HN08}).

Suppose that $\l$ is an integer such that $\l-\gamma > 1$. Then
$C\times\mathbf{R}$ admits a homogeneous Jacobi field of degree
$\l-\gamma$ of the form
\[ \label{eq:ul} u_\l = (y^\l r^{-\gamma} + a_1 y^{\l-2} r^{2-\gamma}  +
  \ldots a_{\lfloor \l/2\rfloor} y^{\l - 2\lfloor \l/2\rfloor}
  r^{2\lfloor \l/2\rfloor - \gamma} )\phi_1, \]
where, as above, $\phi_1$ is the first eigenfunction of $-L_\Sigma$ on
the link $\Sigma$ of $C$, and $r^{-\gamma}\phi_1$ is the corresponding
Jacobi field on $C$. The $a_i$ are suitable constants uniquely determined by
the condition that $L_{C\times\mathbf{R}} u_\l = 0$.

We can consider the graph of $u_\l$ over $C\times \mathbf{R}$ on a region where $|u_\l| \ll
r$, i.e. where $|y|^\l \ll r^{\gamma+1}$. At the same time, on the region
where $r$ is much smaller, we can glue in suitable scaled
copies of the Hardt-Simon smoothings $H_{\pm}$ in the slices of the
form $\mathbf{R}^n \times \{y\}$. Interpolating between the two
regions using cutoff functions we will obtain a hypersurface $X$ whose
mean curvature will be almost zero in a suitable weighted space. We
will then show that in a possibly smaller neighborhood of $0$ we can
find a minimal graph over $X$. Using barrier arguments based on
Proposition~\ref{prop:barrier10} we will show that many of these
minimal hypersurfaces are area minimizing in a neighborhood of the
origin. In particular we will show the
following.
\begin{thm}\label{thm:Texist}
  There exist minimal hypersurfaces in a neighborhood of $0\in
  \mathbf{R}^n\times \mathbf{R}$ that are smooth away from the origin,
  and have tangent cone $C\times \mathbf{R}$ as their unique
  (multiplicity one) tangent cone at the origin. If the integer $\l$ in
  the construction is sufficiently large, then the minimal
  hypersurface that we construct is area minimizing in a neighborhood
  of the origin. 
\end{thm}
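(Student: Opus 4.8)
The plan is to carry out the construction in two stages: first a gluing construction producing an approximately minimal hypersurface $X$ with tangent cone $C\times\mathbf{R}$, then a perturbation to an exactly minimal hypersurface $M$, and finally a barrier argument to show area minimality when $\ell$ is large. For the gluing, I would fix an integer $\ell$ with $\ell - \gamma > 1$ and work with the homogeneous Jacobi field $u_\ell$ from \eqref{eq:ul}. On the region where $|y|^\ell \ll r^{\gamma+1}$ one takes the graph of (a small multiple $\tau$ of) $u_\ell$ over $C\times\mathbf{R}$; on the complementary region, where $r$ is small compared with $|y|^{\ell/(\gamma+1)}$, in each slice $\mathbf{R}^n\times\{y\}$ one glues in the leaf $H(t)$ of the Hardt--Simon foliation with the parameter $t$ chosen (using Definition~\ref{defn:Ht} and the expansion $|f_t - t r^{-\gamma}\phi_1| \le C_1|t|^{1+c/(\gamma+1)}r^{-\gamma-c}$) so that $f_t$ matches $\tau u_\ell$ to leading order, i.e. $t\approx \tau y^\ell$. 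Interpolating with a cutoff in the transition region $r\sim |y|^{\ell/(\gamma+1)}$ gives a hypersurface $X=X_\tau$ which is smooth away from the origin, agrees with $C\times\mathbf{R}$-type geometry, and whose mean curvature is supported in the gluing annulus. A weighted estimate, exactly as in \cite{Sz17}, shows $\|m_X\|$ is $O(\tau^{1+\delta})$ in the appropriate weighted Hölder norm (weight tracking the degeneration $r\to 0$), with the error gaining over the size $\tau$ of the perturbation because $u_\ell$ has growth rate $\ell-\gamma$ strictly above the indicial root $-\gamma$.

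Next I would solve the minimal surface equation as a graph $u$ over $X$: expanding as in \eqref{eq:mXQ} one gets $L_X u = -m_X + \mathcal{Q}(u)$ with $\mathcal{Q}$ the quadratic remainder, and one inverts $L_X$ on a suitable weighted space. The linearized operator $L_X$ is a small perturbation of $L_{C\times\mathbf{R}}$, and the key point — again as in \cite{Sz17} and using the strict stability/strict minimality of $C$ (no homogeneous Jacobi fields with degrees in $(3-n+\gamma,-\gamma)$, and $\gamma$ is the bottom growth rate) — is that one can choose the weight so that $L_X$ is uniformly invertible with bounds independent of $\tau$; a contraction mapping / fixed point argument then produces $u$ with $\|u\|$ of order $\|m_X\|=O(\tau^{1+\delta})$, hence $u$ is a genuinely lower-order correction than $\tau u_\ell$. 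The resulting $M=M_\tau$ is minimal, smooth away from $0$, and since $u$ decays faster than $\tau u_\ell$ while $\tau u_\ell$ decays faster than the cone itself (by $\ell-\gamma > -\gamma$... more precisely by homogeneity degree $\ell - \gamma > 1 > $ the relevant threshold after scaling), the rescalings $2^{-k}M$ converge to $C\times\mathbf{R}$ with multiplicity one, and uniqueness of the tangent cone follows either from Simon~\cite{Simon94}/\cite{Sz20} or directly from the decay rates built into the construction.

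The hardest part — and the new content beyond \cite{Sz17} — is proving area minimality for $\ell$ large, and this is where Proposition~\ref{prop:barrier10} enters. The strategy is: by Hardt--Simon~\cite{HS85} it suffices to show $M$ lies in a foliation of a neighborhood of the origin by hypersurfaces with one-signed mean curvature on each side, i.e. to sandwich $M$ between barriers. One side is handled by the leaves $H(t)\times\mathbf{R}$-type pieces of the Hardt--Simon foliation themselves; for the delicate side, near the singular line $\{r=0\}$ of $C\times\mathbf{R}$, one applies Proposition~\ref{prop:barrier10} with $f(y)$ a small multiple of $y^{\ell/p}$ (so that $\epsilon f(y)^p\sim \epsilon y^\ell$ matches the parameter $t\approx\tau y^\ell$ of $M$), producing hypersurfaces $X_\epsilon$ with strictly negative mean curvature lying between $H(\epsilon f(y)^p\mp\epsilon)$. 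The requirement $|f|_{C^3}\le K$ with the region $r<K^{-Q^2}$ forces one to take $\ell$ (hence $\ell/p$) large enough that $f(y)=\text{const}\cdot y^{\ell/p}$ has controlled $C^3$ norm on the relevant scale while still dominating the error $u$; this is precisely why "$\ell$ sufficiently large" appears in the statement. One then checks that $M$, whose slices are $C^0$-close to $H(\tau y^\ell)$ up to the faster-decaying error $u$, can be caught strictly between $X_\epsilon$ for $\epsilon$ slightly above and slightly below $\tau$ — the faster decay of $u$ compared to the $\epsilon$-gap between consecutive barriers is the crucial inequality — and the maximum principle of White~\cite[Theorem 4]{White10} (the final assertion of Proposition~\ref{prop:barrier10}, which also handles the singular point $r=0$ via the tangent-cone condition (ii)) forbids $M$ from touching $X_\epsilon$ from the negative side. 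Thus $M$ sits inside a genuine foliation by barriers and Hardt--Simon's argument upgrades stationarity plus this sandwiching to area minimality in a neighborhood of the origin. I expect the main obstacle to be matching the barriers to $M$ with the correct strict inequalities uniformly as $r\to 0$, keeping careful track of all constants (which is exactly why Proposition~\ref{prop:barrier10} was stated in quantitative form).
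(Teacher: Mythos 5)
Your construction of the approximate solution $X$, the weighted mean-curvature estimate, and the contraction-mapping perturbation to a genuine minimal graph $T$ (with convergence of the rescalings to $C\times\mathbf{R}$) all match the paper's argument, modulo a cosmetic reparametrization: the paper takes the graph of $u_\ell$ itself on a small ball $B(0,A^{-1})$ rather than a small multiple $\tau u_\ell$, so the smallness parameter is $A^{-1}$ rather than $\tau$. That is essentially equivalent and is not a problem.

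There is, however, a genuine gap in your area-minimality argument. The barriers $X_\epsilon$ produced by Proposition~\ref{prop:barrier10} are only defined in a fixed thin neighborhood of the singular axis, namely $\{r<K^{-Q^2}\}$ with $K$ determined by $|f|_{C^3}$; they cannot be extended out to the boundary sphere on which you need to compare $T$ against a competitor $T'$. So the family $\{X_\epsilon\}$ alone does not give a barrier against competitors that deviate from $T$ at moderate $r$, and the ``foliation'' you invoke is not closed. The paper's proof fills this hole by building each comparison surface $S_\xi$ out of \emph{two} pieces that are glued by taking a pointwise minimum: near $r=0$ it uses the non-graphical barrier $X_\epsilon$ from Proposition~\ref{prop:barrier10} (with $\epsilon=\xi^\lambda$ and $f_\epsilon=\sigma(\epsilon^{-1}y^\ell+2.5)$, so the barrier sits at a fixed \emph{additive} offset $\sim\epsilon$ from $H(y^\ell)$, not a multiplicative one as in your ``slightly above/below $\tau$'' picture), and for larger $r$ it uses the graphical barrier given by the graph of $\xi F_{-\gamma_1}$ over $T$, where $F_{-\gamma_1}$ is the strict supersolution of the Jacobi operator on $T$ from Lemma~\ref{lem:Fa2}. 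The delicate part of the proof — and the real reason the constants in Proposition~\ref{prop:barrier10} must be tracked explicitly — is checking that these two pieces overlap graphically on an annulus $\xi^{1/(\gamma_2+1)}<r<\xi^{1/(\gamma_3+1)}$ and cross each other in the right order (so that the minimum has no boundary in the interior), and the inequality $\lambda<\tfrac{\ell-1}{a(\gamma_3+1)}+\tfrac{1}{Q(\gamma_3+1)}$ needed there is exactly where ``$\ell$ sufficiently large'' enters. Your proposal identifies neither the graphical barrier $\xi F_{-\gamma_1}$ nor the patching step, and the sliding argument cannot be completed without them.

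A second, less fundamental point: the maximum-principle conclusion is not that $T$ ``lies in a foliation'' in the sense of Hardt--Simon; it is a direct slide-the-barrier argument. One assumes a competitor $T'$ with the same boundary exists, notes $T'$ stays on the negative side of $S_{\xi_0}$ for $\xi_0$ small, slides $\xi$ down to the first contact $\xi_1$, and obtains a contradiction because every point of $S_{\xi_1}$ in the ball is an interior point of one of the two pieces, on which strict negativity of the mean curvature (or condition (ii) in Proposition~\ref{prop:barrier10} at $r=0$) forbids interior contact. This requires knowing $S_\xi$ is strictly on the positive side of $T$ for all $\xi>0$, which again uses both barrier pieces together with the fine estimate on $T$ over $H(y^\ell)$ from Proposition~\ref{prop:Tgraphbound}.
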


The proof of this result will take up the rest of this section. The
existence of the required minimal hypersurface $T$ is proven in
Proposition~\ref{prop:Texist}, while the fact that $T$ is area
minimizing for large $\l$ is shown in Proposition~\ref{prop:Tminimizing}. In the
process we will also give a more detailed picture of what these
surfaces look like, see Proposition~\ref{prop:Tgraphbound} below. It
is reasonable to expect all of the hypersurfaces that we construct to be area
minimizing near the origin, but showing this may need a more careful
construction of suitable barrier surfaces.

\subsection{The approximate solutions}\label{sec:approxsoln1}
In this subsection we will construct approximate solutions to our
problem. Recall that $\l$ is an integer such that $\l-\gamma > 1$. Let
us define the number $a = \frac{\l}{1+\gamma}$, and let $\beta\in
(1,a)$. We define $X$ in the ball $\{\rho \leq A^{-1}\}$ for a
sufficiently large $A$, in the following three pieces:
\begin{itemize}
  \setlength\itemsep{1em}
  \item On the region where $r \geq 2|y|^\beta$ we let $X$ be the
    graph of $u_\l$ over $C\times \mathbf{R}$. It is convenient to
    deal separately with the region where $r \geq |y|$, where we have
    $|r^{-1} u_\l| = O(r^{\l-\gamma-1})$ as $r\to 0$. Since $\l-\gamma > 1$, it
    makes sense to consider the graph of $u_\l$ on a sufficiently small
    neighborhood of 0. At the same time, on the region $2|y|^\beta
    \leq r \leq |y|$ we have $|r^{-1}u_\l| = O(|y|^\l r^{-\gamma-1})$ as $r\to
    0$. Since $\l > \beta(\gamma+1)$, it makes sense to consider the
    graph of $u_\l$ on this region too, once $r$ is sufficiently
    small. 
 \item On the region where $r \leq |y|^\beta$, we define $X$ to be
   the surface $H(y^\l)$ in the slice $\mathbf{R}^n \times
   \{y\}$. Note that by Definition~\ref{defn:Ht} we have $H(y^\l) =
   |y|^a H_\pm$. 
\item On the intermediate region $|y|^\beta \leq r \leq 2|y|^\beta$ we
  interpolate between the two definitions above, using a cutoff
  function. More precisely, let $\chi:\mathbf{R}\to [0,1]$ be a
  standard cutoff function, with $\chi(t) = 1$ for $t < 1$ and
  $\chi(t) = 0$ for $t > 2$. In addition recall that $H_{\pm}$ is the
  graph of $\pm r^{-\gamma}\phi_1 + v_{\pm}$ over $C$, outside of a
  large ball. On the region $|y|^\beta \leq r \leq 2|y|^\beta$ we let
  $X$ be the graph of
  \[ \label{eq:gluing1} u_\l + \chi\left(\frac{|x|}{|y|^\beta}\right) \left[ y^\l
      r^{-\gamma}\phi_1 - u_\l + |y|^a v_{\pm}(|y|^{-a} x)\right] \]
  over $C\times \mathbf{R}$,
  where the choice of $v_{\pm}$ depends on the sign of $y^\l$. Note
  that this definition matches up with the definitions of $X$ in the
  two regions above. 
\end{itemize}

We need to estimate the mean curvature of $X$. 
\begin{prop}\label{prop:mXest}
  Let $\beta\in (1,a)$. Suppose that $\delta > \l-\gamma$ is sufficiently close to
  $\l-\gamma$, and $\tau \leq -\gamma$.
  Then there exists a $\kappa > 0$ such that on the
  punctured ball
  $\{0 < \rho < A^{-1}\}$ for sufficiently large $A$, we have the
  estimates
  \[ \label{eq:mXest1} |m_X| + r |\nabla m_X| < A^{-\kappa} \rho^{\delta - \tau}
    r^{\tau-2}. \]
  Here $m_X$ denotes the mean curvature of $X$, and $\nabla m_X$ is
  the derivative of $m_X$ on $X$.
\end{prop}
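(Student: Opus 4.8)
The plan is to estimate $m_X$ region by region, matching the three pieces in the definition of $X$, and showing in each case that the mean curvature is bounded by a small constant times $\rho^{\delta-\tau}r^{\tau-2}$. First, on the region $r \geq 2|y|^\beta$ where $X$ is the graph of the exact Jacobi field $u_\l$ over $C\times\mathbf{R}$: here $m_X$ is purely the quadratic (and higher) error from the minimal surface equation, schematically $m_X = L_{C\times\mathbf{R}}u_\l + \mathcal{Q}(u_\l) = \mathcal{Q}(u_\l)$, where $\mathcal{Q}$ is controlled by $|u_\l|_{C^2}^2$ (and its derivative by $|u_\l|_{C^3}$ times $|u_\l|_{C^2}$) measured in the appropriate scale-invariant way — this is the expansion referred to as \eqref{eq:mXQ}. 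Since $u_\l$ is homogeneous of degree $\l-\gamma$ with a factor $\phi_1$, one has $|u_\l| \lesssim |y|^\l r^{-\gamma}$ on $|y|\leq r$ it degenerates like $r^{\l-\gamma}$, and a rescaling argument on annuli $r\sim R$ (using that $C$ and the graph are controlled by elliptic regularity, exactly as in the proof of Proposition~\ref{prop:Hestimates}) gives the scale-invariant $C^3$ bounds. The quadratic error then carries \emph{two} powers of the small quantity, so $|m_X| \lesssim (\text{size})\cdot \rho^{\delta'-\tau}r^{\tau-2}$ with $\delta'$ strictly bigger than $\l-\gamma$; choosing $\delta$ close enough to $\l-\gamma$ and $A$ large absorbs the constant into $A^{-\kappa}$.

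Second, on the innermost region $r \leq |y|^\beta$, where in each slice $\mathbf{R}^n\times\{y\}$ the surface is exactly $H(y^\l) = |y|^a H_\pm$, a genuinely minimal hypersurface \emph{in that slice}. The mean curvature of $X$ as a hypersurface in $\mathbf{R}^{n+1}$ differs from the slicewise mean curvature (which vanishes) only through the $y$-dependence: as $y$ varies the leaf $H(y^\l)$ moves, so $X$ is a ``graph'' of the moving family and $m_X$ picks up terms involving $\partial_y$ and $\partial_y^2$ of the defining data $|y|^a$, i.e. terms like $|y|^{a-2}$ times bounded geometric factors, suitably weighted by how $H_\pm$ sits over $C$. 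Rescaling the slice by $|y|^{-a}$ reduces this to a fixed computation on $H_\pm$, and tracking powers of $|y|$ and $r$ one finds $|m_X| \lesssim |y|^{?}r^{?}$ which, using $r\leq |y|^\beta$ and $\beta<a$, is again bounded by $A^{-\kappa}\rho^{\delta-\tau}r^{\tau-2}$ for $\tau\leq -\gamma$ and $A$ large; the derivative bound $r|\nabla m_X|$ follows from one more application of the same rescaled elliptic estimates.

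Third — and this is where the main work lies — the interpolation region $|y|^\beta \leq r \leq 2|y|^\beta$, where $X$ is the graph of the cutoff expression \eqref{eq:gluing1}. Here $m_X = L_{C\times\mathbf{R}}(\text{the bracketed cutoff function}) + \mathcal{Q}(\cdots)$, and the Jacobi operator hits the cutoff $\chi(|x|/|y|^\beta)$, producing terms with one or two derivatives of $\chi$, i.e. factors $|y|^{-\beta}$ or $|y|^{-2\beta}$, multiplied by the \emph{difference} $\big[y^\l r^{-\gamma}\phi_1 - u_\l + |y|^a v_\pm(|y|^{-a}x)\big]$ and its first derivative. The point is that on the annulus $r\sim|y|^\beta$ each of these three differences is \emph{small relative to the naive size} $|y|^\l r^{-\gamma}$: the term $y^\l r^{-\gamma}\phi_1 - u_\l$ is $O(|y|^{\l-2}r^{2-\gamma})$, a gain of $r^2/y^2 \sim |y|^{2\beta-2}$; and $|y|^a v_\pm(|y|^{-a}x)$ is $O(|y|^{a}\cdot (|y|^{-a}r)^{-\gamma-c}) = O(|y|^{ac}r^{-\gamma-c})$ with an extra $r^{-c}$ gain from the decay of $v_\pm$ established in Proposition~\ref{prop:Hestimates}. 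Multiplying the gains against the $|y|^{-2\beta}$ from $\Delta\chi$ and comparing with the target weight $\rho^{\delta-\tau}r^{\tau-2}$ (on this annulus $\rho\sim|y|$ and $r\sim|y|^\beta$), one checks the exponent inequalities: because $\beta<a=\l/(\gamma+1)$ the losses from the cutoff derivatives are strictly beaten by the gains, with strict inequality, so a positive power of $|y|$ (hence of $\rho$, hence a factor $A^{-\kappa}$ after restricting to $\rho<A^{-1}$) is left over. The $\mathcal{Q}$ terms are again quadratically small and harmless. The derivative estimate $r|\nabla m_X|$ in every region is obtained by the same rescaling-plus-Schauder scheme applied one order higher, so I would state the $C^3$-scale-invariant bounds once at the start and reuse them.

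The main obstacle is the bookkeeping of exponents in the interpolation region: one must verify that for \emph{some} choice of $\beta\in(1,a)$ and $\delta$ slightly above $\l-\gamma$ all the competing powers of $r$ and $|y|$ line up with strict inequality, and that this is compatible with the requirement $\tau\leq-\gamma$ so that the weight $\rho^{\delta-\tau}r^{\tau-2}$ is the right one for the later fixed-point argument. I expect this to come down to checking a small finite list of linear inequalities in $\beta,\gamma,\l,\delta,\tau$, each of which holds by the standing assumptions $\l-\gamma>1$, $1<\beta<a$, $\delta\approx\l-\gamma$, $\tau\leq-\gamma$, together with the decay exponent $c>0$ from $v_\pm$.
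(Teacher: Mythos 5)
Your proposal follows essentially the same region-by-region strategy as the paper's proof: quadratic error $\mathcal{Q}(u_\l)$ where $X$ is the graph of the exact Jacobi field, the $y$-dependence of the moving Hardt–Simon slices $H(y^\l)$ in the innermost region, and derivatives of the cutoff hitting the small differences $y^\l r^{-\gamma}\phi_1 - u_\l$ and $|y|^a v_\pm(|y|^{-a}x)$ in the interpolation annulus, each then fed through the rescaled-annulus elliptic bookkeeping. The paper reduces at the outset to $\tau=-\gamma$ and splits your first region into the sub-cases $r>|y|$ and $2|y|^\beta\le r\le|y|$ because the size of $u_\l$ behaves differently ($\sim r^{\l-\gamma}$ versus $\sim|y|^\l r^{-\gamma}$), and in the slice region it is worth noting that the first-order $y$-derivative term $E(0)G'(\cdot)\Phi$ is annihilated by the Jacobi operator since $\Phi$ is a Jacobi field, so the quantitative estimate really does come from $G''(0)$ and the quadratic $V$-term of Proposition~\ref{prop:Hestimates}, exactly as your ``$|y|^{a-2}$'' heuristic anticipates; the remaining exponent inequalities you defer to are precisely those the paper checks and they do close under $1<\beta<a$, $\l-\gamma>1$, $\delta$ close to $\l-\gamma$.
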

\begin{proof}
 To simplify notation
we will restrict ourselves to the region where $y \geq 0$, and we will
simply write $H, v$ for $H_+, v_+$. We will analyze the surface $X$ in
different regions, by rescaling it and comparing it to various
scalings of the surfaces $H\times\mathbf{R}$ and $C\times\mathbf{R}$.
In addition we can assume without loss of generality that $\tau =
-\gamma$, since if \eqref{eq:mXest1} holds for $\tau=-\gamma$ then it
also holds for all $\tau < -\gamma$. This is because $\rho\geq r$, and
so $(r / \rho)^{\tau}$ increases as $\tau$ becomes more negative.  

\begin{itemize}
  \setlength\itemsep{1em}
\item {\bf Region I}, where $r > |y|$ and $r\in (R, 2R)$. Here
  $r, \rho\sim R$. Consider the rescaled surface $\tilde{X} = R^{-1}X$,
  given by the graph of $\tilde{u}_\l(x,y) = R^{-1}u_\l(Rx, Ry)$ over $C\times
  \mathbf{R}$. After scaling our region corresponds to $r > |y|$ with
  $r\in (1,2)$. From the definition of $u_l$ we see that in this region
  \[ \label{eq:tildeuest1}|\nabla^i \tilde{u}_\l| \leq C_i R^{\l-\gamma-1}. \]
  Since $u_l$ is a Jacobi field, once $R$ is sufficiently small
  (i.e. $R < A^{-1}$ for large $A$), the mean curvature of $\tilde{X}$
  satisfies
  \[ |m_{\tilde{X}}| + |\nabla m_{\tilde{X}}| < C R^{2\l-2\gamma-2}. \]
  Scaling back, this implies
  \[ |m_X| + R|\nabla m_X| < C R^{2\l-2\gamma-3}. \]
  Since in our region $r, \rho \sim R < A^{-1}$, in order for \eqref{eq:mXest1}
  to hold for sufficiently large $A$ and small $\kappa > 0$, it is enough to ensure that
  \[ 2\l-2\gamma -3 > (\delta-\tau) + (\tau-2) = \delta-2, \]
  i.e. $\delta < 2\l-2\gamma -1$. This is satisfied if $\delta$ is
  sufficiently close to $\l-\gamma$ since by assumption $\l-\gamma >
  1$.
\item {\bf Region II}, where $2|y|^\beta \leq |x| \leq |y|$. Here the
  surface $X$ is still the graph of $u_l$ over $C\times
  \mathbf{R}$. Again assume that $r\in (R, 2R)$, so we have $r\sim R$
  and $\rho\sim |y|$, i.e. $\rho^\beta\lesssim R \lesssim \rho$. As
  above consider
  $\tilde{X} = R^{-1}X$, viewed as the graph of $\tilde{u}_\l =
  R^{-1}u_\l(R\cdot)$ over $C\times \mathbf{R}$. We will use rescaled
  coordinates $\tilde{x} = R^{-1}x, \tilde{y} = R^{-1}y$. From the definition of
  $u_\l$ we now have the estimate
  \[ \label{eq:tildeuest2}|\nabla^i \tilde{u}_\l| \leq C_i
    |\tilde y|^\l R^{\l-\gamma-1}, \]
  and in the rescaled region $2|R\tilde y|^\beta \leq |R\tilde x|$,
  i.e. $|\tilde y|\lesssim R^{\beta^{-1} -1}$. Note that then
  \[ \label{eq:i2} |\tilde y|^\l R^{\l-\gamma-1} \lesssim R^{\l\beta^{-1} -\gamma-1}, \]
  and by our assumption on $\beta$ we have $l\beta^{-1} -\gamma-1 >
  0$. If $R < A^{-1}$ for sufficiently large $A$, and since $u_\l$ is a
  Jacobi field, we then have
  \[  |m_{\tilde{X}}| + |\nabla m_{\tilde{X}}| < C |\tilde y|^{2\l}
    R^{2\l-2\gamma-2}, \]
  and after scaling
  \[ |m_X| + R|\nabla m_X| < C |y|^{2\l} R^{-2\gamma-3} \leq C\rho^{2\l}
    R^{-2\gamma-3}. \]
  In order for \eqref{eq:mXest1} to hold for sufficiently small
  $\kappa > 0$ (with $\tau=-\gamma$), we need
  \[\label{eq:i1} r^{-3-2\gamma} \rho^{2\l} \leq \rho^{\kappa+\delta +\gamma}
    r^{-\gamma-2}. \]
  Using that in our region $\rho^{\beta}
  \lesssim r$, it is enough to ensure that
  $2\l-\delta-\gamma-(1+\gamma)\beta > 0$, or equivalently
  \[ \beta < \frac{\l}{1+\gamma} - \frac{\delta -
      (\l-\gamma)}{1+\gamma}. \]
  Since by assumption $\beta < a$, this inequality can be satisfied by
  choosing $\delta>\l-\gamma$ sufficiently close to $\l-\gamma$. Then in
  turn \eqref{eq:i1} will be satisfied for sufficiently small $\kappa
  > 0$.
\item {\bf Region III}, where $|y|^\beta \leq |x| \leq
  2|y|^\beta$. This is the gluing region, where $X$ is the graph of
  the function in \eqref{eq:gluing1} over $C\times \mathbf{R}$. We
  write this as the graph of $u_\l + E$, where
  \[ E = \chi\left(\frac{|x|}{|y|^\beta}\right) \left[ y^\l
      r^{-\gamma}\phi_1 - u_\l + |y|^a v(|y|^{-a} x)\right]. \]
  As before, we assume that $r\in (R, 2R)$ and we consider the
  rescaled surface $\tilde{X} = R^{-1}X$, which is the graph of
  $R^{-1}u_\l(R\cdot) + R^{-1}E(R\cdot)$. We let $\tilde{x} = R^{-1}x,
  \tilde{y} = R^{-1}y$ as above. The leading contributions to
  the mean curvature $m_{\tilde{X}}$ come from the Jacobi operator
  $L_{C\times\mathbf{R}}$ applied to $E$, and the non-linear part of
  the mean curvature operator applied to $u_\l$. The latter
  contribution is bounded in exactly the same way as in Region II, so
  we focus on the former. For this we need to estimate the derivatives
  of the function $\tilde{E}(\tilde x,\tilde y) = R^{-1}E(R\tilde x,
  R\tilde y)$. The contribution
  of the cutoff function is uniformly bounded so we will ignore
  it.

  The leading term in $y^\l r^{-\gamma}\phi_1 - u_\l$ is $a_1
  y^{\l-2}r^{2-\gamma}\phi_1$, so after rescaling the corresponding
  term and its derivatives are $O( |\tilde y|^{\l-2} R^{\l-\gamma-1})$ (compare
  to \eqref{eq:i2}). Scaling back down, we find that 
the contribution of this term to $|m_X| +
  R|\nabla m_X|$ is $O( \rho^{\l-2} R^{-\gamma})$. For the estimate
  \eqref{eq:mXest1} to hold we then need
\[ \rho^{\l-2} R^{-\gamma} \leq \rho^{\kappa+\delta+\gamma}
  R^{-\gamma-2}. \]
Using that $R \sim \rho^\beta$, it is enough to ensure that
\[ \kappa+\delta+\gamma-\l+2 \leq 2\beta. \] 
This holds for sufficiently small $\kappa$ since $\beta > 1$, as long
as $\delta - (\l-\gamma)$ is chosen very small. 

  It remains to estimate the derivatives of the
  function
  \[ \tilde{v}(\tilde x,\tilde y) = R^{a-1}|\tilde y|^a v( R^{1-a}
    |\tilde y|^{-a}\tilde x). \]
  By Proposition~\ref{prop:Hestimates} we have
  $\nabla^i v = O(|x|^{-\gamma-c-i})$ for some $c > 0$. 
Using this we can estimate the derivatives of
  $\tilde{v}$:
  \[ |\nabla^i \tilde{v}| \leq C_i (R^{1-a}
    |\tilde y|^{-a})^{-\gamma-c-1}. \]
  The contribution of this to $|m_X| + R|\nabla m_X|$ is of order $(R
  \rho^{-a})^{-\gamma-c-1} R^{-1}$, and so to obtain \eqref{eq:mXest1}
  we need to ensure that
\[ \rho^{a(\gamma+1+c)} R^{-\gamma-2-c} \leq
  \rho^{\kappa+\delta+\gamma} R^{-\gamma-2}. \]
Using again that $R \sim \rho^{\beta}$, it is enough to show that
\[  \kappa + \delta - (\l-\gamma) - ac < -\beta c.\]
This holds for small $\kappa$ as long as $\delta - (\l-\gamma)$ is
sufficiently small, since $\beta < a$.

  \item {\bf Region IV}, where $|x| \leq |y|^\beta$. In this region
    the surface $X$ is defined to be $H(y^\l) = y^aH$ in the slice
    $\mathbf{R}^n \times \{y\}$ (recall that we assume $y \geq
    0$). Note that as a result we also have $|x| \geq c_0 y^a$ for some
    $c_0 > 0$. Suppose
    that $r\in (R,2R)$, and let $\tilde{X} = R^{-1}X$ as before. We
    suppose in addition that $y\in (y_0-R, y_0+R)$. It is
    convenient to introduce new coordinates
    \[ \tilde{x} &= R^{-1}x, \quad \tilde{y} = R^{-1}(y-y_0), \]
    so $|\tilde{x}| \in (1,2)$ and $|\tilde{y}|\leq 1$. In terms of
    the new coordinates the scaled up surface $\tilde{X}$ is given by
    $E(\tilde{y})\cdot H$ in the slice $\mathbf{R}^n\times\{\tilde{y}\}$, where
    \[ E(\tilde{y}) = R^{-1}(R\tilde{y} + y_0)^a. \]
    To estimate the mean curvature of this, we view it as a graph over
    the surface $E(0)\cdot H\times \mathbf{R}$. This is somewhat
    similar to the calculation in
    the proof of Proposition~\ref{prop:barrier10}

    Using that $|x|\sim R \ll y_0$, we find that $E(\tilde{y})$ and
    all of its derivatives are of order $R^{-1}y_0^a$ (in fact we have
    more precise bounds $|\partial_{\tilde{y}}^i E(\tilde{y})|
    \lesssim (Ry_0)^{-i} R^{-1}y_0^a$). In addition we
    have
    \[ E(0)^{-1} E(\tilde{y}) = y_0^{-a}(R\tilde{y} + y_0)^a = (1 +
      Ry_0^{-1}\tilde{y})^a = 1 + O(Ry_0^{-1}). \]
    We write $E(0)^{-1}E(\tilde{y}) - 1 = G(R y_0^{-1}\tilde{y})$
    where $G$ has uniformly bounded derivatives. 
    Using Proposition~\ref{prop:Hestimates} we view $E(\tilde{y}) H$
    as the graph of the function
    \[ A &= E(0) \Phi_{G(Ry_0^{-1}\tilde{y})} (E(0)^{-1}\cdot) \\
      &=  E(0) G(Ry_0^{-1}\tilde{y}) \Phi( E(0)^{-1}\cdot) +
      E(0) G(Ry_0^{-1}\tilde{y})^2 V_{G(Ry_0^{-1}\tilde{y})} (E(0)^{-1}\cdot) \]
    over $E(0)\cdot H \times \mathbf{R}$. For $A$ and its derivatives
    we have the estimate
    \[ |\nabla^i A| \leq C_i E(0) Ry_0^{-1} E(0)^\gamma = C_i
      R^{-\gamma} y_0^{a + a\gamma - 1}.\]
    At the same time calculating at $\tilde{y}=0$ we
    have
    \[ L_{E(0)H\times\mathbf{R}} A 
      &= E(0) R^2 y_0^{-2} G''(0) \Phi(E(0)^{-1}\cdot)  \\
      &\quad + 
    E(0) L_{E(0)H\times \mathbf{R}}
    (G(Ry_0^{-1}\tilde{y})^2 V_{G(Ry_0^{-1}\tilde{y})}(E(0)^{-1}\cdot)). \]
  To estimate the last term we calculate that
  \[ \Big|\nabla^i E(0)\,G(Ry_0^{-1}\tilde{y})^2 V_{G(Ry_0^{-1}\tilde{y})}
    (E(0)^{-1}\cdot)\Big| &\leq C_i R^2y_0^{-2} E(0)^{1+\gamma} \\
    &= C_i R^{1-\gamma} y_0^{a+a\gamma-2}. \]
  In sum, the mean curvature satisfies
  \[ m_{\tilde{X}} &= L_{E(0)H\times\mathbf{R}} A + O(A^2) \\
    &\lesssim R^{1-\gamma}y_0^{a+a\gamma-2} +
    R^{-2\gamma}y_0^{2a+2a\gamma-2} \lesssim
    R^{1-\gamma}y_0^{a+a\gamma-2}, \]
  where we used that $R\geq c_0 y_0^a$. The derivatives of
  $m_{\tilde{X}}$ satisfy the same estimate. Scaled down we have
  \[ |m_{X}| + R|\nabla m_{X}| \lesssim R^{-\gamma} y_0^{a+a\gamma
      -2}. \]
  To satisfy \eqref{eq:mXest1} (with $\tau = -\gamma$) we need
  \[ R^{-\gamma} y_0^{a+a\gamma-2} < y_0^{\kappa + \delta+\gamma}
    R^{-\gamma-2}, \]
  since in our region $\rho\sim y_0$.
  Using that $R\leq y_0^\beta$, we can find a suitable $\kappa > 0$ as
  long as
  \[ 2\beta > \delta + \gamma + 2 -a-a\gamma = \delta - (\l-\gamma) +
    2. \]
  Since by assumption $\beta > 1$, this is satisfied as long as
  $\delta$ is sufficiently close to $\l-\gamma$. Then \eqref{eq:mXest1}
  holds for some $\kappa > 0$. 
\end{itemize}
\end{proof}

\subsection{Linear analysis}
In this section we study the mapping properties of the linearized operator
$L_X$ on the surface $X$ that we constructed. The analysis here is
very close to the discussion in \cite{Sz17} so we will be somewhat
brief. As in \cite{Sz17} we  invert the Jacobi operator on
different model spaces, and then glue together the local inverses
using cutoff functions in order to construct an appoximate inverse on
a sufficiently small neighborhood of the origin in $X$.

\subsubsection{Weighted spaces on $X$}
We consider
locally $C^{k,\alpha}$-functions on $X\cap \rho^{-1}(0,A_0^{-1})$, and
define their weighted $C^{k,\alpha}_{\delta,\tau}$-norm by
\[ \label{eq:weightedn}\Vert f\Vert_{C^{k,\alpha}_{\delta,\tau}} = \sup_{R,S > 0} R^{-\tau}
    S^{\tau-\delta} \Vert
    f\Vert_{C^{k,\alpha}_{R^{-2}g_X}(\Omega_{R,S})}. \]
  Here $\Omega_{R,S}\subset X\cap \rho^{-1}(0,A_0^{-1})$
  is the region where $\rho\in (S,2S)$ and
$r \in (R,2R)$. The metric $g_X$ denotes the induced metric on $X$,
and the subscript $R^{-2}g_X$ indicates that we measure the usual
H\"older norm using this rescaled metric.

Note that from the analysis of $X$ in different regions in the proof of
Proposition~\ref{prop:mXest} it follows that the metric $R^{-2}g_X$
has bounded geometry on $\Omega_{R,S}$, with bounds independent of $R,
S$. Because of this we have the
following.
\begin{lemma}
  The Jacobi
operator of $X$ defines a bounded linear map
\[ L_X : C^{2,\alpha}_{\delta, \tau} \to
  C^{0,\alpha}_{\delta-2,\tau-2}. \]
\end{lemma}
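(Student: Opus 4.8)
The plan is to reduce everything to the statement, already observed just before the lemma, that for every $R,S>0$ the rescaled metric $R^{-2}g_X$ has bounded geometry on $\Omega_{R,S}$, with bounds independent of $R,S$. Concretely, this means a uniform lower bound on the injectivity radius, uniform $C^{k,\alpha}$ bounds on the metric components of $R^{-2}g_X$ in geodesic normal (or harmonic) coordinates on balls of a fixed size, and, since the potential of the Jacobi operator scales like the curvature, a uniform bound on $R^2|A_X|^2$ and its covariant derivatives with respect to $R^{-2}g_X$. Here I use that the ambient manifold is $\mathbf{R}^{n+1}$, so $\mathrm{Ric}=0$ and $L_X f = \Delta_X f + |A_X|^2 f$. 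The only genuinely non-trivial input is this uniform bounded geometry, and it is exactly what the region-by-region analysis in the proof of Proposition~\ref{prop:mXest} supplies: in each of Regions I--IV the rescaled surface $R^{-1}X$ on the relevant annulus $r\in(1,2)$ is a graph, with uniformly small $C^3$-norm, over a fixed model (a truncated annulus in $C\times\mathbf{R}$, or in a scaling $E(0)H\times\mathbf{R}$ of the Hardt--Simon surface), and each such model has bounded geometry on $r\in(1,2)$ with bounds depending only on $C$.

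Next I would record the scaling behaviour of $L_X$. On $\Omega_{R,S}$ write $\tilde g = R^{-2}g_X$; then $\Delta_{g_X} = R^{-2}\Delta_{\tilde g}$, so that
\[ L_X f = R^{-2}\bigl(\Delta_{\tilde g} f + R^2|A_X|^2\, f\bigr). \]
By the uniform bounded geometry of $\tilde g$ on $\Omega_{R,S}$, the map $f\mapsto \Delta_{\tilde g}f$ is bounded $C^{2,\alpha}_{\tilde g}(\Omega_{R,S})\to C^{0,\alpha}_{\tilde g}(\Omega_{R,S})$ with a constant independent of $R,S$ (a standard interior Schauder/covering estimate with uniformly controlled coefficients), and $f\mapsto R^2|A_X|^2 f$ is likewise uniformly bounded $C^{0,\alpha}_{\tilde g}\to C^{0,\alpha}_{\tilde g}$ because $C^{0,\alpha}_{\tilde g}(\Omega_{R,S})$ is a Banach algebra with uniformly equivalent norm and $\|R^2|A_X|^2\|_{C^{0,\alpha}_{\tilde g}(\Omega_{R,S})}$ is uniformly bounded. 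Combining these and recalling that $C^{k,\alpha}_{R^{-2}g_X}=C^{k,\alpha}_{\tilde g}$, we obtain
\[ \Vert L_X f\Vert_{C^{0,\alpha}_{R^{-2}g_X}(\Omega_{R,S})} \leq C\,R^{-2}\,\Vert f\Vert_{C^{2,\alpha}_{R^{-2}g_X}(\Omega_{R,S})}, \]
with $C$ independent of $R,S$.

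Finally I would feed this into the definition \eqref{eq:weightedn} of the weighted norms. The target space carries the weights $\delta-2,\tau-2$, and $(\tau-2)-(\delta-2)=\tau-\delta$, so for every $R,S$ with $\Omega_{R,S}$ non-empty,
\[ R^{-(\tau-2)}S^{\tau-\delta}\,\Vert L_X f\Vert_{C^{0,\alpha}_{R^{-2}g_X}(\Omega_{R,S})} &\leq C\,R^{-\tau}S^{\tau-\delta}\,\Vert f\Vert_{C^{2,\alpha}_{R^{-2}g_X}(\Omega_{R,S})} \\ &\leq C\,\Vert f\Vert_{C^{2,\alpha}_{\delta,\tau}}, \]
the last step being the definition of the $C^{2,\alpha}_{\delta,\tau}$ norm. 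Taking the supremum over all admissible $R,S$ gives $\Vert L_X f\Vert_{C^{0,\alpha}_{\delta-2,\tau-2}}\leq C\Vert f\Vert_{C^{2,\alpha}_{\delta,\tau}}$, which is the assertion. The main (and essentially only) obstacle is the uniform bounded geometry of the $R^{-2}g_X$; everything after that is routine. One should also note the minor point that the annuli $\Omega_{R,S}$ for nearby parameters overlap, so a Hölder estimate on each controls the Hölder norm on slightly enlarged regions up to a fixed factor, which suffices when the Banach algebra property and the interior Schauder estimate are invoked; this does not require more than a remark.
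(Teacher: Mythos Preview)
Your proof is correct and follows essentially the same approach as the paper: reduce to the uniform bounded geometry of $R^{-2}g_X$ on $\Omega_{R,S}$, use this to get a uniform local $C^{2,\alpha}\to C^{0,\alpha}$ bound for the rescaled operator, pick up the factor $R^{-2}$ from the scaling $L_{R^{-1}X}=R^2L_X$, and then unwind the weight definitions. You are somewhat more explicit about the potential term and the Banach algebra structure, but the argument is the same.
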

\begin{proof}
  Suppose that $\Vert u\Vert_{C^{2,\alpha}_{\delta, \tau}} \leq 1$. On
    a given region $\Omega_{R,S}$ this means that
    \[ \Vert u\Vert_{C^{2,\alpha}_{R^{-2}g_X}(\Omega_{R,S})} \leq
      R^\tau S^{\delta-\tau}. \]
    Let us denote by $L_{R^{-1}X}$ the Jacobi operator with respect
    to the scaled up surface. The bounded geometry of $R^{-2}g_X$
    implies that
    \[ \Vert L_{R^{-1}X}
      u\Vert_{C^{0,\alpha}_{R^{-2}g_X}(\Omega_{R,S})} \leq C \Vert
      u\Vert_{C^{2,\alpha}_{R^{-2}g_X}(\Omega_{R,S})} \leq CR^\tau S^{\delta-\tau} \]
    for a constant $C$ (independent of $R,S$). We have $L_{R^{-1}X} =
    R^2 L_X$, and so
    \[ \Vert L_X u\Vert_{C^{0,\alpha}_{R^{-2}g_X}(\Omega_{R,S})} \leq
      C R^{\tau-2} S^{\delta-\tau}.\]
    Taking the supremum over all $R,S$ we obtain
    \[ \Vert L_X u\Vert_{C^{0,\alpha}_{\delta-2,\tau-2}} \leq C \Vert
      u\Vert_{C^{2,\alpha}_{\delta, \tau}}\]
    as required. 
  \end{proof}

  We will also need an estimate for the nonlinear terms in the mean
  curvature operator. For a function $u\in C^{2,\alpha}_{\delta,\tau}$
  let us denote by $m_X(u)$ the mean curvature of the graph of $u$
  over $X$. We define the nonlinear operator $Q_X$ by
  \[ \label{eq:mXQ} m_X(u) = m_X + L_X(u) + Q_X(u). \]
  We then have the following, expressing that $Q_X$ contains only
  quadratic and higher order terms.
  \begin{lemma}\label{lem:Qquadratic}
    There are constants $c_1, C > 0$ with the following
    property. Suppose that $\Vert u_i \Vert_{C^{2,\alpha}_{1,1}} \leq
    c_1$ for $i=1,2$. Then
    \[ \Vert Q_X(u_1) - Q_X(u_2)\Vert_{C^{0,\alpha}_{\delta-2,\tau-2}}
      \leq C (\Vert u_1 \Vert_{C^{2,\alpha}_{1,1}} + \Vert u_1
      \Vert_{C^{2,\alpha}_{1,1}}) \Vert u_1 -
      u_2\Vert_{C^{2,\alpha}_{\delta,\tau}}. \]
  \end{lemma}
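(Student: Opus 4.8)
The plan is to reduce the statement to a pointwise estimate on each rescaled region $\Omega_{R,S}$ and then take the supremum defining the weighted norm. First I would recall the definition \eqref{eq:mXQ}, so that $Q_X(u) = m_X(u) - m_X - L_X(u)$ is precisely the collection of terms in the Taylor expansion of the mean curvature operator that are at least quadratic in $u$ and its first two derivatives. The mean curvature of the graph of $u$ over $X$ is given by a smooth function of $u$, $\nabla_X u$, $\nabla_X^2 u$ and the geometry of $X$ (the second fundamental form $A_X$ and its derivatives), so $Q_X(u)$ can be written schematically as $Q_X(u) = \mathcal{N}(u,\nabla u,\nabla^2 u)$ with $\mathcal{N}$ vanishing to second order at the origin. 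The key structural input is that, as established in the proof of Proposition~\ref{prop:mXest}, the rescaled metric $R^{-2}g_X$ has bounded geometry on $\Omega_{R,S}$ uniformly in $R,S$, and moreover the rescaled second fundamental form of $R^{-1}X$ is uniformly bounded there.

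The main step is the scaling bookkeeping. On $\Omega_{R,S}$, passing to the scaled-up surface $R^{-1}X$, the hypothesis $\Vert u_i\Vert_{C^{2,\alpha}_{1,1}}\le c_1$ translates (using $\delta=\tau=1$ in \eqref{eq:weightedn}) to $\Vert u_i\Vert_{C^{2,\alpha}_{R^{-2}g_X}(\Omega_{R,S})}\le c_1 R$, i.e. the rescaled functions $R^{-1}u_i$ have $C^{2,\alpha}$-norm $\le c_1$ with respect to $R^{-2}g_X$. Choosing $c_1$ small enough that we stay in the region where the mean curvature operator is a nice smooth (indeed analytic) function with uniformly bounded derivatives, the difference $Q_{R^{-1}X}(R^{-1}u_1)-Q_{R^{-1}X}(R^{-1}u_2)$ — where $Q_{R^{-1}X}$ is the nonlinear remainder for the rescaled surface — obeys the Lipschitz-type bound
\[
\Vert Q_{R^{-1}X}(R^{-1}u_1)-Q_{R^{-1}X}(R^{-1}u_2)\Vert_{C^{0,\alpha}_{R^{-2}g_X}(\Omega_{R,S})} \le C\big(\Vert R^{-1}u_1\Vert + \Vert R^{-1}u_2\Vert\big)\,\Vert R^{-1}u_1 - R^{-1}u_2\Vert,
\]
all norms being $C^{k,\alpha}_{R^{-2}g_X}(\Omega_{R,S})$. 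This is just the elementary fact that a $C^2$ map vanishing to second order has differences controlled by (sum of arguments) times (difference of arguments), applied with uniform constants thanks to bounded geometry. One then undoes the rescaling using the homogeneity $Q_{R^{-1}X}(v) = R^2 Q_X(Rv)$ (the mean curvature scales like $R^{-1}$ on $R^{-1}X$, hence this relation for the nonlinear remainder measured in the two metrics), substitute $v=R^{-1}u_i$, and insert the weighted-norm translations of the right-hand side: $\Vert R^{-1}u_i\Vert_{C^{2,\alpha}_{R^{-2}g_X}}\le \Vert u_i\Vert_{C^{2,\alpha}_{1,1}}$ and $\Vert R^{-1}(u_1-u_2)\Vert_{C^{2,\alpha}_{R^{-2}g_X}(\Omega_{R,S})}\le R^{\tau-1}S^{\delta-\tau}\Vert u_1-u_2\Vert_{C^{2,\alpha}_{\delta,\tau}}$. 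After collecting powers of $R$ and $S$ one arrives at $\Vert Q_X(u_1)-Q_X(u_2)\Vert_{C^{0,\alpha}_{R^{-2}g_X}(\Omega_{R,S})}\le C(\Vert u_1\Vert_{C^{2,\alpha}_{1,1}}+\Vert u_2\Vert_{C^{2,\alpha}_{1,1}})R^{\tau-2}S^{\delta-\tau}\Vert u_1-u_2\Vert_{C^{2,\alpha}_{\delta,\tau}}$, and taking the supremum over $R,S$ gives exactly the claimed bound.

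The step I expect to be the main obstacle — or at least the one requiring genuine care rather than routine manipulation — is the \emph{weight consistency check}: one has to verify that the quadratic structure is compatible with the weights $(\delta-2,\tau-2)$ on the target, i.e. that two factors each living morally in the $(\delta,\tau)$ scale, one of which we only estimate in the fixed $(1,1)$ scale, produce a remainder in the $(\delta-2,\tau-2)$ scale and not something worse near $r\to 0$ (the singular line of the cylinder) or near $\rho\to 0$. Concretely, the danger is that a term like $u\cdot\nabla^2 u$ or $(\nabla u)^2$, with one factor estimated via the weaker $C^{2,\alpha}_{1,1}$ norm, still decays fast enough in $r$; this works precisely because measuring one factor in the $(1,1)$-scale is \emph{weaker} than the $(\delta,\tau)$-scale in the relevant regime (recall $\delta$ is close to $\l-\gamma>1$, larger than $1$, and $\tau\le-\gamma$), so the product lands safely in $C^{0,\alpha}_{\delta-2,\tau-2}$ with room to spare. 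I would also need to note that the inhomogeneous pieces of $Q_X$ coming from the nonvanishing mean curvature $m_X$ of the approximate solution do not spoil the quadratic bound — but since $Q_X$ is \emph{defined} by subtracting off $m_X$ and $L_X(u)$ in \eqref{eq:mXQ}, these are absorbed automatically and $Q_X(0)=0$, so the difference $Q_X(u_1)-Q_X(u_2)$ genuinely only sees quadratic-and-higher terms. The rest is the scaling bookkeeping sketched above, which is tedious but mechanical.
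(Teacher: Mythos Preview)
Your approach is essentially identical to the paper's: localize to $\Omega_{R,S}$, rescale to the unit-scale surface $R^{-1}X$ with uniformly bounded geometry, use the quadratic vanishing of $Q_{R^{-1}X}$ to get the Lipschitz bound, and unwind the scaling. One small slip: the scaling relation should be $Q_{R^{-1}X}(v)=R\,Q_X(Rv)$ (equivalently $Q_{R^{-1}X}(R^{-1}u)=R\,Q_X(u)$), not $R^2$, since $m_{R^{-1}X}(R^{-1}u)=R\,m_X(u)$; your final displayed estimate with the $R^{\tau-2}S^{\delta-\tau}$ factor is nonetheless correct, so this appears to be a typo rather than an error in the bookkeeping. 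Also, the ``weight consistency check'' you flag as the main obstacle is in fact purely mechanical here --- the arithmetic $(1,1)\times(\delta,\tau)\to(\delta-2,\tau-2)$ works for \emph{any} $\delta,\tau$ and needs no relation to $\ell-\gamma$ --- so that paragraph overstates the difficulty.
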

  \begin{proof}
    Consider one of the regions $\Omega_{R,S}$. By assumption
    \[ \Vert R^{-1} u_i\Vert_{C^{2,\alpha}_{R^{-2}g_X}(\Omega_{R,S})} \leq
      c_1. \]
    If we denote by $X_u$ the graph of $u$ over $X$, then the rescaled
    surface $R^{-1}X_u$ is the same as the graph of $R^{-1}u$ over
    $R^{-1}X$. Since $R^{-1}X$ has bounded geometry, it follows that
    if $\Vert R^{-1}u\Vert_{C^{2,\alpha}_{R^{-2}g_X}}$ is sufficiently
    small, then 
    \[ m_{R^{-1}X}(R^{-1}u) = m_{R^{-1}X} + L_{R^{-1}X}(R^{-1}u) +
      Q_{R^{-1}X}(R^{-1}u), \]
    where $Q_{R^{-1}X}(R^{-1}u)$ is a convergent power series in
    $R^{-1}u$ and its first two derivatives, with at least degree two
    terms. From this it follows that if $c_1$ is sufficiently small
    above, then 
    \[ Q_{R^{-1}X}(R^{-1}u_1) - Q_{R^{-1}X}(R^{-1}u_2) \]
    is a convergent power series in $R^{-1}u_i$ and its first two
    derivatives, in which every term has a factor of $R^{-1}(u_1-u_2)$
    or one of its derivatives. It follows that
    \[ \Vert Q_{R^{-1}X}(R^{-1}u_1) & - Q_{R^{-1}X}(R^{-1}u_2)
      \Vert_{C^{0,\alpha}_{R^{-2}g_X}(\Omega_{R,S})} \leq \\
      &C \Vert
      R^{-1}(u_1-u_2)\Vert_{C^{2,\alpha}_{R^{-2}g_X}}
      \Big( \Vert
      R^{-1}u_1\Vert_{C^{2,\alpha}_{R^{-2}g_X}} + 
      \Vert
      R^{-1}u_2\Vert_{C^{2,\alpha}_{R^{-2}g_X}} \Big). \]
    The claimed result follows using that $m_{R^{-1}X}(R^{-1}u) = R
    m_X(u)$. 
  \end{proof}

  For technical reasons it will be useful to work with H\"older spaces
  on $X\cap \rho^{-1}(0,A^{-1}]$, with $A > 2A_0$, for some fixed $A_0 >
  0$. We define the
  corresponding space $C^{k,\alpha}_{\delta,\tau}(X\cap
  \rho^{-1}(0,A^{-1}])$ to be the space of functions obtained by
  restricting elements of
  $C^{k,\alpha}_{\delta,\tau}(X\cap\rho^{-1}(0,A_0^{-1}))$. The
  norm of a function $f\in C^{k,\alpha}_{\delta,\tau}(X\cap
  \rho^{-1}(0,A^{-1}])$ is defined to be the infimum of the corresponding
  norms of the extensions of $f$ to
  $\rho^{-1}(0,A_0^{-1})$. 
  
  We will need the following comparison between the weighted
  spaces for different weights.
  \begin{lemma}\label{lem:normcomp1}
    Suppose that $\delta,\tau$ are as in
    Proposition~\ref{prop:mXest}, with $\tau$ sufficiently close to
    $-\gamma$.  Then for constants $\kappa, C > 0$ we
    have
    \[ \Vert u\Vert_{C^{2,\alpha}_{1,1}(\rho^{-1}(0, A^{-1}])} \leq C
      A^{-\kappa} \Vert
      u\Vert_{C^{2,\alpha}_{\delta,\tau}(\rho^{-1}(0,A^{-1}])}. \]
  \end{lemma}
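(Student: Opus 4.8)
The plan is to compare the two weighted norms region by region, on each $\Omega_{R,S}\subset X\cap\rho^{-1}(0,A^{-1}]$, where $\rho\in(S,2S)$ and $r\in(R,2R)$, using only the relation between the parameters $R,S$ that is forced by the geometry of $X$. On such a region the defining formula \eqref{eq:weightedn} gives
\[
\Vert u\Vert_{C^{2,\alpha}_{1,1}(\Omega_{R,S})}
= R^{-1}S^{1-1}\Vert u\Vert_{C^{2,\alpha}_{R^{-2}g_X}(\Omega_{R,S})}
= R^{-1}\Vert u\Vert_{C^{2,\alpha}_{R^{-2}g_X}(\Omega_{R,S})},
\]
while $\Vert u\Vert_{C^{2,\alpha}_{\delta,\tau}(\Omega_{R,S})} = R^{-\tau}S^{\tau-\delta}\Vert u\Vert_{C^{2,\alpha}_{R^{-2}g_X}(\Omega_{R,S})}$, so on that fixed region
\[
\Vert u\Vert_{C^{2,\alpha}_{1,1}(\Omega_{R,S})}
= R^{\tau-1}S^{\delta-\tau}\,\Vert u\Vert_{C^{2,\alpha}_{\delta,\tau}(\Omega_{R,S})}.
\]
Since $\rho\le A^{-1}$ throughout, we have $S\le A^{-1}$ and $R\le S\le A^{-1}$. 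Thus it suffices to show that, over all regions $\Omega_{R,S}$ actually occurring in $X$, the prefactor $R^{\tau-1}S^{\delta-\tau}$ is bounded by $CA^{-\kappa}$ for some $\kappa>0$; taking the supremum over $R,S$ then yields the claimed inequality (the passage between the $A_0$-truncated domain and the $A$-truncated domain is handled exactly as in the definition of the truncated spaces, via restriction/extension, and only improves constants).

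The heart of the matter is therefore to identify, from the four regions in the proof of Proposition~\ref{prop:mXest}, the admissible range of $(R,S)$. In Regions I and III we have $R\sim S^{\beta}$ with $\beta>1$ (in Region I, $R\sim S$, which is the case $\beta=1$), in Region II we have $S^{\beta}\lesssim R\lesssim S$, and in Region IV we have $S^{\beta}\gtrsim R\gtrsim c_0 S^{a}$ with $a=\l/(1+\gamma)$. In all cases $R$ ranges between a fixed power $S^{b}$ and $S$ with $1\le b\le a$, so the prefactor is a product of powers of $S$ (all small, since $S\le A^{-1}<1$) with exponents that are continuous functions of $\delta$ and $\tau$. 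Since $\tau$ is close to $-\gamma<0$ and $\delta$ is close to $\l-\gamma$, one checks that $\tau-1<0$ and, using $R\ge S^{a}$ together with the bound $\delta-(\l-\gamma)$ small, that the total exponent of $S$ in $R^{\tau-1}S^{\delta-\tau}$ is a positive number bounded below by some $\kappa=\kappa(C,\delta,\tau)>0$; this gives $R^{\tau-1}S^{\delta-\tau}\le S^{\kappa}\le A^{-\kappa}$. Concretely: with $R=S^{b}$, the exponent is $b(\tau-1)+\delta-\tau = (b-1)(\tau-1) + (\delta-1)$; since $\delta$ is just above $\l-\gamma>1$ we have $\delta-1>0$, and since $b\ge 1$ and $\tau-1<0$ the first term is $\le 0$ but its magnitude $(b-1)(1-\tau)\le (a-1)(1-\tau)$ is controlled, so choosing $\tau$ sufficiently close to $-\gamma$ (equivalently keeping $1-\tau$ from being too large — this is where ``$\tau$ sufficiently close to $-\gamma$'' is used) and $\delta$ appropriately keeps the exponent positive and bounded below uniformly.

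The main obstacle is the bookkeeping in the boundary case $R\sim S^{a}$ of Region IV, where $R$ is as small as it can be relative to $S$: there the negative contribution $a(\tau-1)$ is largest in magnitude, and one must verify that $a(\tau-1)+\delta-\tau>0$, i.e. $(a-1)(\tau-1)+\delta-1>0$. This is exactly the inequality that forces the hypothesis that $\tau$ be close enough to $-\gamma$ and $\delta$ close enough to $\l-\gamma$; it is essentially the same type of constraint already appearing in the Region IV estimate in Proposition~\ref{prop:mXest} (where $2\beta>\delta-(\l-\gamma)+2$ was needed), so no genuinely new smallness condition is introduced. Once this single scalar inequality is checked, uniformity of the implied constants across regions follows from the bounded geometry of $R^{-2}g_X$ on $\Omega_{R,S}$ noted after Proposition~\ref{prop:mXest}, and the proof is complete.
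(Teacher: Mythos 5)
Your argument is correct and follows essentially the same route as the paper. The paper's proof simply records the uniform lower bound $r \geq c_0\rho^a$ on $X$, uses it to estimate $R^{\tau-1}S^{\delta-\tau}\leq C\,S^{a(\tau-1)+\delta-\tau}$, and checks that the exponent $a(\tau-1)+\delta-\tau$ is positive when $\delta>\ell-\gamma$ and $\tau$ is close enough to $-\gamma$; your region-by-region accounting of the admissible range $S^a\lesssim R\lesssim S$ arrives at the identical worst case $b=a$ and the same scalar inequality.
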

  \begin{proof}
    From the construction of $X$ we know that for a constant $c_0 > 0$
    we have $r \geq c_0\rho^a$ on $X$. This implies that on the region
    $\Omega_{R,S}$, i.e. where $r\in (R,2R)$ and $\rho\in (S,2S)$, we
    have
    \[ R^\tau S^{\delta-\tau} \leq C S^{a(\tau-1)+\delta-\tau} R. \]
    If $\delta > \l-\gamma$ and $\tau$ is sufficiently close to
    $-\gamma$, then $a(\tau-1) + \delta-\tau > 0$. The result follows
    from this. 
  \end{proof}
  
We will also need to study the geometry of $X$ in different regions that are
larger than those considered in Proposition~\ref{prop:mXest}. This is
analogous to Propositions 6 and 7 in \cite{Sz17}. We will use similar
notation.

For large $A, \Lambda > 0$, consider the region
\[ \mathcal{U} = \{0 < \rho < A^{-1}, r > \Lambda\rho^a\}\cap X \]
inside $X$. Let $F : \mathcal{U} \to C\times \mathbf{R}$ be the
nearest point projection. We then have the following.
\begin{prop}\label{prop:projest1}
  Given $k, \epsilon > 0$, if $\Lambda > \Lambda(\epsilon)$ and $A >
  A(\epsilon)$ are sufficiently large, then on the set $\mathcal{U}$
  we have $|\nabla^i(F(x,y) - (x,y))| \leq \epsilon |x|^{1-i}$, and in addition
  \[ |\nabla^i(F^*g_{C\times\mathbf{R}} - g_X)|_{g_X} < \epsilon
    r^{-i}, \]
  for $i\leq k$, where $\nabla$ denotes the covariant derivative with
  respect to $g_X$.  
\end{prop}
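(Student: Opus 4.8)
The plan is to analyze $X$ on the region $\mathcal U$ by the same rescaling scheme used in the proof of Proposition~\ref{prop:mXest}, but now tracking how close $X$ is to $C\times\mathbf R$ itself (in $C^k$), rather than merely estimating its mean curvature. The key point is that on $\mathcal U$ we have $r > \Lambda\rho^a$, which, together with the fact that $X$ is the graph of $u_\l$ (or of the interpolated function \eqref{eq:gluing1}) over $C\times\mathbf R$, forces $X$ to be a small $C^k$-graph over $C\times\mathbf R$ once $\Lambda$ and $A$ are large. Concretely, I would cover $\mathcal U$ by the regions $\Omega_{R,S}$ where $\rho\in(S,2S)$, $r\in(R,2R)$, with the constraint $R > \Lambda S^a$, and on each such region pass to the rescaled surface $\tilde X = R^{-1}X$, which sits in the fixed annulus $r\in(1,2)$, $\rho\lesssim R/S \le$ (bounded), and is the graph over $C\times\mathbf R$ of the rescaled function $\tilde u = R^{-1}u_\l(R\cdot)$ or the rescaled version of \eqref{eq:gluing1}.

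The main estimate is then the $C^k$-smallness of $\tilde u$ on that fixed annulus. In Region II of the proof of Proposition~\ref{prop:mXest} (and Regions I, III, where $\mathcal U$ meets them) we already have $|\nabla^i\tilde u_\l|\le C_i|\tilde y|^\l R^{\l-\gamma-1}$, and since $|\tilde y|\lesssim R/S$ and $R>\Lambda S^a$ with $a=\l/(1+\gamma)>1$, the quantity $|\tilde y|^\l R^{\l-\gamma-1}$ is bounded by a negative power of $\Lambda$ times a positive power of $R\le A^{-1}$; hence it is $<\epsilon$ once $\Lambda>\Lambda(\epsilon)$ and $A>A(\epsilon)$. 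In the gluing region the cutoff and the $v_\pm$-terms contribute the extra pieces estimated there, which are likewise controlled by negative powers of $\Lambda$ on $\mathcal U$. This gives $|\nabla^i\tilde u|\le\epsilon$ on the rescaled annulus for $i\le k+1$, say. Now the nearest-point projection $\tilde F:\tilde X\to C\times\mathbf R$ and the pulled-back metric are smooth functions of $\tilde u$ and its derivatives that vanish when $\tilde u\equiv 0$, so $|\nabla^i(\tilde F(x,y)-(x,y))|\le C\epsilon$ and $|\nabla^i(\tilde F^*g_{C\times\mathbf R}-g_{\tilde X})|\le C\epsilon$ on the annulus (using bounded geometry of $C\times\mathbf R$ away from its singular line, which holds because $r\ge 1$ there). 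Scaling back down, $\nabla^i(\tilde F(x,y)-(x,y))$ of size $C\epsilon$ becomes $|\nabla^i(F(x,y)-(x,y))|\le C\epsilon R^{1-i}\sim C\epsilon|x|^{1-i}$, and $|\nabla^i(\tilde F^*g-g_{\tilde X})|\le C\epsilon$ becomes $|\nabla^i(F^*g_{C\times\mathbf R}-g_X)|_{g_X}\le C\epsilon R^{-i}\sim C\epsilon r^{-i}$, which is the claim after renaming $C\epsilon$ as $\epsilon$.

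The main obstacle I anticipate is bookkeeping at the boundary between $\mathcal U$ and the gluing region (Region III): there $X$ is no longer literally a graph of $u_\l$, so one must check that the correction terms $\chi\big(|x|/|y|^\beta\big)\big[y^\l r^{-\gamma}\phi_1-u_\l+|y|^av_\pm(|y|^{-a}x)\big]$ and their derivatives up to order $k+1$ are also small on $\mathcal U$ after rescaling; but on $\mathcal U$ we have $r>\Lambda\rho^a$ while the gluing happens near $r\sim\rho^\beta$ with $\beta<a$, so in fact for $\Lambda$ large $\mathcal U$ only meets the part of Region III where $\chi\equiv 1$ or where the cutoff argument of Region I/II already applies, and the extra terms carry the same negative powers of $\Lambda$. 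A secondary point is that one must verify the bounded-geometry hypotheses needed to turn $C^{k,\alpha}$-smallness of the defining function into $C^k$-smallness of $F$ and $F^*g$; this follows from the analysis of $X$ in Proposition~\ref{prop:mXest} together with $r\ge 1$ on the rescaled annulus, which keeps us uniformly away from the singular ray of $C\times\mathbf R$. Apart from these, the proof is a routine rescaling-and-elliptic-estimates argument parallel to Propositions 6 and 7 in \cite{Sz17}.
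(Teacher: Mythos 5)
Your overall strategy---rescale by $R^{-1}$ so that $\tilde X = R^{-1}X$ sits in a fixed annulus, show that $\tilde X$ is a small $C^k$-graph over $C\times\mathbf{R}$ there, and note that the nearest-point projection and pulled-back metric depend smoothly on that graph function---is the right one and is essentially what the paper does. However, there is a genuine gap in how you decide which regions need to be treated.

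You claim that on $\mathcal{U}$, since $\beta < a$ and the gluing happens near $r\sim\rho^\beta$, ``for $\Lambda$ large $\mathcal{U}$ only meets the part of Region III where $\chi\equiv 1$,'' effectively reducing the analysis to the regions where $X$ is the graph of $u_\l$. This inequality is backwards. Because $\beta < a$ and $\rho < 1$, we have $\rho^\beta > \rho^a$, so $\Lambda\rho^a < \rho^\beta$ as soon as $\rho$ is small: the constraint $r > \Lambda\rho^a$ does not exclude Region III at all, and it does not exclude Region IV either. In fact $\mathcal{U}$ contains the entire strip $\Lambda\rho^a < r < \rho^\beta$ inside Region IV, where $X$ is by definition $H(y^\l)$ in the $\mathbf{R}^n\times\{y\}$ slice, not a graph of $u_\l$ or of the interpolated function. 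This is precisely the case that the paper singles out (``we will only consider Region IV since the others are simpler''), and your argument does not touch it. The crucial observation there is that after rescaling, $\tilde X$ is a small perturbation of $R^{-1}y_0^a\cdot H\times\mathbf{R}$, and the $\mathcal{U}$ constraint $r > \Lambda\rho^a$ translates into $R^{-1}y_0^a < 2\Lambda^{-1}$; since $\lambda H\to C$ as $\lambda\to 0$, this forces $\tilde X$ close to $C\times\mathbf{R}$ once $\Lambda$ is large. Equivalently, one can write $H(y^\l)$ as a graph over $C$ of the function $f_{y^\l}$ on $\{r > C_1|y|^a\}$ and check $|\nabla^i(R^{-1}f_{y^\l}(R\cdot))|\lesssim \Lambda^{-\gamma-1}$ on $\mathcal{U}$, using $|y|^\l r^{-\gamma-1} < \Lambda^{-\gamma-1}$ (since $a(\gamma+1)=\l$). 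Your estimate for Region II (giving $\lesssim\Lambda^{-\gamma-1}$) is correct, but without the corresponding analysis in Region IV the proof is incomplete, and the premise you used to dismiss Region IV is false.
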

\begin{proof}
  As in \cite[Proposition 6]{Sz17} the result follows from the analysis of $X$
  in different regions in the proof of
  Propostion~\ref{prop:mXest}. We will only consider Region IV since
  the others are simpler. In Region IV $\rho$ is comparable to
  $y$, so we are essentially considering the region where $\Lambda y^a <
  r < y^\beta$ (and we are assuming that $y\geq 0$ as in
  Proposition~\ref{prop:mXest}). As in the proof of
  Proposition~\ref{prop:mXest}, we suppose that $r\in (R, 2R)$ and
  $y\in (y_0-R, y_0+R)$,  and we
  let $\tilde{X} = R^{-1}X$. In the proof of the proposition we showed
  that on our region $\tilde{X}$ is a small perturbation of $R^{-1}y_0^a\cdot H
  \times\mathbf{R}$ if $R$ (or equivalently $\rho$)
  is sufficiently small. By assumption $R^{-1}y_0^a < 2\Lambda^{-1}$,
  which we can make arbitrarily small by choosing $\Lambda$ large. It
  follows then that $\tilde{X}$ is a small perturbation of the cone
  $C\times\mathbf{R}$, and the nearest point projection $\tilde{F}$
  from $\tilde{X}$ to $C\times\mathbf{R}$ satisfies
  \[ |\nabla^i(\tilde{F}^*g_{C\times\mathbf{R}} - g_{\tilde{X}})|_{g_{\tilde{X}}} < \epsilon
    r^{-i}, \]
  once $\Lambda$ is sufficiently large. The required estimate follows
  by scaling back to $X$. 
\end{proof}

Next we consider a region $\mathcal{V}$ in $X$ of the form
\[ \mathcal{V} = \{ r < 2\Lambda |y_0|^a, |y-y_0|< B|y_0|^a\}\cap
  X, \]
where $\Lambda, B$ are large, and $y_0$ is such that $|y_0| <
A^{-1}$. On this region we define the nearest point projection
\[ G : \mathcal{V} \to H(y_0^\l)\times\mathbf{R}, \]
which satisfies the following.
\begin{prop}\label{prop:Gproj1}
  Given $k, \epsilon, \Lambda, B > 0$, if $A > A(k,\epsilon,\Lambda, B)$,
  then $|\nabla^i(G(x,y) - (x,y))| \leq \epsilon |x|^{1-i}$ and
  \[ |\nabla^i(G^*g_{H(y_0^\l)\times\mathbf{R}} - g_X)|_{g_X} < \epsilon
    r^{-i}, \]
  for $i\leq k$. 
\end{prop}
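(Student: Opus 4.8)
The plan is to follow exactly the strategy used in the proof of Proposition~\ref{prop:projest1}, now keeping track of the region $\mathcal{V} = \{r < 2\Lambda|y_0|^a,\ |y-y_0|<B|y_0|^a\}\cap X$, which lives in the ``core'' of the surface where $X$ is modeled on scalings of $H\times\mathbf{R}$ rather than on $C\times\mathbf{R}$. By translating in $y$ and rescaling we may assume $y_0 > 0$ (the case $y_0<0$ being symmetric), and it is convenient to work with the rescaled surface $|y_0|^{-a}X$: under this rescaling the slice $H(y_0^\l) = |y_0|^a H$ becomes $H$ itself, the region $\mathcal{V}$ becomes a fixed region $\{r < 2\Lambda,\ |y-1|<B\}$ independent of $y_0$, and the condition $|y_0|<A^{-1}$ translates into a smallness parameter that can be made as small as we like by taking $A$ large. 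So it suffices to prove the estimate for the rescaled surface, uniformly as the smallness parameter tends to $0$.

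First I would show that on the rescaled region the surface $|y_0|^{-a}X$ is, in the $C^k$ sense, an arbitrarily small perturbation of $H\times\mathbf{R}$ once $A$ is large. This is where I invoke the detailed analysis of $X$ in the proof of Proposition~\ref{prop:mXest}: the region $\mathcal{V}$ intersects Regions III and IV of that proof, and in both the surface $X$ was exhibited as a graph over a scaling $E(\tilde y)\cdot H$ of $H$ with $E$ and its derivatives controlled, the graphing function being $O(\rho^{\l-\gamma-1})$-small together with its derivatives after rescaling (as in the bounds on $A$, $B$, $\tilde v$ in that proof). Concretely, in Region IV we wrote $X$ as the graph of a function $A$ with $|\nabla^i A| \le C_i R^{-\gamma}y_0^{a+a\gamma-1}$ over $E(0)H\times\mathbf{R}$, and after scaling by $|y_0|^{-a}$ this becomes a perturbation of $H\times\mathbf{R}$ whose $C^k$ norm is bounded by a positive power of $y_0$ (since $a+a\gamma-1 = \l-\gamma > 0$ and $R \ge c_0 y_0^a$); in Region III the gluing function and the term $|y|^a v_\pm(|y|^{-a}x)$ contribute similarly small errors by Proposition~\ref{prop:Hestimates}. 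Also $H\times\mathbf{R}$ itself has bounded geometry on $\{r<2\Lambda\}$ with bounds depending only on $\Lambda$ and the cone, so these statements make sense uniformly.

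Once the rescaled surface is known to be a $C^k$-small graph over $H\times\mathbf{R}$, the nearest-point projection $\tilde G$ onto $H\times\mathbf{R}$ is well defined on the rescaled $\mathcal{V}$, and standard estimates for nearest-point projections onto a fixed submanifold of bounded geometry give $|\nabla^i(\tilde G(x,y)-(x,y))| \le \epsilon|x|^{1-i}$ and $|\nabla^i(\tilde G^* g_{H\times\mathbf{R}} - g_{|y_0|^{-a}X})| < \epsilon r^{-i}$ for $i\le k$, provided the perturbation is small enough, i.e. provided $A > A(k,\epsilon,\Lambda,B)$. Scaling back by $|y_0|^a$ — which is a homothety and hence preserves all the scale-invariant quantities $|\nabla^i(\,\cdot\,-\mathrm{id})||x|^{i-1}$ and $|\nabla^i(\,\cdot\,)| r^i$ — yields the claimed bounds for $G$ on $\mathcal{V}$. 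The main obstacle is purely bookkeeping: verifying that the error bounds extracted from Regions III and IV of the proof of Proposition~\ref{prop:mXest} genuinely go to zero under the rescaling by $|y_0|^{-a}$ on the \emph{enlarged} region (of size $\sim B|y_0|^a$ in $y$ and $\sim\Lambda|y_0|^a$ in $r$, rather than the dyadic annuli there), uniformly in $\Lambda, B$ once $A$ is large; this is the same uniformity already handled in \cite[Proposition 7]{Sz17}, and no new idea beyond that is needed.
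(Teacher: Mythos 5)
Your proposal is correct and follows essentially the same strategy as the paper: reduce to the analysis of $X$ in the relevant region of the proof of Proposition~\ref{prop:mXest}, observe that rescaling puts the whole of $\mathcal{V}$ into a region where $X$ is a $C^k$-small perturbation of $H(y_0^\l)\times\mathbf{R}$, and then invoke standard nearest-point projection estimates. One small inaccuracy: once $A$ is large, the constraint $r < 2\Lambda|y_0|^a$ together with $|y_0|<A^{-1}$ forces $r < |y_0|^\beta$, so $\mathcal{V}$ lies entirely in Region~IV (not partly in Region~III), which only simplifies your argument.
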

\begin{proof}
  This follows from the analysis of Region IV in the proof of
  Proposition~\ref{prop:mXest}. As in that proof, we assume that we
  are in a neighborhood where $r\in (R,2R)$, and we let $\tilde{X}=
  R^{-1}X$. The main difference is that $y$ can range in a larger interval: $|y-y_0| <
  By_0^a$. Since we have $R \geq c_0 y_0^a$, we still have $|y-y_0| <
  Bc_0^{-1}R$. As for the upper bound for $R$, if $A$ is sufficiently
  large, given $\Lambda$, then the bound $R < 2\Lambda |y_0|^a$
  implies $R < |y_0|^\beta$ so that we are in Region IV. Given these
  observations, the argument in Proposition~\ref{prop:mXest}
  still shows that for a given $B$, by taking $A$ sufficiently large
  (and therefore $R$ small), the rescaled surface $R^{-1}X$ can be
  seen as an arbitrarily small perturbation of $R^{-1}
  H(y_0^\l)\times\mathbf{R}$. The required result follows from this by scaling
  back down by a factor of $R$. 
\end{proof}

\subsubsection{The model space $C\times\mathbf{R}$} In this section
we consider the Jacobi operator on $C\times\mathbf{R}$, using weighted
spaces in terms of the weight function $r$. For simplicity let us write
$C^*$ for the complement of the vertex in $C$. For a function 
$f\in C^{k,\alpha}_{loc}$ on $C^*\times\mathbf{R}$ we define the weighted
$C^{k,\alpha}_\tau$ norm as follows. Let $g$ denote the induced metric
on $C^*\times\mathbf{R}$, and define
\[ \label{eq:wdefn1} \Vert f\Vert_{C^{k,\alpha}_\tau} = \sup_{R > 0} R^{-\tau} \Vert
  f\Vert_{C^{k,\alpha}_{R^{-2}g}(\{R < r < 2R\})}. \]
As before, the subscript $R^{-2}g$ indicates that we are measuring the
$C^{k,\alpha}$ norm of $f$ on the region where $r\in (R, 2R)$ using
the rescaled metric $R^{-2}g$. Note that on this region $R^{-2}g$ has
uniformly bounded geometry.

We have the following result, analogous to Corollary 12 in
\cite{Sz17}.
\begin{prop}\label{prop:CR1}
Suppose that $L_{C\times\mathbf{R}} f =0$ for $f\in
C^{k,\alpha}_{\tau}(C\times\mathbf{R})$, where $\tau\in
(3-n+\gamma, -\gamma)$. Then $f=0$.
\end{prop}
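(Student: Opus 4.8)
The plan is to decompose $f$ into homogeneous modes along the cone $C\times\mathbf{R}$ and to use the weight restriction $\tau\in(3-n+\gamma,-\gamma)$ to kill every mode. The first step is a regularity improvement near the singular line $\{0\}\times\mathbf{R}$. Since $|f|\le C\,r^{\tau}$ with $\tau$ in the indicial gap $(3-n+\gamma,-\gamma)$, inside which there are no homogeneous Jacobi fields on $C$, the asymptotics argument already invoked in the discussion preceding Lemma~\ref{lem:L23annulus} improves this to $|r^{\gamma}f|\le C$ locally near the singular line. Applying this to the rescalings $\tilde f_{\rho}(x,y)=\rho^{-\tau}f(\rho x,\rho y)$, which again satisfy $L_{C\times\mathbf{R}}\tilde f_{\rho}=0$ and $|\tilde f_{\rho}|\le C r^{\tau}$ with the same constant, makes the improvement scale-uniform: one gets $|f(x,y)|\le C\,\rho^{\tau}r^{-\gamma}$ in a fixed neighborhood of the portion of the singular line at distance comparable to $\rho$ from the origin (using $r\le\rho$ and $\tau+\gamma<0$). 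Because strict stability gives $-\gamma>\tfrac{3-n}{2}$, the function $r^{-\gamma}$ is square integrable transverse to the line, so $f$ restricted to $\partial B_{\rho}$ lies in $W^{1,2}$ of the (conically singular) link of $C\times\mathbf{R}$, with $\|f(\rho\,\cdot)\|_{L^{2}}\le C\rho^{\tau}$ for all $\rho>0$.

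With this in hand I would separate variables. Expanding $f=\sum_{k}f_{k}(\rho)\,\Phi_{k}$ in the eigenfunctions $\Phi_{k}$ of $-L$ on the link of $C\times\mathbf{R}$, the equation $L_{C\times\mathbf{R}}f=0$ forces each coefficient $f_{k}$ to solve a second order Euler equation on $(0,\infty)$, whose two dimensional solution space is spanned by $\rho^{-\mu_{k}}$ and a second homogeneous solution. On the ball $B_{1}$ the expansion \eqref{eq:uexpand2} shows $f_{k}(\rho)=c_{k}\rho^{-\mu_{k}}$ — in particular no logarithmic term appears, so the resonant case causes no trouble — and by uniqueness for the Euler equation this identity then holds for all $\rho>0$. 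Hence $|c_{k}|\,\rho^{-\mu_{k}}=|f_{k}(\rho)|\le\|f(\rho\,\cdot)\|_{L^{2}}\le C\rho^{\tau}$ for every $\rho>0$, that is $|c_{k}|\le C\rho^{\mu_{k}+\tau}$. Since $\mu_{k}\le\mu_{1}=\gamma$ and $\tau<-\gamma$, the exponent $\mu_{k}+\tau$ is negative, so letting $\rho\to\infty$ forces $c_{k}=0$. As this holds for every $k$, we conclude $f\equiv0$.

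I expect the main obstacle to be the first step: obtaining enough control of $f$ near the singular line of $C\times\mathbf{R}$ to justify the spectral decomposition on its singular link and the scale-uniform $L^{2}$ bound on the spheres $\partial B_{\rho}$. This is precisely where the hypothesis on $\tau$ enters. The lower endpoint $3-n+\gamma$ is the largest indicial root below the gap, so the bound $|f|=O(r^{\tau})$ forces $f$ to be no more singular transverse to the line than the model rate $r^{-\gamma}$; the upper endpoint $-\gamma$, together with the strict stability inequality $-\gamma>\tfrac{3-n}{2}$, is exactly what makes that model rate square integrable there. Once this regularity is secured, the remainder is the elementary ODE bookkeeping above, which is close to the corresponding argument in \cite{Sz17}.
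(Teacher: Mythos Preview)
Your overall strategy is sound, but the appeal to the expansion \eqref{eq:uexpand2} is not justified. That expansion is stated for Jacobi fields with $r^\gamma u\in L^\infty(B_1)$, and your $f$ does not satisfy this: from $|f|\le Cr^\tau$ with $\tau+\gamma<0$ one only gets $|r^\gamma f|\le Cr^{\tau+\gamma}$, which blows up as $r\to 0$ along directions where $r$ is comparable to $\rho$. Your scale-uniform regularity improvement bounds $r^\gamma f$ on each annulus $\{\rho/2\le|\cdot|\le 2\rho\}$ by a constant times $\rho^{\tau+\gamma}$, which is precisely what you need for the $L^2$ bound on spheres, but it is not a uniform bound on the ball. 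Consequently you cannot conclude from \eqref{eq:uexpand2} that only the branch $\rho^{-\mu_k}$ appears in $f_k$.

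The repair is immediate and in fact streamlines the argument: bypass \eqref{eq:uexpand2} entirely, write the general solution of the Euler ODE for each coefficient as $f_k(\rho)=c_k\rho^{-\mu_k}+c_k'\rho^{-\mu_k'}$ with $-\mu_k'=2-n+\mu_k$, and use the bound $|f_k(\rho)|\le C\rho^\tau$ at both ends. Since $-\mu_k\ge-\gamma>\tau$, sending $\rho\to\infty$ forces $c_k=0$; since $-\mu_k'\le 2-n+\gamma<3-n+\gamma<\tau$, sending $\rho\to 0$ forces $c_k'=0$. (If the two roots coincide one solution carries a $\log\rho$, and the same two-sided comparison disposes of it.) This is in line with what the paper is invoking from \cite{Sz17}, and it makes transparent why both endpoints of the interval $(3-n+\gamma,-\gamma)$ enter.
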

The proof is identical to that in \cite{Sz17}, using the strict
stability of $C$ which implies that 
there are no homogeneous Jacobi fields on $C$ with growth rates
in the interval $(3-n+\gamma, -\gamma)$. 

\subsubsection{The cone $C\times \mathbf{R}$} Here we again consider the
space $C\times \mathbf{R}$, however this time we use more general
doubly weighted H\"older spaces, with the norm given exactly as in
\eqref{eq:weightedn}. More precisely, if $f\in
C^{k,\alpha}_{loc}$ on $C^*\times\mathbf{R}$, then we define the norm
\[ \Vert f\Vert_{C^{k,\alpha}_{\delta, \tau}} = \sup_{R, S > 0}
  R^{-\tau} S^{\tau - \delta} \Vert f\Vert_{C^{k,\alpha}_{R^{-2}g}(\{
    R < r < 2R, S < \rho < 2S\})}. \]
Using this definition we have that the $C^{k,\alpha}_{\tau,\tau}$-norm
coincides with the $C^{k,\alpha}_\tau$ norm from the previous
section. In addition, since the scaled metric $R^{-2}g$ has bounded
geometry on the region where $R < r < 2R$, it follows that the Jacobi
operator defines a bounded linear map
\[ L_{C\times\mathbf{R}} : C^{k,\alpha}_{\delta, \tau} \to
  C^{k-2,\alpha}_{\delta-2, \tau-2}. \]
Analogously to Proposition 14 in \cite{Sz17} we have the following,
with almost exactly the same proof. 
\begin{prop} \label{prop:LCR10} For $\delta$ avoiding a discrete set of indicial roots,
  and $\tau\in (3-n+\gamma, -\gamma)$, the Jacobi operator
  \[ L_{C\times\mathbf{R}} : C^{k,\alpha}_{\delta, \tau}(C\times\mathbf{R}) \to
    C^{k-2,\alpha}_{\delta-2, \tau-2}(C\times\mathbf{R}) \]
  is invertible.
\end{prop}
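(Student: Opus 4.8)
The plan is to follow the proof of Proposition 14 in \cite{Sz17} almost verbatim: construct a parametrix for $L_{C\times\mathbf{R}}$ by inverting on model pieces, patch these with cutoff functions, and correct the resulting error. I would cover $C^*\times\mathbf{R}$ by three kinds of region and build a local inverse on each.

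First, on a region near the singular line $\{0\}\times\mathbf{R}$ but away from the vertex, after rescaling by $\rho$ the operator is modeled on $L_C+\partial_y^2$ on functions over $C^*\times\mathbf{R}$ carrying the $r^\tau$-weight. The strict stability of $C$ says exactly that $\tau\in(3-n+\gamma,-\gamma)$ lies strictly between the conjugate roots $3-n+\gamma$ and $-\gamma$ of \eqref{eq:gl1}, with no other $r^{-\gamma_i}\phi_i$ of degree in this gap; hence $L_C$ is invertible in the corresponding $r^\tau$-weighted spaces (injectivity as in Proposition~\ref{prop:CR1}, surjectivity from the absence of indicial roots in the window), and a Fourier transform in $y$ turns this into an inverse for $L_C+\partial_y^2$, giving a parametrix here.

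Second, near the vertex $C\times\mathbf{R}$ is the metric cone over its link, and I would write $L_{C\times\mathbf{R}}=\partial_\rho^2+\tfrac{n-1}{\rho}\partial_\rho+\rho^{-2}(L-(n-1))$, where $L$ is the Jacobi operator on the link, so that $L_{C\times\mathbf{R}}(\rho^{-\mu}\Phi_k)$ vanishes precisely when $\mu$ satisfies the relation in \eqref{eq:muksigmak}. Since $\tau>3-n+\gamma$, the relevant functions are in $W^{1,2}$ near the two conically singular poles of the link, so the spectral decomposition of $L$ on $W^{1,2}$ underlying the expansion \eqref{eq:uexpand2} is available, and a Mellin transform in $\rho$ then inverts $L_{C\times\mathbf{R}}$ provided $\delta$ avoids the indicial roots, i.e.\ $\delta\notin\{-\mu_k\}_{k\ge1}\cup\{2-n+\mu_k\}_{k\ge1}$ — this is the discrete set in the statement. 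One also checks that the eigenfunctions $\Phi_k$ themselves lie in the $r^\tau$-weighted space near the two poles, which again follows from the absence of roots of \eqref{eq:gl1} in $(3-n+\gamma,-\gamma)$, so the $\rho$-weight and $r$-weight are compatible. On the remaining region $\rho^{-2}g$ has uniformly bounded geometry and interior Schauder estimates apply directly.

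Patching the three local inverses by a partition of unity subordinate to this cover, as in \cite{Sz17}, produces $P:C^{k-2,\alpha}_{\delta-2,\tau-2}\to C^{k,\alpha}_{\delta,\tau}$ with $L_{C\times\mathbf{R}}\circ P-\mathrm{Id}$ and $P\circ L_{C\times\mathbf{R}}-\mathrm{Id}$ of small operator norm once the transition regions are taken thin enough, so $L_{C\times\mathbf{R}}$ is invertible by a Neumann series; equivalently, the error is compact, $L_{C\times\mathbf{R}}$ is Fredholm of index zero, and it is injective because any $u\in C^{k,\alpha}_{\delta,\tau}$ with $L_{C\times\mathbf{R}}u=0$ has $r^\gamma u$ bounded, hence satisfies \eqref{eq:uexpand2} on every annulus, and matching the allowed growth at $\rho\to0$ and $\rho\to\infty$ against $\delta\notin\{-\mu_k\}\cup\{2-n+\mu_k\}$ forces $u=0$, as in Proposition~\ref{prop:CR1}. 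The one point needing genuine care, and the only real difference from the smooth-link case, is the compatibility of the two weight systems at the vertex: the link of $C\times\mathbf{R}$ is itself singular, with two conical points modeled on $\Sigma$, so one must know that the $\rho$-spectral analysis and the $r$-weighted analysis at the poles fit together — which is exactly what \eqref{eq:uexpand2}, \eqref{eq:muksigmak} and the strict stability of $C$ furnish.
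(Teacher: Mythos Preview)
Your proposal is correct and matches the paper's approach: the paper does not give an independent proof but simply states that the result is analogous to Proposition 14 in \cite{Sz17} ``with almost exactly the same proof,'' and your outline is precisely a sketch of that argument, including the one new wrinkle (the singular link of $C\times\mathbf{R}$ and the compatibility of the $\rho$- and $r$-weights) that distinguishes this situation from the smooth-link case in \cite{Sz17}.
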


This result has the following
consequence, using that our doubly weighted norm
$C^{k,\alpha}_{\tau,\tau}$ coincides with the $C^{k,\alpha}_\tau$
norm from the previous section.
\begin{cor}\label{cor:CR2}
  For generic $\tau\in(3-n+\gamma,-\gamma)$ the Jacobi operator
  \[ L_{C\times\mathbf{R}} : C^{2,\alpha}_\tau(C\times\mathbf{R}) \to
    C^{0,\alpha}_{\tau-2}(C\times\mathbf{R}) \]
  is an isomorphism.
\end{cor}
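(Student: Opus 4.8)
The plan is to obtain this as an immediate specialization of Proposition~\ref{prop:LCR10} to the diagonal weight $\delta = \tau$, together with the identification of norms recorded just before the statement of the corollary. Recall that the doubly weighted norm $C^{k,\alpha}_{\tau,\tau}$ on $C^*\times\mathbf{R}$ coincides with the singly weighted norm $C^{k,\alpha}_\tau$ of the previous subsection: when $\delta=\tau$ the extra factor $S^{\tau-\delta}$ in \eqref{eq:weightedn} is trivial, and the $\rho$-annuli $\{R<r<2R,\ S<\rho<2S\}$ cover the $r$-annulus $\{R<r<2R\}$, so the two suprema agree. Consequently $C^{2,\alpha}_\tau(C\times\mathbf{R}) = C^{2,\alpha}_{\tau,\tau}(C\times\mathbf{R})$ and $C^{0,\alpha}_{\tau-2}(C\times\mathbf{R}) = C^{0,\alpha}_{\tau-2,\tau-2}(C\times\mathbf{R})$ as normed spaces.

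With this in hand, the argument is to apply Proposition~\ref{prop:LCR10} with $k=2$ and $\delta=\tau$. That proposition gives invertibility of $L_{C\times\mathbf{R}} : C^{2,\alpha}_{\delta,\tau}\to C^{0,\alpha}_{\delta-2,\tau-2}$ as soon as $\tau\in(3-n+\gamma,-\gamma)$ and $\delta$ avoids the discrete set of indicial roots associated to the vertex of $C\times\mathbf{R}$ (the growth rates $-\mu_k$ and their conjugates of homogeneous Jacobi fields on $C\times\mathbf{R}$). Imposing both requirements on the single parameter $\tau$ — namely $\tau\in(3-n+\gamma,-\gamma)$ and $\tau$ not an indicial root — is exactly what is meant by ``generic $\tau\in(3-n+\gamma,-\gamma)$'': the excluded set is the intersection of the open interval with a set that is discrete in $\mathbf{R}$, hence is itself discrete (indeed finite, since the interval is bounded), so its complement in the interval is open and dense. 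For such $\tau$, Proposition~\ref{prop:LCR10} together with the norm identification above shows that $L_{C\times\mathbf{R}} : C^{2,\alpha}_\tau \to C^{0,\alpha}_{\tau-2}$ is an isomorphism.

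There is no real obstacle here; the substance is entirely contained in Proposition~\ref{prop:LCR10}, and the corollary is just a dictionary statement. The only point meriting a line of care is that the diagonal value $\delta=\tau$ is not forced into the exceptional set. This is automatic: the interval $(3-n+\gamma,-\gamma)$ is nonempty (by strict stability of $C$ one has $\gamma<(n-3)/2$, so $-\gamma-(3-n+\gamma)=n-3-2\gamma>0$), while the indicial roots form a set with no finite accumulation point, so a generic $\tau$ in the interval misses all of them. As a consistency check, injectivity of $L_{C\times\mathbf{R}}$ on $C^{2,\alpha}_\tau$ for every $\tau$ in the interval is already furnished by Proposition~\ref{prop:CR1}, so the genuinely new content supplied by Proposition~\ref{prop:LCR10} is the surjectivity, which holds off the exceptional discrete set.
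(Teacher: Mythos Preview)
Your proof is correct and matches the paper's approach exactly: the corollary is stated immediately after Proposition~\ref{prop:LCR10} with the remark that the doubly weighted norm $C^{k,\alpha}_{\tau,\tau}$ coincides with $C^{k,\alpha}_\tau$, and the corollary is presented as a direct consequence of setting $\delta=\tau$ in that proposition. Your additional remarks (nonemptiness of the interval, discreteness of the indicial set, the consistency check with Proposition~\ref{prop:CR1}) are correct but go beyond what the paper records.
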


\subsubsection{The space $H\times\mathbf{R}$}  We now consider the
product space $H\times\mathbf{R}$, where $H$ denotes one of the Hardt-Simon
smoothings, $H_\pm$,  of $C$. We can define weighted H\"older norms
$C^{k,\alpha}_\tau$  for functions $f$ on $H\times\mathbf{R}$ using the formula
\eqref{eq:wdefn1} that we used on $C\times\mathbf{R}$. Note that for
the induced metric $g$ on $H\times\mathbf{R}$, the rescaled metrics
$R^{-2}g$ still have bounded geometry on the regions defined by $r\in
(R, 2R)$. Analogously to Proposition 21 in \cite{Sz17} we have the
following.
\begin{prop}\label{prop:LHR10}
  The Jacobi operator
\[ L_{H\times\mathbf{R}} : C^{k,\alpha}_\tau(H\times\mathbf{R}) \to
  C^{k-2,\alpha}_{\tau-2}(H\times\mathbf{R}) \]
is invertible for $\tau\in(3-n+\gamma, -\gamma)$. 
\end{prop}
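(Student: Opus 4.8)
The plan is to diagonalize $L_{H\times\mathbf{R}}$ along the flat $\mathbf{R}$-factor and reduce everything to the Jacobi operator on the asymptotically conical surface $H$. Since $g_{H\times\mathbf{R}}=g_H+dy^2$ we have $L_{H\times\mathbf{R}}=L_H+\partial_y^2$, and the weight $r^\tau$ is independent of $y$; so taking the partial Fourier transform in $y$ turns $L_{H\times\mathbf{R}}f=g$ into the one-parameter family $(L_H-\xi^2)\widehat f(\cdot,\xi)=\widehat g(\cdot,\xi)$, $\xi\in\mathbf{R}$, acting between the $r$-weighted H\"older spaces on $H$. First I would show that each $L_H-\xi^2:C^{k,\alpha}_\tau(H)\to C^{k-2,\alpha}_{\tau-2}(H)$ is an isomorphism with a bound on the inverse that is uniform in $\xi$; then the invertibility of $L_{H\times\mathbf{R}}$, together with the asserted estimate, follows by Fourier synthesis, exactly as in the analogous argument of \cite{Sz17}. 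To make the synthesis rigorous for functions that need not decay along $\mathbf{R}$, I would either establish the a priori estimate $\Vert f\Vert_{C^{k,\alpha}_\tau}\le C\Vert L_{H\times\mathbf{R}}f\Vert_{C^{k-2,\alpha}_{\tau-2}}$ by a rescaling/contradiction argument and combine it with a Fredholm or local solvability statement, or work first with data that is Schwartz in $y$ and pass to the general case by density and duality; the higher regularity $k>2$ then comes from interior Schauder estimates on the rescaled regions $\{r\in(R,2R)\}$, which have uniformly bounded geometry.

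The case $\xi=0$ is the statement that $L_H:C^{k,\alpha}_\tau(H)\to C^{k-2,\alpha}_{\tau-2}(H)$ is an isomorphism for $\tau\in(3-n+\gamma,-\gamma)$. Since $H$ is asymptotically conical with cone $C$, this operator is Fredholm precisely when $\tau$ avoids the indicial roots $-\gamma_i$ and $3-n+\gamma_i$ of $L_C$, and strict stability of $C$ says exactly that the interval $(3-n+\gamma,-\gamma)$ lies strictly between the two roots attached to $\lambda_1$ and contains none of the others, so $L_H$ is Fredholm there. For injectivity: if $L_Hf=0$ with $f=O(r^\tau)$ and $\tau<-\gamma$, I would compare $f$ with the positive Jacobi field $\Phi_+$ of Proposition~\ref{prop:Hestimates}, which is positive and comparable to $r^{-\gamma}\phi_1$ near the conical end; then $v=f/\Phi_+$ satisfies $\operatorname{div}(\Phi_+^2\nabla v)=0$ on the complete surface $H$, is bounded, and tends to $0$ at the end, so $v\equiv0$ by the strong maximum principle. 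Since $L_H$ is formally self-adjoint and the interval $(3-n+\gamma,-\gamma)$ is invariant under the conjugation $\tau\mapsto 3-n-\tau$ of the $L^2(H)$-pairing, the cokernel is identified with the kernel of $L_H$ at a conjugate weight still in that interval, hence also trivial, and $L_H$ is an isomorphism.

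For $\xi\neq0$ the extra term $-\xi^2$ is coercive: solutions of $(L_H-\xi^2)f=0$ decay super-polynomially at the conical end of $H$, so $L_H-\xi^2$ is Fredholm on $C^{k,\alpha}_\tau(H)$ for every $\tau$, and an integration by parts together with the stability inequality for $H$ (which follows from the existence of $\Phi_+>0$) shows that both $L_H-\xi^2$ and its formal adjoint are injective; hence $L_H-\xi^2$ is an isomorphism whose inverse depends continuously on $\xi$. It then remains to check that $\Vert(L_H-\xi^2)^{-1}\Vert_{C^{k-2,\alpha}_{\tau-2}\to C^{k,\alpha}_\tau}$ is bounded uniformly in $\xi$: as $\xi\to0$ it converges to the bound for $L_H^{-1}$ found above, while for $|\xi|$ large, rescaling by $|\xi|$ and applying standard elliptic estimates for $(\text{second order})-\xi^2$ on the lengthscale $|\xi|^{-1}$ — on which $r^\tau$ is essentially constant — gives a bound of order $O(1)$.

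I expect the main technical work to be precisely this uniform control of the resolvent family across all of $\xi\in\mathbf{R}$, especially near $\xi=0$ where invertibility rests on strict stability and on $\tau$ lying strictly between the two indicial roots of $L_C$ attached to $\lambda_1$, together with the passage from the fibrewise bounds to a genuine bounded inverse of $L_{H\times\mathbf{R}}$ on the weighted space of functions that need not decay along $\mathbf{R}$; both points are handled as in the corresponding proof in \cite{Sz17}.
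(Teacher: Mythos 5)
Your treatment of the $\xi=0$ fibre, i.e.\ the invertibility of $L_H:C^{k,\alpha}_\tau(H)\to C^{k-2,\alpha}_{\tau-2}(H)$ for $\tau\in(3-n+\gamma,-\gamma)$, coincides with the paper's: Lockhart--McOwen Fredholm theory in weighted spaces, the cokernel identified with the kernel of $L_H$ at the conjugate weight $3-n-\tau$ (which stays in the admissible interval), and triviality of both kernels by the strong maximum principle applied to $u/\Phi_+$ using the positive Jacobi field $\Phi_+\sim r^{-\gamma}$. That part is correct and is exactly what the paper does.

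For the passage from $H$ to $H\times\mathbf{R}$, your route (Fourier-in-$y$, uniform control of the resolvent family $(L_H-\xi^2)^{-1}$, then synthesis) is genuinely different from what the paper invokes, which is the gluing/estimate argument of Proposition~21 of \cite{Sz17}. You correctly flag the central obstruction: elements of $C^{k,\alpha}_\tau(H\times\mathbf{R})$ need not decay in $y$, so one cannot take Fourier transforms as classical functions. Of your two proposed fixes, the ``density and duality'' option does not work: Schwartz-in-$y$ data is not dense in the weighted sup-norm (the function $\equiv 1$ in $y$ cannot be approximated uniformly), and the duality theory of H\"older spaces is too poor to rescue this. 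The ``a priori estimate by a rescaling/contradiction argument together with a solvability statement'' is the right fallback and is in the spirit of the paper's cited proof.

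However, there is a genuine gap in the fallback as you state it. In the contradiction argument for the estimate $\Vert f\Vert_{C^{k,\alpha}_\tau}\le C\Vert L_{H\times\mathbf{R}}f\Vert_{C^{k-2,\alpha}_{\tau-2}}$, one normalizes and locates a near-extremal point $(x_i,y_i)$; after translating in $y$ and, if $r(x_i)\to\infty$, rescaling, the blow-up limit is a nonzero Jacobi field either on $C\times\mathbf{R}$ (ruled out by the analogue of Proposition~\ref{prop:CR1}) or, when $r(x_i)$ stays bounded, on $H\times\mathbf{R}$ itself, lying in $C^{k,\alpha}_\tau(H\times\mathbf{R})$. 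Ruling out the latter requires a Liouville theorem for $L_{H\times\mathbf{R}}$ on the $y$-nondecaying weighted space, and this is \emph{not} a consequence of the fibrewise injectivity of $L_H-\xi^2$ that you established — such a Jacobi field cannot be Fourier-decomposed into fibres. The correct argument is the same maximum-principle device as for $L_H$ but with an extra step: consider $v=u/\Phi_+$ (with $\Phi_+$ viewed as a $y$-independent positive Jacobi field on $H\times\mathbf{R}$); $v$ is bounded and tends to $0$ as $r\to\infty$ uniformly in $y$, but its supremum need not be attained because of the noncompact $\mathbf{R}$-factor. One must take a near-maximizing sequence of points, translate in $y$, pass to a subsequential limit Jacobi field for which the supremum of the quotient \emph{is} attained at an interior point, and only then apply the strong maximum principle for $\operatorname{div}(\Phi_+^2\nabla v)=0$. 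Without this sliding step the injectivity on $H\times\mathbf{R}$ is not justified, and the rest of the argument does not close.
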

It is worth pointing out that in this result, as in
Proposition~\ref{prop:CR1},
it is crucial that $C$ is
strictly minimizing and strictly stable, since this implies that on
$H$ the Jacobi operator
\[ \label{eq:Lh1} L_H : C^{k,\alpha}_{\tau}(H) \to C^{k-2,\alpha}_{\tau-2}(H) \]
is invertible for $(3-n+\gamma, -\gamma)$. The weighted spaces here
can be defined as above, or as the subspaces of translation invariant
functions in the corresponding spaces over $H\times\mathbf{R}$. To
see that \eqref{eq:Lh1} is invertible, note that the standard Fredholm
theory in weighted spaces due to Lockhart-McOwen~\cite{LM85} implies
that the cokernel of $L_H$ is given by the kernel of
\[ \label{eq:Lh2} L_H : C^{2,\alpha}_{3-n-\tau}(H) \to C^{0,\alpha}_{1-n-\tau}(H). \]
At the same time, since $C$ is strictly minimizing and strictly
stable, $H$ admits a positive Jacobi field asymptotic to
$r^{-\gamma}\phi_1$, corresponding to homothetic scalings of $H$
 (see Hardt-Simon~\cite{HS85}). Therefore, by the maximum principle,
 there cannot be any Jacobi field on $H$ decaying faster than
 $r^{-\gamma}$. In particular if $\tau\in (3-n+\gamma, -\gamma)$, then
 the operators \eqref{eq:Lh1} and \eqref{eq:Lh2} have trivial kernel,
 and so \eqref{eq:Lh1} is invertible. The rest of the proof of the
 proposition follows the proof of Proposition 21 in \cite{Sz17}
 closely. 

 \subsubsection{The Jacobi operator on $X$} We can now invert the linearized
operator $L_X$ in suitable weighted spaces, at least close to the
origin. The main result is the following.
\begin{prop}\label{prop:Linvert}
  Let $\tau\in(3-n+\gamma, -\gamma)$, and suppose that $\delta$ avoids
  a discrete set of indicial roots. Then for sufficiently large $A >
  0$, the Jacobi operator
  \[ L_X : C^{2,\alpha}_{\delta, \tau}(X \cap \rho^{-1}(0, A^{-1}])
    \to C^{0,\alpha}_{\delta-2, \tau-2}(X\cap \rho^{-1}(0,A^{-1}]) \]
  is surjective, with a right inverse $P$ bounded independently of $A$. 
\end{prop}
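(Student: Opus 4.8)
The plan is to construct $P$ by a standard patching argument, gluing together the inverses on the model spaces $C\times\mathbf{R}$, $H(y_0^\l)\times\mathbf{R}$, and their scalings, exactly in the spirit of \cite{Sz17}. First I would fix a function $f\in C^{0,\alpha}_{\delta-2,\tau-2}(X\cap\rho^{-1}(0,A^{-1}])$, extend it to $X\cap\rho^{-1}(0,A_0^{-1})$ with comparable norm, and decompose the neighborhood of the origin in $X$ into the two overlapping types of regions studied in Propositions~\ref{prop:projest1} and~\ref{prop:Gproj1}: the ``outer'' region $\mathcal{U}=\{r>\Lambda\rho^a\}$, which via the projection $F$ looks like a piece of $C\times\mathbf{R}$ near its vertex, and the ``inner'' regions $\mathcal{V}$ centered at points $y_0$ with $|y_0|<A^{-1}$, which via $G$ look like pieces of $H(y_0^\l)\times\mathbf{R}\cong |y_0|^a(H\times\mathbf{R})$. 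Using a partition of unity subordinate to this cover (with the $y_0$'s chosen on a suitable scale $\sim|y_0|^a$ in the $y$-variable), I would cut $f$ into pieces supported in each region.

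The key steps, in order: (1) On each outer piece, transplant the function to $C\times\mathbf{R}$ using $F$ and apply the inverse from Proposition~\ref{prop:LCR10} (or Corollary~\ref{cor:CR2}) in the doubly-weighted space $C^{k,\alpha}_{\delta,\tau}$; here the hypotheses $\tau\in(3-n+\gamma,-\gamma)$ and $\delta$ avoiding indicial roots are exactly what is needed for invertibility, and the bound is uniform because the weighted norms are scale-invariant. (2) On each inner piece centered at $y_0$, rescale by $|y_0|^{-a}$ so that $H(y_0^\l)\times\mathbf{R}$ becomes the fixed model $H\times\mathbf{R}$, transplant via $G$, apply Proposition~\ref{prop:LHR10} in the single-weighted space $C^{k,\alpha}_\tau$ (the relevant norm on $\mathcal{V}$ is essentially single-weighted since $\rho\sim|y_0|$ there), and rescale back; again invertibility uses $\tau\in(3-n+\gamma,-\gamma)$ and the bound is uniform by scaling. (3) Sum the local solutions against the partition of unity to get a candidate $\tilde P f$, and estimate the error: $L_X(\tilde P f)-f$ consists of commutator terms where $L_X$ hits the cutoffs, plus terms measuring the discrepancy between $g_X$ and the pulled-back model metrics. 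By Propositions~\ref{prop:projest1} and~\ref{prop:Gproj1} these discrepancies are $O(\epsilon)$ in the appropriate norms once $\Lambda$ and $A$ are large, and the cutoff commutators are supported in the overlap regions where $r\sim\rho^a$, contributing a factor that can be made small (in fact $\leq 1/2$) by the same choice of parameters together with Lemma~\ref{lem:normcomp1}. (4) Thus $L_X\tilde P=\mathrm{Id}+\mathcal{E}$ with $\|\mathcal{E}\|\leq 1/2$, so $P=\tilde P(\mathrm{Id}+\mathcal{E})^{-1}$ is the desired right inverse, bounded independently of $A$.

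I expect the main obstacle to be step (3): carefully controlling the error operator $\mathcal{E}$ near the ``interface'' $r\sim\rho^a$, where the outer and inner regions meet and where the geometry of $X$ transitions between looking like $C\times\mathbf{R}$ and like a scaled $H\times\mathbf{R}$. One must check that the commutator terms $[L_X,\chi]$ applied to the local solutions land in $C^{0,\alpha}_{\delta-2,\tau-2}$ with small norm — this is where the gap $\delta>\l-\gamma$ and the closeness of $\tau$ to $-\gamma$ are used, via a norm-comparison like Lemma~\ref{lem:normcomp1} to absorb the loss incurred by differentiating the cutoff, which is supported on a region of definite relative size. The transitions between consecutive inner regions (overlapping $\mathcal{V}$'s at nearby $y_0$'s) must also be handled, but there the two models $H(y_0^\l)\times\mathbf{R}$ and $H(y_1^\l)\times\mathbf{R}$ for $|y_0-y_1|\lesssim|y_0|^a$ differ by an amount controlled by Proposition~\ref{prop:Hestimates}, so the same estimate applies. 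Since all of this is close to the corresponding argument in \cite[Section 5]{Sz17}, I would present it somewhat briefly, emphasizing only the points where the doubly-weighted norm and the presence of the $\mathbf{R}$-factor require modification.
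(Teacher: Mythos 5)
Your proposal matches the paper's approach: the paper constructs a right inverse by patching together the model inverses from Propositions~\ref{prop:LCR10} and \ref{prop:LHR10} via the nearest-point projections of Propositions~\ref{prop:projest1} and \ref{prop:Gproj1}, then absorbs the error by a Neumann series, referring to Proposition 22 of \cite{Sz17} for the details. The decomposition into outer ($C\times\mathbf{R}$-like) and inner ($H\times\mathbf{R}$-like) regions, the rescaling by $|y_0|^a$, and the role of the weighted norms are all exactly as you describe.
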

The proof is based on patching together the inverses given by
Propositions~\ref{prop:LCR10} and \ref{prop:LHR10} using cutoff
functions to construct an approximate right inverse of $L_X$. This is
possible because of Propositions~\ref{prop:projest1} and
\ref{prop:Gproj1} showing that $X$ can be well approximated by
$C\times \mathbf{R}$ and $H\times\mathbf{R}$ on suitable
regions. The details of the proof are essentially identical to the proof
of Proposition 22 in \cite{Sz17}. 

\subsection{Solution of the nonlinear problem}
In this section we construct
a minimal surface $T$ as a graph over $X$ of a function in our weighted spaces. More
precisely we have the following. 
\begin{prop}\label{prop:Texist}
  Let us choose $\beta\in (1,a),\delta > \l-\gamma$  as in
  Proposition~\ref{prop:mXest} and let $\tau < -\gamma$ be very close
  to $-\gamma$. Then 
  for sufficiently large $A > 0$ there is a function $u\in
  C^{2,\alpha}_{\delta,\tau}$ satisfying
  \[ \Vert u\Vert_{C^{2,\alpha}_{\delta, \tau}(X\cap \rho^{-1}(0,A^{-1}])}
    \leq A^{-\kappa/2}, \]
  such that the graph $T$ of $u$ over $X$ is minimal. $T$ has an
  isolated singularity at the origin, and tangent cone
  $C\times\mathbf{R}$ there. 
\end{prop}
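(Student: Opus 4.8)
The plan is to solve the minimal surface equation $m_X(u) = 0$ for $u$ over $X$ by a fixed point argument in the weighted space $C^{2,\alpha}_{\delta,\tau}$, using the approximate right inverse $P$ of $L_X$ from Proposition~\ref{prop:Linvert} together with the mean curvature estimate from Proposition~\ref{prop:mXest} and the quadratic estimate for $Q_X$ from Lemma~\ref{lem:Qquadratic}. Writing $m_X(u) = m_X + L_X(u) + Q_X(u)$, the equation $m_X(u)=0$ is equivalent, for $u$ in the image of $P$, to the fixed point equation
\[
  u = -P\left( m_X + Q_X(u) \right) =: \mathcal{N}(u),
\]
and I would show $\mathcal{N}$ is a contraction on the ball $\{ \Vert u \Vert_{C^{2,\alpha}_{\delta,\tau}} \leq A^{-\kappa/2} \}$ once $A$ is large. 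For this, Proposition~\ref{prop:mXest} gives $\Vert m_X \Vert_{C^{0,\alpha}_{\delta-2,\tau-2}} \leq A^{-\kappa}$, so $\Vert P(m_X) \Vert_{C^{2,\alpha}_{\delta,\tau}} \leq C A^{-\kappa}$, which is $\leq \tfrac12 A^{-\kappa/2}$ for large $A$. For the quadratic term I need to feed elements of the small $C^{2,\alpha}_{\delta,\tau}$-ball into Lemma~\ref{lem:Qquadratic}, which is stated in terms of the $C^{2,\alpha}_{1,1}$-norm; this is exactly where Lemma~\ref{lem:normcomp1} enters, giving $\Vert u \Vert_{C^{2,\alpha}_{1,1}} \leq C A^{-\kappa'} \Vert u \Vert_{C^{2,\alpha}_{\delta,\tau}}$, so the smallness hypothesis of Lemma~\ref{lem:Qquadratic} is met and we obtain $\Vert Q_X(u_1) - Q_X(u_2) \Vert_{C^{0,\alpha}_{\delta-2,\tau-2}} \leq C A^{-\kappa'} (\Vert u_1 \Vert + \Vert u_2 \Vert)_{C^{2,\alpha}_{\delta,\tau}} \Vert u_1 - u_2 \Vert_{C^{2,\alpha}_{\delta,\tau}}$ on the ball. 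Composing with the bounded operator $P$ then shows $\mathcal{N}$ maps the ball to itself and contracts it, so the Banach fixed point theorem produces the desired $u$, and the bound $\Vert u \Vert_{C^{2,\alpha}_{\delta,\tau}} \leq A^{-\kappa/2}$ follows from the contraction estimate.

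Once $u$ is found, the graph $T$ of $u$ over $X$ is a minimal hypersurface in $\rho^{-1}(0, A^{-1}]$. Smoothness of $T$ away from the origin is automatic: away from $0$ the surface $X$ is smooth and $u$ is a $C^{2,\alpha}$ solution of the (uniformly elliptic, analytic) minimal surface equation there, so by elliptic (indeed analytic) regularity $T$ is smooth, in fact real analytic, away from $0$. It remains to identify the tangent cone at the origin. The weighted bound $\Vert u \Vert_{C^{2,\alpha}_{\delta,\tau}} \leq A^{-\kappa/2}$ with $\delta > \l - \gamma$ and $\tau$ close to $-\gamma$ says that on the region $\Omega_{R,S}$ the graph function $u$, together with the graphing function of $X$ over $C\times\mathbf{R}$ (which is $O(u_\l) = O(\rho^{\l-\gamma})$ in the relevant outer region, and comparable to $\rho^a r^{-\gamma}\phi_1$ overall), is small relative to $\rho$; hence the blowups $\lambda^{-1} T$ converge to $C\times\mathbf{R}$ as $\lambda \to 0$, with multiplicity one. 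Slightly more carefully, I would note that $X$ itself has tangent cone $C\times\mathbf{R}$ at $0$ by construction — in the slices $r \leq |y|^\beta$ it agrees with $H(y^\l) = |y|^a H_\pm$, which scales away, and in the region $r \geq 2|y|^\beta$ it is the graph of $u_\l$ of degree $\l - \gamma > 1 > 0$ — and that the graph of $u$ over $X$ has the same tangent cone because $u$ decays faster than linearly in $\rho$; so $C\times\mathbf{R}$ is the unique multiplicity one tangent cone of $T$ at $0$. Since the singular set of $T$ is contained in the singular set of $X$, which is just the origin together with the singular line of the $H(y^\l)$ pieces — but the latter do not appear since $H_\pm$ are smooth — the only singular point of $T$ is the origin.

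The main obstacle is the interplay of the two weights in the nonlinear estimate, i.e. making sure the exponents line up so that $\mathcal{N}$ genuinely contracts: $Q_X$ is only controlled in terms of the $C^{2,\alpha}_{1,1}$-norm, and one must pay a factor of $A^{-\kappa'}$ via Lemma~\ref{lem:normcomp1} to convert from $C^{2,\alpha}_{\delta,\tau}$, while $P$ is bounded \emph{independently} of $A$ (Proposition~\ref{prop:Linvert}) and $m_X$ is bounded by $A^{-\kappa}$; the net effect is that every term acquires a positive power of $A^{-1}$, so the scheme closes for $A$ large. A secondary point is that $P$ is only a right inverse of $L_X$ on $X \cap \rho^{-1}(0, A^{-1}]$ (because of the patching with cutoff functions near $\rho = A^{-1}$), so $u$ solves the minimal surface equation only on a slightly smaller neighborhood of the origin, which is all that is claimed. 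Everything else — regularity away from $0$, the identification of the tangent cone, and isolatedness of the singularity — is a routine consequence of the weighted decay of $u$ and the explicit structure of $X$, and follows the argument in \cite{Sz17} essentially verbatim.
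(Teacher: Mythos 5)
Your proposal is correct and follows essentially the same route as the paper: the same fixed-point map $\mathcal{N}(u) = -P(m_X + Q_X(u))$ on the $A^{-\kappa/2}$-ball in $C^{2,\alpha}_{\delta,\tau}$, the same use of Proposition~\ref{prop:mXest}, Lemma~\ref{lem:normcomp1}, Lemma~\ref{lem:Qquadratic}, and Proposition~\ref{prop:Linvert}, and the same conclusion for smoothness away from $0$ and the tangent cone. The only cosmetic difference is that the paper makes the tangent-cone identification a bit more explicit by invoking the nearest-point projection estimate of Proposition~\ref{prop:projest1}, whereas you argue directly from the structure of $X$ and the weighted decay of $u$; the content is the same.
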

\begin{proof}
  Given Proposition~\ref{prop:Linvert}, the proof of the existence of
  $u$ is an 
  application of the contraction mapping theorem (see also for instance
  Proposition 4.5 in \cite{Sz20}).

  Let us define the set
  \[ \mathcal{B} = \left\{ u\in C^{2,\alpha}_{\delta,\tau}(X\cap
        \rho^{-1}(0,A^{-1}])\,:\, \Vert
        u\Vert_{C^{2,\alpha}_{\delta,\tau}} \leq
        A^{-\kappa/2}\right\}. \]
  Our goal is to solve the equation $m_X(u) = 0$ for $u\in
  \mathcal{B}$. Writing $m_X(u) = m_X + L_X(u) + Q_X(u)$ as before,
  this is equivalent to the equation
  \[ L_X(u) = -m_X - Q_X(u), \]
  and so in terms of the right inverse $P$ given by
  Proposition~\ref{prop:Linvert} it is sufficient to solve $u =
  \mathcal{N}(u)$, where
  \[ \mathcal{N}(u) = - P(m_X + Q_X(u)). \]
It is enough to show that once $A$ is sufficiently large, the map
$\mathcal{N}$ defines a contraction from $\mathcal{B}$ to itself. 

Note first that by Proposition~\ref{prop:mXest} we have
\[ \label{eq:N0est} \Vert \mathcal{N}(0)
  \Vert_{C^{2,\alpha}_{\delta,\tau}(\rho^{-1}(0,A^{-1}])} \leq C
  A^{-\kappa}. \]
At the same time, using Lemma~\ref{lem:normcomp1} we have that if
$u\in \mathcal{B}$, then $\Vert
u\Vert_{C^{2,\alpha}_{1,1}} < CA^{-\kappa/2}$, and so for $A$
sufficiently large we can apply
Lemma~\ref{lem:Qquadratic}. It follows that for $u_i\in \mathcal{B}$
we have
\[ \Vert \mathcal{N}(u_1) -
  \mathcal{N}(u_2)\Vert_{C^{2,\alpha}_{\delta,\tau}} &\leq C\Vert
  Q_X(u_1) - Q_X(u_2)\Vert_{C^{0,\alpha}_{\delta-2,\tau-2}} \\
  &\leq C A^{-\kappa/2} \Vert u_1 -
  u_2\Vert_{C^{2,\alpha}_{\delta,\tau}} \\
  &\leq \frac{1}{2}\Vert u_1 -
  u_2\Vert_{C^{0,\alpha}_{\delta,\tau}} \]
if $A$ is sufficiently large. It remains to check that $\mathcal{N}$
maps $\mathcal{B}$ to itself. For this we can combine the two
estimates above to see that if $u\in \mathcal{B}$, then
\[ \Vert \mathcal{N}(u)\Vert_{C^{2,\alpha}_{\delta,\tau}} &\leq \Vert
  \mathcal{N}(u) - \mathcal{N}(0) \Vert_{C^{2,\alpha}_{\delta,\tau}} +
  \Vert \mathcal{N}(0) \Vert_{C^{2,\alpha}_{\delta,\tau}}  \\
  &\leq \frac{1}{2}\Vert u \Vert_{C^{2,\alpha}_{\delta,\tau}}  +
  CA^{-\kappa} \leq A^{-\kappa/2} \]
if $A$ is sufficiently large. It follows that $\mathcal{N}$ has a
fixed point as required.

Since $T$ is a graph over $X$, and $X$ is smooth away from the
  origin, $T$ is necessarily also smooth away from the origin. To see
  that the tangent cone of $T$ at the origin is $C\times\mathbf{R}$,
  it is enough to show that the sequence of rescalings $2^kT$ converges
  in the sense of currents to $C\times\mathbf{R}$ on the ball
  $B_1(0)$, as $k\to \infty$. This follows from
  Proposition~\ref{prop:projest1}. In particular choose $\epsilon >
  0$, and let $\Lambda, A$ be sufficiently large as in the
  proposition. According to the proposition, once $2^k  > A$, the
  rescaled surface $2^kX$ is the graph of a function $f_k$ over
  $C\times\mathbf{R}$ on the set $\{r > 2^{(1-a)k}\Lambda\rho\}\cap
  B_1(0)$, where $|f_k| \leq \epsilon r$. At the same time, by
  Proposition~\ref{prop:Texist} and Lemma~\ref{lem:normcomp1} we have
  that if $k$ is chosen even larger if necessary, then on the ball
  $B_1(0)$ the surface $2^kT$ is the graph of a function $u_k$ over $2^kX$
  with $|u_k| < \epsilon$.

  In sum we obtain that as $k\to\infty$, we have an increasing sequence of open subsets
  $\Omega_k\subset B_1(0)\cap \{r > 0\}$, such that $\Omega_k\to
  B_1(0)\cap\{r > 0\}$ and $2^kT$ is the graph of a function $U_k$
  over $(C\times\mathbf{R})\cap \Omega_k$, with $\sup_{\Omega_k}|U_k|
  \to 0$.  This is enough to conclude that the tangent cone at the
  origin is $C\times\mathbf{R}$. 
\end{proof}

\begin{remark}\label{rem:deltatau}
  In the rest of the paper we will often use constants $\delta, \tau$
  as in Proposition~\ref{prop:Texist}. We always first choose
  $\delta > \l - \gamma$ sufficiently close to $\l-\gamma$,
  and then $\tau < -\gamma$ is chosen close to $\gamma$, possibly
  depending on $\delta$.
\end{remark}

In later arguments we will need the following result about
the surface $T$.

\begin{prop}\label{prop:Tgraphbound}
  Let $A > 0$ be sufficiently large so that $T$ is
  defined in the ball $B(0, 4A^{-1})$. There are constants $C_1, \kappa$
  depending on $T$ (i.e. depending on the cone $C$ and the value of
  $\l$ in the construction of $T$) with the following property. For
  $|y_0| < A^{-1}$, the slice $T\cap\{y=y_0, r < A^{-1}\}$
  is the graph of a function $f$
  over $H(y_0^\l)$ (both viewed as subsets of $\mathbf{R}^n$), where
  \[\label{eq:fest2} |f| \leq C_1\rho^{\l-\kappa} r^{\kappa - \gamma}. \]
\end{prop}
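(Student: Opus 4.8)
The plan is to unravel the construction of $X$ in Section~\ref{sec:approxsoln1} to see what the slices $\{y = y_0\}$ look like, and then to use the fact that $T$ is a small graph over $X$ in the $C^{2,\alpha}_{\delta,\tau}$-norm (Proposition~\ref{prop:Texist}) to control the slices of $T$. The key point is that the estimate \eqref{eq:fest2} we want to prove is, up to constants, exactly the bound encoded in the weighted norm $C^{2,\alpha}_{\delta,\tau}$ once we recall the relations $a = \l/(\gamma+1)$ and $\delta > \l-\gamma$, $\tau$ close to $-\gamma$. So the strategy is: (1) establish the analog of \eqref{eq:fest2} for $X$ itself, with a clean power of $\rho$ and $r$; (2) translate the weighted-norm bound $\|u\|_{C^{2,\alpha}_{\delta,\tau}} \leq A^{-\kappa/2}$ on the graphing function $u$ of $T$ over $X$ into a pointwise bound $|u| \leq C\rho^{\delta-\tau} r^{\tau}$ (this is immediate from the definition \eqref{eq:weightedn}, taking $R\sim r$, $S\sim\rho$ in $\Omega_{R,S}$); and (3) add the two contributions and absorb the exponents, using that $\delta$ can be taken arbitrarily close to $\l-\gamma$ and $\tau$ arbitrarily close to $-\gamma$, to land at an estimate of the shape $C_1\rho^{\l-\kappa}r^{\kappa-\gamma}$ for some $\kappa>0$.

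For step (1), I would go region by region as in the proof of Proposition~\ref{prop:mXest}, fixing $y=y_0$ with $|y_0| < A^{-1}$. In Region~IV ($r \leq |y_0|^\beta$) the slice of $X$ is \emph{exactly} $H(y_0^\l)$, so $f \equiv 0$ there and there is nothing to prove. In Region~III (the gluing region $|y_0|^\beta \leq r \leq 2|y_0|^\beta$) the slice of $X$ is the graph over $C\times\mathbf{R}$ of $u_\l + E$ with $E$ as in \eqref{eq:gluing1}; writing this instead as a graph over $H(y_0^\l)$ — legitimate by Proposition~\ref{prop:Hestimates} and Lemma~\ref{lem:graphfg}, since in this region $H(y_0^\l)$ is already a small graph $\pm r^{-\gamma}\phi_1 + v_\pm$ over $C$ — the difference is controlled by the terms we dropped, namely $y_0^\l r^{-\gamma}\phi_1 - u_\l$ (whose leading term is $a_1 y_0^{\l-2}r^{2-\gamma}\phi_1$, of size $\lesssim \rho^{\l-2}r^{2-\gamma} = \rho^{\l}\cdot(\rho^{-2}r^2)\cdot r^{-\gamma}$) together with the $v_\pm$-correction terms, all of which carry a power of $\rho$ that is $\l$ minus a positive quantity times something comparable to $1$ (using $r\sim\rho^\beta$, $\beta>1$ in this region). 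In Regions~I and~II ($r \geq 2|y_0|^\beta$, so $X$ is the graph of $u_\l$ over $C\times\mathbf{R}$) one rewrites $u_\l$ as a graph over $H(y_0^\l)$; the difference between graphing over $C$ and over $H(y_0^\l)$ involves $H(y_0^\l) - C$, which on this region is of size $\lesssim |y_0|^a r^{-\gamma}\phi_1 + \dots \lesssim \rho^a r^{-\gamma}$, and since $a = \l/(\gamma+1) < \l$ and $\rho \geq r$, one checks $\rho^a r^{-\gamma} \lesssim \rho^{\l-\kappa}r^{\kappa-\gamma}$ for suitable $\kappa>0$; meanwhile $u_\l$ itself is $\lesssim \rho^\l r^{-\gamma}$ on $r\geq |y_0|$ (where $|u_\l|\lesssim r^{\l-\gamma}$, and $r^{\l-\gamma} = \rho^\l r^{-\gamma}(r/\rho)^\l \leq \rho^\l r^{-\gamma}$), and $\lesssim |y_0|^\l r^{-\gamma} \leq \rho^\l r^{-\gamma}$ on $2|y_0|^\beta \leq r\leq |y_0|$, so both fit comfortably inside $C_1\rho^{\l-\kappa}r^{\kappa-\gamma}$.

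For step (2)–(3), once $X$ satisfies such a slice bound, writing $T$ as the graph of $u$ over $X$ with $|u|\le C\rho^{\delta-\tau}r^{\tau}$ and adding, the slice of $T$ over $H(y_0^\l)$ is bounded by $C_1(\rho^{\l-\kappa}r^{\kappa-\gamma} + \rho^{\delta-\tau}r^{\tau})$; since $\delta$ may be taken as close to $\l-\gamma$ as we like and $\tau$ as close to $-\gamma$, both terms are dominated by $C_1\rho^{\l-\kappa}r^{\kappa-\gamma}$ after shrinking $\kappa$, which is \eqref{eq:fest2}. The main obstacle, and the step requiring the most care, is step (1) in Regions~I–III: one must verify that converting "graph over $C\times\mathbf{R}$" to "graph over $H(y_0^\l)$" inside a fixed slice introduces only errors that still carry the full power $\rho^{\l-\kappa}$ — in particular that the scaling built into $H(y_0^\l) = |y_0|^a H_\pm$ interacts correctly with the $r^{-\gamma}$ decay of $H_\pm$ over $C$, and that the cutoff-interpolation in the gluing region does not destroy the estimate. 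This is bookkeeping of the same flavor as the proof of Proposition~\ref{prop:mXest}, and I would present it by reusing that region decomposition essentially verbatim rather than redoing the rescaling estimates.
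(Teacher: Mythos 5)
Your overall plan — reuse the region decomposition from Proposition~\ref{prop:mXest} to bound the slice of $X$ over $H(y_0^\l)$ and then add the graphing bound $|u|\lesssim\rho^{\delta-\tau}r^\tau$ of $T$ over $X$ from Proposition~\ref{prop:Texist} — is the paper's approach, and your handling of Regions~III and IV and of the final combination step is essentially right. However, your treatment of Region~II contains a genuine gap.

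On $2|y_0|^\beta\leq r\leq|y_0|$ you estimate separately the height of $H(y_0^\l)$ over $C$ and $|u_\l|$ itself and claim both are $\lesssim\rho^{\l-\kappa}r^{\kappa-\gamma}$. Neither claim is correct. First, a slip in the exponent: $H(y_0^\l)=|y_0|^aH_\pm$ is the graph of $|y_0|^a\Psi_\pm(|y_0|^{-a}\,\cdot)$ over $C$, and since $|y_0|^a(|y_0|^{-a}r)^{-\gamma}=|y_0|^{a(\gamma+1)}r^{-\gamma}=|y_0|^\l r^{-\gamma}$, the leading term of this height is $y_0^\l r^{-\gamma}\phi_1$, of size $\sim\rho^\l r^{-\gamma}$ rather than $\rho^ar^{-\gamma}$. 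Second, and more importantly, even the corrected size $\rho^\l r^{-\gamma}$ — and likewise your bound $|u_\l|\lesssim|y_0|^\l r^{-\gamma}\sim\rho^\l r^{-\gamma}$ — does \emph{not} fit inside $C_1\rho^{\l-\kappa}r^{\kappa-\gamma}$ on Region~II: the ratio is $(\rho/r)^\kappa\gtrsim\rho^{-\kappa(\beta-1)}\to\infty$ as $\rho\to 0$, since $\rho\sim|y_0|$, $r\lesssim\rho^\beta$ and $\beta>1$ there. So simply summing these contributions gives a bound of the wrong order.

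What saves the estimate on Region~II is the \emph{cancellation} of the shared leading term $y_0^\l r^{-\gamma}\phi_1$. The slice of $X$ is the graph over $C$ of $u_\l=y_0^\l r^{-\gamma}\phi_1+a_1y_0^{\l-2}r^{2-\gamma}\phi_1+\cdots$, while $H(y_0^\l)$ is the graph over $C$ of $y_0^\l r^{-\gamma}\phi_1+O\big(|y_0|^{a(\gamma+1+\kappa')}r^{-\gamma-\kappa'}\big)$, the error coming from $v_\pm$ in $\Psi_\pm$. Graphing one over the other subtracts the common $y_0^\l r^{-\gamma}\phi_1$, and each remainder is then checked directly to be $\lesssim|y_0|^{\l-\kappa}r^{\kappa-\gamma}$: for $|y_0|^{\l-2}r^{2-\gamma}$ one uses $r\leq|y_0|$, and for $|y_0|^{a(\gamma+1+\kappa')}r^{-\gamma-\kappa'}$ one uses $r\geq 2|y_0|^\beta$ together with $\beta<a$. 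You invoked this cancellation correctly in your Region~III discussion, but you need it in Region~II as well (only in Region~I, where $\rho\sim r$, do the crude absolute-size bounds suffice). With that insertion your argument coincides with the paper's.
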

\begin{proof}
  We consider two regions separately:
  \begin{itemize}
    \item On the region where $r > |y_0|$, we know from the proof of
      Proposition~\ref{prop:mXest} that the approximate solution $X$
      is the graph of a function $f_0$ over $C\times\mathbf{R}$, with
      $|f_0|\leq Cr^{\l-\gamma}$. We also know that on this region the
      surface $H(y_0^\l)$ is the graph of a function $f_1$ over
      $C\times\mathbf{R}$ with $|f_1| \leq C
      |y_0|^{\l}r^{-\gamma}$. Since $r > |y_0|$, we have $|f_1|\leq
      Cr^{\l-\gamma}$. Finally, by Proposition~\ref{prop:Texist}
      the surface $T$ is the graph of a function $u$ over $X$ with
      \[ |u|\leq \rho^{\delta-\tau}r^\tau. \]
      On the region $r > |y_0|$ we have that $\rho$ is comparable to
      $r$, and in addition $\delta > \l-\gamma$, so $|u| \leq C
      r^{\l-\gamma}$. Combining the three estimates we find that $T$
      can be written as the graph of a function $f$ over $H(y_0^\l)$,
      with $|f| \leq Cr^{\l-\gamma}$ as required.
    \item Consider the region $r < 2|y_0|$. We first show that the
      approximate solution $X$ is the graph of a function over
      $H(y_0^l)$ satisfying the estimate \eqref{eq:fest2}.  We use the analysis of Regions
      II, III, IV in the proof of Proposition~\ref{prop:mXest}. On
      Region II $X$ is the graph of $u_\l$ over $C\times X$, and also
      by \eqref{eq:ul} we have
      \[ |u_\l - y^\l r^{-\gamma} \phi_1| \leq C
        |y|^{\l-2}r^{2-\gamma}. \]
      At the same time in this region $H(y^\l)$ is the graph of a
      function $f_1$ over $C\times X$, where
      \[ |f_1 - y^\l r^{-\gamma}\phi_1| \leq C
        |y|^{a(\gamma+1+\kappa')}r^{-\gamma-\kappa'}, \]
      for some $\kappa' > 0$. Note that
      \[ |y|^{a(\gamma+1+\kappa')}r^{-\gamma-\kappa'} \leq C
        |y|^{\l-\kappa} r^{\kappa-\gamma}\]
      for suitable $\kappa > 0$. Indeed, this is equivalent to
      \[\label{eq:20} |y|^{a\kappa' +\kappa} \leq C r^{\kappa'+\kappa}, \]
        however on this region $r \geq 2|y|^\beta$ for some $\beta\in
        (1,a)$. It follows that \eqref{eq:20} holds if $\kappa$ is
        sufficiently small. In sum we find that on Region II the
        surface $X$ can be viewed as the graph of a function $f_2$
        over $H(y^\l)$, with
        \[ |f_2| \leq C |y|^{\l-\kappa}r^{\kappa-\gamma}. \]
        A very similar calculation shows that the same is also true on
        Region III, while on Region IV by definition $X$ is just the
        surface $H(y^\l)$.

        At the same time $T$ is the graph of a function $u$ over $X$
        with $|u| \leq \rho^{\delta-\tau}r^\tau$, and on our region
        $\rho$ is comparable to $|y|$. It is therefore enough to check
        that
        \[ |y_0|^{\delta-\tau}r^\tau \leq C
          |y_0|^{\l-\kappa}r^{\kappa-\gamma}, \]
      noting that on our region $|y_0|\sim \rho$. 
      Recall that we have $\delta > \l-\gamma$, and $\tau < -\gamma$
      can be taken very close to $-\gamma$. In particular it is enough
      to check the inequality above for $\tau=-\gamma$, in which case
      it is equivalent to
      \[  |y_0|^{\delta - (\l-\gamma) +\kappa} \leq C r^{\kappa}. \]
      Moreover, since we also have $r > C^{-1}|y|^a$, it is enough to
      ensure that
      \[ \delta - (\l-\gamma) + \kappa > a\kappa, \]
      and since $\delta > \l-\gamma$, this will be satisfied for
      sufficiently small $\kappa$. 
  \end{itemize}
\end{proof}

We also have the following variant of Proposition~\ref{prop:Fadefn},
based on the fact that at suitable scales $T$ looks either like
$C\times \mathbf{R}$ or $H\times\mathbf{R}$.
\begin{lemma}\label{lem:Fa2}
  Let $a\in (3-n+\gamma, -\gamma)$. There is a constant $C_a$,
  depending on $a$ and the cone $C$, and functions $F_{\pm,a} > C_a^{-1}r^a$ on $H_{\pm}$
  satisfying the following. We have $|\nabla^i F_{\pm,a}|\leq C_ar^{a-i}$ for
  $i\leq 3$ and $L_{H_\pm} F_{\pm,a} < - C_a^{-1} r^{a-2}$. In addition $F_{\pm,a} = r^a\phi_1$
  outside a large ball, where we view $H_\pm$ as graphs over
  $C$ in order to define $\phi_1$ on $H_\pm$. 

  As in Proposition~\ref{prop:Fadefn}, we define $F_a$ on $\mathbf{R}^n\setminus\{0\}$ to be
  homogeneous of degree $a$, restricting to $F_{\pm,a}$ on $H_\pm$.
  Then in a sufficiently small neighborhood
  of the origin the function $F_a$ satisfies the same estimates on the
  surface $T$. That is, $|\nabla^i F_a| \leq C_a r^{a-i}$ and $L_T
  F_a < - C_a^{-1} r^{a-2}$, for a larger constant $C_a$.  
\end{lemma}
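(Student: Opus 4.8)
The plan is to first construct $F_{\pm,a}$ on $H_\pm$ following the proof of Proposition~\ref{prop:Fadefn} (equivalently Lemma 5.7 in \cite{Sz20}) essentially verbatim, the single change being a sign. The input is the identity $L_C(r^a\phi_1)=c_a r^{a-2}\phi_1$, where $c_a$ is the polynomial in \eqref{eq:gl1} for $i=1$ evaluated at the exponent $a$; its two roots are $-\gamma$ and $3-n+\gamma$, and since $a$ now lies strictly between them one has $c_a<0$, so that $L_C(r^a\phi_1)\le -2C_a^{-1}r^{a-2}$ using $\phi_1>0$ and $\phi_1$ bounded below on the compact link $\Sigma$. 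Outside a large ball $H_\pm$ is the graph of $\Psi_\pm=\pm r^{-\gamma}\phi_1+v_\pm$ over $C$, an $O(r^{-1-\gamma})$ perturbation of $C$ in the scale invariant $C^3$ sense by Proposition~\ref{prop:Hestimates}, so transporting $r^a\phi_1$ via the graph gives $L_{H_\pm}(r^a\phi_1)=L_C(r^a\phi_1)$ plus a term that is $o(r^{a-2})$ as $r\to\infty$ (since $\gamma>-1$), whence $L_{H_\pm}(r^a\phi_1)\le -C_a^{-1}r^{a-2}$ once $r$ is large. On the remaining compact inner part of $H_\pm$, where $r$ is bounded above and below, I would extend $r^a\phi_1$ to a positive function with $L_{H_\pm}F_{\pm,a}$ bounded above by a negative constant; here the hypothesis $a\in(3-n+\gamma,-\gamma)$ is convenient, because by \eqref{eq:Lh1} the operator $L_{H_\pm}\colon C^{3,\alpha}_a(H_\pm)\to C^{1,\alpha}_{a-2}(H_\pm)$ is invertible, so one can prescribe $g\in C^{1,\alpha}_{a-2}(H_\pm)$ with $g=L_{H_\pm}(r^a\phi_1)$ for $r$ large and $g\le -2C_a^{-1}r^{a-2}$ everywhere, and solve $L_{H_\pm}F_{\pm,a}=g$ — the desired inequality then holds by construction. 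Writing $F_{\pm,a}=\Phi v$ with $\Phi>0$ the homothety Jacobi field on $H_\pm$ (see the discussion around \eqref{eq:Lh1}) removes the zeroth order term, and the maximum principle for the resulting potential free drift Laplacian, together with $F_{\pm,a}\to r^a\phi_1>0$ at infinity, forces $v>0$, hence $F_{\pm,a}>C_a^{-1}r^a$; the bounds $|\nabla^iF_{\pm,a}|\le C_a r^{a-i}$ for $i\le 3$ amount to $F_{\pm,a}\in C^{3,\alpha}_a$, and one arranges, as in Proposition~\ref{prop:Fadefn}, that $F_{\pm,a}=r^a\phi_1$ outside a large ball. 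The homogeneous degree $a$ extension $F_a$ to $\mathbf{R}^n\setminus\{0\}$ then satisfies $L_{H(t)}F_a<-C_a^{-1}r^{a-2}$ and $|\nabla^iF_a|\le C_a r^{a-i}$ on every leaf $H(t)$ with $C_a$ scale invariant, exactly as in the last line of Proposition~\ref{prop:Fadefn}, because $F_a$ has degree $a$ and $L$ scales with degree $-2$.

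For the statement about $T$, I would extend $F_a$ to $(\mathbf{R}^n\setminus\{0\})\times\mathbf{R}$ by making it independent of $y$. Since $r\ge c_0\rho^a$ on $X$ (Lemma~\ref{lem:normcomp1}) and $T$ is an $A^{-\kappa/2}$ small graph over $X$ in $C^{2,\alpha}_{\delta,\tau}$ by Proposition~\ref{prop:Texist}, near the origin $T\setminus\{0\}$ lies in the region $r>0$, so $F_a$ restricts to it. The argument is scale by scale: fix a point of $T$ with $r\in(R,2R)$ and rescale by $R^{-1}$. By the regional analysis in the proof of Proposition~\ref{prop:mXest}, together with Propositions~\ref{prop:projest1} and \ref{prop:Gproj1} (and Proposition~\ref{prop:Texist} to pass from $X$ to $T$), the rescaled surface $R^{-1}T$ is, on the annulus $\{r\in(1,2)\}$, $C^3$ close to one of the model surfaces, with closeness tending to $0$ as $\rho\to 0$, i.e.\ as $A\to\infty$: either $C\times\mathbf{R}$, on which $F_a=r^a\phi_1$ and $L_{C\times\mathbf{R}}(r^a\phi_1)=L_C(r^a\phi_1)=c_a r^{a-2}\phi_1$, or a rescaled Hardt-Simon model $H(y_0^\l)\times\mathbf{R}$, on which by homogeneity $F_a$ restricts to the corresponding rescaling of $F_{\pm,a}$ and $L F_a<-C_a^{-1}r^{a-2}$ with the scale invariant constant from the first step. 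On each model $F_a$ thus satisfies $|\nabla^iF_a|\le C_a r^{a-i}$ and $L F_a\le -2C_a^{-1}r^{a-2}$; these conditions are stable under small $C^3$ perturbations of the underlying surface (the effect on $F_a$ being controlled by the projection estimates in Propositions~\ref{prop:projest1} and \ref{prop:Gproj1}) and have a definite margin of size $r^{a-2}$, so once $A$ is large they persist, with $C_a$ replaced by $2C_a$, for $F_a$ restricted to $R^{-1}T$. Scaling back and invoking the scale invariance of $C_a$, this yields $|\nabla^iF_a|\le C_a r^{a-i}$ for $i\le 3$ and $L_TF_a<-C_a^{-1}r^{a-2}$ on $T$ near the origin, after enlarging $C_a$.

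The hard part is this last step in the regions near the singular ray of $C\times\mathbf{R}$, that is the Region IV type part of $T$ where $r\ll\rho$ and the geometry of $T$ degenerates as one approaches the singular line. There one needs $R^{-1}T$ to be genuinely $C^3$ close — not merely $C^0$ close — to the appropriate scaled Hardt-Simon model $H(y_0^\l)\times\mathbf{R}$, uniformly in the scale $R$, so that both the margin $r^{a-2}$ in $L F_a<-C_a^{-1}r^{a-2}$ and the constant $C_a$ survive the comparison uniformly. This is precisely the information provided by the bounded geometry of $X$ established in the proof of Proposition~\ref{prop:mXest}, combined with Proposition~\ref{prop:Gproj1}, so I expect the remaining verification to be bookkeeping rather than a new idea.
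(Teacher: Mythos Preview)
Your outline matches the paper's proof in structure and in all the essential ideas. One small inconsistency in your write-up: solving $L_{H_\pm}F_{\pm,a}=g$ globally via the invertibility in \eqref{eq:Lh1} does not by itself give $F_{\pm,a}=r^a\phi_1$ outside a large ball --- the difference is merely a Jacobi field on the exterior region, not identically zero there --- and this exact equality is what makes the leafwise homogeneous extension $F_a$ smooth across $C$ in the foliation. The paper, following Proposition~\ref{prop:Fadefn} (i.e.\ \cite[Lemma~5.7]{Sz20}), instead modifies $r^a\phi_1$ only on a compact piece by a direct cutoff construction, which gives the exact equality automatically; you should do the same rather than invoke the global inverse. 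Your positivity argument via the substitution $F_{\pm,a}=\Phi_\pm v$ and the maximum principle for the resulting drift Laplacian is a valid variant of the paper's touching argument (slide $-\lambda\Phi_\pm$ up until it contacts $F_{\pm,a}$, possible since $a<-\gamma$ makes $F_{\pm,a}$ decay faster than $\Phi_\pm$, and read off $L_{H_\pm}F_{\pm,a}\ge 0$ at the contact point); both are the same maximum-principle idea. Your rescaling argument on $T$ is exactly the paper's.
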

\begin{proof}
  In the same way as in Proposition~\ref{prop:Fadefn} we can construct
  $F_{\pm, a}$ on $H_\pm$ such that $|\nabla^i F_{\pm a}|\leq C_a
  r^{a-i}$, $L_{H_\pm}F_{\pm,a} < -C_a^{-1} r^{a-2}$ and $F_{\pm,a} =
  r^a\phi_1$ outside of a large ball. For this note that $L_C r^a =
  -c_a r^{a-2}$ for $a\in(3-n+\gamma, -\gamma)$, where $c_a > 0$.
  To see that $F_{\pm,
    a} > 0$ we argue as in Step 1 of \cite[Proposition 5.11]{Sz20}: We
  use the strict minimizing property of $C$, which
  implies that $H_{\pm}$ admit positive Jacobi fields $\Phi_{\pm}$
  satisfying $|\Phi_\pm| < C r^{-\gamma}$, arising from homothetic
  rescalings. If $F_{\pm,a}$ were non-positive somewhere, then we could
  find some $\lambda \geq 0$ such that $F_{\pm,a}\geq
  -\lambda\Phi_{\pm}$ with equality at some point (note that
  $F_{\pm,a}$ decays faster than $\Phi_{\pm}$ at infinity). At the
  contact point we would have $L_{H_\pm}F_{\pm,a} \geq
  L_{H_\pm}(-\lambda\Phi_{\pm})=0$, which is a contradiction.

  It remains to estimate $\nabla^i F_a$ and $L_T F_a$ on our
  hypersurface $T$. For $\nabla^i F_a$ note that $F_a$ is homogeneous
  of degree $a$ and the regularity scale of $T$ at each point is
  bounded below by $c_1 r$ for some $c_1 > 0$. Therefore the rescaled
  surface $\lambda^{-1}T$ has uniformly bounded geometry in the region
  $r\in (1,2)$. Here $F_a$ has uniformly bounded derivatives, and so
  scaling back we obtain the estimates $|\nabla^i F_a|\leq C_a
  r^{a-i}$ on $T$.

  To estimate $L_T F_a$ we need to use more information about $T$ in
  different regions. Instead of $T$ we first consider $X$, and use the
  description of $X$ in different regions given in
  Proposition~\ref{prop:mXest}, in the ball $B(0,A^{-1})$ for large
  $A$. The argument there shows that in each
  region, after rescaling to the ``unit scale'' $r\in(1,2)$, the
  rescaled surface $\tilde{X}$ can be approximated arbitrarily well by
  $H(t)\times\mathbf{R}$ for some $t\in \mathbf{R}$ ($t=0$ in Regions
  I-III, and $t=R^{-\gamma-1}y_0^\l$ in Region IV), as long as $A$ is
  sufficiently large. On the annulus $r\in (1,2)$ we have
  $L_{H(t)}F_a < -C_a^{-1}$, and so we will also have $L_{\tilde{X}}
  F_a < -\frac{1}{2}C_a^{-1}$ if $A$ is sufficiently large. At the
  same time the estimate for $u$ in Proposition~\ref{prop:Texist}
  implies that after rescaling, $R^{-1}T$ is also an arbitrarily small
  perturbation of $\tilde{X}$ in the region $r\in (1,2)$, as long as
  $A$ is sufficiently large. We can therefore ensure that $L_{R^{-1}T}
  F_a < -\frac{1}{4}C_a^{-1}$. Scaling back and using the homogeneity
  of $F_a$ we get the required estimate $L_T F_a < -\frac{1}{4}
  C_a^{-1} r^{a-2}$. 
\end{proof}

\subsection{$T$ is area minimizing}\label{sec:minimizing}
In this section we complete the proof of
Theorem~\ref{thm:Texist} by showing that
$T$ is area minimizing near the
origin when $\l$ is sufficiently large.
\begin{prop}\label{prop:Tminimizing}
  For sufficiently large $\l$, depending on the cone $C$, the
  hypersurface $T$ constructed in Proposition~\ref{prop:Texist} is
  area minimizing in a neighborhood of the origin. 
\end{prop}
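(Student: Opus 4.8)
The plan is to run the strategy of Hardt--Simon~\cite{HS85}, with the barriers of Proposition~\ref{prop:barrier10} playing the role that leaves of a minimal foliation play for the surfaces $H_\pm$; the complication, as emphasised in the introduction, is that $T$ is not itself a leaf of the foliation $\{H(t)\times\mathbf{R}\}$, so the barriers must be built by hand and tailored to $T$. It suffices to show that $T\cap B_\sigma$ is area minimizing for every $\sigma$ below some fixed $\sigma_0$: a competitor for $T\cap B_{\sigma_0}$ has its perturbation compactly contained in a smaller ball, so this reduces to a statement about balls of arbitrarily small radius. I would argue by induction on the radius. For the base case, $C$ strictly minimizing implies $C\times\mathbf{R}$ is strictly minimizing, and a standard compactness argument (strict minimality rules out area loss in a limit) produces $\delta_0>0$ such that every minimal hypersurface contained in the $\delta_0$-neighbourhood of $C\times\mathbf{R}$ in $B_1$ is area minimizing in $B_{1/2}$; since $T$ has tangent cone $C\times\mathbf{R}$, the rescalings $\rho^{-1}T$ are $\delta_0$-close to $C\times\mathbf{R}$ once $\rho$ is small, so $T\cap B_{\rho/2}$ is area minimizing for all small $\rho$.

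For the inductive step, fix $\sigma$ small and let $S$ be an area minimizing competitor for $T$ in $B_\sigma$. In each $y$-slice I would apply Proposition~\ref{prop:barrier10} with a function of the form $f(y)=(1+\epsilon^{-1}(y^\l+\mu(y)))^{1/p}$, where $\mu>0$ is a small function chosen, using the sharp graph estimate of Proposition~\ref{prop:Tgraphbound} together with the leaf comparison of Lemma~\ref{lem:L51}, so that the leaf $H(\epsilon f(y)^p-\epsilon)=H(y^\l+\mu(y))$ lies strictly on the positive side of the slice of $T$; running the construction on the other side of $C\times\mathbf{R}$ (equivalently, with $f$ of the opposite sign) gives a companion surface strictly on the negative side of $T$. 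Because $\l$ is large, $f$ is essentially a high power of $y$ near the axis, so $\|f\|_{C^3}$ is small on $\{|y|<\sigma\}$ provided $\epsilon$ is not too small, and one can take $\epsilon$ to be a fixed power of $\sigma$ lying below $\sigma^{\gamma+1}$. Proposition~\ref{prop:barrier10} then yields hypersurfaces $X_\epsilon^\pm$ of strictly negative, respectively strictly positive, mean curvature, straddling $T$ and sandwiching it to within $O(\epsilon)$ in the leaf parameter. Coarsely trapping $\mathrm{spt}(S)$ near $C\times\mathbf{R}$ by sweeping the minimal leaves $H(t)\times\mathbf{R}$, and then sweeping $X_\epsilon^\pm$ towards $T$ --- contacts on the singular ray being excluded exactly as in the last paragraph of the proof of Proposition~\ref{prop:barrier10} --- a first-contact argument shows $\mathrm{spt}(S)$ lies between $X_\epsilon^-$ and $X_\epsilon^+$.

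Since $\epsilon$ is a power of $\sigma$ below $\sigma^{\gamma+1}$, outside a tube of radius $\sim\epsilon^{1/(\gamma+1)}\ll\sigma$ about the $y$-axis the slab between $X_\epsilon^-$ and $X_\epsilon^+$ is thin relative to the local scale of $T$; applying Allard's regularity theorem at every scale, $S$ is a smooth multiplicity-one minimal graph over $T$ there, and since $S=T$ near $\partial B_\sigma$, unique continuation on the connected set $T\cap(B_\sigma\setminus\mathrm{tube})$ forces $S=T$ outside the tube. Thus $S$ agrees with $T$ except inside the tube, where it is still trapped in the thin slab about $C\times\mathbf{R}$ and agrees with $T$ on the lateral boundary; far from the origin $T$ is there well modelled by scalings of $H_\pm\times\mathbf{R}$, which are area minimizing by~\cite{HS85}, so area minimality of $S$ there follows, while the remaining small ball $B_{\sigma'}(0)$ with $\sigma'<\sigma$ is handled by the inductive hypothesis together with the strict minimality of $C\times\mathbf{R}$ (using that $S$ inherits the tangent cone $C\times\mathbf{R}$ at the origin from the trapping). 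Hence $\mathrm{area}(S)\ge\mathrm{area}(T\cap B_\sigma)$, so $T\cap B_\sigma$ is area minimizing; iterating, the admissible radii increase and reach a fixed positive value (of order $K^{-Q^2}$), which proves the proposition.

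The step I expect to be the main obstacle is the construction of $X_\epsilon^\pm$ in the inductive step. They must hug $T$ closely enough near the singular ray that the uncontrolled tube is genuinely thinner than $B_\sigma$, and controlling the barrier away from the axis near $\partial B_\sigma$ --- where the built-in $\pm\epsilon$ slack in Proposition~\ref{prop:barrier10}(iii) must absorb the $\sim\epsilon r$ spread of the barrier from the leaf $H(\epsilon f(y)^p)$ --- requires the precise form of $f$, Proposition~\ref{prop:Tgraphbound}, and Lemma~\ref{lem:L51}. It is exactly here that the hypothesis enters: only because $\l$ is large is $f$ a high power of $y$ with small $C^3$-norm, which is what lets $\epsilon$ be taken much smaller than $\sigma^\l$ and hence the uncontrolled tube much smaller than $B_\sigma$. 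A secondary delicate point is the innermost ball $B_{\sigma'}(0)$, where $S$ cannot be pinned to $T$ by barriers at all and one must instead combine the tangent cone information with the strict minimality of $C\times\mathbf{R}$, in the spirit of Hardt--Simon.
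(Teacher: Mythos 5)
Your plan reaches for the right ingredients --- Proposition~\ref{prop:barrier10}, Lemma~\ref{lem:Fa2}, Proposition~\ref{prop:Tgraphbound}, Lemma~\ref{lem:L51}, and a first-contact sweep --- but the architecture has gaps I don't see how to close, and it is structurally different from the paper's proof.

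The central difficulty you have not addressed is that Proposition~\ref{prop:barrier10} produces barriers only in a thin tube $\{r < K^{-Q^2}\}$ about the singular ray, where $K = \|f\|_{C^3}$. To trap a competitor $S$ in $B_\sigma$ you must barrier it all the way out to $\partial B_\sigma$, where $r$ is of order $\sigma$; your $X_\epsilon^\pm$ do not exist there, so the first-contact sweep has an unaccounted-for edge at $\{r \sim K^{-Q^2}\}$ where a contact gives no contradiction. The paper's resolution is exactly to splice the barrier with a \emph{graphical} piece: the graph $S_{\xi,1}$ of $\xi F_{-\gamma_1}$ over $T$, which has negative mean curvature by Lemma~\ref{lem:Fa2} and is defined for $r > \xi^{1/(\gamma_2+1)}$, is merged with the Proposition~\ref{prop:barrier10} surface $S_{\xi,2}$, defined for $r < \xi^{1/(\gamma_3+1)}$, by taking a fibrewise minimum on the overlapping annulus; the exponents $\gamma_1 < \gamma_2 < \gamma_3$ and $\epsilon = \xi^\lambda$ with $\lambda$ in the window \eqref{eq:l11} are tuned precisely so that the two pieces cross in the right order on the overlap, and every point of the spliced $S_\xi$ is an interior point of one piece or the other. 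That splice is the main technical work, and it is entirely missing from your argument.

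Two further gaps. The ``base case'' is not standard: Hardt--Simon's result that strictly minimizing tangent cones force area minimality requires the cone to have an isolated singularity, which $C\times\mathbf{R}$ does not; and your compactness sketch (``strict minimality rules out area loss in a limit'') does not work because the competitors can converge to $C\times\mathbf{R}$ along with the surfaces, leaving nothing to contradict. That claim is essentially a uniform version of the very proposition being proved, so the induction is circular. The ``unique continuation'' step is also misapplied: the competitor $S$ agrees with $T$ only on the sphere $\partial B_\sigma$, not on an open set, so unique continuation gives no identification of $S$ with $T$ off the tube. The paper avoids all of this: there is no induction, no base case, and no identification of $S$ with $T$ on a subregion. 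One builds the one-parameter family $S_\xi$ of spliced one-sided barriers with negative mean curvature (and a controlled negative cone on the axis), shows they are strictly on the positive side of $T$, and then slides $\xi$ down to a first contact with a hypothetical area-minimizing competitor $T'$, getting a direct contradiction with White's maximum principle at either a smooth point or an axis point.
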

\begin{proof}
We construct a foliation near $T$ whose leaves have negative mean
curvature for the normal pointing away from $T$. The leaves of the
foliation will be built out of graphs away from the origin,
and barrier surfaces given by Proposition~\ref{prop:barrier10} near the
origin. These two pieces will be combined by taking their
``minimum''.

Let $\gamma < \gamma_1 < \gamma_2 < \gamma_3$, where $\gamma_i$ are
all very close to $\gamma$. Let $\xi > 0$ be small, and $\epsilon =
\xi^\lambda$, where $\lambda$ satisfies
\[ \label{eq:l11} 1 - \frac{\gamma_1 -\gamma}{\gamma_2+1} < \lambda < 1 -
  \frac{\gamma_1-\gamma}{\gamma_3+1}. \]
We will build a
hypersurface $S_\xi$ on the positive side of $T$ (the negative side being
entirely analogous) out of two pieces: $S_{\xi,1}$ is 
the graph of $\xi F_{-\gamma_1}$ over $T$, where $F_{-\gamma_1}$ is
provided by Lemma~\ref{lem:Fa2}, while $S_{\xi,2}$ will be a
hypersurface, which in each $y$-slice is contained between the
surfaces $H(y^\l + 3\epsilon \pm 2\epsilon)$. We work on a ball
$B(0,A^{-1})$ for sufficiently large $A$, so that Lemma~\ref{lem:Fa2}
applies. We will show the
following properties:
\begin{enumerate}
  \item  $S_{\xi,1}$ is defined on the region where $r >
\xi^{1/(\gamma_2+1)}$, while $S_{\xi,2}$ is defined where $r <
\xi^{1/(\gamma_3+1)}$, once $\xi$ is sufficiently small. On these
regions both hypersurfaces are strictly on the positive side of $T$.
  \item On the annular region 
\[ \label{eq:ann1} \xi^{\frac{1}{\gamma_2+1}} < r <
  \xi^{\frac{1}{\gamma_3+1}}, \]
  both $S_{\xi,1}, S_{\xi,2}$ are graphs over the hypersurface with $y$-slices
  $H(y^\l)$.
 \item Near the outer boundary $r = \xi^{1/(\gamma_3+1)}$
the graphical surface $S_{\xi,1}$ lies on the negative side of
$S_{\xi,2}$, while on the inner boundary the opposite is true, and so
we can define the hypersurface $S_\xi$ to be the minimum of
$S_{\xi, 1}$ and $S_{\xi,2}$, such that every point of $S_\xi$ in the ball
$B(0,A^{-1})$ is an interior point of either $S_{\xi,1}$
or $S_{\xi,2}$.
\end{enumerate}

Let us consider $S_{\xi,1}$ first, given by the graph of
$\xi F_{-\gamma_1}$ over $T$. Recall that the regularity scale of $T$ is
bounded below by $c_0 r$ for some $c_0 > 0$, so that the graph is well
defined as long as $\xi F_{-\gamma_1} < c_1 r$ for a small $c_1 >
0$. Since $F_{-\gamma_1}$ is comparable to $r^{-\gamma_1}$, this is
satisfied as long as $\xi \ll r^{\gamma_1 + 1}$, which in particular
holds if
\[ r \geq \xi^{\frac{1}{\gamma_2+1}} \]
as long as $\gamma_2 > \gamma_1$ and $\xi$ is sufficiently
small. Since $|\nabla^i F_{-\gamma_1}|\leq C_{\gamma_1} r^{-\gamma_1 -
  i}$ and $L_T F_{-\gamma_1} < -C_{\gamma_1}^{-1} r^{-\gamma_1-2}$, it
also follows (scaling up annular regions $r\in (R, 2R)$ by $R^{-1}$ as
in the proof of Proposition~\ref{prop:mXest})
that the mean curvature of $S_{\xi,1}$ is negative
on the region $r \geq \xi^{1/(\gamma_2+1)}$ as long as $\xi$ is
sufficiently small. Since $F_{-\gamma_1}$ is strictly positive, it is
clear that $S_{\xi,1}$ is strictly on the positive side of $T$. 

In order to construct $S_{\xi,2}$ we apply
Proposition~\ref{prop:barrier10} for a suitable function
$f_\epsilon(y)$ on the interval $|y|\leq y_\epsilon$, where
\[ \label{eq:yed} y_\epsilon = C_1 \xi^{\frac{1}{a(\gamma_3+1)}}, \]
for a large $C_1 > 0$ to be chosen below and $a= \l / (1+\gamma)$ as
before.  Since we want the resulting
hypersurface to be contained between $H(y^\l+3\epsilon\pm 2\epsilon)$,
we want to use a function $f_\epsilon$ such that
\[ \epsilon^{-1}y^\l + 2 \leq
  f_\epsilon(y)^p \leq \epsilon^{-1}y^\l + 3, \]
with suitable control on the $C^3$-norm of $f_\epsilon$. We define
$f_\epsilon$ by
\[ f_\epsilon(y) = \sigma(\epsilon^{-1}y^\l + 2.5), \]
where $\sigma : \mathbf{R}\to\mathbf{R}$ is a fixed smooth function
satisfying $\sigma(t) = t^{1/p}$ for $|t| > 1$, and
  $|\sigma(t)^p - t| < \frac{1}{10}$ for all
  $t$. We then have estimates
  \[ |\sigma(t)| &\leq |t|^{1/p} + 1, \\
      |\partial_t^i \sigma(t)| &\leq C
    \]
  for $i=1,2,3$. Using this we can estimate the derivatives of
  $f_\epsilon$. We are working on $|y| < y_\epsilon$, where by
  \eqref{eq:yed} we can assume that
  $\epsilon^{-1} y_\epsilon^\l \geq 10$ if $\gamma_3$ is sufficiently
  close to $\gamma$. On this interval we have
  \[ |f_\epsilon(y)| &\leq
    C(\epsilon^{-1}y_\epsilon^\l)^{\frac{1}{p}} \\
    |\partial_y f_\epsilon(y)| &\leq C\l
    \epsilon^{-1}y_\epsilon^{\l-1}\\
    |\partial_y^2 f_\epsilon(y)| &\leq C\l^2 (
    \epsilon^{-1}y_\epsilon^{\l-2} + (\epsilon^{-1}y_\epsilon^{\l-1})^2)
    \leq C\l^2 (\epsilon^{-1}y_\epsilon^{\l-1})^2
    \\
    |\partial_y^3 f_\epsilon(y)| &\leq C\l^3
    (\epsilon^{-1}y_\epsilon^{\l-1})^3. 
  \]
  In sum we get $|f_\epsilon|_{C^3} \leq C
  (\l\epsilon^{-1}y_\epsilon^{\l-1})^3$. According to
  Proposition~\ref{prop:barrier10} we will obtain a hypersurface
  $X_\epsilon$ with negative mean curvature, defined on the region $|y|
  < y_\epsilon$ and $r <
  (\l \epsilon^{-1}y_\epsilon^{\l-1})^{-Q}$ for some fixed large
  $Q$, once $\epsilon$ is small enough.
  Since in each $y$-slice $X_\epsilon$ lies between $H(y^\l +3\epsilon
  \pm 2\epsilon)$, it follows 
  that in the $y$-slices where $|y| = y_\epsilon$, we have
  \[ \label{eq:rl2} r > C^{-1} y_\epsilon^{\frac{\l}{\gamma+1}} = C^{-1}
    y_\epsilon^a. \]

  We need to choose the constant $C_1$ in \eqref{eq:yed}
  and $\gamma_3$ in a suitable
  way, so that the resulting surface $X_\epsilon$ is defined and has
  no boundary in the region $r < \xi^{1/(\gamma_3+1)}$.
  Using \eqref{eq:yed}, \eqref{eq:rl2} together with the
  constraint $r < (\l\epsilon^{-1}y_\epsilon^{\l-1})^{-Q}$, it follows
  that we need to
  choose the constants so that
  \[ \label{eq:c10} \xi^{1/(\gamma_3+1)} &< (\l\xi^{-\lambda}y_\epsilon^{\l-1})^{-Q}, \\
    \xi^{1/(\gamma_3+1)} &< C^{-1}y_\epsilon^a, \]
  for sufficiently small $\xi$. 
  The second condition follows from \eqref{eq:yed} if $C_1$ is
  large. As for the first condition, using
  \eqref{eq:yed}, it is equivalent to
  \[ \xi^{\frac{1}{\gamma_3+1}} <
    \l^{-Q}\xi^{Q\lambda}C_1^{-Q(\l-1)}\xi^{\frac{-Q(\l-1)}{a(\gamma_3+1)}}. \]
  Once $Q, l$ are fixed, this inequality will hold for sufficiently
  small $\xi$, as long as
  \[ Q\lambda - \frac{Q(\l-1)}{a(\gamma_3+1)} - \frac{1}{\gamma_3+1} <
    0, \]
  i.e.
  \[ \label{eq:l21} \lambda < \frac{\l-1}{a(\gamma_3+1)} +
    \frac{1}{Q(\gamma_3+1)}. \]
  If we choose $\gamma_3$ sufficiently close to $\gamma$, and $\l,a$
  are sufficiently large (recall $\l = a(\gamma+1)$), then any $\lambda
  < 1$ satisfies \eqref{eq:l21}. We then define $S_{\xi, 2}$ to be the
  hypersurface $X_\epsilon$ on the region $r <
  \xi^{1/(\gamma_3+1)}$. To see that $S_{\xi,2}$ is graphical over $T$
  on the region where $r > \xi^{\frac{1}{\gamma_2+1}}$, it is enough
  to check that $H(y^\l + 4\epsilon)$ is graphical over $H(y^\l)$ on
  this region. This in turn follows if on this region we have
  $\epsilon r^{-\gamma} \ll r$, i.e. $\xi^\lambda \ll
  \xi^{\frac{\gamma+1}{\gamma_2+1}}$. This is implied by
  \eqref{eq:l11} and that $\gamma_2 > \gamma_1$. 

  We next claim that $S_{\xi,2}$ lies
  strictly on the positive side of $T$. To see this recall from
  Proposition~\ref{prop:Tgraphbound} that in each $y$-slice $T$ is the
  graph of a function $f$ over $H(y^\l)$, with
  \[ |f| \leq C(r^{\l-\gamma} + |y|^{\l-\kappa}r^{\kappa-\gamma}). \]
  Using that $|y| < C r^{a^{-1}}$, it follows that
  \[ |f| \leq Cr^{1 + \kappa(1-a^{-1})} \leq C r^{1 + \kappa/2}, \]
  where we used that $\l, a$ are large. At the same time $S_{\xi,2}$
  lies on the positive side of $H(y^\l + \epsilon)$,
  which in turn is on the positive side of
  the graph of
  \[ c_0 \min \{ \epsilon r^{-\gamma}, r\} \]
  over $H(y^\l)$ by Lemma~\ref{lem:L51}. It remains to check that as long as $r <
  \xi^{1/(\gamma_3+1)}$ and $\xi$ is sufficiently small, we have
  \[ C r^{1 + \kappa/2} < c_0 \min\{\xi^\lambda r^{-\gamma}, r\}. \]
  It is clear that $Cr^{1+\kappa/2} < c_0 r$ if $r$ is small, and the
  remaining inequality is
  \[ r^{\gamma+1+\kappa/2} < c_0C^{-1} \xi^\lambda. \]
  For this to hold on the region $r < \xi^{1/(\gamma_3+1)}$ we need
  \[ \frac{\gamma+1+\kappa/2}{\gamma_3+1} > \lambda. \]
  If $\gamma_3-\gamma$ is sufficiently small, then this holds as soon
  as $\lambda < 1$. 

  We now have our two hypersurfaces $S_{\xi,1}, S_{\xi,2}$ with no
  boundary on the annular region $\xi^{1/(\gamma_2+1)} < r <
  \xi^{1/(\gamma_3+1)}$. Let us
  examine the relative position of the surfaces near the inner
  boundary $r = \xi^{1/(\gamma_2+1)}$. We claim that here the
  graphical surface $S_{\xi,1}$ is on the negative side of
  $S_{\xi,2}$. Using Proposition~\ref{prop:Tgraphbound} as above we know that
  in each $y$-slice $T$ is the graph of a function $f$ over $H(y^\l)$,
  with $|f| \leq Cr^{1 + \kappa/2}$. At
  the same time $F_{-\gamma_1} > C_{\gamma_1}^{-1}r^{-\gamma_1}$. It
  follows that near the inner boundary $S_{\xi,1}$ can be written as
  the graph of a function $f_1$ over $H(y^l)$, where
  \[ f_1 > C_{\gamma_1}^{-1}\xi^{1 - \frac{\gamma_1}{\gamma_2+1}} - C
    \xi^{\frac{1+\kappa/2}{\gamma_2+1}}. \]
  We have
   \[ \label{eq:i12} \frac{1+\kappa/2}{\gamma_2+1} >
      \frac{1+\gamma_2-\gamma_1}{\gamma_2+1} \]
    as long as $\gamma_2-\gamma_1$ is sufficiently small ($\kappa$ is
    fixed).  It follows that for
    sufficiently small $\xi$ we have
    \[ f_1 > \frac{1}{2}
      C_{\gamma_1}^{-1}\xi^{1-\gamma_1/(\gamma_2+1)}. \]
    At the same time near the surface $S_{\xi,2}$ lies on the negative
    side of $H(y^\l + 5\epsilon)$. From
    \eqref{eq:l11} it follows that as long as $\gamma_1-\gamma$ is
    sufficiently small, then $\lambda > 1/(\gamma_1+1)$, and so 
    \[ \epsilon = \xi^\lambda \ll \xi^{\frac{1}{\gamma_2+1}}. \]
    Because of this, near the inner boundary $r =
    \xi^{1/(\gamma_2+1)}$ the surface $H(y^\l + 5\epsilon)$ lies on the negative side
    of the graph of the function $C \epsilon r^{-\gamma}$
    over $H(y^\l)$. We just need to check that this is smaller than the
    function $f_1$ above, i.e. that
    \[ C\xi^\lambda \xi^{-\frac{\gamma}{\gamma_2+1}} <
      \frac{1}{2}C_{\gamma_1}^{-1}\xi^{1-\frac{\gamma_1}{\gamma_2+1}}. \]
    This in turn follows for sufficiently small $\xi$ if
    \[ \lambda > 1 - \frac{\gamma-\gamma_1}{\gamma_2+1}, \]
    which is what we assumed in \eqref{eq:l11}.

    Let us now consider the outer boundary $r = \xi^{1/(\gamma_3+1)}$,
    where we claim that the graphical surface $S_{\xi,1}$ lies on the
    negative side of $S_{\xi,2}$. Arguing as above, we find that here
    $S_{\xi,1}$ is the graph of a function $f_2$ over $H(y^l)$
    satisfying
    \[ f_2 < C_{\gamma_1} \xi^{1 - \frac{\gamma_1}{\gamma_3+1}} +
      C\xi^{\frac{1+\kappa/2}{\gamma_3+1}}. \]
    The inequality \eqref{eq:i12} holds with $\gamma_2$ replaced by
    $\gamma_3$, as long as $\gamma_3-\gamma_1$ is also sufficiently
    small, and so it follows that for sufficiently small $\xi$ we have
    \[ f_2 < 2C_{\gamma_1} \xi^{1-\frac{\gamma_1}{\gamma_3+1}}.\]
    At the same time the surface $S_{\xi,2}$ is on the positive side
    of $H(y^\l + \epsilon)$. Arguing as above,
    we see that near the outer boundary this is on the positive side
    of the graph of $C\epsilon r^{-\gamma}$ over $H(y^\l)$. We need to
    check that this is greater than $f_2$, i.e. 
    \[ C^{-1} \xi^\lambda \xi^{-\frac{\gamma}{\gamma_3+1}} >
      2C_{\gamma_1} \xi^{1-\frac{\gamma_1}{\gamma_3+1}}. \]
    This holds for small $\xi$ as long as
    \[ \lambda < 1 - \frac{\gamma-\gamma_1}{\gamma_3+1}, \]
    which is satisfied by \eqref{eq:l11}.

    We can now build a comparison surface $S_\xi$ in the ball
    $B(0,A^{-1})$ for sufficiently large $A$ as follows ($A$ is
    determined by the neighborhood of the origin on which the function
    $F_{-\gamma_1}$ satisfies the properties in
    Lemma~\ref{lem:Fa2}). On the region
    $r > \xi^{1/(\gamma_3+1)}/4$ we let $S_\xi = S_{1,\xi}$, while on $r
    < 4\xi^{1/(\gamma_2+1)}$ we let $S_\xi = S_{2,\xi}$. To define
    $S_\xi$ on the annular region
    \[ \label{eq:ann2} \xi^{1/(\gamma_3+1)}/2 < r < 2\xi^{1/(\gamma_2+1)} \]
    note that here both $S_{\xi,1}$ and $S_{\xi,2}$ are graphs over
    $H(y^\l)$ in the $y$-slices. In particular it makes sense to define
    $S_\xi$ as their minimum. From the discussion above we know that near the outer boundary of
    \eqref{eq:ann2} the surface $S_\xi$ coincides with $S_{\xi,1}$,
    while near the inner boundary it coincides with $S_{\xi,2}$. It
    follows that every point of $S_\xi$ in the ball $B(0,A^{-1})$ is an interior point of either
    $S_{\xi,1}$ or of $S_{\xi,2}$.

    Using the comparison surfaces we
    complete the proof that $T$ is area minimizing in a
    neighborhood of the origin. We first choose a $\xi_0$ sufficiently
    small so that $S_{\xi_0}$ is defined. Note that $S_{\xi_0}$ is
    strictly on the positive side of $T$. If $T$ were not area
    minimizing in any neighborhood of the origin, then we could find an
    area minimizing hypersurface $T'$ in a very small ball
    $B(0,\rho_0)$, with boundary $T'\cap \partial B(0,\rho_0) = T\cap
    \partial B(0,\rho_0)$, but $T'$ not equal to $T$. We can assume
    that $T'$ contains points lying strictly on the positive side of
    $T$ (otherwise we can repeat the whole construction on the
    negative side of $T$, reversing orientations), while if $\rho_0$ is 
    sufficiently small, $T'$ will lie on the negative side of
    $S_{\xi_0}$. The hypersurfaces $S_\xi$ vary continuously with
    $\xi$, and so we can find a largest $\xi_1 < \xi_0$, such that
    $S_{\xi_1}$ has non-empty intersection with $T'$. This
    intersection must happen at an interior point $z$ of $T'$, and $z$
    is also an interior point of either $S_{\xi_1, 1}$ or $S_{\xi_1,
      2}$. Both of these possibilities are a contradiction
    since the $S_{\xi, i}$ have negative mean curvature at
    smooth points, and $T'$ also cannot have a contact point with
    $S_{\xi_1, 2}$ at its singular points by
    Proposition~\ref{prop:barrier10}. 
\end{proof}

\section{Strong unique continuation}\label{sec:uniquecont}
In this section we prove a strong unique continuation result for
minimal hypersurfaces $M$ with cylindrical tangent cone
$C\times\mathbf{R}$ at the origin, where $C$ is a strictly minimizing
strictly stable cone in $\mathbf{R}^n$. To state the precise result, let us
denote by $d:\mathbf{R}^n\times\mathbf{R} \to \mathbf{R}$ the distance
function to the cone $C\times \mathbf{R}$.

\begin{thm}\label{thm:uniquecont1}
  Suppose that $M$ is an $n$-dimensional stationary integral varifold in a neighborhood of the
  origin $0\in\mathbf{R}^n\times\mathbf{R}$, which admits
  $C\times\mathbf{R}$ as a (multiplicity one) tangent cone at
  the origin. Suppose that for all $k > 0$ there is a constant $C_k$
  such that for all $\rho < 1$ we have
  \[ \label{eq:L2vanish} \int_{M\cap B_\rho(0)} d^2 < C_k \rho^k, \]
  i.e. the $L^2$-distance from $M$ to $C\times\mathbf{R}$ on the ball
  $B_\rho(0)$ vanishes to infinite order as $\rho\to 0$. Then $M = C\times\mathbf{R}$. 
\end{thm}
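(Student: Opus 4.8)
The plan is to argue by contradiction: suppose $M\neq C\times\mathbf{R}$ in every neighborhood of $0$, so that $E(\rho):=\int_{M\cap B_\rho(0)}d^2\,d\mu_M$ is strictly positive for all small $\rho$, while \eqref{eq:L2vanish} holds. I will show that $E$ nonetheless satisfies a polynomial lower bound $E(\rho)\geq c\rho^N$ with $N$ finite; since this contradicts $E(\rho)=O(\rho^k)$ for all $k$, we conclude $M=C\times\mathbf{R}$. On the linear level such a lower bound is a consequence of the convexity of $t\mapsto\ln\int_{(C\times\mathbf{R})\cap B_{e^{-t}}}|u|^2$ for Jacobi fields $u$, whose quantitative, strictly convex form is Proposition~\ref{prop:QL23annulus}; the substance of the argument is to transfer this estimate to the nonlinear object $M$, for which no scalar frequency function is available.

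\textbf{Step 1: closeness and graphical structure.} Since $C\times\mathbf{R}$ is a multiplicity one tangent cone at $0$, assumption \eqref{eq:L2vanish} gives $\rho^{-n-2}\int_{M\cap B_\rho}d^2\to0$ along a sequence $\rho_j\to 0$, and the uniqueness argument of Simon (in the form used in \cite{Sz20}, based on the three-annulus Lemma~\ref{lem:L23annulus}) upgrades this to $\mathcal{E}(\rho):=\rho^{-n-2}\int_{M\cap B_\rho}d^2\to0$ as $\rho\to0$. Combined with Allard's theorem and $\epsilon$-regularity for minimal surfaces near the cylinder $C\times\mathbf{R}$, this shows that at each small scale $\rho$ the singular set of $M$ is confined to a small neighborhood of the singular ray $\{x=0\}$, while on $\{r>\sigma\rho\}$ the surface $M$ is the graph of a function $w$ over $C\times\mathbf{R}$ solving $L_{C\times\mathbf{R}}w=\mathcal{Q}(w)$ with $\mathcal{Q}$ quadratic in $w$ and its first two derivatives; thus $w$ is an approximate Jacobi field.

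\textbf{Step 2: non-concentration near the singular ray (the main obstacle).} A priori a definite fraction of $E(\rho)$ could come from $\{r<\sigma\rho\}$, where the graphical description breaks down, which would obstruct the comparison with a Jacobi field on all of $C\times\mathbf{R}$. I would prove a quantitative non-concentration estimate, with constants depending only on $C$,
\[ \int_{M\cap B_\rho(0)\cap\{r<\sigma\rho\}}d^2\,d\mu_M\leq C\sigma^\beta\int_{M\cap B_\rho(0)}d^2\,d\mu_M \]
for some $\beta>0$ and all small $\sigma$, following \cite{Sz20}: one sweeps the region near $\{x=0\}$ by the negatively curved barrier hypersurfaces of Proposition~\ref{prop:barrier10} (which lie between the leaves $H(\epsilon f(y)^p\pm\epsilon)$) and applies the maximum principle, including White's version at the singular points, to push $M$ off the ray at a controlled rate; the effective bookkeeping of the constants $K,Q$ in Proposition~\ref{prop:barrier10} is designed exactly for this. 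It follows that $E(\rho)$ agrees, up to a factor $1+O(\sigma^\beta)$, with the squared $L^2$-norm of $w$ on $(C\times\mathbf{R})\cap\{r>\sigma\rho\}$, and hence, after solving $L_{C\times\mathbf{R}}u=0$ with the low-order data of $w$ and using estimates of the type of Lemma~\ref{lem:L2Linfty}, with that of a genuine Jacobi field $u$ on $C\times\mathbf{R}$, up to an error of size $\mathcal{E}(\rho)^{1/2}+\sigma^\beta$.

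\textbf{Step 3: the discretized frequency argument.} Fix a small $\lambda_0\in(0,1/2)$. Descending a geometric sequence of scales, at scale $\rho$ I rescale and normalize so that $\int_{M\cap B_\rho}d^2$ plays the role of $\|u\|_{L^2(B_1)}^2\leq4$. Either $\int_{M\cap B_{e^{-2\lambda_0}\rho}}d^2\geq c_0\int_{M\cap B_\rho}d^2$ for a fixed $c_0>0$ — a uniform doubling across this step, which, if it persists down all small scales, already yields $E(\rho)\geq c\rho^N$ — or the hypotheses \eqref{eq:ua1} are met by the comparison field $u$ of Step 2 for a suitable $\lambda\in[\lambda_0/2,\lambda_0]$ provided by Proposition~\ref{prop:QL23annulus}, which then gives the strict convexity gain \eqref{eq:ua2}. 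In the second case, since the Step 2 errors are $\ll\lambda_0^A$ once $\rho$ is small (because $\mathcal{E}(\rho)\to0$), transferring back to $M$ produces a genuine discrete convexity inequality for $g(t):=\ln\int_{M\cap B_{e^{-t}}}d^2$ with a strictly positive gap. Iterating the two alternatives down all small scales makes $g$ eventually convex up to negligible errors, so $g'$ is essentially non-decreasing and $g(t)\geq g(t_0)+(g'(t_0)-o(1))(t-t_0)$, i.e. $E(\rho)\geq c\rho^N$ with $N$ finite — contradicting \eqref{eq:L2vanish}, and hence $M=C\times\mathbf{R}$ near $0$. As indicated, the crux is Step 2: making the non-concentration of \cite{Sz20} effective, with a definite rate $\sigma^\beta$ and constants depending only on $C$, so that the part of $M$ invisible to the Jacobi-field model stays negligible against the fixed convexity gain $\lambda_0^A$; a secondary technical point is controlling $\mathcal{Q}(w)$ and $w-u$ in the annular $L^2$-norms \eqref{eq:L2ann} uniformly in the scale, which follows from the elliptic estimates on $C\times\mathbf{R}$ together with the decay $\mathcal{E}(\rho)\to0$.
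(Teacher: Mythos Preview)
Your high-level strategy matches the paper's: transfer Proposition~\ref{prop:QL23annulus} from Jacobi fields to $M$ via a non-concentration estimate, then derive a doubling inequality contradicting infinite-order vanishing. But two of your steps have genuine gaps.

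\textbf{Step 2.} The relative $L^2$ non-concentration you claim, $\int_{M\cap B_\rho\cap\{r<\sigma\rho\}}d^2\leq C\sigma^\beta\int_{M\cap B_\rho}d^2$ with $C$ depending only on the cone, does not follow from the barrier sweep of Proposition~\ref{prop:barrier10}. The barriers control the $L^\infty$-type distance $D_{C\times\mathbf{R}}$ (this is Proposition~\ref{prop:nonconc}), and converting $D_{C\times\mathbf{R}}$ to $L^2$ via area bounds produces an \emph{absolute} bound on the inner integral, not a relative one; the exponents in the chain $L^2\to D_{C\times\mathbf{R}}\to L^2$ do not close up. The paper avoids this by working with the hybrid distance $d(M,\rho)=\bigl(\int_{M\cap B_\rho}d^2\bigr)^{1/2}+D_{C\times\mathbf{R}}(M;B_\rho)^{1+q^2}$, so that the barrier-based non-concentration is applied only to the $D_{C\times\mathbf{R}}$ part while the $L^2$ part is handled on the graphical region and then patched back using the $L^\infty$ control near the ray (Steps 3--4 of Proposition~\ref{prop:3anndecay}).

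\textbf{Step 3.} Your iteration does not work as written. The dichotomy ``doubling over $[t,t+2\lambda_0]$ \emph{or} strict convexity at this triple'' does not make $g$ eventually convex: in the doubling case you get no convexity information, and you never show that doubling propagates from one triple to the next, so the two cases can alternate. The paper's mechanism is different. It first proves a nonlinear three-annulus property (Proposition~\ref{prop:3anndecay}): if $d(M,e^{-\lambda})\geq\tfrac12 d(M,1)$ and $d(M,e^{-\lambda})\leq\lambda_0^Q$, then $d(M,e^{-2\lambda})\geq\tfrac12 d(M,e^{-\lambda})$; thus doubling, once it starts, persists forever and gives the polynomial lower bound. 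If doubling never starts, then $d(M,e^{-\lambda_0}\rho)\leq\tfrac23 d(M,\rho)$ for all $\rho$, which permits replacing $\lambda_0$ by $\lambda_1=(2/3)^{1/Q}\lambda_0$ and repeating (Corollary~\ref{cor:doubling}). The total scale traversed is $\sum_k\lambda_k<\infty$, a convergent geometric series, forcing $d(M,\rho_0)=0$ at some $\rho_0>0$. This shrinking-$\lambda$ trick is precisely why the quantitative threshold $\epsilon\leq\lambda_0^Q$ is needed, and it is absent from your outline.
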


The proof relies on a similar idea as the monotonicity of frequency
used by Almgren~\cite{Alm00} and Garofalo-Lin~\cite{GL86}, although the
details are quite different, since we are not able to define a
suitable frequency function in our setting. To explain the basic idea, let us denote
by $d(M,\rho)$ some measure of the distance between $M$ and
$C\times\mathbf{R}$ on the ball $B_\rho(0)$. Suppose that $d(M,\rho)$ is
defined in a scale invariant way, so that $d(M,\rho)= d(\rho^{-1}M, 1)$.
In practice $d(M,\rho)$ will be
a scaled $L^2$-distance, ``regularized'' by adding a small multiple of
an $L^\infty$-type distance (see \eqref{eq:ddefn} below). It is possible that one
could also use the $L^2$-distance itself by relying on the
non-concentration result due to Simon~\cite[Corollary 2.3]{Simon94} in
the arguments below. 

Suppose that $\lambda > 0$.
Let us say that the three-annulus property holds for the pair
$(M,\lambda)$, if $d(M,e^{-\lambda}) \geq \frac{1}{2}d(M,1)$ implies
$d(M,e^{-2\lambda}) \geq \frac{1}{2} d(M,e^{-\lambda})$. Note that if for some
$\lambda > 0$ the three-annulus property holds for $(e^{k\lambda}M,
\lambda)$ for all $k\geq 0$, and in addition $d(M,e^{-\lambda}) \geq
\frac{1}{2} d(M, 1)$, then iterating the three-annulus property we
find that $d(M, e^{-k\lambda}) \geq 2^{-k} d(M, 1)$. This should imply
that $M$ cannot approach its tangent cone at infinite order for any
reasonable definition of the distance $d$.

From the three-annulus lemma (see Proposition~\ref{prop:QL23annulus})
for Jacobi fields on $C\times\mathbf{R}$, together with a
contradiction argument, one expects that perturbing $\lambda$ slightly
if necessary, the three-annulus property holds for $(M,\lambda)$,
whenever $M$ is sufficiently close to $C\times\mathbf{R}$, say
whenever $d(M,1) < E(\lambda)$, for a function $E$ converging to
zero as $\lambda\to 0$. A precise version of this statement will be
the core of our argument. See Proposition~\ref{prop:3anndecay} below,
where we will show that for a suitable definition of $d$, we can
choose $E(\lambda) = \lambda^Q$ for some $Q > 0$.

Given this, we can conclude as follows (see Corollary~\ref{cor:doubling} for
details).  Assuming that $\rho^{-1} M$ is
sufficiently close to $C$ for all $\rho < 1$, the three-annulus
property will hold for the pairs
$(\rho^{-1}M, \lambda_0)$, for some $\lambda_0 > 0$.  
If for a given $\rho_0 \in (0,1)$ we had $d(M,
e^{-\lambda_0}\rho_0) \geq \frac{1}{2} d(M, \rho_0)$, then $M$ would not
approach its tangent cone at infinite order. Therefore for all $\rho <1$ we must have
\[ d(M, e^{-\lambda_0}\rho) < \frac{1}{2} d(M, \rho). \]
Proposition~\ref{prop:3anndecay} will then imply that for all $\rho <
e^{-\lambda_0}$ the three-annulus property holds for $(\rho^{-1}M,
\lambda_1)$ for some $\lambda_1 < s \lambda_0$, with $s < 1$
depending only on the number $Q$ in
Proposition~\ref{prop:3anndecay}. Iterating this, it follows that if $M$
approaches its tangent cone at infinite order, then we have $d(M,
\rho_0) = 0$ for $\rho_0 = e^{-\lambda_0(1+s+s^2+\ldots)}$, leading to
Theorem~\ref{thm:uniquecont1}.

Most of this section will be taken up by the proof of
Proposition~\ref{prop:3anndecay}, which is essentially a quantitative version
of the three annulus lemma (see Proposition 7.5 in
\cite{Sz20}). In \cite{Sz20} we used a contradiction argument similarly
to related earlier results 
in the literature such as in Simon~\cite{Simon83}, together
with a non-concentration estimate. 
To obtain a quantitative version of the result we cannot argue by
contradiction, and a first step is to obtain a quantitative
version of the non-concentration estimate, Proposition 7.4 in
\cite{Sz20}. We prove this in Section~\ref{sec:qnonconc}. We will
then give the proof of Proposition~\ref{prop:3anndecay} in
Section~\ref{sec:q3ann}. 

\subsection{Quantitative non-concentration}\label{sec:qnonconc}
As in \cite{Sz20}, we define a kind of $L^\infty$-distance between
hypersurfaces $M$ and the cone $C\times \mathbf{R}$ using comparisons
to rescalings of the Hardt-Simon smoothings $H_\pm$ of the cone $C$.
\begin{definition}\label{defn:DCRdist}
  For $\epsilon > 0$, let us define the neighborhood
  $N_\epsilon(C\times\mathbf{R})$ to be the (closed) region contained
  between the two hypersurfaces $H(\pm \epsilon)$ using the notation
  in Definition~\ref{defn:Ht}. 

  For any subsets $M, U\subset\mathbf{R}^{n+1}$ (in practice $M$ will
  be the support of a stationary integral varifold and $U$ an open set), we define
  \[ D_{C\times\mathbf{R}}(M; U) = \inf\{ \epsilon > 0\,|\, M\cap U
    \subset N_\epsilon(C\times\mathbf{R})\}. \]
  We think of this as the distance from $M$ to $C\times\mathbf{R}$ over
  the set $U$. 
\end{definition}

The following is a quantitative version of the non-concentration
result, Proposition 7.4 in \cite{Sz20}. 
\begin{prop}\label{prop:nonconc}
  There is a constant $A>0$ depending only on the cone $C$ with the
  following property. Let $b > 0$ and $s\in (0,1/2)$, and suppose that
  $M\subset\mathbf{R}^n\times\mathbf{R}$ is a codimension one
   stationary integral varifold defined in the region $\{|y| <
   b\}$. We then have
   \[ \label{eq:nonconc1}
     D_{C\times\mathbf{R}}(M; \{|y| < b/2\}) &\leq A
     D_{C\times\mathbf{R}}(M; \{r \geq b s^A\}\cap \{|y| < b\})
     \\ &\qquad + s D_{C\times\mathbf{R}}(M; \{|y| < b\}). \]
\end{prop}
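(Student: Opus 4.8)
The plan is to prove Proposition~\ref{prop:nonconc} by a contradiction-compactness argument, upgraded to keep track of the constant $A$, closely following the strategy of Proposition 7.4 in \cite{Sz20}. First I would reduce to the scale $b=1$ by the obvious scaling, so we must show
\[
D_{C\times\mathbf{R}}(M;\{|y|<1/2\}) \leq A\, D_{C\times\mathbf{R}}(M;\{r\geq s^A\}\cap\{|y|<1\}) + s\, D_{C\times\mathbf{R}}(M;\{|y|<1\}).
\]
Write $\eta = D_{C\times\mathbf{R}}(M;\{|y|<1\})$ for the total distance and $\eta' = D_{C\times\mathbf{R}}(M;\{r\geq s^A\}\cap\{|y|<1\})$ for the ``outer'' distance. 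If $\eta=0$ there is nothing to prove, so normalize $\eta=1$; we then need to bound the distance on $\{|y|<1/2\}$ by $A\eta' + s$. Suppose this fails: then there is a fixed $s\in(0,1/2)$ and a sequence $M_k$ of stationary integral varifolds in $\{|y|<1\}$ with $D_{C\times\mathbf{R}}(M_k;\{|y|<1\})=1$, with $\eta'_k := D_{C\times\mathbf{R}}(M_k;\{r\geq s^k\}\cap\{|y|<1\}) \to 0$ (taking $A=k$ along the sequence), yet $D_{C\times\mathbf{R}}(M_k;\{|y|<1/2\}) > k\,\eta'_k + s$ for all $k$ large.

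The heart of the matter is then to analyze where the mass of $M_k$ can concentrate. Since $D_{C\times\mathbf{R}}(M_k;\{|y|<1\})=1$, the $M_k$ lie in the fixed neighborhood $N_1(C\times\mathbf{R})$, and since $\eta'_k\to 0$, away from the singular line $\{r=0\}$ the supports of $M_k$ are forced into shrinking neighborhoods of $C\times\mathbf{R}$. By the compactness theorem for stationary integral varifolds with locally bounded mass (the mass bound coming from the containment in $N_1(C\times\mathbf{R})$ together with monotonicity), a subsequence of $M_k$ converges to a stationary integral varifold $M_\infty$ in $\{|y|<1\}$, whose support lies in $C\times\mathbf{R}$, and which by the constancy theorem and the structure of $C\times\mathbf{R}$ must be an integer multiple of $C\times\mathbf{R}$. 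The subtle point — and this is where I expect the main obstacle — is that the bound $D_{C\times\mathbf{R}}(M_k;\{|y|<1/2\}) > s$ must persist in the limit in a usable form; because $D_{C\times\mathbf{R}}$ is an $L^\infty$-type distance it is \emph{not} continuous under varifold convergence (mass can be lost near $\{r=0\}$). This is exactly why the statement has the factor $s$ multiplying $\eta$ on the right: the argument must rule out that the ``extra'' distance on $\{|y|<1/2\}$ beyond $A\eta'$ all comes from a sliver of $M_k$ hugging the singular line.

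To handle this I would use the foliation $\{H(t)\}$ and the fact that $C\times\mathbf{R}$ is strictly stable and strictly minimizing, via a first-variation / maximum-principle argument of the type already used in \cite{Sz20}, rather than relying on varifold continuity. Concretely: if $D_{C\times\mathbf{R}}(M_k;\{|y|<1/2\})$ is large, then for some slice $\{y=y_0\}$, $|y_0|<1/2$, the support of $M_k$ in that slice reaches out to $H(\pm\epsilon_k)$ with $\epsilon_k$ bounded below. One then uses the barrier surfaces from Proposition~\ref{prop:barrier10}, or a direct sweepout by leaves $H(t)$, together with the stationarity of $M_k$, to propagate this ``largeness'' outward to the region $\{r\geq s^k\}$: a point of $M_k$ deep inside the foliation, on the wrong side of many leaves, cannot be isolated there for a stationary varifold, because the leaves $H(t)$ for $t$ in a whole interval act as barriers and a connected piece of $M_k$ touching $H(\epsilon_k)$ at an interior point must either cross it or extend to where we can see it in the outer region. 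Tracking the geometry — specifically Lemma~\ref{lem:L51}, which says $H(t+\lambda)$ is on the positive side of the graph of $c_0\min\{\lambda r^{-\gamma},r\}$ over $H(t)$ — gives the quantitative comparison: the distance at radius $s^A$ controls, up to the constant $A$, the distance further in, with an error proportional to $s$ times the total distance. The factor $s^A$ in the hypothesis region is precisely what is needed so that, after summing the geometric-series-type losses across the dyadic scales from $r\sim 1$ down to $r\sim s^A$, the accumulated error is at most $A\eta' + s\eta$. I would organize the quantitative step as a discrete iteration over the scales $r\in(2^{-j},2^{-j+1})$, applying at each scale the non-quantitative non-concentration estimate from \cite{Sz20} (or its local consequence) with an explicit constant, and then summing.

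Let me also note the role of the ``$+\,s\,D_{C\times\mathbf{R}}(M;\{|y|<1\})$'' term: it is unavoidable because the estimate is not an exact unique-continuation statement at finite scale but a controlled-loss estimate, and in the later application (iterated as $\rho\to 0$) this $s$-term is absorbed by choosing $s$ small and using that the total distance itself is decaying. Thus in the write-up I would not try to remove it; I would simply make sure the $A$ produced is independent of $s$, $b$, and $M$, depending only on the cone $C$ through the constants $c_0,\gamma$ and the constants in the compactness/monotonicity estimates. The main obstacle, to reiterate, is the lower-semicontinuity failure of the $L^\infty$-distance $D_{C\times\mathbf{R}}$ under varifold limits near the singular ray, and the fix is to trade the soft compactness argument for a hard barrier argument using the foliation $H(t)$ and Lemma~\ref{lem:L51}, exactly as the choice of an $L^\infty$-regularized distance in \eqref{eq:ddefn} is designed to permit.
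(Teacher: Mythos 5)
You correctly diagnose the central difficulty: a pure compactness argument cannot work because $D_{C\times\mathbf{R}}$ is an $L^\infty$-type distance that is not (lower semi)continuous under varifold convergence near the singular ray $\{r=0\}$, so mass hugging the ray can produce large $D_{C\times\mathbf{R}}(M;\{|y|<1/2\})$ while disappearing in the varifold limit. You also correctly pivot to the right fix: a hard barrier/maximum-principle argument using the foliation $H(t)$ and the hypersurfaces of Proposition~\ref{prop:barrier10}, with Lemma~\ref{lem:L51} supplying the quantitative separation between leaves. So the overall strategy matches the paper.

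Where the proposal diverges is in the mechanics, and the divergence matters. You propose a dyadic iteration over scales $r\in(2^{-j},2^{-j+1})$, applying a per-scale non-concentration estimate and summing a geometric series, and you attribute the $s^A$ in the hypothesis region to this summation. That is not how the paper obtains the constant, and it is not clear that such an iteration closes: the per-scale losses you would incur are not of the form that telescopes into the single $A\eta' + s\eta$ on the right. The actual proof is a single sweepout. After scaling to $b=1$, one applies Proposition~\ref{prop:barrier10} to the \emph{one} function
\[
f(y) = 4(1+y)^{-1} + 4(1-y)^{-1}
\]
on the interval $(-1+s,\,1-s)$, noting $|f|_{C^3}\le C_1 s^{-3}$ so one may take $K=s^{-a_3}$. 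The resulting family $X_\epsilon$, with $y$-slices between $H(\tfrac{1}{2}\epsilon f(y)^p)$ and $H(2\epsilon f(y)^p)$, is defined on $\{r < K^{-Q^2}\} = \{ r < s^{a_3 Q^2}\}$; this is where the $s^A$ in the statement comes from, namely the size-of-validity constraint $r<K^{-Q^2}$ in Proposition~\ref{prop:barrier10} for $K\sim s^{-a_3}$, not from any summation. The $s$-term multiplying $D_{C\times\mathbf{R}}(M;\{|y|<b\})$ arises because $f(y)$ blows up like $s^{-1}$ near $|y|=1-s$, so the barrier condition at the $y$-boundary of $U_s$ requires $\epsilon > s D$; the other barrier condition at $\{r=r_0\}$ requires $\epsilon > D_{r_0}$. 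Sweeping $\epsilon$ down to $D_{r_0}+sD$ and invoking the maximum principle (with the cone-point case handled by part (ii) of Proposition~\ref{prop:barrier10}) gives the result in one step, symmetrizing on both sides of $C\times\mathbf{R}$. So while your identification of the tool is right, the proposal as written would need to be substantially reorganized to yield the stated constant structure; the contradiction-compactness framing at the start is a detour that is abandoned, and the dyadic iteration step is a genuine gap.
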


\begin{proof}
  The proof is similar to that in \cite{Sz20}, however we need to keep
track of the constants. 
  By scaling we can reduce the result to the case when $b=1$. Given $s
  > 0$, we apply
  Proposition~\ref{prop:barrier10} to the function
  \[ f(y) = 4(1+y)^{-1} + 4(1-y)^{-1}, \]
  on the interval $y\in (-1+s, 1-s)$. Note that on this interval we
  have $|f|_{C^3} \leq C_1s^{-3}$ for a fixed constant
  $C_1$, so we can take $K = s^{-a_3}$ in the Proposition for some large
  $a_3$, since $s < 1/2$. Using that $f > 4$, we have
  \[ \label{eq:fc1} \frac{1}{2}f^p + 2 < f^p < 2f^p - 2, \]
  for the $p$ from the Proposition. 
  Let us denote by $X_\epsilon$ the surface constructed in
  Proposition~\ref{prop:barrier10}, so that in the $y$-slices $X_\epsilon$ lies between
  the hypersurfaces $H(\epsilon f(y)^p \pm \epsilon)$, on the region
  $r < K^{-Q^2} = s^{a_3Q^2}$.

    \bigskip
    Let us denote by $U_s$ the region $\{r < K^{-Q^2}\}\times \{|y| <
    1-s\}$. In $U_s$ the surface $X_\epsilon$ has negative mean curvature
    and, using \eqref{eq:fc1}, lies between
  the hypersurfaces with $y$-slices
  $H(\epsilon f(y)^p/2)$ and $H(2\epsilon f(y)^p)$. 
  In particular we have
    \begin{enumerate}
      \item On the boundary piece $\{r < K^{-Q^2}\} \times \{ |y| =
        1-s\}$, the surface $X_\epsilon$ lies on the positive side of
        $H(2\epsilon s^{-1})$.
      \item On all of $U_s$ the surface $X_\epsilon$ lies on the
        positive side of
        $H(2\epsilon)$, using that
        $f > 4$. 
      \item On the smaller region $\{r < K^{-Q^2}\}\times \{|y| <
        \frac{1}{2}\}$ the surface $X_\epsilon$ lies on the negative
        side of $H(C_1\epsilon)$ for $C_1$ depending
        on $p$ (i.e. on the cone).
      \end{enumerate}

      Let us write $r_0 = K^{-Q^2} = s^{a_3Q^2}$ and define
      \[ D &= D_{C\times \mathbf{R}}(M; \{|y| < 1\}), \\
          D_{r_0} &= D_{C\times \mathbf{R}}(M; \{ r \geq r_0\}\times
          \{|y| < 1\}). \]
      By definition in the region where $|y| < 1$, the support of $M$
      lies between $H(\pm 2D)$, while on the
      boundary piece $\{r = r_0\} \times \{|y| < 1\}$ of $U_s$, $M$ lies
      between $H(\pm 2D_{r_0})$. It follows from
      this that:
      \begin{itemize}
      \item[(i)] Using property (2) above, in $U_s$, the support of
        $M$ lies on the negative side of
          $X_\epsilon$, as long as $\epsilon > D$.
        \item[(ii)] Using properties (1), (2), on the boundary of
          $U_s$ the support of $M$ lies on the negative side of
          $X_\epsilon$ as long as
          \[ \epsilon > sD, \text{ and } \epsilon > D_{r_0}. \]
          This holds in particular if $\epsilon > D_{r_0} + sD$. 
        \end{itemize}
        It follows
        that in $U_s$ the support of $M$ must be on the negative side of
        $X_\epsilon$ for all $\epsilon > D_{r_0} + sD$. Indeed, if
        this is not the case, then we let $\epsilon_0$ be the supremum of all
        $\epsilon$ for which the support of $M$ intersects $X_\epsilon$ (in the
        region $r < r_0$), and we find
        that $M$ lies on the negative side of $X_{\epsilon_0}$, but
        the two surfaces meet at an interior point of $U_s$ (property (ii)
        ensures that the intersection cannot be on $\partial
        U_s$). This is a contradiction since $X_{\epsilon_0}$ has
        negative mean curvature.

        $M$ therefore lies on the negative side of $X_{D_{r_0} +
          sD}$, and so by property (3) above, on the region $\{r <
        r_0\} \times \{|y| < 1/2\}$, $M$ lies on the negative side of
        $H(C_1D_{r_0} + C_1sD)$. Arguing in the same way from the negative
        side of $C\times\mathbf{R}$ as well, it follows that
        \[ D_{C\times\mathbf{R}}(M; \{|y| < 1/2\}) \leq C_1D_{r_0} +
          C_1sD,\]
        i.e.
        \[ D_{C\times\mathbf{R}}(M; \{|y| < 1/2\}) &\leq
          C_1D_{C\times\mathbf{R}}(M; \{r \geq s^{a_3 Q^2}\}\times \{|y|
          < 1\}) \\
          &\qquad + C_1s D_{C\times\mathbf{R}}(M; \{|y| < 1\}). \]
       Replacing $s$ by $C_1^{-1}s$, we can choose $A$ sufficiently
       large (depending on $Q, a_3$ and $C_1$) such that the required
       inequality \eqref{eq:nonconc1} holds. 
\end{proof}

\subsection{The quantitative three annulus lemma}\label{sec:q3ann}
This subsection contains the main new technical ingredient in the
proof of Theorem~\ref{thm:uniquecont1}. In order to state the result,
we introduce some notation. We fix a small number $q\in (0, 1/2)$, to be
chosen below, and we let
\[ \label{eq:ddefn} d(M, 1) = \left( \int_{M\cap B_1(0)} d^2 \right)^{1/2} +
  D_{C\times\mathbf{R}}(M; B_1(0))^{1 + q^2}, \]
where $d$ denotes the distance to $C\times\mathbf{R}$. For any $\rho
\leq 1$ we let $d(M, \rho) = d(\rho^{-1}M, 1)$. A basic property of
this is that if $a\in [\frac{1}{2}, 1]$ and $\rho \leq 1$, then
  \[ d(M, a\rho) \leq C_1 d(M, \rho), \]
where $C_1$ depends on the cone. This follows from the scaling
properties
\[ \int_{a^{-1}\rho^{-1}M \cap B_1} d^2 =
  a^{-2-n} \int_{\rho^{-1}M \cap B_a} d^2, \]
and
\[ \label{eq:DCRscale} D_{C\times\mathbf{R}}(a^{-1}\rho^{-1}M; B_1) = a^{-\gamma-1}
  D_{C\times \mathbf{R}}(\rho^{-1}M; B_a). \]
Our main technical result, used in place of the monotonicity of the
frequency function in the classical proofs of strong unique
continuation, is the following. 
\begin{prop}\label{prop:3anndecay}
  If $q>0$ in \eqref{eq:ddefn} is sufficiently small (depending on the cone $C$), and
  $Q=q^{-4}$, then for any $\lambda_0\in (0, 1/2)$ there exists a
  $\lambda\in [\lambda_0/2, \lambda_0]$ with the following
  property. Let $M$ be a stationary integral varifold in $B_2(0)$, with
  \[ \label{eq:Mareabound} \Vert
  M\Vert (B_2(0)) \leq \frac{11}{10} \Vert C\times\mathbf{R}\Vert
  (B_2(0)),\] such that 
  \[ \frac{1}{2}d(M,1) \leq d(M, e^{-\lambda}) \leq \lambda_0^Q. \]
  Then
  \[ \frac{1}{2}d(M, e^{-\lambda}) \leq d(M, e^{-2\lambda}). \]
\end{prop}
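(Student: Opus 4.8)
The plan is to pass the quantitative three annulus lemma for Jacobi fields on $C\times\mathbf{R}$, Proposition~\ref{prop:QL23annulus}, to the varifold $M$, using the quantitative non-concentration estimate Proposition~\ref{prop:nonconc} to control the behaviour of $M$ near the singular ray $\{r=0\}$. I would take $\lambda\in[\lambda_0/2,\lambda_0]$ to be the value furnished by Proposition~\ref{prop:QL23annulus} (which depends only on $C$), and argue by contradiction: suppose, in addition to the hypotheses, that $d(M,e^{-2\lambda})<\tfrac12 d(M,e^{-\lambda})$. Combining this with $\tfrac12 d(M,1)\le d(M,e^{-\lambda})\le\lambda_0^Q$ and the elementary bound $d(M,a\rho)\le C_1 d(M,\rho)$ for $a\in[\tfrac12,1]$, iterated over the finitely many dyadic scales between $e^{-2\lambda}$ and $1$, shows that $d(M,\rho)\le C\lambda_0^Q$ for all $\rho\in[e^{-2\lambda},1]$, so $M$ is uniformly close to $C\times\mathbf{R}$ on this whole range of scales. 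Using the area bound \eqref{eq:Mareabound} and Allard regularity, it follows that on each of these dyadic annuli $M$ is, away from a thin neighbourhood of $\{r=0\}$, a $C^2$-small normal graph of a function $w$ over $C\times\mathbf{R}$.

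Next I would replace $w$ by an honest Jacobi field. Expanding the minimal surface equation as in \eqref{eq:mXQ} gives $L_{C\times\mathbf{R}}w=Q_{C\times\mathbf{R}}(w)$ with $Q_{C\times\mathbf{R}}(w)$ quadratically small in the relevant doubly weighted norms; cutting it off near $\{r=0\}$ (where the cut-off error is controlled by the non-concentration bound on $w$) and solving with the weighted solvability of $L_{C\times\mathbf{R}}$ from Proposition~\ref{prop:LCR10} produces a Jacobi field $v$ on $B_1$ with $|r^\gamma v|$ bounded and $\|v-w\|$ quadratically small, i.e. of relative size $O(\lambda_0^{Q})$. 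I would then show that for $\rho\in\{1,e^{-\lambda},e^{-2\lambda}\}$ one has $d(M,\rho)=\big(\rho^{-(n+2)}\int_{(C\times\mathbf{R})\cap B_\rho}|v|^2\big)^{1/2}(1+o(1))$, where the $o(1)$ is bounded by the quadratic error plus a fixed power of a small parameter $s$ (to be taken as a suitable small power of $\lambda_0$). Over the graphical region this is immediate from $d\approx|w|\approx|v|$ together with the comparison of area measures; over the thin neighbourhood of $\{r=0\}$ the contribution to $\int_M d^2$ is at most the mass there times $D_{C\times\mathbf{R}}(M;\cdot)^2$, and Proposition~\ref{prop:nonconc}, applied in $y$-slabs and summed over dyadic scales with $s$ small, bounds this by the graphical contribution, which is in turn controlled by $\int|v|^2$ via Lemma~\ref{lem:L2Linfty}. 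The same argument, using that $D_{C\times\mathbf{R}}(M;B_\rho)$ is then comparable to $\rho^{-\gamma-1}\|v\|_{L^2}$, shows that the regularizing term $D_{C\times\mathbf{R}}(M;B_\rho)^{1+q^2}$ in \eqref{eq:ddefn} has relative size $O(\lambda_0^{q^{-2}})$ and is negligible.

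To conclude, write $N(\rho)=\big(\rho^{-(n+2)}\int_{(C\times\mathbf{R})\cap B_\rho}|v|^2\big)^{1/2}$, so $N(\rho)=d(M,\rho)(1+o(1))$ at the three scales, and $N(\rho)=\rho^{(n+2)/2}\|v\|_{L^2(B_\rho)}$ matches the scaling built into Proposition~\ref{prop:QL23annulus}. I would replace $v$ by $\kappa v$ with $\kappa>0$ the largest constant for which both hypotheses \eqref{eq:ua1} hold, so that either $\|\kappa v\|_{L^2(B_1)}=2$ or $\|\kappa v\|_{L^2(B_{e^{-2\lambda}})}=\tfrac12 e^{-(n+2)\lambda}$. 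In the first case $\tfrac12 d(M,1)\le d(M,e^{-\lambda})$ gives $2N(e^{-\lambda})\ge N(1)(1-o(1))$, hence $\|\kappa v\|_{L^2(B_{e^{-\lambda}})}\ge(1-o(1))e^{-\frac{n+2}{2}\lambda}$; in the second case $d(M,e^{-2\lambda})<\tfrac12 d(M,e^{-\lambda})$ gives $N(e^{-\lambda})\ge 2N(e^{-2\lambda})(1-o(1))$, hence the same lower bound. Either way this contradicts the conclusion \eqref{eq:ua2}, $\|\kappa v\|_{L^2(B_{e^{-\lambda}})}\le(1-\lambda^A)e^{-\frac{n+2}{2}\lambda}$, provided the $o(1)$ errors are $\ll\lambda^A$. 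Since $A$ depends only on $C$, $\lambda\ge\lambda_0/2$, $Q=q^{-4}$, and $s$ is a fixed small power of $\lambda_0$, the relative errors — of order $\lambda_0^{q^{-4}}$ from the nonlinearity, $\lambda_0^{q^{-2}}$ from the regularization, and a suitable power of $\lambda_0$ from $s$ — are all $\ll\lambda_0^{A}\sim\lambda^{A}$ once $q$ is small, which finishes the proof.

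The main obstacle is the middle step: controlling $M$ near the singular ray $\{r=0\}$, where the graphical description breaks down, and arranging that every error incurred there — through the non-concentration estimate, the cut-off in the linear correction, and the $L^\infty$-type term of $d$ — is smaller than the quantitative strict-convexity gain $\lambda^A$ of Proposition~\ref{prop:QL23annulus}. This is exactly what forces the particular form of the regularized distance \eqref{eq:ddefn} and the relation $Q=q^{-4}$, and making all of $s$, $q$, $Q$, $A$ fit together is the technical heart of the argument.
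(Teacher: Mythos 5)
Your proposal follows essentially the same route as the paper's proof: argue by contradiction, use the area bound and Allard regularity (plus the regularizing $L^\infty$-term $D_{C\times\mathbf{R}}$ built into $d$) to make $M$ graphical over $C\times\mathbf{R}$ outside a thin tube around $\{r=0\}$, cut off and correct the graph to an exact Jacobi field via the invertibility of $L_{C\times\mathbf{R}}$ in doubly-weighted spaces, feed the result into Proposition~\ref{prop:QL23annulus}, and then use the non-concentration estimate together with a dyadic covering near $\{r=0\}$ to transfer the strict convexity gain back to $d(M,e^{-\lambda})$. The normalization via ``largest $\kappa$ making both hypotheses of \eqref{eq:ua1} hold'' is a clean reformulation of the paper's rescaling $\tilde U=C_\epsilon^{-1}U$, and the case split at the end is correct. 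One spot where your sketch is looser than it can afford to be: in the thin tube the pointwise distance from $M$ to the cone at scale $r$ is $\sim D_{C\times\mathbf{R}}\,r^{-\gamma}$, blowing up as $r\to 0$ to $D_{C\times\mathbf{R}}^{1/(\gamma+1)}$ (not $D_{C\times\mathbf{R}}$); a uniform bound ``mass $\times D^2$'' over the whole tube would not close the estimate, so the dyadic decomposition in $r$ you allude to is not merely convenient but essential, with the $r^{-2\gamma}$ weight and the condition $\gamma<\tfrac{n-1}{2}$ doing real work. Similarly, the Jacobi field you construct is only defined on $B_{e^{-2\epsilon^q}}$, so you need an extra convexity step to compare scales $1$ and $e^{-2\epsilon^q}$, and this is where the inner exponent $q^3$ (and ultimately the relation $Q=q^{-4}$) gets pinned down; you gesture at this bookkeeping but it is the part that has to be carried out carefully.
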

\begin{proof}
  Let us write $\epsilon = d(M, e^{-\lambda})$, and suppose that
  \[\label{eq:da1} d(M,1)\leq 2\epsilon, \text{ and } d(M, e^{-2\lambda})\leq
    \frac{\epsilon}{2}. \]
  Our goal is to show that if $q$ is sufficiently small and $\epsilon
  \leq \lambda_0^Q$, then for an appropriate choice of $\lambda\in
  [\lambda_0/2, \lambda_0]$ (depending only on the cone $C$), the
  inequalities \eqref{eq:da1} imply that we also have $d(M,
  e^{-\lambda}) < \epsilon$, contradicting the definition of
  $\epsilon$. It will then follow that $d(M,1)\leq 2 d(M,
  e^{-\lambda})$ implies $d(M, e^{-2\lambda}) > d(M,e^{-\lambda})/2$
  under the assumption that $d(M,e^{-\lambda})$ is sufficiently small,
  as required. 

  \bigskip
  \noindent{\em Step 1}. We will first show that if $q$ is small
  enough, then $M$ is graphical over
  $C\times \mathbf{R}$ on the region where $r > \epsilon^q$ and $\rho
  < e^{-\epsilon^q}$. To see this, note first that since $d(M,1)\leq
  2\epsilon$, by the definition of $d$ we have
  \[ \label{eq:DC2} D_{C\times\mathbf{R}}(M;B_1)\leq
    (2\epsilon)^{\frac{1}{1+q^2}}. \]
  This means that $M$ is contained between the surfaces
  $H(\pm (2\epsilon)^{\frac{1}{1+q^2}}) $. It follows
  that there is a
  constant $C_0 > 0$ depending on the cone $C$ only such that on the
  region where $r > C_0 \epsilon^{\frac{1}{\gamma+1}\frac{1}{1+q^2}}$,
  $M$ is contained between the graphs of
  \[ \label{eq:pmgraphs} \pm C_0 \epsilon^{\frac{1}{1+q^2}} r^{-\gamma} \]
  over $C\times \mathbf{R}$. Note that if $q, \epsilon$ are small,
  then $\epsilon^q >  C_0
  \epsilon^{\frac{1}{\gamma+1}\frac{1}{1+q^2}}$, and 
  so on our region $M$
  is contained between the graphs of the functions in
  \eqref{eq:pmgraphs}. Moreover if $r > \epsilon^q$ then
  \[ \label{eq:ge1} C_0 \epsilon^{\frac{1}{1+q^2}} r^{-\gamma} < C_0
    \epsilon^{\frac{1}{1+q^2}-\gamma q} < \epsilon^{1/2} \]
  if $\epsilon, q$ are small. 

  Let us now consider a
  point $p\in C\times\mathbf{R}$ in the region where $r > \epsilon^q$
  and $\rho < e^{-\epsilon^q}$. Then for any $c_0 < 1/2$ the ball $B(p,
  c_0\epsilon^q)$ is contained in $B(0,1)$, and the second fundamental
  form of
  $C\times\mathbf{R}$ on this ball is bounded by $C_1 \epsilon^{-q}$
  for some $C_1 > 0$. Using the bound \eqref{eq:ge1} we have that in
  this ball $M$ is contained in the $\epsilon^{1/2}$-neighborhood of
  $C\times\mathbf{R}$. Let us rescale: the surface
  $\tilde{M} = c_0^{-1}\epsilon^{-q}M$ will then be contained in the
  $c_0^{-1}\epsilon^{1/2-q}$-neighborhood of a surface $S$ in a unit
  ball $B$ whose
  second fundamental form satisfies a bound $|A|\leq C_1c_0$. Using
  the monotonicity formula, and the bound \eqref{eq:Mareabound} we
  have a uniform bound for the area of $\tilde{M}$ in $B$. We fix a
  small $\delta > 0$, and let 
  $c_0$ sufficiently small so that we can apply
  Lemma~\ref{lem:mult} below. We apply this to a cover of the region
  $\{r > \epsilon^q, \rho < e^{-\epsilon^q}\}$ in $C\times\mathbf{R}$
  with balls of radius $c_0\epsilon^q$. It follows using the Lemma
  that if in any of these balls the density of $M$ is greater than
  $(1+\delta)$, then in fact the density is greater than $(2-\delta)$
  for all of the balls. This contradicts \eqref{eq:Mareabound} once
  $\delta$ and $\epsilon$ are sufficiently small. It follows that the
  density of $M$ in any of the balls in our cover is less than
  $1+\delta$, which implies that $M$ is graphical using the Allard
  regularity theorem~\cite{All72}. 

  It follows that on the region $\{r > \epsilon^q, \rho <
  e^{-\epsilon^q}\}$, $M$ is given as the graph of a function $u$ that
  satisfies (see \eqref{eq:pmgraphs})
  \[\label{eq:ub1} |u| \leq C_0 \epsilon^{\frac{1}{1+q^2}}
    r^{-\gamma}. \]

  We can now get higher order estimates by rescaling and using
  interior derivative estimates for the minimal graph equation: given
  a point $p$ in the region $\{r > 2\epsilon^q, \rho <
  e^{-2\epsilon^q}\}$ with $r\in (R,2R)$, we rescale by
  $\epsilon^{-q}$. The scaled up surface $\tilde{M} = \epsilon^{-q}$,
  in the ball $B_1(\epsilon^{-q}p)$,
  is then the graph of a function $\tilde{u}$ over a surface with
  uniformly bounded geometry (a ball in $C\times\mathbf{R}$ in the
  region where $r > 1$). Using
  the bound \eqref{eq:ub1} we then have
  \[\label{eq:tu1} |\nabla^i \tilde{u}| \leq C_0\epsilon^{\frac{1}{1+q^2}-q}
    R^{-\gamma}, \text{ for } i=0,1,2,3, \]
  increasing $C_0$ if necessary. In addition since $M$ is minimal, we
  have
  \[ L_{C\times\mathbf{R}}(\tilde{u}) +
    Q_{C\times\mathbf{R}}(\tilde{u}) = 0, \]
  using the notation in \eqref{eq:mXQ}, and so
  \[ \label{eq:Ltu2} | L_{C\times\mathbf{R}}\tilde{u}|, |{\nabla} L_{C\times\mathbf{R}}\tilde{u}| \leq C_0
    \epsilon^{\frac{2}{1+q^2}-2q} R^{-2\gamma} \leq
    \epsilon^{\frac{3}{2}} R^{-2\gamma}, \]
  if $q,\epsilon$ are small. Let $\chi$ be a cutoff function such that
  $\chi(t)=0$ for $t < 1$ and $\chi=1$ for $t> 2$. The function
  $\chi(r)\tilde{u}$ satisfies the same derivative bounds as
  $\tilde{u}$ in \eqref{eq:tu1}, so after scaling back down, we have
  \[ |\nabla^i (\chi(\epsilon^{-q}r) u)| \leq C_0
    \epsilon^{\frac{1}{1+q^2}-2q-iq} R^{-\gamma}. \]
  In particular, if we fix a small $\beta > 0$, then
  on the region where $r\in (\epsilon^q, 2\epsilon^q)$
  we have
  \[ \label{eq:niL1} |\nabla^i L_{C\times\mathbf{R}}(\chi(\epsilon^{-q}r) u)| \leq C_0
    \epsilon^{\frac{1}{1+q^2} + \beta q} r^{-\gamma-\beta-2-i}. \]
  The same estimate also holds on the region $r \geq
  2\epsilon^q$, using \eqref{eq:Ltu2} and that for such $r$ we have
  $\chi(\epsilon^{-q}r)=1$. Indeed, \eqref{eq:Ltu2} implies after
  rescaling that for $r \geq 2\epsilon^q$ we have
  \[ |\nabla^i L_{C\times\mathbf{R}} \chi(\epsilon^{-q}r) u| \leq
    \epsilon^{\frac{3}{2}-q-iq} r^{-2\gamma} \leq
    \epsilon^{\frac{3}{2}-(i+1)q} r^{-\gamma+\beta+2+i} r^{-\gamma-\beta-2-i}. \]
  This implies the estimate \eqref{eq:niL1}, as long as $q$ is small
  enough. 

  \bigskip
  \noindent{\em Step 2}. Next we use the invertibility of
  $L_{C\times\mathbf{R}}$ in weighted spaces, given by
  Corollary~\ref{cor:CR2} to perturb $u$ to a Jacobi field $U$, and
  we apply the quantitative 3-annulus lemma,
  Proposition~\ref{prop:QL23annulus}. For
  this, we note that the estimate \eqref{eq:niL1} says that
  \[ \Big\Vert L_{C\times\mathbf{R}}(\chi(\epsilon^{-q}r)
    u)\Big\Vert_{C^{0,\alpha}_{-\gamma-\beta-2}} \leq C_0
    \epsilon^{\frac{1}{1+q^2} + \beta q}, \]
  and so we can solve the equation $Lv = L(\chi(\epsilon^{-q}r)u)$ on
  the ball $\{\rho < e^{-2\epsilon^q}\}$,  with
  \[ \label{eq:vweighted} \Vert v\Vert_{C^{2,\alpha}_{-\gamma-\beta}} \leq C_0
    \epsilon^{\frac{1}{1+q^2} + \beta q}, \]
  for a larger $C_0$.
  Note that this also implies the estimate
  \[ \label{eq:vL2} \Vert v\Vert_{L^2(B(0, e^{-2\epsilon^{q}}))} \leq \epsilon^{1 +
      \frac{\beta}{2} q}, \]
  choosing $q$ (and therefore $\epsilon$) smaller if necessary. 

  We now define $U = \chi(\epsilon^{-q}r)u - v$, so
  that by construction $U$ is a Jacobi field on $C\times\mathbf{R}$,
  defined in the ball $\{\rho < e^{-2\epsilon^q}\}$. The assumption
  \eqref{eq:da1} implies that
  \[ \Vert \chi(\epsilon^{-q}r) u\Vert_{L^2(B(0,1))} &\leq
    2\epsilon, \\
    \Vert \chi(\epsilon^{-q}r) u\Vert_{L^2(B(0,e^{-2\lambda}))} &\leq
    e^{-\lambda\left(n + 2\right)}\, \frac{\epsilon}{2}. \]
  Together with \eqref{eq:vL2} this implies
  \[ \Vert U\Vert_{L^2(B(0,e^{-2\epsilon^q}))} &\leq 2\epsilon +
    \epsilon^{1 + \frac{\beta}{2}q}, \\
      \Vert U\Vert_{L^2(B(0,e^{-2\lambda}))} &\leq e^{-\lambda\left(n
          + 2\right)}\, \frac{\epsilon}{2} + \epsilon^{1 +
        \frac{\beta}{2}q}. \]
    If we let $C_\epsilon = \epsilon + 2e^{n+2} \epsilon^{1 +
      \frac{\beta}{2} q}$, and $\tilde{U} = C_\epsilon^{-1}U$, then
    \[ \Vert \tilde{U}\Vert_{L^2(B(0, e^{-2\epsilon^q}))} &\leq 2, \\
        \Vert \tilde{U}\Vert_{L^2(B(0, e^{-2\lambda - 2\epsilon^q}))}
        &\leq \frac{1}{2} e^{-(n+2)\lambda}. \]
      This is the setting to which Proposition~\ref{prop:QL23annulus}
      applies (after a rescaling of the balls by $e^{2\epsilon^q}$),
      and so if $\lambda\in [\lambda_0/2, \lambda_0]$ is chosen
      appropriately, we have
      \[ \Vert \tilde{U}\Vert_{L^2(B(0, e^{-\lambda - 2\epsilon^q}))}
        \leq (1 - \lambda^A) e^{-\frac{n+2}{2}\lambda}. \]
      Using the convexity of $t \mapsto \Vert
      \tilde{U}\Vert_{L^2(B(0,e^t))}$, and that $\epsilon^{q^3} \geq
      2\epsilon^q$ for small $q$, we obtain
      \[ \Vert \tilde{U}\Vert_{L^2(B(e^{-\lambda + q^3}))} &\leq
        \left(1- \frac{2\epsilon^{q^3}}{\lambda}\right) (1-\lambda^A)
        e^{-\frac{n+2}{2}\lambda} + \frac{2\epsilon^{q^3}}{\lambda}
        \cdot 2 \\
        &\leq \left(1 - \lambda^A/2\right)
        e^{-\frac{n+2}{2}\lambda}, \]
      as long as $\frac{1}{2}\lambda^A e^{-(n+2)\lambda / 2} > 4
      \epsilon^{q^3}\lambda^{-1}$. Since $\lambda \geq \frac{1}{2}
      \epsilon^{1/Q}$, this will be satisfied for sufficiently small
      $\epsilon$ as long as $(A+1)Q^{-1} < q^3$. This holds for $Q
      = q^{-4}$ if $q$ is sufficiently small.

      Using the definition of $\tilde{U}$, we then have
      \[ \Vert U\Vert_{L^2(B(e^{-\lambda + \epsilon^{q^3}}))} &\leq
        \epsilon e^{-\frac{n+2}{2}\lambda} (1 - \lambda^A/2) (1 +
        2e^{n+2}\epsilon^{\beta q/2}) \\
        &\leq \epsilon e^{-\frac{n+2}{2}\lambda} (1 + \epsilon^{\beta
          q/4} - \lambda^A/2), 
      \]
      once $q$ is small enough. Using again that $\lambda \geq
      \epsilon^{q^{4}}/2$, we find that $\epsilon^{\beta q/4} <
      \lambda^A / 4$ if $q, \epsilon$ are small enough, and so
      we can assume that
    \[ \label{eq:UL24}\Vert U\Vert_{L^2(B(e^{-\lambda + \epsilon^{q^3}}))} &\leq
      \epsilon e^{-\frac{n+2}{2}\lambda} - \epsilon \lambda^{A}/4. \]
    We apply the $L^\infty$-estimate, Lemma~\ref{lem:L2Linfty},
    for Jacobi fields (on balls of
    radius comparable to $\epsilon^{q^3}/2$, scaled to unit size), to obtain
    \[ \label{eq:Usup1} |r^\gamma U| \leq C_3 \epsilon^{1 - C_3 q^3}, \text{ on }
      B(e^{-\lambda + \epsilon^{q^3}/2}). \]

    \bigskip
    \noindent{\em Step 3.} We now use our estimates for $U$ and $v$ to
    deduce corresponding estimates for $u$ on the region where $r >
    \epsilon^{2q^3}$ and $\rho < e^{-\lambda + \epsilon^{q^3}/2}$, and
    then apply the non-concentration result, 
    Proposition~\ref{prop:nonconc}, to find estimates for smaller $r$
    in a slightly smaller ball. Recall that on this region $u = U
    + v$. On the one hand, \eqref{eq:UL24} combined with \eqref{eq:vL2} implies
\[ \label{eq:uL22} \Vert \chi(\epsilon^{-q}r) u\Vert_{L^2(B(0,
    e^{-\lambda}))} \leq \epsilon e^{-\frac{n+2}{2}\lambda} - \epsilon
  \lambda^A / 8, \]
if $q$ is sufficiently small.

    At the same time by \eqref{eq:vweighted}, $v$ satisfies 
      \[ |r^\gamma v| \leq C_0 \epsilon^{\frac{1}{1+q^2} + \beta q}
        r^{-\beta} \leq \epsilon^{1 + \beta q / 2}, \]
      once $q$ is small enough (using $r > \epsilon^{2q^3}$). Combining
      this with \eqref{eq:Usup1} we have that on our region
      \[ |r^\gamma u| \leq \epsilon^{1 - C_3 q^3}, \]
      increasing $C_3$ if necessary, once $q$ is small enough. In
      particular we can use this to bound the distance from
      $C\times\mathbf{R}$ on this region:
      \[ \label{eq:DC1} D_{C\times\mathbf{R}}\left(M; \{ r > \epsilon^{2q^3}, \rho <
        e^{-\lambda + \epsilon^{q^3}/2}\}\right) \leq C_4 \epsilon^{1-C_3
        q^3}. \]
      We would like to apply Proposition~\ref{prop:nonconc} on
      suitable subsets of the ball $\{\rho < e^{-\lambda + \epsilon^{q^3}/2}\}$ with
      width $b = \frac{1}{4}\epsilon^{q^3}$, i.e. satisfying $|y-y_0|
      < b$. In order to use the estimate \eqref{eq:DC1} in
      Inequality~\eqref{eq:nonconc1},  we then need $\epsilon^{2q^3} < bs^A$,
      i.e. $s^A > 4\epsilon^{q^3}$. This is satisfied if we choose $s
      = \epsilon^{q^3/C_5}$ for a sufficiently large $C_5$, and then
      from   Proposition~\ref{prop:nonconc} we can deduce 
      \[ \label{eq:DC3} D_{C\times\mathbf{R}}(M; B_{e^{-\lambda}}) &\leq A
        D_{C\times\mathbf{R}} (M;
        B_{e^{-\lambda+\epsilon^{q^3}/2}}\cap \{ r \geq e^{2q^3}\}) \\
        &\qquad\qquad + \epsilon^{q^3/C_5} D_{C\times\mathbf{R}}(M;
        B_{e^{-\lambda+\epsilon^{q^3}/2}}) \\
      &\leq AC_4 \epsilon^{1-C_3q^3} + \epsilon^{q^3/C_5}
      (2\epsilon)^{\frac{1}{1+q^2}} \\
      &\leq C_6 (\epsilon^{1-C_3 q^3} + \epsilon^{q^3/C_5 + \frac{1}{1+q^2}}),\]
    where we also used \eqref{eq:DC2} again.
  
    \bigskip
    \noindent{\em Step 4}. We will now estimate $d(M, e^{-\lambda})$
    using \eqref{eq:uL22} and \eqref{eq:DC3}. First, \eqref{eq:DC3}
    implies (scaling up by $e^\lambda$ and increasing $C_6$), that
    \[ \label{eq:DC5} D_{C\times\mathbf{R}}(e^\lambda M; B_1)^{1+q^2} &\leq
      C_6(\epsilon^{1+q^2/2} + \epsilon^{1 + q^3/C_5}) \\
      &\leq \epsilon^{1+q^3/(2C_5)},\]
    and also
    \[ \label{eq:DC6} D_{C\times\mathbf{R}}(M; B_{e^{-\lambda}}) \leq
      \epsilon^{1-q^2} \]
  for sufficiently small $q$. 

  To estimate the $L^2$-distance of $M$ to $C\times\mathbf{R}$ on the
  ball $B(0,e^{-\lambda})$ note that $M$ is the graph of $u$ over
  $C\times\mathbf{R}$ on the region where $r > 2\epsilon^q$, and
  \eqref{eq:uL22} gives an $L^2$ bound for $u$ on this region. It
  remains to estimate the integral of $d^2$ over $M$ in the set $\{r\leq
  2\epsilon^q\} \cap B(0, e^{-\lambda})$. 
  
  The bound \eqref{eq:DC6} says that on the ball $B_{e^{-\lambda}}$
  the surface $M$ is contained between $H(\pm
  \epsilon^{1-q^2})\times\mathbf{R}$.
Let us write $\zeta =
  \epsilon^{\frac{1-q^2}{1+\gamma}}$, and denote by $\Omega$ the
  region between $H(\pm \zeta^{1+\gamma})\times\mathbf{R}$, satisfying also
  $\{|y| < 1, r < 2\epsilon^q\}$. We are interested
  in the part of $M$ that lies in $\Omega\cap B_{e^{-\lambda}}$. To
  estimate the integral of $d^2$ on this part of $M$, we
  cover $\Omega$ with a collection of
  balls as follows. First note that there is a constant $C_7 > 0$
  such that on the set $\{ r > C_7\zeta\}$, the surfaces $H(\pm \zeta^{1+\gamma})$
  are graphs of functions $u_{\pm}$ over $C$, with
  \[\label{eq:upm1} |u_{\pm}| < C_7\zeta^{1+\gamma}r^{-\gamma}.\]
  We can cover $\Omega\cap \{r <
  4C_7\zeta\}$ with $N_0$ balls of radius $\zeta$, where $N_0 \leq
  C_8\zeta^{-1}$. Using \eqref{eq:Mareabound} together with the
  monotonicity formula, this implies that the area of $M$ in this
  region is bounded by $C_9 N_0 \zeta^n$, while the distance $d$ to
  the cone is bounded by $4C_7\zeta$, so the contribution to the
  integral of $d^2$ is at most $C\zeta^{n+1}$. 

  To cover the rest of $\Omega$, note that we can
  cover a region of the form $\{R < r < 2R\} \cap \Omega$ using $N_R$
  balls of radius $\zeta^{1+\gamma} R^{-\gamma}$, where
  \[ N_R \leq C_{10} \frac{R^{n-1} \zeta^{1+\gamma}
      R^{-\gamma}}{\zeta^{(n+1)(1+\gamma)}R^{-(n+1)\gamma}} = C_{10}
    R^{n(1+\gamma)-1} \zeta^{-n(1+\gamma)}. \]
  For this note that the area of the region $\{R < r < 2R, |y| <
  1\}\cap (C\times\mathbf{R})$ is comparable to $R^{n-1}$, and by
  \eqref{eq:upm1} the region $\{R < r < 2R\}\cap \Omega$ is in the
  $C_7\zeta^{1+\gamma}R^{-\gamma}$-neighborhood of this. Using the
  monotonicity formula as above, we find that the area of $M$ in this
  region is of order $N_R \zeta^{n(1+\gamma)} R^{-n\gamma}$. As the
  distance $d$ to the cone is bounded by
  $C_7\zeta^{1+\gamma}R^{-\gamma}$, it follows that the integral of
  $d^2$ on this region of $\Omega$ is bounded by
  \[ C_{11} R^{n(1+\gamma)-1}\zeta^{-n(1+\gamma)} R^{-n\gamma}
    \zeta^{n(1+\gamma)}\zeta^{2+2\gamma} R^{-2\gamma} =  C_{11}
    R^{n-1-2\gamma} \zeta^{2+2\gamma}. \]

  We add up these contributions for $R= 2^iC_7\zeta$ with $i=1, 2,
  \ldots, K$, where $K$ is the smallest integer with $2^kC_7\zeta >
  2\epsilon^q$. Since $\gamma < \frac{n-1}{2}$, in the end we obtain
  the bound
  \[ \int_{M\cap\Omega\cap B_{e^{-\lambda}}} d^2 &\leq C_{12} (\epsilon^q)^{n-1-2\gamma}
    \zeta^{2+2\gamma}  = C_{12} \epsilon^{q(n-1-2\gamma) +2(1-q^2)} \\
    &\leq \epsilon^{2(1+q^2)}, \]
  for sufficiently small $q$.

  Combining this with \eqref{eq:uL22} we find that
  \[ \label{eq:DL2} \left(\int_{M\cap B(0,e^{-\lambda})} d^2\right)^{1/2} \leq
    \epsilon e^{-\frac{n+2}{2}\lambda} - \epsilon \lambda^A / 8 +
    \epsilon^{1+q^2}. \]
  Using that $\lambda > \epsilon^{q^4}/2$, for sufficiently small $q$
  we get
  \[  \left(\int_{M\cap B(0,e^{-\lambda})} d^2\right)^{1/2} \leq
    \epsilon e^{-\frac{n+2}{2}\lambda} - \epsilon \lambda^A / 16. \]
  After rescaling this, and combining it with \eqref{eq:DC5}, we get 
  \[ \left(\int_{e^\lambda M\cap B(0,1)} d^2 \right)^{1/2} +
    D_{C\times\mathbf{R}}(e^\lambda M; B_1)^{1+q^2} &\leq
    \epsilon - \epsilon e^{\frac{n+2}{2}\lambda} \lambda^A / 16 +
    \epsilon^{1+q^3/(2C_5)} \\
  < \epsilon,\]
  if $q$ is sufficiently small. This means $d(M,e^{-\lambda}) <
  \epsilon$, contradicting our choice of $\epsilon$ at the
  beginning of the proof. 
\end{proof}

We used the following lemma in the proof above. 
\begin{lemma}\label{lem:mult}
  Given any $\delta, V > 0$, there is a constant $c(n,\delta, V) > 0$ depending on the
  dimension $n$ and on $\delta, V$ with the
  following property.
  Let $S\subset\mathbf{R}^{n+1}$ be a smooth connected minimal hypersurface
  without boundary in the  ball $B_1(0)$, with $0\in S$ and
  with second fundamental form
  $|A|\leq c$. Suppose also that $M$ is an $n$-dimensional 
  stationary integral varifold in $B_1(0)$, supported in the $c$-neighborhood of
  $S$ with $\Vert M\Vert (B_1(0)) < V$. Then for all $p\in S\cap
  B_{1/2}(0)$ the following three inequalities are equivalent:
  \[ \label{eq:MB3} \Vert M \Vert (B_{1}(0)) &\geq (1+\delta)\omega_n, \\
  \Vert M\Vert (B_1(0)) &\geq (2-\delta)\omega_n, \\
  \Vert M\Vert (B_{1/2}(p)) &\geq (1+\delta) 2^{-n} \omega_n. \]
  where $\omega_n$ is the volume of the unit ball in $\mathbf{R}^n$. 
\end{lemma}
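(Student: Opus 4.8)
The plan is to argue by contradiction using a compactness argument: a nearly flat stationary integral varifold is forced to be close to a hyperplane of some integer multiplicity $m$, and each of the three inequalities in \eqref{eq:MB3} merely records whether $m\ge 2$, so all three become equivalent once $c$ is small. We may assume $\delta<1$. Suppose the assertion fails for some $\delta,V$. Then there are $c_j\downarrow 0$, minimal hypersurfaces $S_j$ as in the statement with $|A_{S_j}|\le c_j$ and $0\in S_j$, stationary integral varifolds $M_j$ in $B_1(0)$ supported in the $c_j$-neighborhood of $S_j$ with $\Vert M_j\Vert(B_1(0))<V$, and points $p_j\in S_j\cap B_{1/2}(0)$, such that for each $j$ the three inequalities in \eqref{eq:MB3} with $p=p_j$ are not all equivalent. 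After passing to a subsequence we may assume one fixed pair among the three fails to be equivalent for every $j$.

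First I would analyze the $S_j$. Since $0\in S_j$, $S_j$ is connected and properly embedded with no boundary in $B_1(0)$, and $|A_{S_j}|\le c_j\to 0$, after passing to a subsequence the tangent planes $T_0S_j$ converge to a hyperplane $P$ through the origin and $S_j\to P$ in $C^1_{loc}(B_1(0))$; in particular for every $\eta>0$, once $j$ is large $S_j$ is a graph over $P$ on $B_{1-\eta}(0)$ with $C^1$ norm less than $\eta$. Hence $\mathrm{dist}(\cdot,S_j)\to\mathrm{dist}(\cdot,P)$ locally uniformly on $B_1(0)$, so the support of $\Vert M_j\Vert$ eventually lies in any prescribed neighborhood of $P$ inside any $B_{1-\eta}(0)$; using that $S_j$ has no boundary in $B_1(0)$, in fact $\mathrm{supp}\Vert M_j\Vert$ lies in a slab about $P$ whose width tends to $0$. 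Also $p_j\to p_\infty\in\overline{B_{1/2}(0)}\cap P$ along the subsequence.

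Next, by the compactness theorem for stationary integral varifolds of locally bounded mass (see \cite{SimonGMT}), a further subsequence of $M_j$ converges in the varifold sense to a stationary integral varifold $M_\infty$ in $B_1(0)$, which by the previous step is supported in $P$. The constancy theorem then forces $M_\infty$ to be the hyperplane $P\cap B_1(0)$ with some nonnegative integer multiplicity $m$. Since $\Vert M_\infty\Vert$ assigns zero mass to every sphere, varifold convergence gives
\[ \Vert M_j\Vert(B_{1/2}(p_j))\to m\,2^{-n}\omega_n \qquad\text{and}\qquad \Vert M_j\Vert(B_{1-\eta}(0))\to m\,\omega_n(1-\eta)^n \]
for every $\eta>0$; for the first limit one uses $B_{1/2}(p_j)\subset B_1(0)$, squeezes this ball between $B_{1/2-\eta}(p_\infty)$ and $B_{1/2+\eta}(p_\infty)$, and lets $\eta\to 0$.

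The remaining step, which I expect to be the main obstacle, is to promote the second limit to $\Vert M_j\Vert(B_1(0))\to m\,\omega_n$, i.e.\ to rule out mass of $M_j$ escaping to $\partial B_1(0)$ — varifold convergence by itself controls mass only on compactly contained open sets. Here one combines the monotonicity formula with the slab structure of $\mathrm{supp}\Vert M_j\Vert$ near $\partial B_1(0)$ (this is where the no-boundary hypothesis on $S_j$ enters) to bound $\Vert M_j\Vert\big(B_1(0)\setminus B_{1-\eta}(0)\big)$ by a quantity tending to $0$ with $\eta$, uniformly in $j$; alternatively, one may simply note that in the application $M$ is stationary on the strictly larger ball $B_2(0)$, which makes $\Vert M_j\Vert(B_1(0))\to m\omega_n$ immediate. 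Granting this, the proof concludes: since $m$ is an integer and $\delta<1$, for all large $j$ each of the three inequalities in \eqref{eq:MB3} holds when $m\ge 2$ and fails when $m\le 1$, so in either case all three have the same truth value, contradicting the choice of the non-equivalent pair.
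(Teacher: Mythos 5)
Your compactness argument is essentially the same as the paper's: both argue by contradiction, pass to a subsequence along which $S_j$ converges to a hyperplane and the $M_j$ converge to an integer multiple $m$ of that hyperplane (via Allard's compactness and the constancy theorem), and then read off the equivalence of the three inequalities from the integrality of $m$. The paper disposes of this in three sentences, ending with ``the required claim is clear in the limit.''

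What you add, correctly, is the observation that this is not entirely automatic: varifold convergence gives convergence of mass only on compactly contained sets, and since $p_j$ may lie on $\partial B_{1/2}(0)$, the same caveat applies to $\Vert M_j\Vert(B_{1/2}(p_j))$ as to $\Vert M_j\Vert(B_1(0))$. If mass of $M_j$ concentrated at $\partial B_1(0)$ in the limit, the three conditions could cease to be equivalent (e.g.\ $m=1$ with extra boundary mass $\tfrac12\omega_n$ and $\delta=0.4$ would satisfy the first inequality but not the other two). This is a genuine subtlety that the paper's terse proof passes over. Of your two suggested resolutions, the second --- noting that in the actual application the varifold is stationary on a strictly larger ball than the one on which the lemma is invoked, so that $\Vert M_\infty\Vert(\partial B_1)=0$ and mass does not escape --- is the one that is clearly correct and is implicitly what is used. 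Your first suggestion (monotonicity plus the slab structure near $\partial B_1$) is plausible but not obviously airtight as sketched: monotonicity controls density ratios from below as the radius shrinks, which is the wrong direction for an a priori upper bound on $\Vert M_j\Vert(B_1\setminus B_{1-\eta})$, so one would need to supplement it, for instance by an Allard-type argument making $M_j$ a bounded-multiplicity graph near $\partial B_1$.
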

\begin{proof}
  We can argue by contradiction, assuming that for  given $V, \delta$
  no such $c$ exists, 
  i.e. there are sequences $S_i, M_i$ satisfying the hypotheses for $c
  = i^{-1}$, but we can find points $p_i$ for which only one of the
  inequalities in \eqref{eq:MB3} holds. Up to choosing a subsequence
  we can assume that the $S_i$ converge to a hyperplane $S$, and
  the $M_i$ converge as measures to a multiple $kS$, for $k\in
  \mathbf{N}$. The required claim is clear in the limit. 
\end{proof}

Proposition~\ref{prop:3anndecay} implies the following doubling condition, which
leads to the unique continuation result.
\begin{cor}\label{cor:doubling}
  Let $M$ be a stationary integral varifold in a neighborhood of the origin with unique tangent cone
  $C\times\mathbf{R}$ at the origin (with multiplicity one). Then
  either $M = C\times\mathbf{R}$ in a neighborhood of the origin, or
  there exists constants $A, \rho_0 > 0$ such that
  \[ \label{eq:ddouble} d(M, \rho) \leq A\, d(M, \rho/2), \]
  for all $\rho < \rho_0$. 
\end{cor}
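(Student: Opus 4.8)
The plan is to prove the contrapositive: assuming $M\ne C\times\mathbf{R}$ in every neighborhood of the origin, I will exhibit constants $A,\rho_0>0$ satisfying \eqref{eq:ddouble}. Two preliminary reductions are in order. First, $d(M,\rho)>0$ for all sufficiently small $\rho$: if $d(M,\rho_1)=0$ then in particular $D_{C\times\mathbf{R}}(M;B_{\rho_1})=0$, so $\mathrm{supp}\,M\cap B_{\rho_1}\subset\bigcap_{\epsilon>0}N_\epsilon(C\times\mathbf{R})=C\times\mathbf{R}$, and then the constancy theorem together with the multiplicity one assumption on the tangent cone forces $M=C\times\mathbf{R}$ on $B_{\rho_1}$, a contradiction. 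Second, since $C\times\mathbf{R}$ is the unique multiplicity one tangent cone of $M$ at $0$, the rescalings $\rho^{-1}M$ converge to $C\times\mathbf{R}$ as $\rho\to0$; hence (by the monotonicity formula and matching of densities) there is a $\rho_*>0$ such that for all $\rho<\rho_*$ the area bound \eqref{eq:Mareabound} holds for $\rho^{-1}M$ and $d(M,\rho)$ is as small as we please. Throughout I use the scaling property $d(M,a\rho)\le C_1 d(M,\rho)$ for $a\in[1/2,1]$ recorded before the statement.

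Now fix $\lambda_0\in(0,1/2)$, let $\lambda=\lambda(\lambda_0)\in[\lambda_0/2,\lambda_0]$ and $Q$ be as in Proposition~\ref{prop:3anndecay}, and shrink $\rho_*=\rho_*(\lambda_0)$ so that also $d(M,\rho)<\lambda_0^Q$ for $\rho<\rho_*$. Then Proposition~\ref{prop:3anndecay} applies to $\rho^{-1}M$ at every scale $\rho<\rho_*$ and yields the implication that $d(M,e^{-\lambda}\rho)\ge\tfrac12 d(M,\rho)$ entails $d(M,e^{-2\lambda}\rho)\ge\tfrac12 d(M,e^{-\lambda}\rho)$. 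So, writing $\rho_k=e^{-k\lambda}\sigma$, the property $P_\sigma(k):\ d(M,\rho_{k+1})\ge\tfrac12 d(M,\rho_k)$ satisfies $P_\sigma(k)\Rightarrow P_\sigma(k+1)$ for every $\sigma<\rho_*$ and every $k\ge0$. I distinguish two cases: either there exist $\lambda_0$ and $\sigma<\rho_*(\lambda_0)$ with $P_\sigma(0)$ (Case A), or for every $\lambda_0\in(0,1/2)$ one has $d(M,e^{-\lambda}\rho)<\tfrac12 d(M,\rho)$ for all $\rho<\rho_*(\lambda_0)$ (Case B).

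In Case A, iterating $P_\sigma(k)\Rightarrow P_\sigma(k+1)$ gives $d(M,e^{-(k+1)\lambda}\sigma)\ge\tfrac12 d(M,e^{-k\lambda}\sigma)$ for all $k\ge0$. Combined with $d(M,a\rho)\le C_1 d(M,\rho)$ this forces the doubling bound: for $\rho\in(e^{-(k+1)\lambda}\sigma,e^{-k\lambda}\sigma]$, compare both $\rho$ and $\rho/2$ to the nearest scale $e^{-k\lambda}\sigma$, using that $d$ decreases by at most a factor $2$ over each step $e^{-\lambda}$ and increases by at most a fixed power of $C_1$ over a halving, to get $d(M,\rho)\le A\,d(M,\rho/2)$ with $A$ depending only on $C_1$ and $\lambda$. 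This is \eqref{eq:ddouble} with $\rho_0=\sigma$, and we are done.

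It remains to rule out Case B, which will be the main obstacle. In Case B, $d$ drops by more than a factor $2$ at scale $e^{-\lambda}$ everywhere below $\rho_*(\lambda_0)$, so iterating, $d(M,e^{-k\lambda}\rho)<2^{-k}d(M,\rho)$ and hence $d(M,\rho)\le C\rho^{(\ln2)/\lambda}$ near $0$; as $(\ln2)/\lambda\to\infty$ when $\lambda_0\to0$, this means $d(M,\rho)$ vanishes to infinite order as $\rho\to0$. I then run Proposition~\ref{prop:3anndecay} along a sequence $\lambda_0^{(j)}\downarrow0$: at each stage, either $P_\sigma(0)$ holds for $\lambda^{(j)}$ at some small $\sigma$, returning us to Case A (hence to \eqref{eq:ddouble}), or $d(M,e^{-\lambda^{(j)}}\rho)<\tfrac12 d(M,\rho)$ for all sufficiently small $\rho$, the smallness hypothesis $d(M,\cdot)<(\lambda_0^{(j)})^Q$ being supplied by the infinite order vanishing just obtained. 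Chaining these fast-drop inequalities across $j$ and passing to the limit, exactly as outlined at the start of Section~\ref{sec:uniquecont}, and using once more $d(M,a\rho)\le C_1 d(M,\rho)$, one forces $d(M,\tau)=0$ at a positive scale $\tau$; by the first paragraph this gives $M=C\times\mathbf{R}$ on $B_\tau$, contradicting our standing assumption. Thus Case B cannot occur. The delicate point throughout this last step is the bookkeeping: controlling the shrinking base scales of the chaining against the shrinking smallness thresholds $(\lambda_0^{(j)})^Q$ required to invoke Proposition~\ref{prop:3anndecay}, and verifying that the limiting scale $\tau$ stays positive; everything else — the constancy theorem, the scaling identities for $d$, and Proposition~\ref{prop:3anndecay} itself — enters only softly.
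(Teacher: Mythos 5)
Your argument follows essentially the same strategy as the paper's: use Proposition~\ref{prop:3anndecay} to set up a dichotomy at each scale $\lambda_0$ between (i) doubling, which iterates to give \eqref{eq:ddouble}, and (ii) a fast-drop $d(M,e^{-\lambda}\rho) < \tfrac12 d(M,\rho)$, which is iterated over a sequence $\lambda_0^{(j)}\downarrow 0$ to force $d(M,\tau)=0$ at a positive scale $\tau$, contradicting the assumption that $M\neq C\times\mathbf{R}$ near $0$. Your preliminary reductions (positivity of $d$ for small $\rho$ away from $C\times\mathbf{R}$, scale comparison $d(M,a\rho)\leq C_1 d(M,\rho)$, shrinking to where the area bound and smallness hold) and your Case A are all in order.

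However, you identify the crux of Case B — ``verifying that the limiting scale $\tau$ stays positive'' against the shrinking smallness thresholds $(\lambda_0^{(j)})^Q$ — and then stop, calling it ``bookkeeping.'' This is not a soft point: it is precisely what makes the argument close. The paper handles it by choosing the next parameter explicitly as $\lambda_{k+1}=(2/3)^{Q^{-1}}\lambda_k$. With this choice, the fast-drop inequality $d(M,\rho) > \tfrac32 d(M,e^{-\lambda_0}\rho)$ at the current stage simultaneously (a) supplies the smallness hypothesis needed at the next stage, since $d(e^{\lambda_k}M_{k},\rho)\leq\tfrac23 d(M_k,\rho)\leq\tfrac23\lambda_k^Q=\lambda_{k+1}^Q$, and (b) produces a geometric series $\sum_k\lambda_k = \lambda_0/(1-(2/3)^{Q^{-1}})<\infty$, which pins the limiting scale $\tau=e^{-\sum\lambda_k}>0$. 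Your detour through ``infinite-order vanishing'' is not needed (and in fact cannot be used to invoke Theorem~\ref{thm:uniquecont1}, since that theorem is proved using this corollary); the convergent geometric sum is the mechanism, and without making it explicit the Case B branch of your proposal has a genuine gap rather than a routine verification.
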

\begin{proof}
  Let $\lambda_0 < 1/2$, and suppose that \eqref{eq:Mareabound} holds
  for $\rho^{-1}M$ for
  all $\rho \leq 1$, and in addition $d(M, \rho) \leq \lambda_0^Q$ for all
  $\rho \leq 1$ for the $Q$ in Proposition~\ref{prop:3anndecay}. This
  can be arranged by first scaling $M$ up. 

  Suppose that there is some $\rho_0 \leq 1$ such that for all $\lambda\in
  [\lambda_0/2, \lambda_0]$ we have $d(M, \rho_0) \leq 2d(M,
  e^{-\lambda}\rho_0)$. Then under our assumptions,
  Proposition~\ref{prop:3anndecay} implies that for all $k > 0$ we
  have $d(M, e^{-k\lambda}\rho_0) \leq 2 d(M,
  e^{-(k+1)\lambda})$. This implies that for a suitable constant $A$
  we have $d(M, \rho) \leq A d(M, \rho/2)$ for all $\rho < \rho_0$.

  We can therefore assume that for all $\rho \leq 1$ there exists some
  $\lambda\in [\lambda_0/2, \lambda_0]$ for which
  \[ \label{eq:dM5} d(M, \rho) > 2d(M, e^{-\lambda}\rho). \]
  If $\lambda_0$ is sufficiently small, then by the scaling behavior
  of $d(M, \rho)$, we always have
  \[ d(M, e^{-\lambda}\rho) \geq \frac{3}{4} d(M, e^{-\lambda_0}\rho), \]
  and so \eqref{eq:dM5} implies
  \[ d(M, \rho) > \frac{3}{2} d(M, e^{-\lambda_0}\rho). \]
  In particular if we let $M_1 = e^{\lambda_0}M$, and $\lambda_1 =
  (2/3)^{Q^{-1}}\lambda_0$, then our original assumptions apply to the
  pair $M_1$ and $\lambda_1$ replacing $M, \lambda_0$: we have
  \[ d(M_1, \rho) \leq \frac{2}{3} d(M, \rho) \leq \lambda_1^Q, \]
  while the area bound \eqref{eq:Mareabound} still holds.

  We can therefore iterate this construction, and we obtain a sequence
  of constants $\lambda_k = (2/3)^{Q^{-1}}\lambda_{k-1}$ and scalings
  $M_k = e^{\lambda_{k-1}}M_{k-1}$, satisfying $d(M_k, 1) \leq
  \lambda_k^Q$. Since $\lambda_k^Q\to 0$, and at the same time
  \[ \lambda_0 + \lambda_1 + \ldots =
    \frac{\lambda_0}{1-(2/3)^{Q^{-1}}} < \infty, \]
  it follows that $d(M, \rho_0) = 0$ for some $\rho_0 > 0$. This
  implies that $M=C\times\mathbf{R}$ in a neighborhood of the origin
  as required. 
\end{proof}

Based on this result we can prove Theorem~\ref{thm:uniquecont1}.
\begin{proof}[Proof of Theorem~\ref{thm:uniquecont1}]
  Suppose that $M$ is a stationary integral varifold in a neighborhood of the
  origin, satisfying the bounds \eqref{eq:L2vanish}. It is enough to
  show that \eqref{eq:ddouble} cannot hold. To see this, we show that
  for a stationary integral varifold $M$ in the ball $B(0,2)$ the $L^2$
  distance $\int_{M\cap B(0,2)} d^2$ to the cone $C\times\mathbf{R}$
  can be used to control $d(M, 1)$.

  Suppose that
  \[ \label{eq:intd3} \int_{M\cap B(0,2)} d^2 < \delta, \]
  and suppose that $p\in M\cap B(0,1)$ satisfies $d(p) > d_0$. Then
  for any $q\in  B(p, d_0/2)$ we have $d(q) > d_0/2$, and by the
  monotonicity formula we have
  \[ \Vert M\Vert B(p, d_0/2) \geq \omega_n (d_0/2)^n. \]
  It follows that
  \[ \int_{M\cap B(0,2)} d^2 > \omega_n (d_0/2)^{n+2}, \]
  and so using \eqref{eq:intd3} we have
  \[ d_0 < C_n \delta^{\frac{1}{n+2}} \]
  for a dimensional constant $C_n$. In particular the Hausdorff
  distance from $C\times\mathbf{R}$ to the support of $M\cap B(0,1)$
  is at most $C_n \delta^{1/(n+2)}$.

  At the same time the region between the surfaces
  $H(\pm \epsilon)$ contains the
  $c_1\epsilon$-neighborhood of $(C\times\mathbf{R})\cap B(0,2)$,
  for some $c_1$ depending on the cone $C$. Since $M\cap B(0,1)$ is
  in the $C_n \delta^{1/(n+2)}$-neighborhood of $C\times\mathbf{R}$, it
  follows that
  \[ D_{C\times \mathbf{R}}(M; B(0,1)) \leq C_n c_1^{-1}
    \delta^{1/(n+1)}. \]
  Therefore in general we have the estimate
  \[  D_{C\times\mathbf{R}}(M, B(0,1)) \leq C \left(\int_{M\cap
        B(0,2)} d^2 \right)^{1/(n+1)}.\]

  Suppose now that $M$ satisfies \eqref{eq:L2vanish}. For each $\rho >
  0$ this implies that
  \[ \int_{\rho^{-1}M\cap B(0,2)} d^2 < C_k(2\rho)^{k+n+2}, \]
  and so
  \[ D_{C\times\mathbf{R}}(\rho^{-1}M; B(0,1)) \leq C_k
    \rho^{\frac{k+n+2}{n+1}}, \]
  for new constants $C_k$. Using the definition of $d(M, \rho)$ it
  follows from this that up to further changing the constants $C_k$,
  we have
  \[ d(M, \rho) \leq C_k \rho^{\frac{k+n+2}{n+1}}. \]
  In particular for all $k, i$ we have
  \[ d(M, 2^{-i}) \leq C_k 2^{-i\frac{k+n+2}{n+1}} \]
  For sufficiently large $k$ this contradicts \eqref{eq:ddouble},
  since by iterating the estimate \eqref{eq:ddouble} we obtain
  $d(M, 2^{-i}) \geq A^{-i} d(M,1)$  for all $i$. 
\end{proof}

In the presence of an area bound \eqref{eq:Mareabound} the distance
$D_{C\times\mathbf{R}}(M; B(0,1))$ controls the $L^2$ distance from $M$ to $C\times\mathbf{R}
$ on $B(0,1/2)$ (see the argument above leading to \eqref{eq:DL2}),
and so we
have an analogous result in terms of $D_{C\times\mathbf{R}}$:
\begin{cor}
  Suppose that $M$ is a stationary integral varifold in $B(0,1)$, with unique
  tangent cone $C\times\mathbf{R}$ at the origin (with multiplicity
  one). If for any $k > 0$ we have a constant $C_k > 0$ satisfying
  \[ \label{eq:DCvanish} D_{C\times\mathbf{R}}(M, B(0,\rho)) \leq C_k \rho^k \]
  for all $\rho < 1$, then $M = C\times\mathbf{R}$ in a neighborhood
  of the origin.
\end{cor}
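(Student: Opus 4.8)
The plan is to reduce this statement to Theorem~\ref{thm:uniquecont1}: I will show that if $D_{C\times\mathbf{R}}(M;B_\rho(0))$ vanishes to infinite order as $\rho\to 0$, then so does the $L^2$-distance $\int_{M\cap B_\rho(0)}d^2$, and then quote Theorem~\ref{thm:uniquecont1}. The first thing to note is that the area bound \eqref{eq:Mareabound} is automatically available for small rescalings of $M$: since $C\times\mathbf{R}$ is a multiplicity one tangent cone of $M$ at $0$, we have $\Theta(M,0)=\Theta(C\times\mathbf{R},0)$, and by the monotonicity formula the density ratios $\rho\mapsto\rho^{-n}\Vert M\Vert(B_\rho(0))$ decrease to this common value. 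Hence there is $\rho_1>0$ so that $\rho^{-1}M$ satisfies \eqref{eq:Mareabound} on $B_2(0)$ for every $\rho\leq\rho_1$.

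The key step is then that, under the area bound, $D_{C\times\mathbf{R}}$ controls the $L^2$-distance. Concretely, I claim there are constants $C,\sigma>0$ depending only on the cone such that, whenever $N$ is a stationary integral varifold satisfying \eqref{eq:Mareabound} on $B_2(0)$ and $\epsilon:=D_{C\times\mathbf{R}}(N;B_2(0))$ is sufficiently small, one has $\int_{N\cap B_1(0)}d^2\leq C\epsilon^{\sigma}$. This is precisely the covering estimate already carried out inside the proof of Proposition~\ref{prop:3anndecay} (the passage leading to \eqref{eq:DL2}), now applied directly to the whole region between $H(\pm\epsilon)$: on $\{r>C'\epsilon^{1/(\gamma+1)}\}$ the varifold $N$ is squeezed to within distance $\lesssim\epsilon\,r^{-\gamma}$ of $C\times\mathbf{R}$, which contributes $\lesssim\epsilon^2$ to $\int d^2$ because $2\gamma<n-1$; near the singular line one covers the thin region between $H(\pm\epsilon)$ by balls adapted to the geometry (of radii comparable to the thickness $\epsilon R^{-\gamma}$ of this region at scale $r\sim R$) and uses the monotonicity formula together with \eqref{eq:Mareabound} to bound the remaining contribution by a positive power of $\epsilon$. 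Applying this with $N=\rho^{-1}M$, and using the scaling relation \eqref{eq:DCRscale} together with $\int_{\rho^{-1}M\cap B_1(0)}d^2=\rho^{-n-2}\int_{M\cap B_\rho(0)}d^2$, gives
\[ \int_{M\cap B_\rho(0)}d^2\leq C\rho^{n+2}\Big(\rho^{-(\gamma+1)}D_{C\times\mathbf{R}}(M;B_{2\rho}(0))\Big)^{\sigma} \]
for all $\rho\leq\rho_1$ for which $D_{C\times\mathbf{R}}(M;B_{2\rho}(0))$ is small.

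Finally, feeding in the hypothesis $D_{C\times\mathbf{R}}(M;B_{2\rho}(0))\leq C_k(2\rho)^k$ with $k$ large, the displayed inequality yields $\int_{M\cap B_\rho(0)}d^2\leq C_k'\rho^{k'}$ with $k'=k'(k)\to\infty$ as $k\to\infty$; for $\rho\in[\rho_1,1)$ the same bound holds after enlarging the constant, since $\int_{M\cap B_\rho(0)}d^2\leq\int_{M\cap B_1(0)}d^2<\infty$. Thus $M$ meets the hypotheses of Theorem~\ref{thm:uniquecont1}, and we conclude $M=C\times\mathbf{R}$ in a neighborhood of $0$. The only step involving real work is the $D_{C\times\mathbf{R}}$-to-$L^2$ comparison, and this has essentially already been done in the course of proving Proposition~\ref{prop:3anndecay}; everything else is bookkeeping with the scaling exponents.
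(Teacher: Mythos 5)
Your proposal is correct and follows essentially the same route the paper takes: reduce to Theorem~\ref{thm:uniquecont1} by observing that the area bound \eqref{eq:Mareabound} is available for small rescalings (multiplicity one tangent cone plus monotonicity), and that $D_{C\times\mathbf{R}}$ controls the $L^2$-distance via the covering estimate already carried out in the passage leading to \eqref{eq:DL2}. The scaling bookkeeping you supply is exactly the content the paper leaves to the reader.
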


It follows from this result that for any such $M$ that is not equal to
$C\times\mathbf{R}$ near the origin, we can extract a non-zero Jacobi
field by a blowup argument, representing the leading order deviation
of $M$ from its tangent cone. 

\begin{cor}\label{cor:nonzeroU}
  Suppose that $M$ is a stationary integral
  varifold in a neighborhood of 0 with unique multiplicity one tangent cone
  $C\times\mathbf{R}$ at the origin, with $M$ not equal to $C\times\mathbf{R}$
  in any ball. For $i > 0$ let $a_i =
  D_{C\times\mathbf{R}}(2^iM; B_1)$. Then for a sequence $r_i \to 0$ we
  can write $2^iM$ as the graph of a function $u_i$ over $C\times\mathbf{R}$ on the region
  $B_1\cap \{r > r_i\}$. In addition up to choosing a subsequence we
  have $a_i^{-1}u_i \to U$ locally uniformly on
  $B_1\cap (C\times\mathbf{R})\setminus \{0\}\times\mathbf{R}$, where
  $U\not\equiv 0$ is a
  non-zero Jacobi field on $(C\times\mathbf{R}) \cap B_1$, satisfying
  $|U| \leq C_1 r^{-\gamma}$ for a constant $C_1$ depending on the
  cone $C$. 
\end{cor}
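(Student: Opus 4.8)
The plan is to run a standard blowup argument, normalizing by the $D_{C\times\mathbf{R}}$-distance, and to use the strong unique continuation result — in the form of the $D_{C\times\mathbf{R}}$-version corollary immediately preceding this statement — to guarantee that the limiting Jacobi field is non-trivial. First I would record the consequences of the hypotheses: since $M$ has unique multiplicity one tangent cone $C\times\mathbf{R}$ at the origin, the dyadic rescalings $2^iM$ converge to $C\times\mathbf{R}$ as varifolds, so in particular $a_i = D_{C\times\mathbf{R}}(2^iM; B_1)\to 0$, the area bound $\Vert 2^iM\Vert(B_2)\le \tfrac{11}{10}\Vert C\times\mathbf{R}\Vert(B_2)$ holds for all large $i$, and $2^iM\cap B_1$ lies in the (closed) region between $H(\pm a_i)$. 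Exactly as in Step 1 of the proof of Proposition~\ref{prop:3anndecay}, this forces $2^iM$ to lie between the graphs of $\pm C_0 a_i r^{-\gamma}$ over $C\times\mathbf{R}$ on the region $r > C_0 a_i^{1/(\gamma+1)}$; choosing $r_i$ with $r_i\to 0$ but $r_i/a_i^{1/(\gamma+1)}\to\infty$ (say $r_i = a_i^{1/2(\gamma+1)}$) and invoking the Allard regularity theorem (the density of $2^iM$ being close to $1$ at such points by the varifold convergence, or alternatively Lemma~\ref{lem:mult}), one gets that $2^iM$ is the graph of a function $u_i$ over $C\times\mathbf{R}$ on $\{r > r_i\}\cap B_1$ with $|u_i|\le C_0 a_i r^{-\gamma}$. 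Set $\tilde u_i = a_i^{-1}u_i$, so that $|\tilde u_i|\le C_0 r^{-\gamma}$.

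Next I would establish uniform higher order bounds and pass to a limit. Rescaling the annular regions $\{r\in(R,2R)\}$ to unit size and applying interior Schauder estimates for the minimal graph equation gives $|\nabla^k\tilde u_i|\le C_k r^{-\gamma-k}$ for $k\le 3$ on $\{r > 2r_i\}$. Since $M$ is stationary, $u_i$ solves $L_{C\times\mathbf{R}}u_i + Q_{C\times\mathbf{R}}(u_i) = 0$ with $Q_{C\times\mathbf{R}}$ quadratic in the notation of \eqref{eq:mXQ}, hence $|L_{C\times\mathbf{R}}\tilde u_i| = a_i^{-1}|Q_{C\times\mathbf{R}}(u_i)| \le C a_i r^{-2\gamma-2}$, which tends to $0$ as $i\to\infty$. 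By Arzel\`a--Ascoli and elliptic compactness, a subsequence of $\tilde u_i$ converges in $C^2_{loc}$ on $(C\times\mathbf{R})\cap B_1\setminus\{0\}\times\mathbf{R}$ to a limit $U$ with $L_{C\times\mathbf{R}}U = 0$ and $|U|\le C_0 r^{-\gamma}$; so $U$ is a Jacobi field on $(C\times\mathbf{R})\cap B_1$ in the admissible class ($r^\gamma U$ bounded), and the only remaining assertion once we know $U\not\equiv 0$ is the bound $|U|\le C_1 r^{-\gamma}$, which is inherited directly from $|\tilde u_i|\le C_0 r^{-\gamma}$.

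The main point — and the step I expect to be the real obstacle — is to show $U\not\equiv 0$. Suppose for contradiction $U\equiv 0$. Fix a small $s\in(0,1/2)$ and a small width $b>0$, and set $\delta_0 = bs^A$ with $A$ the constant of Proposition~\ref{prop:nonconc}. Since $\tilde u_i\to 0$ uniformly on the fixed compact set $\{r\ge\delta_0\}\cap B_{3/4}$ and there the $D_{C\times\mathbf{R}}$-distance of a small graph over $C\times\mathbf{R}$ is controlled by $\sup r^{\gamma}|u_i|$ up to a fixed constant, we get $\eta_i := a_i^{-1}D_{C\times\mathbf{R}}(2^iM; \{r\ge\delta_0\}\cap B_{3/4})\to 0$. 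Covering $(C\times\mathbf{R})\cap B_{1/2}$ near the singular segment by slabs $\{|y-y_0|<b/2\}$, $|y_0|\le 1/2$, and applying the quantitative non-concentration estimate, Proposition~\ref{prop:nonconc}, in each (its proof uses $2^iM$ only in a thin neighborhood of the segment $\{0\}\times[y_0-b,y_0+b]\subset B_{3/4}$ together with its boundary values on the cylinder $\{r=\delta_0\}$, exactly as in Step 3 of the proof of Proposition~\ref{prop:3anndecay}), together with $D_{C\times\mathbf{R}}(2^iM;B_1)=a_i$, yields $D_{C\times\mathbf{R}}(2^iM;B_{1/2})\le C(\eta_i + s)a_i$. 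By the scaling identity \eqref{eq:DCRscale} we have $D_{C\times\mathbf{R}}(2^iM;B_{1/2}) = 2^{-\gamma-1}a_{i+1}$, so $a_{i+1}/a_i\le C'(\eta_i+s)$; for each fixed $s$ this gives $\limsup_i a_{i+1}/a_i\le C's$, and since $s$ was arbitrary, $a_{i+1}/a_i\to 0$.

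Finally I would convert this into a contradiction with unique continuation. The relation $a_{i+1}/a_i\to 0$ forces $a_i$ to decay faster than any geometric rate $\theta^i$, and applying \eqref{eq:DCRscale} once more, namely $D_{C\times\mathbf{R}}(M;B_{2^{-i}}) = 2^{-i(\gamma+1)}a_i$ together with monotonicity of $D_{C\times\mathbf{R}}$ in the domain, gives $D_{C\times\mathbf{R}}(M;B_\rho) = O(\rho^k)$ for every $k>0$. The strong unique continuation corollary preceding the present statement then forces $M = C\times\mathbf{R}$ in a neighborhood of the origin, contradicting the hypothesis that $M$ is not equal to $C\times\mathbf{R}$ in any ball. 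Hence $U\not\equiv 0$, completing the proof. The one technical wrinkle is the correct localization of Proposition~\ref{prop:nonconc} to slabs that meet but are not contained in $B_{3/4}$; this is handled exactly as in the earlier applications of that proposition in Section~\ref{sec:uniquecont}, since the barrier construction underlying it only sees $M$ in a bounded thin neighborhood of the singular segment and on a fixed cylinder, all sitting inside $B_{3/4}$ where $2^iM$ is controlled.
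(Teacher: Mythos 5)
Your proposal is correct and follows essentially the same route as the paper: graphicality and the $|u_i|\le C a_i r^{-\gamma}$ bound via the argument of Step~1 of Proposition~\ref{prop:3anndecay}, a subsequential limit to a Jacobi field $U$, and, to rule out $U\equiv 0$, the quantitative non-concentration estimate Proposition~\ref{prop:nonconc} to derive $a_{i+1}\le C s\,a_i$ for every $s>0$ and all large $i$, hence infinite-order decay of $D_{C\times\mathbf{R}}(\rho^{-1}M;B_1)$, which contradicts the preceding strong unique continuation corollary. The slab-localization remark at the end is already subsumed in the paper's statement of Proposition~\ref{prop:nonconc} applied with a slightly enlarged constant $A$, so no additional argument is needed there.
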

\begin{proof}
  The fact that $2^iM$ is the graph of $u_i$ on a set of the form $\{r
  > r_i\}$ follows in the same way as Step 1 in the proof of
  Proposition~\ref{prop:3anndecay}. In addition the estimate $|u_i|
  \leq a_i C_1 r^{-\gamma}$ on the set $\{r > r_i\}$ follows from the
  definition of the distance $D_{C\times\mathbf{R}}$. Choosing a subsequence we can extract a limit
  $a_i^{-1}u_i \to U$, to a Jacobi field $U$ on $B_1\cap 
  (C\times\mathbf{R})$. We only need to rule out the possibility that
  for all choices of subsequence we get the limit $U=0$.

  Suppose therefore that $a_i^{-1}u_i \to 0$ (along the entire
  sequence) locally uniformly on
  $B_1\cap (C\times\mathbf{R})$, away from the singular ray. We will
  show that then $D_{C\times\mathbf{R}}(2^iM; B_1)$ vanishes to
  infinite order in $2^{-i}$.

  Let $s < 1/2$. The non-concentration result
  Proposition~\ref{prop:nonconc} implies (for a larger constant $A$) that
  \[ \label{eq:nc1} D_{C\times\mathbf{R}}(2^iM; B(0,1/2)) &\leq A
    D_{C\times\mathbf{R}}(2^iM; \{r \geq s^A\}\cap B(0,3/4)) \\
    &\qquad + sD_{C\times\mathbf{R}}(2^iM; B(0,3/4)). \]
  Once $i$ is sufficiently large (depending on $s$), we have $r_i <
  s^A$, and so on the set $\{r \geq s^A\}\cap B(0,1)$, the surface $2^iM$ is the
  graph of $u_i$ over $C\times\mathbf{R}$.
  Since $a_i^{-1}u_i \to 0$ locally uniformly,
  we can choose $i$ sufficiently large so that $|u_i| < s a_i$
  on the set $\{r \geq s^A\}\cap B(0,3/4)$, and so
  \[D_{C\times\mathbf{R}}(2^iM; \{r \geq s^A\}\cap B(0,3/4)) \leq
    C_1s a_i \]
  for $C_1$ depending on the cone $C$. Using \eqref{eq:nc1}, together
  with the definition of the $a_i$, this implies that for sufficiently
  large $i$ (depending on $s$) we have
  \[ a_{i+1} \leq C_2s a_i, \]
  for a constant $C_2$ depending only on the cone $C$. This in turn
  implies that for any $s > 0$ there is a constant $C_s$ depending on
  $s$ such that $a_i\leq C_s s^i$ for all $i > 0$, i.e. 
  \[ D_{C\times\mathbf{R}}(2^iM; B(0,1)) \leq C_s s^i. \]
  Letting $\rho = 2^{-i}$ and $k = -\log_2 s$, we can write this as
  \[ D_{C\times\mathbf{R}}(\rho^{-1}M; B(0,1)) \leq C_k \rho^k. \]
  This implies inequalities of the form \eqref{eq:DCvanish} (note that
  $k\to \infty$ as $s\to 0$), so in a
  neighborhood of the origin $M=C\times\mathbf{R}$, contrary to our
  assumption. 
\end{proof}

\section{Minimal hypersurfaces with large
  symmetry}\label{sec:symmetry}
In this section we restrict ourselves to minimizing cones $C = C(S^p\times S^q)$
for $p+q > 6$, and to codimension one stationary integral varifolds
$M$ in a neighborhood of the origin $0 \in
\mathbf{R}^n\times\mathbf{R}$  that are
invariant under the action of the group $O(p+1)\times O(q+1)$ on
$\mathbf{R}^n = \mathbf{R}^{p+1}\times\mathbf{R}^{q+1}$. The main
result is the following.

\begin{thm}\label{thm:symmetric}
  Let $M$ be a stationary integral varifold in a neighborhood of the origin in
  $\mathbf{R}^n\times\mathbf{R}$, with tangent cone
  $C\times\mathbf{R}$ at the origin (with multiplicity one), where $C
  = C(S^p\times S^q)$, with $p+q > 6$. Suppose that $M$ is invariant under the action
  of $G = O(p+1)\times O(q+1)$ on $\mathbf{R}^n$. Then either $M =
  C\times\mathbf{R}$ in a neighborhood of the origin, or $M$
  is a graph over one of the surfaces constructed in
  Section~\ref{sec:gluing} near the origin and so it has an
  isolated singularity at the origin. 
\end{thm}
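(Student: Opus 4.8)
The plan is to combine the strong unique continuation result (Theorem~\ref{thm:uniquecont1}, via Corollary~\ref{cor:nonzeroU}) with the $G$-invariant classification of Jacobi fields (Lemma~\ref{lem:CinvJ}) and a comparison argument against the surfaces $T$ built in Section~\ref{sec:gluing}. First I would dispose of the trivial case: if $M = C\times\mathbf{R}$ in a neighborhood of $0$ we are done, so assume this fails in every ball. By Corollary~\ref{cor:nonzeroU} there is a nonzero Jacobi field $U$ on $(C\times\mathbf{R})\cap B_1$ with $|U|\leq C_1 r^{-\gamma}$, obtained as a locally uniform limit $a_i^{-1}u_i\to U$ where $2^iM$ is the graph of $u_i$ over $C\times\mathbf{R}$ away from the singular ray and $a_i = D_{C\times\mathbf{R}}(2^iM;B_1)$. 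Since $M$ is $G$-invariant, so is each $u_i$ and hence so is $U$; since $r^\gamma U$ is bounded, Lemma~\ref{lem:CinvJ} forces
\[
U = \sum_{k,\l\geq 0} a_{k,\l}\, r^{2k-\gamma} y^\l,
\]
and because $U\not\equiv 0$ there is a leading homogeneous term. The growth rates occurring are $2k+\l-\gamma$; the lowest one compatible with $U\not\equiv 0$ and $|U|\le C_1 r^{-\gamma}$ that is \emph{not} simply $\pm r^{-\gamma}\phi_1$ (a scaling/translation mode, which can be removed by moving the base point and rescaling) is $y^\l r^{-\gamma}\phi_1$ for the smallest admissible integer $\l$ with $\l-\gamma>1$. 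This is exactly the Jacobi field $u_\l$ of \eqref{eq:ul} modeling one of the hypersurfaces $T$ of Theorem~\ref{thm:Texist}, and this is where the hypothesis $p+q>6$ enters: it guarantees that no degree-one invariant Jacobi field (the excluded $\phi=y^3r^{-2}-y$ phenomenon on $C(S^3\times S^3)$, $C(S^2\times S^4)$) intervenes, so the genuine leading deviation of $M$ is modeled on some $u_\l$.

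Next I would upgrade this infinitesimal statement to a quantitative one: a \v{S}varc/monotonicity-type estimate (or directly the three-annulus machinery of Section~\ref{sec:uniquecont}, now applied with the \emph{known} leading exponent $\l-\gamma$) shows that, after an initial rescaling and possibly a change of base point on the $y$-axis, $M$ approaches $C\times\mathbf{R}$ at a definite polynomial rate $\rho^{\l-\gamma}$ with a nonzero coefficient, i.e. $D_{C\times\mathbf{R}}(\rho^{-1}M;B_1)\sim \rho^{\l-\gamma}$ and more precisely $2^iM$ is, on $\{r>r_i\}$, the graph of $u_\l$-plus-lower-order over $C\times\mathbf{R}$. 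Comparing with Proposition~\ref{prop:Texist} and Proposition~\ref{prop:Tgraphbound}, the surface $T$ built from this same $\l$ also approaches $C\times\mathbf{R}$ at rate exactly $\rho^{\l-\gamma}$, with the \emph{same} leading profile $u_\l$ (up to the scaling freedom $t\mapsto$ scale of $T$). The key point, as announced in the introduction, is then to show $M$ approaches $T$ \emph{faster} than $T$ approaches $C\times\mathbf{R}$: writing $M$ and $T$ both as graphs over $C\times\mathbf{R}$ in the graphical region, their difference has leading term of order $o(\rho^{\l-\gamma})$ once the scaling of $T$ is matched to the leading coefficient of $M$; running the unique-continuation/three-annulus argument \emph{relative to $T$} rather than relative to $C\times\mathbf{R}$, and using Lemma~\ref{lem:Fa2} to build barriers adapted to $T$ exactly as in Proposition~\ref{prop:Tminimizing}, forces this difference to be controlled, so that near the origin $M$ lies in a graphical neighborhood of $T$. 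Since $T$ has positive regularity scale $\gtrsim r$ and $M$ is a stationary varifold trapped in a thin graphical tube around the smooth (away from $0$) surface $T$, Allard's theorem gives that $M$ is a smooth graph over $T$ away from $0$; hence $M$ inherits the isolated singularity of $T$ at the origin.

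The main obstacle I anticipate is precisely the relative unique-continuation step: one needs a version of Proposition~\ref{prop:3anndecay} and the non-concentration estimate Proposition~\ref{prop:nonconc} in which the model is $T$ (which degenerates near the singular ray) rather than the exact cone $C\times\mathbf{R}$. The barrier construction of Proposition~\ref{prop:barrier10}, combined with the $T$-adapted supersolutions $F_a$ of Lemma~\ref{lem:Fa2}, should supply the needed non-concentration near the singular line, since $T$ looks like a scaled $H_\pm\times\mathbf{R}$ there and $F_{-\gamma_1}$ provides a strictly positive supersolution; and the $G$-invariance removes all Jacobi fields except the one-parameter family $r^{2k-\gamma}y^\l$, so the three-annulus dichotomy for deviations of $M$ from $T$ can be read off the same spectral gap used in Lemma~\ref{lem:L23annulus}. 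Assembling these ingredients — invariant-Jacobi-field rigidity, matching the scale of $T$ to the leading coefficient of $M$, and the $T$-relative three-annulus/non-concentration argument closed off by the barriers — yields that $M$ is a graph over a scaling of $T$, completing the proof.
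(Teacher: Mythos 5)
Your overall architecture matches the paper's: extract a nonzero $G$-invariant Jacobi field via Corollary~\ref{cor:nonzeroU}, classify it by Lemma~\ref{lem:CinvJ}, identify the leading profile $u_\l$, and then run a three-annulus/non-concentration argument measuring the distance from $M$ to the surfaces $T$ of Section~\ref{sec:gluing} (with barriers from Proposition~\ref{prop:barrier10} and Lemma~\ref{lem:Fa2} supplying the non-concentration near the singular ray). You have also correctly anticipated the need for a $T$-relative version of the non-concentration and three-annulus lemmas, which the paper supplies as Proposition~\ref{prop:nonconc2} and Proposition~\ref{prop:DT3annulus2}.

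However, there is a genuine gap at the heart of the ``relative unique continuation'' step, and it is precisely where the $p+q>6$ hypothesis and the integrability of the problem are doing real work. You propose to fix a single comparison surface $T$ by ``matching the scaling of $T$ to the leading coefficient of $M$'' once, and then to conclude that the difference decays at a definite rate by the three-annulus dichotomy. But $u_\l$ has \emph{exactly} the critical degree $d_M = \l - \gamma$, and Lemma~\ref{lem:L23annulus} gives the dichotomy only after removing the degree-$d$ component. In the $T$-relative picture the analogous mode is the one-parameter variation of $T_\lambda$ through nearby members of the same family, and a one-time matching does not eliminate it: as you pass to smaller scales the degree-$d_M$ component of the deviation can reappear with a new coefficient, stalling the decay. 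The paper's resolution is to enlarge the comparison family: it constructs a Jacobi field $\phi_\l$ on $T$ asymptotic to $u_\l$ (Proposition~\ref{prop:TJacfield}), deforms $T$ by the implicit function theorem into a one-parameter family $T_{1,\epsilon}$ (Proposition~\ref{prop:Tdeform}), and sets $T_{\lambda,\epsilon\lambda}$ to be scalings thereof. The decay step (Proposition~\ref{prop:decay2} / Corollary~\ref{cor:decay2}) then picks a \emph{new} $\epsilon'$ at each scale $L^B$, absorbing the degree-$d_M$ component of the current deviation into the choice of comparison surface, with $|\epsilon_{i+1}-\epsilon_i|$ decaying geometrically so the sequence converges. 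Without this continual re-centering of the comparison surface at every scale, the three-annulus argument relative to a fixed $T$ does not close.

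Two smaller issues: the relevant $\l$ is not ``the smallest admissible integer with $\l-\gamma>1$'' — it is the degree $d_M$ of the specific varifold $M$ (Definition~\ref{defn:Mdegree}), which is a property of $M$ and can be any such integer; and the paper does not move base points along the $y$-axis. The purpose of $p+q>6$ is not to let you remove translational modes by recentering, but to ensure that there is no $G$-invariant degree-one homogeneous Jacobi field on $C\times\mathbf{R}$ (the excluded $\phi = y^3 r^{-2} - y$), so that the existence theorem (Theorem~\ref{thm:Texist}, which requires $\l-\gamma>1$) produces a comparison surface for every possible degree $d_M$ that can occur.
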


The proof of Theorem~\ref{thm:symmetric} will occupy the rest of this
section. First note that under the assumptions we know that $C\times\mathbf{R}$ is
actually the unique tangent cone of $M$ at the origin. This follows
from Simon~\cite{Simon94}, although under the $O(p+1)\times
O(q+1)$-invariance it is more immediate, since $C\times \mathbf{R}$
has no
invariant degree one Jacobi fields when $p+q>6$. Assuming that $M$ is
not equal to $C\times\mathbf{R}$ in a neighborhood of the origin, we can then apply
Corollary~\ref{cor:nonzeroU} to extract non-trivial Jacobi fields  by
blowing up $M$. We define the degree of $M$ as follows.

\begin{definition}\label{defn:Mdegree}
Given any Jacobi field $U$ on the unit ball in $C\times\mathbf{R}$
satisfying $|r^\gamma U| \in L^\infty$, we let the degree of $U$ be
the smallest degree appearing in the homogeneous decomposition of
$U$. I.e. the degree of $U$ is $d$, if
\[ U = U_d + O(\rho^{c+\gamma}r^{-\gamma})\]
for some $c > d$ as $\rho\to 0$, and $U_d$ is a non-zero degree $d$
homogeneous Jacobi field. If $U$ is identically zero we define its
degree to be infinite. 

Let $M$ be as in the statement of Theorem~\ref{thm:symmetric}, and
suppose that $M$ is not equal to $C\times\mathbf{R}$ in a neighborhood
of the origin. We then define the degree of $M$ to be the
smallest possible degree of all the Jacobi fields that are 
obtained by the rescaling process in Corollary~\ref{cor:nonzeroU},
\end{definition}

Note that since we are considering $M$ that are $O(p+1)\times
O(q+1)$-invariant, by Lemma~\ref{lem:CinvJ} the Jacobi fields $U$
arising from Corollary~\ref{cor:nonzeroU} are of the form
\[ U = \sum_{k, \l\geq 0} a_{k,\l} r^{2k-\gamma} y^\l. \]
This can be written as
\[ U = \lambda u_\l + O(\rho^{c+\gamma}r^{-\gamma}) \]
for some $\lambda\not=0$ and $c > \l-\gamma$, where $u_\l$ is
the function in
\eqref{eq:ul}. Note that in the present setting $\phi_1=1$, so in
particular the formula for $u_\l$ makes sense on all of
$\mathbf{R}^{n+1}\setminus \{0\}\times\mathbf{R}$. 
We will assume from now on that the degree of $M$ is
$\l-\gamma$.

Let us denote by $T_{\pm 1}$ the minimal surfaces constructed in
Proposition~\ref{prop:Texist}, modeled on the Jacobi fields
$\pm u_\l$, and define 
\[ \label{eq:Tlambda}T_\lambda = \lambda^{(1-(\l-\gamma))^{-1}}T_1, \quad T_{-\lambda} =
  \lambda^{(1-(\l-\gamma))^{-1}}T_{-1}\, \text{ for }\lambda > 0.\]
We let $T_0 = C\times\mathbf{R}$. 
For sufficiently small $|\lambda|$, the surface $T_\lambda$ is defined in
$B_2(0)$ and to leading order we can think of  $T_\lambda$
as the graph of $\lambda u_\l$ over $C\times\mathbf{R}$, at
least away from the singular ray. 

To prove Theorem~\ref{thm:symmetric} the strategy is to show that under the
assumptions in the theorem $M$ will decay towards $T_\lambda$ for a
suitable $\lambda\not=0$ at a
rate faster than the degree $\l-\gamma$. In a sufficiently small
neighborhood of the origin this will imply that $M$ is actually a
graph over $T_\lambda$, and in particular it has an isolated
singularity at the origin. The proof has similarities  with the
argument 
in \cite{Sz20} proving the uniqueness of certain cylindrical tangent cones,
based on a suitable three annulus lemma in terms of an $L^\infty$-type
distance. The use of the three annulus
lemma itself for proving decay estimates goes back to
Allard-Almgren's~\cite{AA81} and Simon's~\cite{Simon83} works.

\subsection{Perturbations of $T_\lambda$}
For technical reasons we will need to consider further minimal perturbations
of the surfaces $T_\lambda$, because we do not have sufficiently
precise results comparing $T_\lambda$ to $T_{\lambda'}$ when $\lambda,
\lambda'$ are very close to each other. In order to do this we first construct
the corresponding Jacobi fields. For simplicity we work with $T=T_1$,
but the same results hold for $T_{-1}$ as well. 
\begin{prop}\label{prop:TJacfield}
  There is a Jacobi field $\phi_\l$ on $T$ in a sufficiently small
  neighborhood of 0, satisfying the following properties:
  \begin{enumerate}
  \item $\phi_\l \in C^{2,\alpha}_{\l-\gamma,-\gamma}$.
  \item On the region $r > |y|^b$ for some $b > 1$ we have
    $\phi_\l - u_\l \in C^{2,\alpha}_{\delta, \tau}$ where the
    weights $\delta, \tau$ are chosen as in
  Proposition~\ref{prop:Texist} and Remark~\ref{rem:deltatau}. 
  \end{enumerate}
\end{prop}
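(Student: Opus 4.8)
The plan is to obtain $\phi_\l$ by first constructing a good approximate Jacobi field $\Phi$ on $T$ in a neighbourhood of the origin, and then correcting it to an exact one using the invertibility of $L_T$ in the doubly weighted H\"older spaces of Section~\ref{sec:gluing}. The construction of $\Phi$ runs parallel to the construction of the approximate solution $X$ in Section~\ref{sec:approxsoln1}, with $u_\l$ as the model on the outer region and a Hardt--Simon homothety Jacobi field as the model near the singular ray. Fix $b\in(1,a)$, where $a=\l/(1+\gamma)$. On $\{r>2|y|^b\}$ I would let $\Phi$ be simply the restriction of $u_\l$ to $T$ --- recall that $\phi_1\equiv1$ in the present symmetric setting, so $u_\l$ is a genuine function on $\mathbf{R}^{n+1}\setminus(\{0\}\times\mathbf{R})$, and $T$ avoids the axis near $0$. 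On $\{r<|y|^b\}$, in each $y$-slice I would let $\Phi$ be the restriction to $T$ of $\psi(x,y)=|y|^a\Phi_\pm(|y|^{-a}x)$, where $\Phi_\pm$ is the positive Jacobi field on $H_\pm$ arising from homothetic scalings (Proposition~\ref{prop:Hestimates}) and the sign is that of $y^\l$; since $a(1+\gamma)=\l$ and $\Phi_\pm$ is asymptotic to a multiple of $r^{-\gamma}$ at infinity, after a harmless normalization the leading term of $\psi$ is exactly $y^\l r^{-\gamma}$, the leading term of $u_\l$. On the gluing region $\{|y|^b\le r\le2|y|^b\}$ I would interpolate between $u_\l$ and $\psi$ with a cutoff $\chi(|x|/|y|^b)$, exactly as in \eqref{eq:gluing1}.

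The key step is to show that $\|L_T\Phi\|_{C^{0,\alpha}_{\delta-2,\tau-2}}$ is finite --- in fact small once $A$ is large --- for $\delta>\l-\gamma$ close to $\l-\gamma$ and $\tau<-\gamma$ close to $-\gamma$ as in Proposition~\ref{prop:Texist} and Remark~\ref{rem:deltatau}. By design, $\Phi$ is an exact Jacobi field on the relevant model surface in each region: $C\times\mathbf{R}$ on $\{r>2|y|^b\}$, where $L_{C\times\mathbf{R}}u_\l=0$, and $H(y^\l)\times\mathbf{R}$ in slices near the axis, where the slice part $\psi(\cdot,y)$ is an exact Jacobi field on $H(y^\l)$ by the homothety interpretation of $\Phi_\pm$. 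Consequently $L_T\Phi$ is a sum of error terms --- the difference between $L_T$ and the model Jacobi operator, controlled by Propositions~\ref{prop:projest1}, \ref{prop:Gproj1} (and Proposition~\ref{prop:Tgraphbound} near $r\sim|y|^b$); the $y$-derivatives coming from the variation of the leaf $H(y^\l)$ and of the factor $|y|^a$, estimated exactly as the mean curvature in Region~IV of the proof of Proposition~\ref{prop:mXest}; and the cutoff commutator, which acts only on $\psi-u_\l$ --- and each of these decays strictly faster than the naive bound $|\Phi|\,r^{-2}\sim\rho^{\l-\gamma}r^{-2}$. Here one uses that the leading terms of $\psi$ and $u_\l$ coincide, so that $\psi-u_\l$ is of strictly lower order (the subleading homogeneous pieces $y^{\l-2}r^{2-\gamma}$ of $u_\l$, and the $O(r^{-\gamma-c})$ corrections built into $\Phi_\pm$), just as the bracketed term in \eqref{eq:gluing1} was in the construction of $X$. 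Because $\l-\gamma>1$ there is room to absorb all of these into $C^{0,\alpha}_{\delta-2,\tau-2}$ once $b\in(1,a)$ and $\delta$ close to $\l-\gamma$ are chosen suitably, and the resulting exponent inequalities are the same ones that appeared in the proof of Proposition~\ref{prop:mXest}.

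Granting this, I would invoke the analogue of Proposition~\ref{prop:Linvert} with $L_T$ in place of $L_X$. This holds because $T$ is the graph over $X$ of a function with small $C^{2,\alpha}_{1,1}$ norm (Proposition~\ref{prop:Texist} and Lemma~\ref{lem:normcomp1}), so that $L_T$, transplanted to $X$, is a small perturbation of $L_X$ by the quadratic estimate underlying Lemma~\ref{lem:Qquadratic}, and hence remains surjective from $C^{2,\alpha}_{\delta,\tau}$ onto $C^{0,\alpha}_{\delta-2,\tau-2}$ with a right inverse $P$ bounded independently of $A$. Setting $w=-P(L_T\Phi)$, with $\|w\|_{C^{2,\alpha}_{\delta,\tau}}$ small, the function $\phi_\l:=\Phi+w$ is a Jacobi field on $T$ near $0$. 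Property~(2) follows: on $\{r>2|y|^b\}$ we have $\phi_\l-u_\l=w\in C^{2,\alpha}_{\delta,\tau}$ directly, while on the gluing annulus $\{|y|^b<r<2|y|^b\}$ we have $\phi_\l-u_\l=\chi(|x|/|y|^b)(\psi-u_\l)+w$, and the first term is in $C^{2,\alpha}_{\delta,\tau}$ there by the lower-order estimate for $\psi-u_\l$ already used above (again for $\delta$ close to $\l-\gamma$ and $b>1$, which is the Region~III exponent condition). Property~(1) follows since $\Phi\in C^{2,\alpha}_{\l-\gamma,-\gamma}$ by construction --- both building blocks scale like $\rho^{\l-\gamma}$ with the correct $r$-weight, and near the axis both are $\lesssim r$, consistent with the $C^{2,\alpha}_{\l-\gamma,-\gamma}$ norm precisely because $r\gtrsim\rho^a$ on $T$ --- while $w\in C^{2,\alpha}_{\delta,\tau}\subset C^{2,\alpha}_{\l-\gamma,-\gamma}$ on a small ball by the same weight comparison as in Lemma~\ref{lem:normcomp1}, using that $\delta>\l-\gamma$ and $\tau$ is taken close to $-\gamma$ after $\delta$ is fixed (Remark~\ref{rem:deltatau}).

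The main obstacle is the key estimate on and near the singular ray and in the gluing region. One has to choose the near-axis model $\psi$ so that its restriction to $T$ really is an \emph{almost}-Jacobi field, which forces a careful treatment of the coupling between the $y$-direction and the moving leaf $H(y^\l)$ --- this is precisely the (somewhat lengthy) Region~IV computation of Proposition~\ref{prop:mXest}, now carried out at the linearized level --- and one has to check that the cutoff errors and the operator-difference errors decay fast enough to land in the target weighted space, which is the same exponent book-keeping that forces $b\in(1,a)$ and $\delta$ very close to $\l-\gamma$. Away from the singular ray, and the patching of the local inverses into $P$, everything else is a routine transcription of the linear analysis of Section~\ref{sec:gluing} and of \cite{Sz17}.
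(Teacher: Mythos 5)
Your proposal follows essentially the same route as the paper's proof: build an approximate Jacobi field on $T$ by interpolating, with the same cutoff used to define $X$, between $u_\ell$ on the outer region and the homothety Jacobi field $|y|^a\Phi_\pm(|y|^{-a}x)$ near the axis; estimate its Jacobi operator in the doubly weighted H\"older norm $C^{0,\alpha}_{\delta-2,\tau-2}$ by rerunning the Region I--IV analysis of Proposition~\ref{prop:mXest} at the linearized level; and then correct to an exact Jacobi field using the surjectivity of $L_T$ inherited from Proposition~\ref{prop:Linvert}. The one organizational difference is that you restrict the ambient model functions $u_\ell$, $\psi$ directly to $T$, whereas the paper defines the approximate solution $U_\ell$ intrinsically on $X$ (via the slice parameterization $U_\ell(y^ax,y)=y^a\Phi(x)$) and only transports to $T$ via the graph map at the last step; the two constructions differ by a term of size $O(|f|\,|\nabla\psi|)$ where $f$ is the graph function of $T$ over $X$, which is strictly lower order, so the resulting weighted estimates coincide. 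A minor point worth flagging is that the paper's intermediate Claim establishes only $L_X U_\ell\in C^{0,\alpha}_{\delta-2,-\gamma-2}$ (weaker $r$-decay) and obtains the full $C^{0,\alpha}_{\delta-2,\tau-2}$ estimate only after adding the $(L_T-L_X)$ contribution, which carries the extra factor $S^{\delta-\tau}R^{\tau-1}$; aiming directly at $C^{0,\alpha}_{\delta-2,\tau-2}$ for $L_T\Phi$ as you do is legitimate because on $X$ one has $r\leq\rho$, giving the containment $C^{0,\alpha}_{\delta-2,-\gamma-2}\subset C^{0,\alpha}_{\delta-2,\tau-2}$ for $\tau<-\gamma$, but it is worth being explicit about where the two sources of $r$-decay are being used.
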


\begin{proof}
  We begin by constructing a function $U_\l$ on $X$, which we will
  then perturb to the Jacobi field $\phi_\l$ on $T$. This can be
  thought of as a linear version of the construction of $X$ itself in
  Section~\ref{sec:approxsoln1}. We define $U_\l$ as follows, using
  the notation from Section~\ref{sec:approxsoln1} where we defined
  $X$. 
  \begin{itemize}
  \item On the region where $r \geq |y|^\beta$ we let $U_\l = u_\l$. 
  \item On the region where $r \leq \frac{1}{2}|y|^\beta$ recall that
    $X$ is defined to be $H(y^\l)$ in the slice
    $\mathbf{R}^n\times\{y\}$. For simplicity assume that $y > 0$ and
    let $H= H_+$, so that $H(y^\l) = y^a H$ (recall that $a = \l /
    (\gamma+1)$). Let us also denote by $\Phi$ the Jacobi field on $H$
    satisfying $\Phi = r^{-\gamma} + O(r^{-\gamma-c})$.  For a point
    $(y^a x, y) \in X$ in our region, where $x\in H$, we set
    \[ U_\l (y^a x, y) = y^a \Phi(x). \]
    If we extend $\Phi$ to scalings $\lambda H$ by $\Phi(\lambda x) =
    \Phi(x)$, as in Lemma~\ref{lem:Phipullback} below,
    then we can also write this as $U_\l(x,y) =
    y^a\Phi(x)$. 
  \item On the intermediate region $\frac{1}{2} |y|^\beta < r <
    |y|^\beta$ we interpolate between the definitions above:
    \[ U_\l(y^a x, y) = u_\l(y^a x, y) + \chi\left( \frac{ 2|y^a
          x|}{|y|^\beta}\right) \Big[ y^a\Phi(x) - u_\l(y^a x,
      y)\Big] \]
    in terms of the same cutoff function $\chi$ as before. 
  \end{itemize}

  From the construction it follows that $U_\l \in
  C^{2,\alpha}_{\l-\gamma, -\gamma}$. In particular $L_X U_\l \in
  C^{0,\alpha}_{\l-\gamma-2, -\gamma-2}$, however we actually have the
  following slightly better estimate.
  
  \bigskip
  \noindent {\bf Claim}: The function $U_\l$ satisfies $L_X U_\l \in
   C^{0,\alpha}_{\delta-2, -\gamma-2}$, where $\delta > \l - \gamma$
   is sufficiently close to $\l-\gamma$ as in
   Proposition~\ref{prop:mXest}.
   \bigskip

   To prove the claim we study different regions as in the proof of
   Proposition~\ref{prop:mXest}.
   \begin{itemize}
     \item {\bf Region 1}, where $r > |y|^\beta$. This corresponds to
       Regions I,II,III in Proposition~\ref{prop:mXest}. Suppose that $r\in
       (R,2R)$ and introduce the rescaled surface $\tilde{X} =
       R^{-1}X$ as before. Using the calculations in the proof of
       Proposition~\ref{prop:mXest} we know that in this region
       $\tilde{X}$ can be viewed as the graph of a function $f$ over
       $C\times\mathbf{R}$, where
       \[ |\nabla^i f| \leq C_i \rho^\l R^{-\gamma-1}. \]
       In terms of the rescaled variables $\tilde{x}, \tilde{y}$ we have
       $u_\l(x,y) = R^{\l-\gamma} u_\l(\tilde{x}, \tilde{y})$ since
       $u_\l$ has degree $\l-\gamma$. In addition $u_\l$ is a Jacobi
       field on $C\times\mathbf{R}$. It follows that
       \[ |\nabla^i L_{\tilde{X}} u_\l(\tilde{x}, \tilde{y})| \leq C_i
         \rho^\l R^{-\gamma-1}, \]
       and by rescaling we find
       \[ |\nabla^i L_X u_\l(x,y)| \leq C_i R^{-2-i} R^{\l-\gamma}
         \rho^{\delta+\gamma} R^{-\gamma-1}. \]
       Using that $\l-\gamma > 1$ this implies
       \[ |\nabla^i L_X u_\l(x,y)| \leq C_i\rho^{\delta+\gamma}
         R^{-\gamma-2-i}, \]
       for $\delta > \l-\gamma$ sufficiently close to $\l-\gamma$. 
       This implies the claim.
       \item {\bf Region 2}, where $r < |y|^\beta/2$. Here we use the
         calculation from Region IV in the proof of
         Proposition~\ref{prop:mXest}. We use the same notation as in
         that proof, working on the rescaled surface $\tilde{X} =
         R^{-1}X$ in terms of the rescaled coordinates $\tilde{x},
         \tilde{y}$. The surface $\tilde{X}$ is given by
         $E(\tilde{y})H$ in the slice
         $\mathbf{R}^n\times\{\tilde{y}\}$, where $E(\tilde{y}) =
         E(0)(1+G(Ry_0^{-1}\tilde{y}))$. The function $U_\l$ is given
         by
         \[\label{eq:Ul10} U_\l( E(\tilde{y}) z, \tilde{y}) = y^a \Phi(z) =
           (R\tilde{y} + y_0)^a \Phi(z), \]
         where $z\in H$.  To estimate the effect of the Jacobi
         operator of $\tilde{X}$ on this function we view $\tilde{X}$
         as a graph over $E(0)H\times\mathbf{R}$ and pull the function
         back to this product using Lemma~\ref{lem:Phipullback}
         below. Let us define
         \[ \tilde{F}_{\tilde{y}} : E(0)H \to
           E(\tilde{y}) H = E(0) (1 + G(Ry_0^{-1}\tilde{y})) H\]
         to be the map obtained by viewing $E(\tilde{y}) H$ as a graph
         over $E(0)H$. In terms of the maps $F_\epsilon$ in
         Lemma~\ref{lem:Phipullback} below, we have
         \[ \tilde{F}_{\tilde{y}} ( E(0) z) = E(0) F_{ 1+
             G(Ry_0^{-1}\tilde{y}) }(z), \]
         for $z\in H$. Because of this we can write
         \[ \tilde{F}_{\tilde{y}}^*U_\l( E(0)z, \tilde{y}) =
           (R\tilde{y}+y_0)^a (F^*_{1+G(Ry_0^{-1}\tilde{y})}
           \Phi)(z), \]
         where $z\in H$. It is more convenient to write this in terms
         of $\tilde{z} = E(0)z \in E(0)H$:
         \[ \label{eq:FUl11} \tilde{F}_{\tilde{y}}^*U_\l( \tilde{z}, \tilde{y}) =
           (R\tilde{y}+y_0)^a (F^*_{1+G(Ry_0^{-1}\tilde{y})}
           \Phi)(E(0)^{-1} \tilde{z}), \]
         and use that in our scaled up region $E(0)H$ has uniformly bounded
         geometry. In particular we have the estimates
         \[ |\nabla^i \Phi( E(0)^{-1} \tilde{z}) | \lesssim
           E(0)^\gamma, \]
         where we are taking derivatives on $E(0) H$.
         
         Using these formulas we find that at $y=y_0$ we have
         \[ L_{E(0)H \times \mathbf{R}} \tilde{F}_{\tilde{y}}^* U_\l
           \lesssim (R y_0^{-1})^2 y_0^a E(0)^\gamma. \]
         For this note that when we differentiate with respect to
         $\tilde{y}$ we obtain a factor of $Ry_0^{-1}$ (for
         derivatives falling on $F_{1+G(Ry_0^{-1}\tilde{y})}$ we use
         Lemma~\ref{lem:Phipullback} below), while along
         the slice $y=y_0$ the function $\tilde{F}_{\tilde{y}}^*
         U_\l$ is a Jacobi field on $E(0) H$.

         From the analysis in
         Region IV in Proposition~\ref{prop:mXest} we know that in our region
         $\tilde{X}$ is the graph of a function $A$ over
         $E(0)H\times\mathbf{R}$, where $|\nabla^i A| \lesssim
         Ry_0^{-1} E(0)^{\gamma+1}$. In addition we have
         \[ |\nabla^i \tilde{F}_{\tilde{y}}^* U_\l| \lesssim y_0^a
           E(0)^\gamma, \]
         noting that derivatives in the $\tilde{y}$-direction will
         actually satisfy better estimates. It follows that
         \[ |\nabla^i (L_{\tilde{X}} - L_{E(0)H \times \mathbf{R}})
           \tilde{F}_{\tilde{y}}^* U_\l | \lesssim Ry_0^{-1} E(0)^{\gamma+1}y_0^a E(0)^\gamma, \]
         where we are viewing $L_{\tilde{X}}$ as an operator on
         $E(0)H\times\mathbf{R}$ through the map
         $\tilde{F}_{\tilde{y}}$.  In sum we have 
         \[ |\nabla^i L_{\tilde{X}} \tilde{F}_{\tilde{y}}^* U_\l
           |\lesssim Ry_0^{-1} y_0^a E(0)^\gamma ( Ry_0^{-1} +
           E(0)^{\gamma+1}). \]
         Rescaling we then have
         \[ |\nabla^i L_X U_\l| \lesssim R^{-2-i} Ry_0^{-1} y_0^a
           E(0)^\gamma (Ry_0^{-1} + E(0)^{\gamma+1}). \]
         Since in this region $y_0\sim \rho$, and also $E(0) = R^{-1}
         y_0^a$, for our claim we need to
         show that
         \[ R^{-\gamma-1} \rho^{\l-1} ( R\rho^{-1} +
           R^{-\gamma-1}\rho^\l) \lesssim \rho^{\delta + \gamma}
           R^{-\gamma-2}. \]
         Using that $\rho^a \lesssim R \lesssim \rho^\beta$ on this
         region, a short calculation shows that this indeed holds if
         $\delta > \l - \gamma$ is sufficiently close to $\l-\gamma$.
       \item {\bf Region 3}, where $|y|^\beta / 2 \leq r \leq
         |y|^\beta$. We again consider $\tilde{X} = R^{-1}X$, where
         $r\in (R, 2R)$. In the notation of the proof of
         Proposition~\ref{prop:mXest} this is still Region IV, and we
         view $\tilde{X}$ as a graph over $E(0)H\times
         \mathbf{R}$. We need to estimate the difference
         \[ B(\tilde{x},\tilde{y}) = y^a \Phi(\tilde{x} ) - u_\l(x, y), \]
         where $x = R\tilde{x}$ and $y = R\tilde{y} + y_0$ as before,
         and we are using the convention in
         Lemma~\ref{lem:Phipullback} that we extend define $\Phi$ on
         scalings $\lambda H$ so that $\Phi(\lambda x) = \Phi (x)$. 
         We pull back all the quantities to $E(0)H\times\mathbf{R}$ as
         in Region 2 above, using $\tilde{F}_{\tilde{y}}$.

         We first use \eqref{eq:FUl11} and Lemma~\ref{lem:Phipullback}
         to see that on $E(0)H \times\mathbf{R}$ we have
         \[  |\tilde{F}_{\tilde{y}}^*(y^a \Phi(x)) - y^a \Phi(x)
           |_{C^k} \lesssim Ry_0^{-1} y_0^a E(0)^\gamma. \]
         Since on $H$ we have $\Phi(z) = |z|^{-\gamma} + O(|z|^{-\gamma-c})$
         for some $c > 0$ we can compare $\Phi(\tilde{x})$ to
         $|\tilde{x}|^{-\gamma}$ on $E(0)H\times \mathbf{R}$:
         \[ | y^a\Phi(\tilde{x}) - y^a E(0)^\gamma
           |\tilde{x}|^{-\gamma}|_{C^k} \lesssim y_0^a E(0)^{\gamma+c}. \]
         At the same time, on $\tilde{X}$, from the definition of $u_\l$ we have
         \[  u_\l(x, y)  &= y^\l |x|^{-\gamma} + O(\rho^{\l-2}
           R^{2-\gamma}) \\
           &= y^\l R^{-\gamma} |\tilde{x}|^{-\gamma}
           + O(\rho^{\l-2} R^{2-\gamma}),\]
         where the error estimate holds for derivatives on $\tilde{X}$
         as well. We also have, as in Region 2 above, that $\tilde{X}$
         is the graph of the function $A$ over $E(0)H
         \times\mathbf{R}$ where $|\nabla^i A|\lesssim  Ry_0^{-1}
         E(0)^{\gamma+1}$. This leads to an error of order $Ry_0^{-1}
         E(0)^{\gamma+1} y_0^\l R^{-\gamma}$ when comparing $u_\l$ on
         $\tilde{X}$ and on $E(0)H\times\mathbf{R}$. 

         Combining all of this information, and using $E(0) =
         R^{-1}y_0^a$, we obtain the estimate
         \[ |B|_{C^k} \lesssim Ry_0^{a-1} E(0)^\gamma + y_0^a
           E(0)^{\gamma+c} + Ry_0^{-1} E(0)^{\gamma+1} y_0^\l
           R^{-\gamma}. \]
         Scaling back to $X$ introduces an additional factor of
         $R^{-2}$, and so to verify our claim we need to ensure that
         \[ (R^{1-\gamma} \rho^{\l-1} + R^{-\gamma-c}\rho^{\l + ac} +
           R^{-2\gamma}\rho^{2\l-1}) R^{-2} \lesssim
           \rho^{\delta+\gamma} R^{-\gamma-2}, \]
         where we also used that in our region $\rho\sim y_0$. A
         calculation shows that this inequality holds as long as
         $\delta$ is sufficiently close to $\l-\gamma$, since $1 <
         \beta < a$ and $R\sim \rho^\beta$. 
       \end{itemize}

       Having shown the claim we can finish the proof of the
       Proposition. By Proposition~\ref{prop:Texist}, in a
       sufficiently small neighborhood of the origin, the minimal
       hypersurface $T$ is the graph of a function $f$ over $X$ where
       $f\in C^{2,\alpha}_{\delta, \tau}$. 
       Using
       Lemma~\ref{lem:normcomp1} we have $\Vert
       f\Vert_{C^{2,\alpha}_{1,1}} \lesssim K^{-\kappa}$ on the region
       $\rho < K^{-1}$. It then follows from the surjectivity of
       $L_X$ in Proposition~\ref{prop:Linvert} that if $K$ is
       sufficiently large, then
       \[ \label{eq:LTs10} L_T : C^{2,\alpha}_{\delta,\tau}(T\cap \rho^{-1}(0, K^{-1}])
         \to C^{0,\alpha}_{\delta-2,\tau-2}(T\cap
         \rho^{-1}(0,K^{-1}]) \]
       is also surjective.
       
       We will identify $T$ with $X$ using that $T$ is graphical over
       $X$.  We can then view
       the function $U_\l$ constructed on $X$ above as a function on
       $T$. In order to estimate $L_T U_\l$ we use that $T$ is the
       graph of $f$ over $X$ with $f\in
       C^{2,\alpha}_{\delta,\tau}$. For given $R, S$ with $S < K^{-1}$
       sufficiently small, consider the region $\Omega_{R, S} \subset
       X$, where $r \in(R, 2R)$ and $\rho\in (S, 2S)$. Let  
       $\tilde{X} = R^{-1}X$, $\tilde{T} = R^{-1}T$ and
       $\tilde{\Omega}_{R,S} = R^{-1} \Omega_{R,S}$. By definition
       of the weighted norms $\tilde{T}$ is the graph of a function
       $\tilde{f}$ over $\tilde{X}$ on the region
       $\tilde{\Omega}_{R,S}$, where $|\tilde{f}|_{C^{2,\alpha}}
       \lesssim  S^{\delta-\tau} R^{\tau-1}$. Note that $\delta > \l -
       \gamma$, we take $\tau$ sufficiently close to $-\gamma$,
       and in addition $R \gtrsim S^a$ on $X$. It follows that
       $S^{\delta-\tau} R^{\tau-1} \lesssim S^{\kappa}$ for some
       $\kappa > 0$. In particular if $S$ is sufficiently small then
       on $\tilde{\Omega}_{R,S}$ we have
       \[ \Vert L_{\tilde{X}} - L_{\tilde{T}}\Vert_{C^{2,\alpha} \to
           C^{0,\alpha}} 
         \lesssim S^{\delta-\tau} R^{\tau-1}. \]
       Since $U_\l \in C^{2,\alpha}_{\l-\gamma, -\gamma}$, on
       $\tilde{\Omega}_{S,R}$ we have $\Vert U_\l
       \Vert_{C^{2,\alpha}} \lesssim S^{\l} R^{-\gamma}$. In addition
       our Claim above implies that on $\tilde{\Omega}_{R,S}$ we have
       \[  \Vert L_{\tilde X} U_\l\Vert_{C^{0,\alpha}} \lesssim S^{\delta+\gamma}
         R^{-\gamma}.   \]
       Combining these estimates we have
       \[   \Vert L_{\tilde{T}} U_\l  \Vert_{C^{0,\alpha}} \lesssim
         S^{\delta+\gamma} R^{-\gamma} + S^{\delta-\tau} R^{\tau-1}
         S^\l R^{-\gamma}.  \]
       Using that $R\lesssim S$ and $S^a \lesssim R$ on $X$, as well
       as $\tau < -\gamma$, we have
       \[ \Vert L_{\tilde{T}} U_\l \Vert_{C^{0,\alpha}} \lesssim
         S^{\delta - \tau} R^\tau. \]
       Rescaled, and allowing $S, R$ to vary this implies that $L_T
       U_\l \in C^{0,\alpha}_{\delta, \tau-2}$ on the region $\rho <
       K^{-1}$ for sufficiently large $K$. The surjectivity of the
       operator in \eqref{eq:LTs10} then implies that in a
       neighborhood of the origin we can find a function $\phi_\l$ on
       $T$ with $L_T\phi_\l = 0$, satisfying $\phi_\l - U_\l  \in
       C^{2,\alpha}_{\delta, \tau}$. The properties (1) and (2) in the
       statement of the proposition now follow from the properties of
       $U_\l$, setting $b=\beta$.  Note in particular that $C^{2,\alpha}_{\delta, \tau}
       \subset C^{2,\alpha}_{\l-\gamma, -\gamma}$ for our choice of
       $\delta, \tau$. 
\end{proof}

We used the following result in the proof above.
\begin{lemma}\label{lem:Phipullback}
  Let us denote by $\Phi$ the Jacobi field on $H$ which satisfies
  $\Phi = r^{-\gamma} + O(r^{-\gamma-c})$ for a small $c > 0$.
  For sufficiently small $\epsilon$
  view $(1+\epsilon)H$ as a graph over $H$, and denote by 
  $F_\epsilon: H \to (1 +\epsilon) H$ the corresponding
  identification. Extend $\Phi$ from  $H$ to multiples $\lambda H$
  with $\lambda > 0$ to be homogeneous of degree zero (i.e. $\Phi(\lambda x)
  =\Phi(x)$).  The pullbacks $F_\epsilon^*\Phi$ on $H$ then satisfy the
  estimates
  \[ | \partial_\epsilon^j \nabla^i F_\epsilon^* \Phi| \leq C_{i,j}
    r^{-\gamma-i}, \]
  for suitable constants $C_{i,j}$. 
\end{lemma}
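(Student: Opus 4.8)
The starting point is the identification $F_\epsilon^*\Phi = \Phi \circ G_\epsilon$, where $G_\epsilon := (1+\epsilon)^{-1}\circ F_\epsilon$: note that $G_\epsilon$ does map $H$ into itself (since $F_\epsilon(x)\in (1+\epsilon)H$), and composing $F_\epsilon$ with the dilation $(1+\epsilon)^{-1}$ precisely undoes the degree-zero homogeneous extension of $\Phi$ to $(1+\epsilon)H$. I would also record that $G_\epsilon$ is compatible with rescalings: conjugating the normal graph identification $F_\epsilon$ by the dilation $x\mapsto R^{-1}x$ produces the normal graph identification $\tilde F_{\epsilon,R}$ of $(1+\epsilon)H_R$ over $H_R := R^{-1}H$, and correspondingly $\tilde G_{\epsilon,R} := (1+\epsilon)^{-1}\tilde F_{\epsilon,R}$ satisfies $\tilde G_{\epsilon,R}(\tilde x) = R^{-1}G_\epsilon(R\tilde x)$. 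The plan is then to prove the estimate separately for $r$ in a bounded range and for $r$ large.

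For $r$ bounded, the estimate is essentially trivial: on $H$ the function $r$ is bounded below (the Hardt--Simon leaf $H_\pm$ is a fixed smooth surface away from the origin), so any annulus $\{r_0 \le r \le R_0\}$ is a compact piece of a smooth surface; there $(1+\epsilon)H$ varies real-analytically in $\epsilon$ for small $|\epsilon|$ (this is already used in the proof of Proposition~\ref{prop:Hestimates}), hence so do $F_\epsilon$, $G_\epsilon$ and $F_\epsilon^*\Phi = \Phi\circ G_\epsilon$, and all the mixed derivatives $\partial_\epsilon^j\nabla^i F_\epsilon^*\Phi$ are bounded there, which gives the claim since $r$ is comparable to a constant.

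For $r$ large I would run a rescaling argument. Fix a large dyadic $R$ and work on $H_R = R^{-1}H$ over the annulus $r\in (1/2,4)$; as in the proof of Proposition~\ref{prop:mXest}, $H_R$ has bounded geometry uniformly in $R$ (it converges to $C$ as $R\to\infty$). Rescaling the estimates of Proposition~\ref{prop:Hestimates}, the identification $\tilde F_{\epsilon,R}$ differs from the identity by $O(R^{-\gamma-1})$ in every $\partial_\epsilon^j\nabla^i$-norm on $r\in(1/2,4)$, so $\tilde G_{\epsilon,R}$ has $\partial_\epsilon^j\nabla^i$-derivatives bounded uniformly in $R$ on $r\in(1,3)$ and, for $\epsilon$ small, maps $\{r\in(1,2)\}$ into $\{r\in(1/2,4)\}$. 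Next set $\Phi_R := \Phi(R\,\cdot\,)$, a Jacobi field on $H_R$; since $\Phi = r^{-\gamma} + O(r^{-\gamma-c})$ we have $|\Phi_R|\le C R^{-\gamma}$ on $r\in(1/2,4)$, and interior Schauder estimates for $L_{H_R}\Phi_R = 0$ (using the uniform bounded geometry) upgrade this to $|\nabla^i\Phi_R|\le C_i R^{-\gamma}$ on $r\in(1,3)$. Using $G_\epsilon(R\tilde x) = R\,\tilde G_{\epsilon,R}(\tilde x)$ and $\Phi(Rw) = \Phi_R(w)$ for $w\in H_R$, one gets $F_\epsilon^*\Phi(R\tilde x) = (\tilde G_{\epsilon,R}^*\Phi_R)(\tilde x)$, and a routine higher-order chain rule (Fa\`a di Bruno) estimate then gives $|\partial_\epsilon^j\nabla^i(\tilde G_{\epsilon,R}^*\Phi_R)| \le C_{i,j}R^{-\gamma}$ on $r\in(1,2)$. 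Scaling back to $H$ --- which costs a factor $R^{-i}$ on the $i$-th covariant derivative measured in $g_H$ --- yields $|\partial_\epsilon^j\nabla^i F_\epsilon^*\Phi| \le C_{i,j}R^{-\gamma-i}\le C_{i,j} r^{-\gamma-i}$ on $\{r\in(R,2R)\}$; letting $R$ range over dyadic values covers all large $r$.

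The one point requiring care is the bookkeeping in the rescaling step. The map $G_\epsilon$ is \emph{not} a small perturbation of the identity on $H$ --- the factor $(1+\epsilon)^{-1}$ displaces a point $x$ by $O(\epsilon|x|)$, which is large for large $|x|$ --- so one cannot simply estimate $\partial_\epsilon(\Phi\circ G_\epsilon)$ by multiplying $|\nabla\Phi|\lesssim r^{-\gamma-1}$ with $|\partial_\epsilon G_\epsilon|\lesssim r$ without tracking the precise cancellation. Passing to unit scale is exactly what resolves this: there $\tilde G_{\epsilon,R}$ genuinely has uniformly bounded derivatives while $\Phi_R$ carries the weight $R^{-\gamma}$, so the composition estimate is clean and scale-invariant. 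Everything else is standard elliptic regularity together with the chain rule.
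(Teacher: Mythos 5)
Your proof is correct, and it takes a genuinely different route from the paper's. The paper argues abstractly: it observes that $F_\epsilon^*\Phi$ is a Jacobi field for the pulled-back operator $L_\epsilon$ on $H$, uses that the family $L_\epsilon: C^{2,\alpha}_{-\gamma-c}\to C^{0,\alpha}_{-\gamma-2-c}$ varies smoothly and is invertible (from Proposition~\ref{prop:Hestimates} together with the discussion after Proposition~\ref{prop:LHR10}) to produce a smooth family $u_\epsilon\in C^{2,\alpha}_{-\gamma-c}$ with $L_\epsilon(\Phi+u_\epsilon)=0$, and then matches leading coefficients under the degree-zero extension convention to conclude $F_\epsilon^*\Phi = (1+\epsilon)^\gamma(\Phi+u_\epsilon)$, from which the derivative bounds follow. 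You instead work directly with the identity $F_\epsilon^*\Phi = \Phi\circ G_\epsilon$, $G_\epsilon=(1+\epsilon)^{-1}\circ F_\epsilon: H\to H$, and run a dyadic rescaling to a fixed annulus where $\tilde G_{\epsilon,R}$ has uniformly bounded $(\epsilon,x)$-derivatives while $\Phi_R$ carries the weight $R^{-\gamma}$, closing with Schauder and the chain rule. The pitfall you single out — that $G_\epsilon$ is \emph{not} a small perturbation of the identity for large $r$, so a naive product estimate of $\partial_\epsilon(\Phi\circ G_\epsilon)$ by $|\nabla\Phi|\cdot|\partial_\epsilon G_\epsilon|$ would lose a factor of $r$ — is exactly right, and rescaling to unit scale is the correct fix: there $\partial_\epsilon\tilde G_{\epsilon,R}\approx -\tilde x$ is genuinely bounded because $|\tilde x|$ is bounded on the annulus. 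Your argument is more elementary and self-contained (no weighted Fredholm theory); the paper's yields the slightly sharper structural statement $F_\epsilon^*\Phi - (1+\epsilon)^\gamma\Phi = O(r^{-\gamma-c})$ smoothly in $\epsilon$, pinning down the leading coefficient, which is more than the lemma strictly requires. Both are valid.
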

\begin{proof}
  To see this, note that $F_\epsilon^*\Phi$ satisfies the equation
  $L_\epsilon (F_\epsilon^*\Phi) = 0$, where
  \[ L_\epsilon = \Delta_{g_\epsilon} + |A_\epsilon|^2, \]
  in terms of the metric $g_\epsilon$ and norm of second fundamental
  form $|A_\epsilon|$ pulled back from $(1+\epsilon)H$.

  Using Proposition~\ref{prop:Hestimates} we find that the operators
  \[ L_\epsilon : C^{2,\alpha}_\tau \to C^{0,\alpha}_{\tau-2} \]
  vary smoothly with $\epsilon$, for sufficiently small $\epsilon$,
  and in particular as discussed after Proposition~\ref{prop:LHR10},
  these operators are all invertible for $\tau\in (3-n+\gamma,
  -\gamma)$. Note that using Proposition~\ref{prop:Hestimates} again
  we have $L_\epsilon \Phi \in C^{0,\alpha}_{-\gamma-2-c}$ for some small $c >
  0$ on $H$, and so by the invertibility of $L_\epsilon$
  we can find a family of functions $u_\epsilon$
  varying smoothly in $C^{2,\alpha}_{-\gamma-c}$ satisfying
  \[ L_\epsilon (\Phi + u_\epsilon) = 0. \]
  Identifying $H$ with $(1+\epsilon)H$ using $F_\epsilon$, we then
  have that
  \[ (F_\epsilon^{-1})^*(\Phi+u_\epsilon) = r^{-\gamma} +
    O(r^{-\gamma-c})\]
  is a Jacobi field on $(1+\epsilon)H$. This Jacobi field with leading
  order term $r^{-\gamma}$ is unique, and so it must be a multiple of
  $\Phi$. Our assumption that $\Phi$ is extended from $H$ to
  $(1+\epsilon)H$ by $\Phi((1+\epsilon)x) = \Phi(x)$ for $x\in H$ then implies that
  \[  (F_\epsilon^{-1})^*(\Phi+u_\epsilon) = (1+\epsilon)^{-\gamma}
    \Phi, \]
  comparing leading terms. Pulling back again to $H$ we then have
  \[ F_{\epsilon}^* \Phi = (1+\epsilon)^\gamma(\Phi + u_\epsilon), \]
  from which the required result follows. 
\end{proof}

Using this Jacobi field we can construct minimal perturbations of $T$
as follows.
\begin{prop}\label{prop:Tdeform}
  There are $\epsilon_0, C_1> 0$ such that for any
  $\epsilon$ with $|\epsilon| < \epsilon_0$ there exists a minimal
  surface $T_{1,\epsilon}$ in a neighborhood of the origin (the
  neighborhood is independent of $\epsilon$) with the following
  property. $T_{1,\epsilon}$ is the graph of the function $\epsilon
  \phi_\l + v_\epsilon$ over $T$, where $v_0=0$ and
  \[ \label{eq:vepsbound} \Vert v_\epsilon - v_{\epsilon'} \Vert_{C^{2,\alpha}_{\delta,\tau}} \leq
    C_1 (|\epsilon| + |\epsilon'|) |\epsilon - \epsilon'|, \]
  for $|\epsilon|, |\epsilon'| < \epsilon_0$. The weights
  $\delta,\tau$ here are as in Proposition~\ref{prop:Texist} and
  Remark~\ref{rem:deltatau}. 
\end{prop}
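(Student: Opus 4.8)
The plan is to run a contraction mapping argument parallel to the proof of Proposition~\ref{prop:Texist}, now seeded by the Jacobi field $\phi_\l$ from Proposition~\ref{prop:TJacfield}. Concretely, I would seek $T_{1,\epsilon}$ as the graph of $\epsilon\phi_\l + v$ over $T$ with $v\in C^{2,\alpha}_{\delta,\tau}(T\cap\rho^{-1}(0,K^{-1}])$ for a fixed large $K$. Since $T$ is minimal we have $m_T(u) = L_T u + Q_T(u)$ as in \eqref{eq:mXQ}, and since $L_T\phi_\l = 0$ the minimal surface equation $m_T(\epsilon\phi_\l + v) = 0$ becomes $L_T v = -Q_T(\epsilon\phi_\l + v)$. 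By \eqref{eq:LTs10} the operator $L_T$ admits, for $K$ large, a right inverse $P$ bounded independently of $K$ (the derivation there is by perturbation of $L_X$, so Proposition~\ref{prop:Linvert} also gives the uniform bound), so it suffices to find a fixed point of $\mathcal{N}_\epsilon(v) = -P\,Q_T(\epsilon\phi_\l + v)$. Exactly as in Lemma~\ref{lem:Qquadratic}, using that the regularity scale of $T$ is bounded below by $c_1 r$ (as in the proof of Lemma~\ref{lem:Fa2}), so that $R^{-1}T$ has bounded geometry on the corresponding regions $\Omega_{R,S}$, the remainder $Q_T$ satisfies
\[ \Vert Q_T(u_1) - Q_T(u_2)\Vert_{C^{0,\alpha}_{\delta-2,\tau-2}} \leq C\big(\Vert u_1\Vert_{C^{2,\alpha}_{1,1}} + \Vert u_2\Vert_{C^{2,\alpha}_{1,1}}\big)\Vert u_1 - u_2\Vert_{C^{2,\alpha}_{\delta,\tau}} \]
for $\Vert u_i\Vert_{C^{2,\alpha}_{1,1}}$ small, and a version of Lemma~\ref{lem:normcomp1} on $T$ gives $\Vert v\Vert_{C^{2,\alpha}_{1,1}(\rho^{-1}(0,K^{-1}])}\lesssim \Vert v\Vert_{C^{2,\alpha}_{\delta,\tau}}$ (in fact with a gain $K^{-\kappa}$), while $\Vert\phi_\l\Vert_{C^{2,\alpha}_{1,1}}$ is bounded by a constant depending on $T$, so $\Vert\epsilon\phi_\l\Vert_{C^{2,\alpha}_{1,1}}$ is small once $\epsilon$ is small.

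The one genuinely new estimate is a bound on the ``seed'' term $Q_T(\epsilon\phi_\l)$ in the target norm, since $\phi_\l$ lies only in $C^{2,\alpha}_{\l-\gamma,-\gamma}$, not in $C^{2,\alpha}_{\delta,\tau}$. I would argue region by region: on $\Omega_{R,S}\subset T$ the rescaled surface has bounded geometry, and $Q_T$ of a function with small $C^{2,\alpha}$-norm is controlled by the square of that norm, so using $\phi_\l\in C^{2,\alpha}_{\l-\gamma,-\gamma}$,
\[ \Vert Q_T(\epsilon\phi_\l)\Vert_{C^{0,\alpha}_{R^{-2}g_T}(\Omega_{R,S})}\lesssim \epsilon^2 R^{-2\gamma}S^{2\l}. \]
Comparing with the $C^{0,\alpha}_{\delta-2,\tau-2}$-weight $R^{\tau-2}S^{\delta-\tau}$ and using $r\geq c_0\rho^a$ on $T$ with $a = \l/(\gamma+1)$ (so $S^a\lesssim R\lesssim S$ on $\Omega_{R,S}$), the exponents balance: the borderline $\l = a(\gamma+1)$ is absorbed by the extra power from $Q_T$ being at least quadratic, together with the freedom in Remark~\ref{rem:deltatau} to take $\delta$ slightly above $\l-\gamma$ and $\tau$ slightly below $-\gamma$. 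Taking the supremum gives $\Vert Q_T(\epsilon\phi_\l)\Vert_{C^{0,\alpha}_{\delta-2,\tau-2}}\leq C\epsilon^2$. With this, for $C_2$ large, $K$ large and $\epsilon_0$ small, $\mathcal{N}_\epsilon$ maps $\mathcal{B}_\epsilon = \{v: \Vert v\Vert_{C^{2,\alpha}_{\delta,\tau}}\leq C_2\epsilon^2\}$ into itself and is a contraction there, so it has a unique fixed point $v_\epsilon$ with $\Vert v_\epsilon\Vert_{C^{2,\alpha}_{\delta,\tau}}\leq C_2\epsilon^2$, and $v_0 = 0$ since $Q_T(0) = 0$.

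For the Lipschitz estimate \eqref{eq:vepsbound} I would subtract the two fixed point equations,
\[ v_\epsilon - v_{\epsilon'} = -P\big[Q_T(\epsilon\phi_\l + v_\epsilon) - Q_T(\epsilon'\phi_\l + v_{\epsilon'})\big], \]
and apply the quadratic difference bound with size factor $\lesssim |\epsilon| + |\epsilon'|$ and increment $(\epsilon-\epsilon')\phi_\l + (v_\epsilon - v_{\epsilon'})$. The $\phi_\l$ part of the increment is handled by the same region-by-region computation as for the seed term and contributes $\lesssim (|\epsilon|+|\epsilon'|)\,|\epsilon-\epsilon'|$, while the $(v_\epsilon - v_{\epsilon'})$ part carries a coefficient $\lesssim |\epsilon|+|\epsilon'|$ that is absorbed on the left, yielding \eqref{eq:vepsbound}.

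I expect the main obstacle to be exactly this seed estimate (and its reappearance in the Lipschitz step): because $\phi_\l$ decays near the origin only at the borderline rate $\l-\gamma = a(\gamma+1)-\gamma$, converting its $C^{2,\alpha}_{\l-\gamma,-\gamma}$ bound into a doubly weighted $C^{0,\alpha}_{\delta-2,\tau-2}$ bound on $Q_T(\epsilon\phi_\l)$ only closes thanks to the gain from the nonlinearity being at least quadratic, the geometric estimate $r\gtrsim\rho^a$ on $T$, and the slack in the choice of $\delta,\tau$. Everything else is a routine transcription of the scheme already used for Proposition~\ref{prop:Texist}.
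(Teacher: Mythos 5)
Your contraction-mapping scheme is, in substance, the same argument as the paper's: the paper applies the implicit function theorem to $\mathcal{F}(\epsilon,h) = m_T(\epsilon\phi_\l + L_T^{-1}h)$, with the right inverse $L_T^{-1}$ for \eqref{eq:LTs10}, and gets $\dot h_0 = 0$ from $L_T\phi_\l = 0$; writing the fixed point problem for $v$ directly as you do is equivalent. What you add that the paper only touches obliquely (it says merely that the inclusions $C^{2,\alpha}_{\delta,\tau}\subset C^{2,\alpha}_{\l-\gamma,-\gamma}\subset C^{2,\alpha}_{1,1}$ make $\mathcal{F}$ smooth) is the explicit verification that $Q_T(\epsilon\phi_\l)$ really lands in the smaller target space $C^{0,\alpha}_{\delta-2,\tau-2}$ even though $\phi_\l$ lies only in the borderline space $C^{2,\alpha}_{\l-\gamma,-\gamma}$. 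Identifying this as the crux is a genuine gain in clarity.

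There is, however, a small numerical slip in your region-by-region seed bound. Tracking through the scaling argument used to prove Lemma~\ref{lem:Qquadratic} (using $m_{R^{-1}T}(R^{-1}u) = R\,m_T(u)$ and $\Vert R^{-1}u\Vert_{C^{2,\alpha}(R^{-1}T)} = R^{-1}\Vert u\Vert_{C^{2,\alpha}_{R^{-2}g_T}}$) gives
\[ \Vert Q_T(u)\Vert_{C^{0,\alpha}_{R^{-2}g_T}(\Omega_{R,S})} \lesssim R^{-3}\,\Vert u\Vert^2_{C^{2,\alpha}_{R^{-2}g_T}(\Omega_{R,S})}, \]
so the bound for the seed should read $\lesssim \epsilon^2 R^{-2\gamma-3}S^{2\l}$, not $\epsilon^2 R^{-2\gamma}S^{2\l}$. (This extra $R^{-3}$ is exactly what makes Lemma~\ref{lem:Qquadratic}'s stated conclusion come out with the weight $R^{\tau-2}S^{\delta-\tau}$; your version would give a stronger conclusion than the lemma states.) With the corrected power, the comparison $R^{-2\gamma-3}S^{2\l}\lesssim R^{\tau-2}S^{\delta-\tau}$ is \emph{saturated} at $\tau=-\gamma$, $\delta=\l-\gamma$ (using $R\gtrsim S^a$ with $a=\l/(\gamma+1)$), so the estimate is not closed by quadraticity alone. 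Writing $\tau = -\gamma-\eta_1$, $\delta = \l-\gamma+\eta_2$, the requirement becomes $(a-1)\eta_1 \geq \eta_2$, so the slack from $\tau$ must be taken large relative to that from $\delta$. That is exactly what Remark~\ref{rem:deltatau} permits (choose $\delta$ first, then $\tau$ depending on $\delta$), so your conclusion is still correct, but the balance is genuinely tight and worth stating precisely; as written your argument suggests the estimate closes for any $\tau<-\gamma$, which it does not.
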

\begin{proof}
  This is an application of the implicit function theorem.
  We need to solve the equation
  \[ m_T(\epsilon \phi_\l + v_\epsilon) = 0. \]
  for $v_\epsilon \in C^{2,\alpha}_{\delta, \tau}$, where $m_T(f)$
  denotes the mean curvature of the graph of $f$ over $T$ as
  before. We seek $v_\epsilon$ of the form $v_\epsilon = L_T^{-1}
  h_\epsilon$, where $L_T^{-1}$ is a right inverse for the map in
  \eqref{eq:LTs10}. We apply the implicit function theorem to the
  map
  \[ \mathcal{F} : (-\epsilon_0, \epsilon_0) \times
    C^{0,\alpha}_{\delta-2,\tau-2}(T\cap \rho^{-1}(0, K^{-1}]) &\to
    C^{0,\alpha}_{\delta-2, \tau-2}(T\cap \rho^{-1}(0, K^{-1}])  \\
  (\epsilon, h) &\mapsto m_T(\epsilon\phi_\l + L_T^{-1}h) \]
  for sufficiently small $\epsilon_0 > 0$ and large $K$. 
  The properties of the weighted spaces, in particular the fact that 
  $C^{2,\alpha}_{\delta,\tau} \subset C^{2,\alpha}_{\l - \gamma, -\gamma}\subset
  C^{2,\alpha}_{1,1}$, imply that near the origin the map
  $\mathcal{F}$ is smooth. In addition the
  linearization of $h\mapsto \mathcal{F}(0,h)$ is $L_T L_T^{-1} =
  I$. The implicit function theorem then implies that we can find a
  smooth family $h_\epsilon \in C^{0,\alpha}_{\delta-2,\tau-2}$
  satisfying $\mathcal{F}(\epsilon, h_\epsilon) = 0$. Differentiating
  with respect to $\epsilon$ at $\epsilon =0$ we have
   \[ L_T (\phi_\l + L_T^{-1}  \dot{h}_0) = 0, \]
   and since $L_T \phi_\l = 0$ we get $\dot{h}_0 = 0$. It follows that 
\[ \Vert h_\epsilon - h_{\epsilon'}
  \Vert_{C^{0,\alpha}_{\delta-2,\tau-2}} \lesssim (|\epsilon| +
  |\epsilon'|) |\epsilon - \epsilon'|.\]
  This implies the required claim about $v_\epsilon =
  L_T^{-1}h_\epsilon$. 
\end{proof}

We will write 
\[ T_{\lambda, \epsilon \lambda} = \lambda^{(1-(\l-\gamma))^{-1}}
    T_{1,\epsilon}\]
for $\lambda > 0$ and $|\epsilon| < \epsilon_0$. For $\lambda < 0$ we define
\[ T_{\lambda, \epsilon \lambda} = |\lambda|^{(1-(\l-\gamma))^{-1}}
  T_{-1,-\epsilon}. \]
Roughly speaking we can think of $T_{\lambda, \epsilon\lambda}$ as the graph
of $(1+\epsilon)\lambda u_\l$ over $C\times \mathbf{R}$ to leading
order. We will need the following. 

\begin{lemma}\label{lem:fineq11}
  For sufficiently small $\lambda, \epsilon$ the hypersurface
  $T_{\lambda, \epsilon\lambda}$ is the graph of a function $f$ over
  $H(\lambda y^\l)$ in the slice $(\mathbf{R}^n\times \{y\})\cap B_2(0)$
  satisfying
  \[ \label{eq:fineq11} |f| \leq C_1 |\lambda| (\rho^{\l -\kappa}
    r^{\kappa - \gamma} + |\epsilon| \rho^\l r^{-\gamma}), \]
for some $C_1, \kappa > 0$ independent of $\lambda, \epsilon$. 
\end{lemma}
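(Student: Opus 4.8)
The plan is to prove \eqref{eq:fineq11} first in the model case $\lambda=\pm1$, and then obtain the general case from the scaling $T_{\lambda,\epsilon\lambda}=|\lambda|^{(1-(\l-\gamma))^{-1}}T_{\pm1,\pm\epsilon}$, whose exponent is tailored precisely so that all powers of $\rho$ and $r$ balance.

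\emph{Step 1 (the case $\lambda=\pm1$).} Treat $T_{1,\epsilon}$; the surface $T_{-1,-\epsilon}$ is handled identically via the analogous statements for $T_{-1}$. By Proposition~\ref{prop:Tdeform}, $T_{1,\epsilon}$ is the graph over $T=T_1$ of $h_\epsilon:=\epsilon\phi_\l+v_\epsilon$, where, taking $\epsilon'=0$ in \eqref{eq:vepsbound}, $\Vert v_\epsilon\Vert_{C^{2,\alpha}_{\delta,\tau}}\le C_1\epsilon^2$. Since $\phi_\l\in C^{2,\alpha}_{\l-\gamma,-\gamma}$ by Proposition~\ref{prop:TJacfield}(1), and $C^{2,\alpha}_{\delta,\tau}\hookrightarrow C^{2,\alpha}_{\l-\gamma,-\gamma}$ with bounded inclusion (proved as in Lemma~\ref{lem:normcomp1}), we get $\Vert h_\epsilon\Vert_{C^{2,\alpha}_{\l-\gamma,-\gamma}}\le C|\epsilon|$; unwinding the weighted norm by rescaling each region $\{r\in(R,2R),\,\rho\in(S,2S)\}$ to unit size as throughout, this reads $|\nabla^i h_\epsilon|\le C|\epsilon|\,\rho^\l r^{-\gamma-i}$ for $i\le 2$. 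On the other side, Proposition~\ref{prop:Tgraphbound} says that in each $y$-slice $T$ is the graph over $H(y^\l)$ of a function $f_0$ with $|f_0|\le C_1\rho^{\l-\kappa}r^{\kappa-\gamma}$, with matching derivative bounds by the same rescaling. It remains to pass from graphicality of $T_{1,\epsilon}$ over $T$ in $\mathbf{R}^{n+1}$ to slice-wise graphicality over $H(y^\l)$ inside $\mathbf{R}^n$: exactly as in the proof of Proposition~\ref{prop:Tgraphbound}, at each scale $R^{-1}T$ is an arbitrarily small perturbation of a product $H(t)\times\mathbf{R}$, so cutting the small normal graph of $h_\epsilon$ with a $y$-slice and re-expressing it over $H(y^\l)$ only perturbs the displacement, at unit scale, by products of the small quantities $h_\epsilon$, $f_0$ and their first derivatives. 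Composing the two graphs by Lemma~\ref{lem:graphfg} at unit scale, the cross terms there are $\lesssim|f_0|_{C^1}|h_\epsilon|_{C^1}$, which after scaling back is bounded by $|\epsilon|$ times the first term of \eqref{eq:fineq11}, hence absorbed. We conclude that the $y$-slice of $T_{1,\epsilon}$ is the graph over $H(y^\l)$ of a function $f$ with $|f|\le C\big(\rho^{\l-\kappa}r^{\kappa-\gamma}+|\epsilon|\rho^\l r^{-\gamma}\big)$; here we also used that $|v_\epsilon|\lesssim\epsilon^2 r^\tau\rho^{\delta-\tau}\lesssim\epsilon^2\rho^\l r^{-\gamma}\le|\epsilon|\rho^\l r^{-\gamma}$, the middle inequality being a weight comparison as in Lemma~\ref{lem:normcomp1} using $r\gtrsim\rho^a$ on $T$ and that $\delta$ is chosen close to $\l-\gamma$ and then $\tau$ close to $-\gamma$ (Remark~\ref{rem:deltatau}).

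\emph{Step 2 (scaling).} Put $\mu=|\lambda|^{(1-(\l-\gamma))^{-1}}$. Since $\l-\gamma>1$ we have $1-(\l-\gamma)<0$, so $\mu\to\infty$ as $\lambda\to0$; in particular, for small $|\lambda|$ the surface $T_{\lambda,\epsilon\lambda}=\mu\,T_{\pm1,\pm\epsilon}$ is defined on $B_2(0)$, and $T_{\lambda,\epsilon\lambda}\cap B_2(0)=\mu\big(T_{\pm1,\pm\epsilon}\cap B_{2/\mu}(0)\big)$ with $B_{2/\mu}(0)$ inside the neighborhood where Step 1 applies. From $H(t)=|t|^{1/(\gamma+1)}H_\pm$ one gets $\mu H(t)=H(\mu^{\gamma+1}t)$, and using the identity $\gamma+1-\l=1-(\l-\gamma)$ together with $\mu^{1-(\l-\gamma)}=|\lambda|$, this gives $\mu H\big((y_0/\mu)^\l\big)=H\big(\mu^{\gamma+1-\l}y_0^\l\big)=H(\lambda y_0^\l)$ (for $\lambda<0$ one uses the $H(-y^\l)$ version of Step 1 for $T_{-1,-\epsilon}$, consistent since $\lambda=-|\lambda|$). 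Hence the $y_0$-slice of $T_{\lambda,\epsilon\lambda}$ is the graph over $H(\lambda y_0^\l)$ of the scaled function $f(x)=\mu\,\tilde f(\mu^{-1}x)$, where $\tilde f$ is the Step 1 slice function at height $y_0/\mu$. Substituting $\tilde r=\mu^{-1}r$, $\tilde\rho=\mu^{-1}\rho$ and using $\mu\cdot\mu^{\gamma-\l}=\mu^{1-(\l-\gamma)}=|\lambda|$ converts the Step 1 bound into $|f|\le C_1|\lambda|\big(\rho^{\l-\kappa}r^{\kappa-\gamma}+|\epsilon|\rho^\l r^{-\gamma}\big)$, which is \eqref{eq:fineq11}.

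\emph{Expected main obstacle.} Steps 1 and 2 are largely bookkeeping with the weighted norms and homogeneity; the one genuinely non-routine point is the slice reprojection in Step 1, that is, converting ``$T_{1,\epsilon}$ is a normal graph over $T$ in $\mathbf{R}^{n+1}$'' into ``each $y$-slice of $T_{1,\epsilon}$ is a graph over $H(y^\l)$ in $\mathbf{R}^n$'' while keeping the displacement controlled at the same order as $h_\epsilon$. This is exactly the type of argument already used in the proof of Proposition~\ref{prop:Tgraphbound}, carried out scale by scale where $T$ looks like a product $H(t)\times\mathbf{R}$, but it requires checking that the error introduced by the reprojection is genuinely lower order in the weighted spaces rather than borderline.
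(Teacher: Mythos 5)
Your proof is correct and takes essentially the same route as the paper: combine Proposition~\ref{prop:Tgraphbound} (for the $T_\lambda$ versus $H(\lambda y^\l)$ piece) with Proposition~\ref{prop:Tdeform} (for the $T_{\lambda,\epsilon\lambda}$ versus $T_\lambda$ piece) and rescale. You are somewhat more explicit than the paper about two points that it leaves implicit, namely the scale-by-scale reprojection from a normal graph over $T$ to a slice-wise graph over $H(y^\l)$, and the boundedness of the inclusion $C^{2,\alpha}_{\delta,\tau}\subset C^{2,\alpha}_{\l-\gamma,-\gamma}$ under the conventions of Remark~\ref{rem:deltatau}.
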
 
\begin{proof}
  From Proposition~\ref{prop:Tgraphbound}, by rescaling, we find that
  $T_\lambda$ is the graph of a function $f_1$ over $H(\lambda y^\l)$
  in the $\mathbf{R}^n\times\{y\}$ slice, where
\[ |f_1| \leq C_1 |\lambda| \rho^{\l -\kappa} r^{\kappa - \gamma}. \]
  In addition by rescaling Proposition~\ref{prop:Tdeform}, the hypersurface
  $T_{\lambda, \epsilon\lambda}$ is the graph of a function $f_2$ over
  $T_{\lambda}$, where
  \[ |f_2| \leq C_1 |\epsilon| |\lambda| \rho^\l r^{-\gamma}, \]
using that $C^{2,\alpha}_{\delta, \tau} \subset
C^{2,\alpha}_{\l-\gamma, -\gamma}$. The required result follows by
combining the estimates for $f_1$ and $f_2$. 
\end{proof}

In the next section we will define the distance $D_{T_{\lambda,
    \epsilon\lambda} }(M; U)$
of $M$ from $T_{\lambda, \epsilon\lambda}$ over a set $U$, prove a non-concentration
result, and a three-annulus lemma. This follows \cite{Sz20}
fairly closely. In Section~\ref{sec:thm3proof} we will use these ingredients
to prove Theorem~\ref{thm:symmetric}. 

\subsection{Non-concentration and three-annulus lemma}
We will initially only define the distance from $T_{\lambda, \epsilon\lambda}$ to $M$ on
subsets of the annulus $B_1\setminus B_{1/2}$. On more general
subsets we will define the distance by rescaling. In addition, as in
\cite{Sz20} the distance initially depends on an additional small
parameter $\beta$, which we will choose in
Proposition~\ref{prop:nonconc2} below. We also need to choose a
number $\gamma_1$ so that $\gamma < \gamma_1 < \frac{n-3}{2}$. 

      \begin{definition}\label{defn:Ddefn}
Let $\beta > 0$ be  a small parameter to be chosen below. For small $d > 0$
we define the $d$-neighborhood $N_d(T_{\lambda, \epsilon\lambda})$ of $T_{\lambda, \epsilon\lambda}$ in the annulus
$B_1\setminus B_{1/2}$ as follows:
\begin{itemize}
  \item[(a)] If $d \geq \beta |\lambda|$, then $N_d(T_{\lambda,
      \epsilon\lambda})$ is the region in 
    $B_1\setminus B_{1/2}$ that is bounded
    between $H(\pm \beta^{-2} d) \times\mathbf{R}$.
  \item[(b)] If $d < \beta|\lambda|$, then $N_d(T_{\lambda,
      \epsilon\lambda})$ is the region in 
    $B_1\setminus B_{1/2}$ that is bounded
    between the graphs of
    \[\label{eq:U2defn} \pm \min\{ (\beta|\lambda| + d)r^{-\gamma}, d
      r^{-\gamma_1}\} \]
    over $T_{\lambda, \epsilon\lambda}$, on $B_2\setminus B_{1/4}$.
  \end{itemize}
  Given this, we define the distance $D_{T_{\lambda, \epsilon\lambda}}(M; U)$ of $M$ from
$T_{\lambda, \epsilon\lambda}$ on the set $U\subset B_1\setminus B_{1/2}$ to be the
infimum of those $d > 0$ for which $M\cap U \subset N_d(T_{\lambda, \epsilon\lambda})$. 
\end{definition}

\begin{remark}\label{rem:DT}
Note that if the subset $U$ is far from the singular set,
i.e. $U\subset \{ r > r_0\}$ for some $r_0 > 0$, then
$D_{T_{\lambda, \epsilon\lambda}}(M; U) < d$ implies that $M\cap U$ is contained in the (usual)
$C(r_0) d$-neighborhood of $T_{\lambda, \epsilon\lambda}$, for some $C(r_0)$ depending on
$r_0, \beta$, and vice versa. As $r_0\to 0$, we have $C(r_0)\to\infty$. 
\end{remark}

We have the following, analogous to \cite[Lemma 5.3]{Sz20}.
\begin{lemma}
  Suppose that $\beta > 0$ is sufficiently small. Then for $0< d_1 <
  d_2$ we have $N_{d_1}(T_{\lambda, \epsilon\lambda}) \subset
  N_{d_2}(T_{\lambda, \epsilon\lambda})$ as 
  long as $|\lambda|, |\epsilon|$ are sufficiently small (depending on $\beta$). In
  addition $\cap_{d > 0} N_d(T_{\lambda, \epsilon\lambda}) =
  T_{\lambda, \epsilon\lambda} \cap (B_1\setminus 
  B_{1/2})$. 
\end{lemma}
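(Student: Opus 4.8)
The plan is to analyze the two cases in Definition~\ref{defn:Ddefn} separately and check the nesting $N_{d_1} \subset N_{d_2}$ in each, using that the two definitions agree (up to constants controlled by $\beta$) at the transition value $d = \beta|\lambda|$. First I would record the structure: for $d \geq \beta|\lambda|$ the neighborhood $N_d(T_{\lambda,\epsilon\lambda})$ is the slab between $H(\pm\beta^{-2}d)\times\mathbf{R}$, and since the leaves $H(t)$ are ordered (for $t > 0$, $H(t)$ lies on the positive side of $H(t')$ when $t > t'$, by Lemma~\ref{lem:L51} and the foliation property), monotonicity in $d$ is immediate in this regime. For $d < \beta|\lambda|$ the neighborhood is the region between the graphs of $\pm\min\{(\beta|\lambda|+d)r^{-\gamma}, d\, r^{-\gamma_1}\}$ over $T_{\lambda,\epsilon\lambda}$; here I would observe that for each fixed $r > 0$ both $(\beta|\lambda|+d)r^{-\gamma}$ and $d\, r^{-\gamma_1}$ are strictly increasing in $d$, hence so is their minimum, so the graph regions are nested. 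The only subtlety in this regime is that the graph over $T_{\lambda,\epsilon\lambda}$ of a function bounded by $\sim\beta|\lambda| r^{-\gamma}$ is well-defined on $B_2\setminus B_{1/4}$: this needs $\beta|\lambda| r^{-\gamma} \ll r$, i.e. $\beta|\lambda| \ll r^{\gamma+1}$, which holds for $r$ bounded below and $|\lambda|$ small, together with the fact (from Lemma~\ref{lem:fineq11}) that $T_{\lambda,\epsilon\lambda}$ itself is a small graph over $H(\lambda y^\l)$, which in turn is a small graph over $C\times\mathbf{R}$ on this annulus when $|\lambda|, |\epsilon|$ are small.

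The main point to verify carefully is the matching of the two definitions across $d = \beta|\lambda|$, i.e. that $N_d$ for $d$ slightly below $\beta|\lambda|$ is contained in $N_{d'}$ for $d'$ slightly above. When $d = \beta|\lambda|$, definition (b) gives the region between the graphs of $\pm\min\{2\beta|\lambda| r^{-\gamma}, \beta|\lambda| r^{-\gamma_1}\}$ over $T_{\lambda,\epsilon\lambda}$, while definition (a) at $d = \beta|\lambda|$ gives the slab between $H(\pm\beta^{-1}|\lambda|)\times\mathbf{R}$. I would show the former is contained in the latter (up to shrinking $\beta$): using Lemma~\ref{lem:L51}, $H(\lambda y^\l \pm c_0^{-1}\beta^{-1}|\lambda|)$ sits on the far side of the graph of $\pm c_0\min\{\beta^{-1}|\lambda| r^{-\gamma}, r\}$ over $H(\lambda y^\l)$, and since on $B_1\setminus B_{1/2}$ we have $\min\{\beta^{-1}|\lambda| r^{-\gamma}, r\} \sim \beta^{-1}|\lambda| r^{-\gamma}$ for small $|\lambda|$, and $T_{\lambda,\epsilon\lambda}$ differs from $H(\lambda y^\l)$ by $O(|\lambda|)$ with a better power of $r$ than $r^{-\gamma}$ (by Lemma~\ref{lem:fineq11}, the error is $\lesssim |\lambda|\rho^{\l-\kappa}r^{\kappa-\gamma} + |\lambda||\epsilon|\rho^\l r^{-\gamma}$, both $\ll \beta^{-1}|\lambda| r^{-\gamma}$ on this annulus once $\beta$ is fixed small), the region between the two graphs in (b) is swallowed by the slab in (a). The reverse inclusion is similar but easier since $r^{-\gamma_1}$ decays slower than $r^{-\gamma}$ is not relevant here — we only need one inclusion at the seam to glue the monotone families on either side.

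Finally, for the last assertion $\bigcap_{d>0} N_d(T_{\lambda,\epsilon\lambda}) = T_{\lambda,\epsilon\lambda}\cap(B_1\setminus B_{1/2})$: on $B_1\setminus B_{1/2}$ we stay in regime (b) for $d$ small, where $N_d$ is the region between the graphs of $\pm\min\{(\beta|\lambda|+d)r^{-\gamma}, d\,r^{-\gamma_1}\}$ over $T_{\lambda,\epsilon\lambda}$. As $d\to 0$ the bound $d\, r^{-\gamma_1}$ wins for $r$ bounded below (since $(\beta|\lambda|+d)r^{-\gamma}$ stays bounded away from $0$ while $d\,r^{-\gamma_1}\to 0$), so the enveloping functions converge uniformly to $0$ on $B_1\setminus B_{1/2}$, forcing the intersection to collapse onto $T_{\lambda,\epsilon\lambda}$ itself; conversely every point of $T_{\lambda,\epsilon\lambda}\cap(B_1\setminus B_{1/2})$ lies in $N_d$ for all $d > 0$ since the zero function lies between $\pm(\text{anything positive})$. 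I expect the main obstacle to be purely bookkeeping: tracking the various constants ($c_0$ from Lemma~\ref{lem:L51}, $C_1,\kappa$ from Lemma~\ref{lem:fineq11}, and $\beta$) so that the seam-matching inclusion holds, and making sure the well-definedness of graphs over $T_{\lambda,\epsilon\lambda}$ (equivalently, that $T_{\lambda,\epsilon\lambda}$ has regularity scale $\gtrsim r$ on this annulus, which follows from Propositions~\ref{prop:Tgraphbound} and~\ref{prop:Tdeform}) is invoked correctly. No single step should be genuinely hard; the argument is essentially the one behind \cite[Lemma 5.3]{Sz20} with the extra perturbation parameter $\epsilon$, which enters only through the harmless $O(|\lambda||\epsilon|\rho^\l r^{-\gamma})$ correction in Lemma~\ref{lem:fineq11}.
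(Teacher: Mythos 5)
Your proposal follows essentially the same route as the paper's: both reduce to the single nontrivial seam inclusion (case~(b) at $d=\beta|\lambda|$ inside case~(a) at $d=\beta|\lambda|$), use Lemma~\ref{lem:fineq11} to transfer graphs over $T_{\lambda,\epsilon\lambda}$ to graphs over $H(\lambda y^\l)$, and use Lemma~\ref{lem:L51} to compare the slab $H(\pm\beta^{-1}|\lambda|)\times\mathbf{R}$ to graphs over $H(\lambda y^\l)$; the within-regime nestings and the intersection claim are handled the same way. One small inaccuracy in your seam step: you apply Lemma~\ref{lem:L51} to the leaves $H(\lambda y^\l\pm c_0^{-1}\beta^{-1}|\lambda|)$, which introduces an extraneous $c_0^{-1}$ that would land you in a strictly larger slab than $N_{\beta|\lambda|}$ in case~(a); the clean way (as in the paper) is to write $H(\beta^{-1}|\lambda|)=H\bigl(\lambda y^\l+(\beta^{-1}|\lambda|-\lambda y^\l)\bigr)$ and apply Lemma~\ref{lem:L51} directly with increment $\beta^{-1}|\lambda|-\lambda y^\l$, which keeps the constants exactly matched to Definition~\ref{defn:Ddefn}.
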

\begin{proof}
  The proof is quite similar to the proof of  \cite[Lemma 5.3]{Sz20},
  except for the additional $\epsilon$ here. It is enough to show that
  on the annulus $B_2\setminus B_{1/4}$
  the region between the graphs of $\pm \min\{ 2\beta |\lambda|
  r^{-\gamma},  \beta|\lambda| r^{-\gamma_1}\}$ over $T_{\lambda,
    \epsilon\lambda}$ is contained between $H(\pm \beta^{-2}
  \beta|\lambda|) \times\mathbf{R}$. 
  
  By Lemma~\ref{lem:L51}, working in the $\mathbf{R}^n\times \{y\}$
  slice, the region between $H(\pm
  \beta^{-1}|\lambda|)$ contains the region between the graphs of 
  \[ \pm c_0\min\{ (\beta^{-1}|\lambda| -| \lambda y^\l|) r^{-\gamma},
    r\} \]
  over $H(\lambda y^\l)$. 

  Using Lemma~\ref{lem:fineq11}, 
  we find that the region between the graphs of
  \[\pm \min\{ 2\beta |\lambda|
  r^{-\gamma},  \beta|\lambda| r^{-\gamma_1}\}\] over $T_{\lambda,
    \epsilon\lambda}$ is contained between the graphs of
\[ \pm (2\beta |\lambda| r^{-\gamma} + C_1|\lambda| r^{\kappa-\gamma}
  + C_1 |\epsilon \lambda| r^{-\gamma}) \]
  over $H(\lambda y^\l)$. 

  We therefore have to show that 
  \[ \label{eq:2b10} 2\beta |\lambda| r^{-\gamma} + C_1|\lambda| r^{\kappa-\gamma}
  + C_1 |\epsilon \lambda| r^{-\gamma} \leq c_0 \min\{ (\beta^{-1}|\lambda| - |\lambda y^\l|) r^{-\gamma},
    r\} \]
  if $\beta, \lambda, \epsilon$ are sufficiently small. 
  We can verify
  this by looking at two separate cases. In the region that we are
  considering, if $|y|\leq 1/8$, then we have $r \in( 1/8,2)$, and 
  \eqref{eq:2b10} follows for small $\beta, \lambda$. 
  If $|y|\in (1/8, 2)$, then we also use that
  $r \gtrsim |\lambda y^\l|^{1/(\gamma+1)}$ on
  $H(\lambda y^\l)$. Using this the inequality \eqref{eq:2b10} follows
  for small $\beta, \lambda, \epsilon$. 
\end{proof}

Note that $T_0 = C\times\mathbf{R}$, and in
Definition~\ref{defn:DCRdist} we already defined a distance
$D_{C\times\mathbf{R}}(M; U)$. We will distinguish the two notions of
distance by writing $D_{T_0}$ and $D_{C\times\mathbf{R}}$ respectively. Note
however that for
$U\subset (B_1\setminus B_{1/2})$  the two distances are uniformly equivalent:
\[ \label{eq:DCRDTequiv}
  C_1^{-1} D_{C\times\mathbf{R}}(M; U) \leq D_{T_0}(M ; U) \leq C_1
  D_{C\times\mathbf{R}}(M; U). \]
Here the constant $C_1$ depends on the cone $C$ and the constant
$\beta$ (which will be fixed later). The proof follows easily from the
definitions, noting that case (b) in Definition~\ref{defn:Ddefn} does
not arise when $\lambda=0$. 

We will use the following result to obtain graphical estimates,
given bounds for the distance $D_{T_\lambda}$.
\begin{lemma} \label{lem:DTgraph}
  There exists a $C_1 > 0$ depending on the cone $C$ and on
  $\beta$, such that for any $r_1 > 0$ there is a $\delta > 0$
  (depending on $r_1, C, \beta$) with
  the following property. Suppose that $M$ is a stationary integral
  varifold in $B_2(0)$ with the area bound \eqref{eq:Mareabound},
  satisfying $D_{T_{\lambda, \epsilon\lambda}}(M; B_1\setminus B_{1/2}) < \delta$ for
  some $|\lambda|, |\epsilon| < \delta$. Then on the region $(B_1\setminus
  B_{1/2})\cap \{ r > r_1\}$ we can write $M$ as the graph of a
  function $u$ over $T_{\lambda, \epsilon\lambda}$ satisfying
  \[ |u| \leq C_1 D_{T_{{\lambda, \epsilon\lambda}}}(M; B_1\setminus B_{1/2})
    r^{-\gamma_1}. \]
  Note that in later applications $\beta$ will be fixed and recall
  that $\gamma_1 > \gamma$ is close to $\gamma$, used in
  Definition~\ref{defn:Ddefn}. 
\end{lemma}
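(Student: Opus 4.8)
The plan is to argue essentially as in Step~1 of the proof of Proposition~\ref{prop:3anndecay}: first use the hypothesis $D := D_{T_{\lambda,\epsilon\lambda}}(M;B_1\setminus B_{1/2}) < \delta$ together with Definition~\ref{defn:Ddefn} to confine $M$ to a thin Euclidean tube around $T_{\lambda,\epsilon\lambda}$ on the region $\{r>r_1\}$; then invoke the monotonicity formula, Lemma~\ref{lem:mult} and Allard's theorem~\cite{All72} to upgrade this confinement to a graphical representation $M=\mathrm{graph}_{T_{\lambda,\epsilon\lambda}}(u)$ there; and finally read the bound on $|u|$ straight off the confinement.

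For the confinement step I would fix any $d\in(D,\delta)$, so that $M\cap(B_1\setminus B_{1/2})\subset N_d(T_{\lambda,\epsilon\lambda})$, and treat the two cases of Definition~\ref{defn:Ddefn}. If $d<\beta|\lambda|$ (case~(b)), then on $B_1\setminus B_{1/2}$ the set $N_d(T_{\lambda,\epsilon\lambda})$ is by definition bounded between the graphs of $\pm\min\{(\beta|\lambda|+d)r^{-\gamma},\, dr^{-\gamma_1}\}\le \pm dr^{-\gamma_1}$ over $T_{\lambda,\epsilon\lambda}$, which is already the desired confinement with constant $1$. If $d\geq\beta|\lambda|$ (case~(a)), then $M$ lies between $H(\pm\beta^{-2}d)\times\mathbf{R}$; choosing $\delta$ small depending on $r_1$ so that $\beta^{-2}\delta\ll r_1^{\gamma+1}$, on $\{r>r_1\}$ the leaves $H(\pm\beta^{-2}d)$ are (by Definition~\ref{defn:Ht} and the estimates on $f_t$ following it) graphs of functions bounded by $C\beta^{-2}dr^{-\gamma}$ over $C\times\mathbf{R}$, so $M$ lies in the $C\beta^{-2}dr^{-\gamma}$-neighborhood of $C\times\mathbf{R}$. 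On the other hand, by Lemma~\ref{lem:fineq11} (with $\rho\le1$ on the annulus) together with the slice comparison of $H(\lambda y^\l)$ to $C$, the surface $T_{\lambda,\epsilon\lambda}$ lies in the $C|\lambda|r^{-\gamma}$-neighborhood of $C\times\mathbf{R}$ there. Since $|\lambda|\le\beta^{-1}d$, combining these — and using that $r^{-\gamma}$ varies by a bounded factor on balls of radius $\sim r$ — shows $M$ lies in the $C_1dr^{-\gamma}$-neighborhood of $T_{\lambda,\epsilon\lambda}$ on $\{r>r_1\}\cap(B_1\setminus B_{1/2})$, with $C_1$ depending only on $C$ and $\beta$. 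The point that keeps $C_1$ independent of $r_1$ is that $\gamma_1>\gamma$ and $r<1$ on this annulus, so $r^{-\gamma}\le r^{-\gamma_1}$, and in both cases the confinement is by $\pm C_1dr^{-\gamma_1}$ over $T_{\lambda,\epsilon\lambda}$.

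For graphicality I would cover $\{r>r_1\}\cap(B_1\setminus B_{1/2})$ within $T_{\lambda,\epsilon\lambda}$ — which for $\lambda,\epsilon$ small is a small perturbation of $C\times\mathbf{R}$ with $|A|\lesssim r^{-1}$ on this region, and is connected since the link $S^p\times S^q$ is — by balls $B(p,c_0r_1)$ centered at $p\in T_{\lambda,\epsilon\lambda}$; choosing $\delta$ small depending on $r_1$, in each rescaled ball $\tilde M=(c_0r_1)^{-1}M$ lies in an arbitrarily small neighborhood of a surface with arbitrarily small second fundamental form and has bounded area by the monotonicity formula and \eqref{eq:Mareabound}. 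Lemma~\ref{lem:mult} then forces the density of $M$ in each such ball either to be $<1+\delta'$ everywhere or $\geq2-\delta'$ everywhere along the connected cover; the latter violates \eqref{eq:Mareabound} for $\delta'$ small, so the density is everywhere $<1+\delta'$ and Allard's theorem~\cite{All72} makes $M$ the graph of a function $u$ over $T_{\lambda,\epsilon\lambda}$ on $\{r>r_1\}\cap(B_1\setminus B_{1/2})$. Combined with the confinement above this gives $|u|\le C_1dr^{-\gamma_1}$ for every $d>D$, hence $|u|\le C_1Dr^{-\gamma_1}$, as claimed. The main step requiring care is the case-(a) conversion in the second paragraph: one must use $d\geq\beta|\lambda|$ to absorb the deviation of $T_{\lambda,\epsilon\lambda}$ from $C\times\mathbf{R}$ into the bound, and $\gamma_1>\gamma$ on the sub-unit annulus to trade the $r^{-\gamma}$ weight for the $r^{-\gamma_1}$ weight with a constant that does not blow up as $r_1\to0$.
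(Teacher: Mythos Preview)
Your proof is correct and follows essentially the same approach as the paper: split into the two cases of Definition~\ref{defn:Ddefn}, in case~(a) use Lemma~\ref{lem:fineq11} together with $|\lambda|\le\beta^{-1}d$ to pass from confinement around $C\times\mathbf{R}$ to confinement around $T_{\lambda,\epsilon\lambda}$, and then invoke the Allard-type argument from Step~1 of Proposition~\ref{prop:3anndecay} for graphicality. Your treatment is in fact slightly more explicit than the paper's on two points: the paper only records the bound $C_2 d\,r^{-\gamma}$ in case~(a) and leaves the conversion to $r^{-\gamma_1}$ (via $r<1$ and $\gamma_1>\gamma$) implicit, and it simply refers back to Step~1 of Proposition~\ref{prop:3anndecay} for graphicality rather than spelling out the covering and Lemma~\ref{lem:mult} argument as you do.
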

\begin{proof}
  Let us write $d = D_{T_{\lambda, \epsilon\lambda}}(M; B_1\setminus B_{1/2})$. If $d
  =0$, then the support of $M$ equals $T_{\lambda, \epsilon\lambda}$
  on the annulus $B_1\setminus 
  B_{1/2}$ by the previous lemma. The area bound for $M$ ensures that
  the multiplicity of $M$ is 1 on the annulus, and so the result
  follows.

  We therefore assume that $d > 0$, and so by definition $M
  \cap (B_1\setminus B_{1/2}) \subset N_{2d}(T_{\lambda,
    \epsilon\lambda})$. There are two cases to
  consider. If $2d < \beta |\lambda|$, then $M$ is contained between
  the graphs of \eqref{eq:U2defn} and so in particular $M$ is
  contained between the graphs of $\pm 2d r^{-\gamma_1}$ over
  $T_{\lambda, \epsilon\lambda}$. 

  If $2d \geq \beta |\lambda|$, then 
  $M$ is contained between the
  hypersurfaces $H(\pm \beta^{-2} 2d) \times\mathbf{R}$. By the
  discussion in Definition~\ref{defn:Ht}, to leading order the surfaces $H(\pm
  \beta^{-2}2d)$ are the graphs of $\pm \beta^{-2}2d r^{-\gamma}$ over
  the cone $C$ on
  the set where $r \gg |\beta^{-2} 2d|^{1/(\gamma+1)}$. In particular
  for any given $r_1 > 0$ this is the case on the set $\{r > r_1\}$
  once $d$ is sufficiently small (depending on $\beta$ and the cone
  $C$). At the same time using Lemma~\ref{lem:fineq11} we also know
  that for sufficiently small $\lambda$ the surface $T_{\lambda, \epsilon\lambda}$ is the
  graph of $f$ over $H(\lambda y^\ell)$ with $f$ satisfying
  \eqref{eq:fineq11}. As long as $|\lambda|\leq \beta^{-2}2d$ (which
  is implied by our assumption $2d\geq \beta|\lambda|$ for small $\beta$), it then
  follows that on the region where $r > r_1$, $M$ is contained between
  the graphs of $\pm C_2 d r^{-\gamma}$ over $T_{\lambda, \epsilon\lambda}$, where $C_2$
  depends on $\beta$.  

  To see that $M$ is actually graphical over $T_{\lambda, \epsilon\lambda}$ on the region
  where $r > r_1$ when $d$ is sufficiently small, we can argue in the
  same way as in Step 1 of the proof of
  Proposition~\ref{prop:3anndecay}. 
\end{proof}

Similarly to \cite[Proposition 5.6]{Sz20}, we obtain the following
non-concentration estimate for minimal hypersurfaces $M$ near to
$T_{\lambda, \epsilon\lambda}$ using barrier arguments. As long as $M$ is not too close
to $T_{\lambda, \epsilon\lambda}$, we use the barrier surfaces constructed in
Proposition~\ref{prop:barrier10}, while when $M$ is very close to
$T_{\lambda, \epsilon\lambda}$ we use graphical barriers constructed using
Lemma~\ref{lem:Fa2}.

\begin{prop}\label{prop:nonconc2}
  Suppose that $\beta > 0$ is sufficiently small. There is a constant
  $C_\beta$ (depending on $\beta$), such that given any $s >  0$,
  there are $r_0=r_0(\beta, s) > 0$ and $\delta = \delta(\beta, s,
  r_0) > 0$ with
  the following property. Suppose that $M$ is a stationary integral
  varifold in $B_1\setminus B_{1/2}$ such that for some $|\lambda| , |\epsilon|<
  \delta$ we have
  \begin{itemize} 
  \item  $D_{T_{\lambda, \epsilon\lambda}}(M; B_1\setminus B_{1/2}) < \delta$,
  \item  $M$ is contained
  between the graphs of $\pm D(r_0) r^{-\gamma}$ over $T_{\lambda,
    \epsilon\lambda} $ on the region where
  $r \geq r_0$ in the annulus $B_1\setminus B_{1/2}$, for some
  $D(r_0)\leq \delta$.
\end{itemize}
  Then we have the estimate
  \[ \label{eq:nc20} D_{T_{\lambda, \epsilon\lambda}}(M; B_{4/5}\setminus B_{3/5}) \leq C_\beta \Big(D(r_0) +
    s D_{T_{\lambda, \epsilon\lambda}}(M; B_1\setminus B_{1/2})\Big). \]
  Note that $\beta$ will be fixed after this point, and crucially $C_\beta$ is
  independent of $s$.
\end{prop}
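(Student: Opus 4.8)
The plan is to follow the barrier-surface strategy of \cite[Proposition 5.6]{Sz20}, using the two types of barriers available near $T_{\lambda,\epsilon\lambda}$: the barrier hypersurfaces $X_\epsilon$ from Proposition~\ref{prop:barrier10} when $M$ is not too close to $T_{\lambda,\epsilon\lambda}$, and graphical barriers built from the function $F_{-\gamma_1}$ of Lemma~\ref{lem:Fa2} when $M$ is very close. By scaling it suffices to prove the estimate on $B_1\setminus B_{1/2}$ as stated. Write $D = D_{T_{\lambda,\epsilon\lambda}}(M; B_1\setminus B_{1/2})$ and distinguish the two regimes of Definition~\ref{defn:Ddefn}: the ``far'' regime $D\ge \beta|\lambda|$, in which $M$ lies between $H(\pm\beta^{-2}D)\times\mathbf R$, and the ``close'' regime $D<\beta|\lambda|$, in which $M$ lies between the graphs of $\pm\min\{(\beta|\lambda|+D)r^{-\gamma}, Dr^{-\gamma_1}\}$ over $T_{\lambda,\epsilon\lambda}$.

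\textbf{The far regime.} Here the argument is essentially that of Proposition~\ref{prop:nonconc}: apply Proposition~\ref{prop:barrier10} on a suitable sub-annulus (say with $y$-variable constrained to an interval strictly inside the one allowed by $B_1\setminus B_{1/2}$, shrunk by a factor controlled by $s$) to the function $f$ encoding the level $H(\lambda y^\l + \cdot)$, obtaining a one-parameter family $X_\eta$ of negative-mean-curvature barriers. By the control from Lemma~\ref{lem:fineq11} on how $T_{\lambda,\epsilon\lambda}$ sits inside the foliation, these barriers can be sandwiched between leaves $H(\cdot)$ at controlled levels, and the maximum-principle sweeping argument (as in the proof of Proposition~\ref{prop:nonconc}) forces $M$ to stay within distance $C_\beta(D(r_0) + sD)$ of $T_{\lambda,\epsilon\lambda}$ on the smaller annulus $B_{4/5}\setminus B_{3/5}$, where the constant $r_0$ depends on $s$ and $\beta$ through the region $r<K^{-Q^2}$ on which the barriers are defined. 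White's maximum principle \cite{White10}, invoked as in Proposition~\ref{prop:barrier10}, handles the singular ray.

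\textbf{The close regime.} Here $M$ lies in a narrow graphical neighborhood of $T_{\lambda,\epsilon\lambda}$, and the barriers of Proposition~\ref{prop:barrier10} are too coarse; instead we build graphical barriers over $T_{\lambda,\epsilon\lambda}$ (equivalently, after a rescaling bringing $T_{1,\epsilon}$ to unit size, over $T=T_1$) of the form $\pm\eta F_{-\gamma_1}$ using Lemma~\ref{lem:Fa2}, which have the correct (negative) mean curvature since $L_T F_{-\gamma_1} < -C^{-1}r^{-\gamma_1-2}$ and $F_{-\gamma_1}>0$. Sweeping the parameter $\eta$ from a value $\gtrsim D$ down to $\gtrsim D(r_0)+sD$, using that on $\{r\ge r_0\}$ the hypothesis gives $M$ trapped between $\pm D(r_0)r^{-\gamma}$-graphs and that $r^{-\gamma_1}$ decays slightly faster than $r^{-\gamma}$ but is still larger than the $L^\infty$-scale near the singular ray, one contracts the graphical neighborhood containing $M$ on $B_{4/5}\setminus B_{3/5}$, again yielding \eqref{eq:nc20}. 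One must check consistency at the interface $D\sim\beta|\lambda|$ between the two regimes, which is where the two types of neighborhood in Definition~\ref{defn:Ddefn} agree up to the factor $\beta^{\pm 2}$, absorbed into $C_\beta$.

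\textbf{Main obstacle.} The delicate point is the interplay near the singular ray $\{r=0\}$ between the two length scales built into Definition~\ref{defn:Ddefn} — the level-set scale $\beta^{-2}d$ of the foliation $H(\cdot)$ and the graphical scale $dr^{-\gamma_1}$ — and making sure that whichever barrier one uses actually encloses $M$ there while still having strictly negative mean curvature, uniformly in the small parameters $\lambda,\epsilon$. This requires using the precise sandwiching from Lemma~\ref{lem:fineq11} (the $\rho^{\l-\kappa}r^{\kappa-\gamma}$ and $|\epsilon|\rho^\l r^{-\gamma}$ terms both being lower order than the $\beta|\lambda|r^{-\gamma}$ term for small $\lambda,\epsilon$), together with Lemma~\ref{lem:L51} to compare nearby leaves, exactly as in the preceding lemma; keeping the final constant $C_\beta$ independent of $s$ (with $s$ only affecting $r_0$ and $\delta$) is the other bookkeeping constraint that dictates how the sweeping intervals must be chosen.
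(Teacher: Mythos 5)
The general strategy you identify (non-graphical $X_t$-barriers from Proposition~\ref{prop:barrier10} when $M$ is far from $T_{\lambda,\epsilon\lambda}$, graphical barriers built from $F_{-\gamma_1}$ when $M$ is close) is the right one, but your case split hides a gap in the ``close regime'' $D < \beta|\lambda|$: you propose to go straight to the graphical barriers there, whereas the paper's proof \emph{always} runs the $X_t$ sweeping argument first, even when $D < \beta|\lambda|$, and only afterwards invokes the graphical barriers. That first step is not optional, and omitting it breaks the final bookkeeping against Definition~\ref{defn:Ddefn}.

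Concretely: when $D < \beta|\lambda|$, the hypothesis only tells you that $M$ lies between the graphs of $\pm(\beta|\lambda|+D)r^{-\gamma}$ (so roughly $\pm 2\beta|\lambda|r^{-\gamma}$) over $T_{\lambda,\epsilon\lambda}$ in the $r^{-\gamma}$-regime near the singular ray. The graphical barrier $\min\{cr^{-\gamma}, t\,g(y)F_{-\gamma_1}\}$ can be swept with cap $c\sim 3\beta|\lambda|$ (so that it initially encloses $M$ and contact points still land where the barrier equals $t\,g(y)F_{-\gamma_1}$ and has negative mean curvature), yielding $|M - T_{\lambda,\epsilon\lambda}| \lesssim (D(r_0)+sD)r^{-\gamma_1}$ away from the ray. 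But to conclude $D_{T_{\lambda,\epsilon\lambda}}(M;B_{4/5}\setminus B_{3/5})\le d'$ with $d'=C_\beta(D(r_0)+sD)$ in case (b) of Definition~\ref{defn:Ddefn}, you must show $M$ lies between $\pm\min\{(\beta|\lambda|+d')r^{-\gamma}, d'r^{-\gamma_1}\}$, whose first component is essentially $\beta|\lambda|r^{-\gamma}$ since $d'\ll\beta|\lambda|$. Your $r^{-\gamma}$-scale control is only $2\beta|\lambda|r^{-\gamma}$, which does not fit. The paper resolves this by first applying the $X_t$ barriers (exactly as in Proposition~\ref{prop:nonconc}, but relative to the foliation level $H(\lambda y^\l + \cdot)$) to show that on the smaller annulus $M$ lies between $H(\lambda y^\l \pm C_3\beta^{-3}s\overline d)$ with $\overline d=\max\{\beta|\lambda|, d+s^{-1}D(r_0)\}$; in the sub-case where $\overline d=\beta|\lambda|$ this, via Lemma~\ref{lem:fineq11} and smallness of $s, r_0, \epsilon$, gives the crucial improvement that $M$ sits between $\pm\tfrac12\beta|\lambda|r^{-\gamma}$, which then both allows the cap $\beta|\lambda|r^{-\gamma}$ in the graphical barrier and makes the combined estimate land inside $N_{d'}$. (In the complementary sub-case $\overline d=d+s^{-1}D(r_0)$, the $X_t$ step alone already gives the conclusion and the graphical barriers are never needed.)

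So the fix is: in the close regime you must run the $X_t$-barrier sweeping first to tighten the $L^\infty$ bound near the singular ray by a factor of $s$ before switching to the graphical $F_{-\gamma_1}$-barriers; your ``far''-regime argument is fine and in fact covers the sub-case $\overline d = d + s^{-1}D(r_0)$ in the paper's scheme.
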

Note that by rescaling, the same estimate holds for pairs of annuli
other than $(B_{4/5}\setminus B_{3/5}) \subset (B_1\setminus
B_{1/2})$ with an appropriate change in the constants. 
\begin{proof}
  The proof is similar to that in \cite{Sz20}, except for the
  presence of $\epsilon$. 
  For the reader's convenience we provide the argument here. 

  Set $d = D_{T_{\lambda, \epsilon\lambda}}(M; B_1\setminus B_{1/2})$, 
  fix $s > 0$, and let
  \[  \overline{d} &= \max\{ \beta |\lambda|, d + s^{-1}D(r_0)\}. \]
  Our first goal will be to show that on a slightly smaller annulus we
  have
  \[ \label{eq:goal1} D_{T_{\lambda, \epsilon\lambda}}(M;
    B_{19/20}\setminus B_{11/20}) \leq C_\beta 
    s\overline{d}, \]
  if $r_0, \delta$ are sufficiently small (i.e. $D(r_0), d,
  |\lambda|, |\epsilon|$
  are sufficiently small as well). Throughout the proof we will denote
  by $C_\beta$ constants that may change from line to line, and depend
  on $\beta$. Constants $C_1, C_2, \ldots$ will depend only on the
  cone $C$. 

  Note first that if $\beta, \delta$ are sufficiently small, then $D(r_0)$
  controls the distance of $M$ from $T_{\lambda, \epsilon\lambda}$ on the region $\{r >
  r_0\}$:
  \[ D_{T_{\lambda, \epsilon\lambda}}(M ; (B_1\setminus B_{1/2})\cap \{ r > r_0\}) \leq
    D(r_0). \]
  To see this we need to show that $M\cap \{ r > r_0\} \subset
  N_{D(r_0)}(T_{\lambda, \epsilon\lambda})$. According to 
  Definition~\ref{defn:Ddefn} we look at two cases separately:
  \begin{itemize}
    \item If $D(r_0)
  \geq\beta|\lambda|$, then we need to verify that $M$ is contained
  between the surfaces $H(\pm \beta^{-2}D(r_0))\times
  \mathbf{R}$ on the region $\{r > r_0\}$, using that $M$ is contained
  between the graphs of $\pm D(r_0) r^{-\gamma}$ over
  $T_{\lambda, \epsilon\lambda}$. On the region $\{r > r_0\}$, once
  $D(r_0), \lambda, \epsilon$ are
  sufficiently small, it follows that $M$ is contained between the
  graphs of $\pm C_2(D(r_0) + |\lambda|) r^{-\gamma}$ over
  $C\times\mathbf{R}$, while $H(\pm\beta^{-2}D(r_0))$ are on the
  positive (resp. negative)
  side of the graphs of $\pm C_2^{-1} \beta^{-2} D(r_0) r^{-\gamma}$ over
  $C\times\mathbf{R}$. It remains to check that
  \[ D(r_0) + |\lambda| \leq C_2^{-2} \beta^{-2} D(r_0). \]
  This follows if $\beta$ is sufficiently small, using that $D(r_0)
  \geq \beta |\lambda|$.
  \item If $D(r_0) < \beta|\lambda|$, then we need to check that, on
    the region $\{r > r_0\}$, $M$
    is contained between the graphs of
    \[ \pm \min \{ (\beta|\lambda| + D(r_0)) r^{-\gamma},
      D(r_0) r^{-\gamma_1}\}, \]
    over $T_{\lambda, \epsilon\lambda}$
    using that it is contained between the graphs of $\pm D(r_0)
    r^{-\gamma}$ over $T_{\lambda, \epsilon\lambda}$. Since $r < 1$ this is clear.
  \end{itemize}

  From now on we will only be concerned with estimates on the region
  $\{r < r_0\}$ for sufficiently small $r_0$. In addition 
  instead of annuli it will be convenient to work on rectangular
  regions of the form  $\{r < r_0\} \cap  \{|y| \in (a,b)\}$. Once
  $r_0, s$ are sufficiently small, the region $(B_{19/20}\setminus B_{11/20})
  \cap \{ r < r_0\}$ is contained in the region 
\[ \Omega_1 = \{r < r_0\} \cap \{|y| \in (0.51+s,0.99-s). \]
 To control $M$ on this region, we will
  apply the barrier construction from Proposition~\ref{prop:barrier10}
  to the function
  \[ f(y) = \sigma ( t^{-1}\lambda y^\ell + h(y)), \]
  where $h(y) = (0.99 - |y|)^{-1} + (|y| - 0.51)^{-1}$, and $\sigma$
  is a smooth function
  satisfying $|\sigma(z)^p - z| < \frac{1}{10}$ for all $z$,  as in
  the proof of Proposition~\ref{prop:Tminimizing}. We will 
  use the barrier construction on the intervals defined by $|y|\in
  (0.51 + s, 0.99-s)$ and we will only consider $t$
  satisfying $t > s |\lambda|$. Under these conditions
  $|f|_{C^3}$ has a bound depending on $s$, and so from 
  Proposition~\ref{prop:barrier10} we obtain barrier surfaces
  $X_t$ on the region $\{r < r_0\}$ for $s|\lambda| < t <
  t_0$, where $r_0, t_0$ depend on $s$. Note that
  $r_0$ may still be chosen smaller below. The
  $y$-slices of the  barrier surfaces $X_t$ lie between
  $H(t f^p \pm t)$, and we have
  \[ t \left(t^{-1}\lambda y^\ell + h(y) - \frac{1}{10}\right) \leq
    t f(y)^p \leq t\left( t^{-1}\lambda y^\ell + h(y)
    + \frac{1}{10}\right). \]
  Using that $h(y) > 4$ it follows that the $y$-slices of $X_t$ lie between
  \[ H(\lambda y^\ell + t h(y)/2 ) \text{ and }
    H(\lambda y^\ell + 2 t h(y)). \]

  We claim that if  $\lambda, d, D(r_0)$ are sufficiently small, then $M$
  lies on the negative side of $X_{t_0}$ on our region
  $\Omega_1$. Since $h(y) \geq 4$, we have that $X_{t_0}$ lies
  on the positive side of the surface with slices $H(\lambda y^\ell +
  2t_0)$. By definition we have $\overline{d} > d$ and
  $\overline{d} > \beta|\lambda|$. It follows that on our region $M$
  is contained between $H(\pm \beta^{-2} \overline{d})$. Our claim
  follows since $\beta^{-2} \overline{d} < \lambda y^\ell +
  2t_0$ if $\lambda, d, D(r_0)$ are sufficiently small
  (depending on $s, \beta$).

  Let $t_1 = \beta^{-3} s\overline{d}$, which in particular satisfies
  $t_1 > s|\lambda|$ is $\beta < 1$. In addition if
  $\overline{d}$ is sufficiently small (depending on $\beta$), then
  $t_1 < t_0$. We claim that on the boundary of the
  region $\Omega_1$ the surface $M$ lies on the negative side of
  $X_t$ for all $t \in [t_1, t_0]$. For
  this we examine the two kinds of boundary pieces of $\Omega_1$:
  \begin{itemize}
  \item On the boundary pieces where $|y|=0.51+s$ or $|y| = 0.99-s$ we
    have $h(y) > s^{-1}$, and so the slices of $X_t$ lie on the positive
    side of $H(\lambda y^\ell + t_1 s^{-1}/2)$. Since $M$ lies
    on the negative side of $H(\beta^{-2} \overline{d})$ it suffices
    if
    \[ \lambda y^\ell + t_1 s^{-1}/2 \geq
      \beta^{-2}\overline{d}. \]
    We have $\overline{d} \geq \beta|\lambda|$ and $t_1 =
    \beta^{-3}s\overline{d}$, so it suffices if
    \[ \beta^{-3} \overline{d} / 2 - \beta^{-1}\overline{d} \geq
      \beta^{-2}\overline{d}, \]
    and this holds for small $\beta$. 
  \item On the boundary piece $\{r = r_0\}$ we know that $M$ is on the
    negative side of the graph of $D(r_0) r^{-\gamma}$ over
    $T_{\lambda, \epsilon\lambda}$, and so using the graphical
    property, Lemma~\ref{lem:fineq11} of $T_{\lambda, \epsilon\lambda}$
    over $H(\lambda y^\ell)$,  $M$ is on the negative side of the graph of
    \[ 2(C_1 |\lambda| r_0^{\kappa - \gamma} + C_1 |\lambda\epsilon|
      r_0^{-\gamma} + D(r_0) r_0^{-\gamma}) \]
    over $H(\lambda y^\ell)$ if $\lambda, D(r_0)$ are sufficiently
    small (depending on $r_0$). At the same time $X_{t}$ is on
    the positive side of $H(\lambda y^\ell + 2t_1)$, i.e. on
    the positive side of the graph of
    \[ c_0 \min \{ 2t_1 r^{-\gamma}, r\} \]
    over $H(\lambda y^\ell )$ using Lemma~\ref{lem:L51}. We therefore need to check that
    \[ 2(C_1 |\lambda| r_0^{\kappa -\gamma} + C_1 |\lambda\epsilon|
      r_0^{-\gamma} + D(r_0) r_0^{-\gamma})
      \leq c_0 \min\{ 2t_1 r_0^{-\gamma}, r_0\}. \]
    Note that if $\overline{d}$ is sufficiently small (depending on
    $\beta, r_0$), then on the right hand side the minimum is achieved by
    $2t_1 r_0^{-\gamma}$. The inequality then follows if
    $\beta, r_0$ and $\epsilon$ are sufficiently small ($r_0, \epsilon$ can depend on $s,
    \beta$),
    since by definition
    $t_1 \geq \beta^{-2} s|\lambda|$, and also $t_1 \geq
    \beta^{-3} D(r_0)$. 
  \end{itemize}

  As $t$ varies from $t_0$ to $t_1$, the surface
  $X_t$ cannot have any interior contact point with $M$ in
  $\Omega_1$ by Proposition~\ref{prop:barrier10}. It follows then that
  $M$ lies on the negative side of $X_{t_1}$. In particular
  since on the interval $|y|\in [0.52, 0.98]$ we have an upper bound
  $h(y) < C_3/2$, it follows that for these $y$ the $y$-slices of the
  surface $M$ lie on the negative side of $H(\lambda y^\ell +
  C_3\beta^{-3}s\overline{d})$. We can repeat the same argument from
  the negative side of $M$ by reversing orientations, and we find that
  on the smaller region
  \[ \Omega_2 = \{|y|\in [0.52, 0.98]\} \cap \{r < r_0\} \]
  $M$ lies between the surfaces with $y$-slices $H(\lambda y^\ell \pm
  A)$, where $A = C_3\beta^{-3}s\overline{d}$.

  Next we claim that $D_{T_{\lambda, \epsilon\lambda}}(M ; \Omega_2) \leq C_4 A$, for
  a sufficiently large $C_4$ depending on the cone $C$. To see this we look at two
  cases:
  \begin{itemize}
  \item If $C_4 A \geq \beta|\lambda|$, then we need to check that $M$
    lies between $H(\pm \beta^{-2} C_4A)$. This follows, since 
    \[  |\lambda| + A < \beta^{-2} C_4 A, \]
    as long as $\beta < 1$ and $C_4 > 1$.
  \item If $C_4 A <  \beta|\lambda|$, then we need to show that over
    $\Omega_2$ the surface $M$ is contained between the graphs of
    \[ \pm \min\{ (\beta|\lambda| + C_4A) r^{-\gamma} , C_4 A
      r^{-\gamma_1}\} \]
    over $T_{\lambda, \epsilon\lambda}$. Since $M$ is between $H(\lambda y^\ell \pm A)$
    we know that $M$ is contained between the graphs of
    \[ \pm C_1' (|\lambda| r^{\kappa - \gamma} + |\lambda\epsilon|
      r^{-\gamma} + Ar^{-\gamma}) \]
    over $T_{\lambda, \epsilon\lambda}$, for some $C_1'$. Therefore the claim follows if
    \[ \label{eq:C1'1} C_1' (|\lambda| r^{\kappa - \gamma} +
      |\lambda\epsilon| r^{-\gamma} + Ar^{-\gamma}) < \min \{ (\beta|\lambda| + C_4A) r^{-\gamma} , C_4 A
      r^{-\gamma_1}\}. \]
    Since $A \geq s\overline{d} \geq s\beta|\lambda|$, we have
    \[ \min \{ (\beta|\lambda| + C_4A) r^{-\gamma} , C_4 A
      r^{-\gamma_1}\} &\geq \frac{1}{2} C_4 A r^{-\gamma} +
      \frac{1}{2}C_4 s\beta|\lambda| r^{-\gamma} \\
      &\geq C_1' (A r^{-\gamma} +  |\lambda| r^\kappa
      r^{-\gamma} + |\lambda\epsilon| r^{-\gamma}), \]
    as long as $C_4$ is large and $r, \epsilon$ are sufficiently small
    (depending on $\beta, s$). 
  \end{itemize}

  We have now achieved our first goal, \eqref{eq:goal1}. In the
  definition of $\overline{d}$ if we have $\overline{d} = d + s^{-1}
  D(r_0)$, then we obtain \eqref{eq:nc20}. For the remainder of the
  proof we can therefore assume that $\overline{d} = \beta|\lambda|$,
  and $d + s^{-1}D(r_0) \leq \beta |\lambda|$. In particular we have
  $A = C_3 \beta^{-2} s |\lambda|$, and so on $\Omega_2$ the surface
  $M$ lies between $H(\lambda y^\ell \pm C_3\beta^{-2} s|\lambda|)$. 

We claim that $M$ is bounded between the graphs of $\pm \frac{1}{2}
\beta |\lambda| r^{-\gamma}$ over $T_{\lambda, \epsilon\lambda}$ on $\Omega_2$,
decreasing $r_0, \epsilon$ if necessary. To see this, note that arguing as
before we have that $M$ is between the graphs of
 \[ \pm C_1'(|\lambda| r^{\kappa -\gamma} + |\lambda\epsilon|
   r^{-\gamma} + C_3\beta^{-2} s|\lambda|
   r^{-\gamma})\]
over $T_{\lambda, \epsilon\lambda}$. The claim follows if $r_0,
\epsilon$ and $s$ are sufficiently 
small (depending on $\beta$). 
  
We now use graphical barrier surfaces, based on the function
$F_{-\gamma_1}$ defined in 
Lemma~\ref{lem:Fa2}. Note first that if $\epsilon$ is sufficiently
small, then $F_{-\gamma_1}$ satisfies the same estimates on
$T_{\lambda, \epsilon\lambda}$ as on $T$. This follows by rescaling
and that $T_{\lambda, \epsilon\lambda}$ is a small perturbation of
$T_\lambda$. 

We set
\[ g(y) = (|y| - 0.52)^{-1} + (0.98 - |y|)^{-1}, \]
  and we work on the region
  \[ \Omega_3 = \{|y| \in (0.52+s, 0.98-s)\}\cap \{ r < r_0\}.\]
  On this region
  we have bounds for the derivatives of $g$ depending on $s$, and also
  $g(y) > 4$. Let $G =
  g(y) F_{-\gamma_1}$, and define the function
  \[ \widetilde{G}_t = \min \{ \beta |\lambda| r^{-\gamma},
    \epsilon G\}. \]

  We claim that on $\Omega_3$ the graph of $\widetilde{G}_t$
  over $T_{\lambda, \epsilon\lambda}$ has negative curvature at all points where
  $\widetilde{G}_t < \beta |\lambda| r^{-\gamma}$ (in particular
  at any intersection point of $M$ with the graph). To see this, note
  first that
  \[ L_{T_{\lambda, \epsilon\lambda}} G &= g(y) L_{T_{\lambda, \epsilon\lambda}} F_{-\gamma_1} + 2\nabla
    g\cdot \nabla F_{-\gamma_1} + F_{-\gamma_1} \Delta g. \]
  Using that $|\nabla y| \leq 1$ and $\Delta y =0$ on $T_{\lambda, \epsilon\lambda}$, we
  have $|\nabla g|, |\Delta g| \leq C_s$ for a constant $C_s$
  depending on the $C^2$-norm of $g$ (i.e. on $s$). It follows that
  \[ L_{T_{\lambda, \epsilon\lambda}} G &\leq - g(y) C_{\gamma_1}^{-1} r^{-\gamma_1-2} +
    C_s r^{-\gamma_1-1} \\
    &\leq - \frac{1}{2} g(y) C_{\gamma_1}^{-1} r^{-\gamma_1-2}, \]
  as long as $r < r_0$ for $r_0$ depending on $s$. 
  To estimate the mean curvature $m_{T_{\lambda, \epsilon\lambda}}(t G)$, note
  that we have
  \[ r^{-1}|t G| + |\nabla t G| + r |\nabla^2 t
    G| \leq C_s t g(y) r^{-\gamma_1 - 1}, \]
  for $C_s$ depending on $s$. Since $G > 4g(y)C_{\gamma_1}^{-1}
  r^{-\gamma_1}$, on the set where $t G < \beta
  |\lambda| r^{-\gamma}$ we have 
  \[ t g C_{\gamma_1}^{-1} r^{-\gamma_1} < \beta |\lambda|
    r^{-\gamma}, \]
  and so
  \[ r^{-1}|t G| + |\nabla t G| + r|\nabla^2 t G|
    \leq C_s C_{\gamma_1} \beta |\lambda| r^{-\gamma-1}. \]
  At the same time on $T_{\lambda, \epsilon\lambda}$, on the region where $|y|\in (1/2,
  1)$, we have a uniform bound $r > c_2|\lambda|^{1/(\gamma+1)}$
  (since the $y$-slices of $T_{\lambda, \epsilon\lambda}$ are close to $H(\lambda
  y^\ell)$). In particular we have
  \[ r^{-1}|t G| + |\nabla t G| + r|\nabla^2 t G|
    \leq C_s C_{\gamma_1}c_2^{-\gamma-1} \beta. \]
  As long as $\beta$ is sufficiently small (depending on $s$), the
  non-linear terms in the expansion of $m_{T_\lambda}(\epsilon G)$
  will then be bounded by 
  \[ C_s \beta t g(y) r^{-\gamma_1-2}, \]
  for $C_s$ depending on $s$. 
  This can be seen by rescaling the regions where $r\in (R,2R)$ by a
  factor $R^{-1}$ as we have done before, using also that the
  regularity scale of $T_{\lambda, \epsilon\lambda}$ is comparable to $r$ at each point. 
  Therefore on the set where $t G < \beta|\lambda| r^{-\gamma}$
  we have
  \[ m_{T_{\lambda, \epsilon\lambda}}(t G) &\leq -\frac{1}{2} t g(y)
    C_{\gamma_1}^{-1} r^{-\gamma_1-2} + C_s \beta t
    g(y) r^{-\gamma_1-2}  \\ &\leq
     -\frac{1}{4} t g(y)
     C_{\gamma_1}^{-1} r^{-\gamma_1-2} < 0\]
   as long as $\beta$ is chosen to be sufficiently small.

   The conclusion is that for all $t$, the graph of
   $\widetilde{G}_t$ over $T_{\lambda, \epsilon\lambda}$ has negative mean
   curvature on the region where $\widetilde{G}_t < \beta
   |\lambda| r^{-\gamma}$. If we choose $t$ to be sufficiently
   large, then $\widetilde{G}_t = \beta|\lambda| r^{-\gamma}$,
   since $t G > t C_{\gamma_1}^{-1} r^{-\gamma_1}$. In
   particular for such $t$ the surface $M$ lies on the negative
   side of the graph of $\widetilde{G}_t$ on the region
   $\Omega_3$ (recall that here $M$ lies between the graphs of
   $\pm \frac{1}{2} \beta |\lambda| r^{-\gamma}$ over $T_{\lambda, \epsilon\lambda}$).

   Let us set $t_2 = C_6(D(r_0) + sd)$. We claim that for large
   enough $C_6$ the surface $M$ lies on the negative side of the graph
   of $\widetilde{G}_t$ over $T_{\lambda, \epsilon\lambda}$
   on the boundary  $\partial \Omega_3$
   for all $t \geq t_2$. We look at the two types of
   boundary components:
   \begin{itemize}
     \item On the boundary piece $\{r = r_0\}$ we know that $M$ is
       on the negative side of the graph of $D(r_0) r^{-\gamma}$ over
       $T_{\lambda, \epsilon\lambda}$, and
       so we just need to check that $D(r_0) r^{-\gamma} \leq
       \widetilde{G}_t$. This is implied by
       \[ D(r_0) r^{-\gamma} < C_{\gamma_1}^{-1} t_2
         r^{-\gamma_1} = C_{\gamma_1}^{-1} C_6(D(r_0) +
         sd) r^{-\gamma_1}, \]
       as soon as $C_6 > C_{\gamma_1}$.
     \item On the boundary pieces of $\Omega_3$ where $|y| = 0.52+s$
       or $|y| = 0.98-s$, we use that $D_{T_{\lambda, \epsilon\lambda}}(M; B_1\setminus
       B_{1/2})=d$, and that at this point we know that $d <
       \beta|\lambda|$. By definition this means that on $\Omega_3$
       the surface $M$ is bounded between the graphs of
       \[ \pm \min\{ (\beta|\lambda| + d) r^{-\gamma}, d
         r^{-\gamma_1}\} \]
       over $T_{\lambda, \epsilon\lambda}$, and in particular $M$ is on the negative side
       of the graph of $d r^{-\gamma_1}$. At the same time on this
       boundary piece $g(y) > s^{-1}$. We therefore need to check
       that
       \[ d r^{-\gamma_1} < C_{\gamma_1}^{-1} C_6(D(r_0) + sd)
         s^{-1} r^{-\gamma_1}, \]
       which follows if $C_6 > C_{\gamma_1}$. 
     \end{itemize}

     Since the graphs of $\widetilde{G}_t$ have negative mean
     curvature on the set where $\widetilde{G}_t < \beta
     |\lambda| r^{-\gamma}$, as we decrease $t$ to
     $t_2$, the surface $M$ cannot have any interior contact
     point with these graphs (note that $M$ lies on the negative side
     of the graph of $\frac{1}{2}\beta|\lambda| r^{-\gamma}$ so any
     contact must happen where the graph of $\widetilde{G}_t$
     has negative mean curvature). It follows that $M$ lies on the
     negative side of the graph of $\widetilde{G}_{t_2}$, and
     by a similar argument $M$ lies on the positive side of the graph
     of $-\widetilde{G}_{t_2}$. On the region $(B_{4/5}
     \setminus B_{3/5} ) \cap \{r < r_0\}$ we have a uniform upper
     bound $g(y) < C_7$ (independent of $s$), and so we find that on
     this region $M$ lies between the graphs of $\pm C_7C_{\gamma_1}
     t_2 r^{-\gamma_1}$ over $T_{\lambda, \epsilon\lambda}$, i.e. between $\pm C_8
     (D(r_0) + sd) r^{-\gamma_1}$ for some $C_8$. Since we already
     know that $M$ lies between the graphs of $\pm \frac{1}{2}\beta
     |\lambda| r^{-\gamma}$ over $T_{\lambda, \epsilon\lambda}$,
     it follows that $M$ lies between the
     graphs of
     \[ \pm \min\{ (\beta |\lambda| + C_8(D(r_0)+sd) )r^{-\gamma},
       C_8(D(r_0) + sd) r^{-\gamma_1}\}. \]
     By definition this means that
     \[ D_{T_{\lambda, \epsilon\lambda}} (M; (B_{4/5}\setminus B_{3/5})\cap \{ r < r_0\})
       \leq C_8( D(r_0) + sd). \]
     This completes the proof of \eqref{eq:nc20}. 
\end{proof}

Above we defined the distance $D_{T_{\lambda, \epsilon\lambda}}(M; U)$ over sets
$U\subset B_1\setminus B_{1/2}$. For more general sets $U\subset
\mathbf{R}^{n+1}\setminus \{0\}$ we define the distance as follows:
\[  D_{T_{\lambda, \epsilon\lambda}}(M ; U) = \sup_{\Lambda > 0} D_{\Lambda
    T_{\lambda, \epsilon\lambda}}(\Lambda M; \Lambda U \cap (B_1\setminus B_{1/2})). \]
Note that the rescalings $\Lambda T_{\lambda, \epsilon\lambda}$ are all of the form
$T_{\lambda', \epsilon\lambda'}$. 

Using this non-concentration estimate we obtain a three-annulus lemma analogous to
\cite[Proposition 5.12]{Sz20}. We fix a value $d\in\mathbf{R}$ and
we will apply the $L^2$ 3-annulus lemma,
Lemma~\ref{lem:L23annulus}. From this lemma we obtain $\alpha_0,\alpha_0',
\rho_0 > 0$, depending on $d$ and the cone $C$. To simplify the
notation below from now on we will
write
\[ D_{T_{\lambda, \epsilon\lambda}}(M) := D_{T_{\lambda, \epsilon\lambda}}(M; B_1\setminus B_{\rho_0}). \]
We then have the following result, whose proof is essentially
identical to that of \cite[Proposition 5.2]{Sz20} and so we will omit
it.
\begin{prop}\label{prop:DT3annulus2}
  Let $d\in \mathbf{R}$, and $\alpha_0, \alpha_0', \rho_0 > 0$ be the corresponding
  values obtained in Lemma~\ref{lem:L23annulus}. There is a 
  large integer $k_0 > 0$ and a small $\delta > 0$ with the following
  property. Let $L = \rho_0^{-k_0}$, and suppose that $M$ is a
  stationary integral varifold in $B_{2L^2}$ satisfying
  \[ \label{eq:Marea2} \Vert M\Vert(B_{2L^2}) \leq \frac{11}{10} \Vert C\times
    \mathbf{R}\Vert (B_{2L^2}). \]
  Suppose that $|\lambda|, |\epsilon| < \delta$ and $D_{T_{\lambda, \epsilon\lambda}}(M) <
  \delta$. Then for any $\alpha\in(\alpha_0, \alpha_0')$ we have
  \begin{enumerate}
  \item[(i)] If $D_{LT_{\lambda, \epsilon\lambda}}(LM) \geq L^{1-d+\alpha}
    D_{T_{\lambda, \epsilon\lambda}}(M)$, then \\
    $D_{L^2T_{\lambda, \epsilon\lambda}}(L^2M) \geq L^{1-d+\alpha}
    D_{LT_{\lambda, \epsilon\lambda}}(LM)$.
  \item[(ii)] If $D_{L^{-1}T_{\lambda, \epsilon\lambda}}(L^{-1}M) \geq L^{d-1+\alpha}
    D_{T_{\lambda, \epsilon\lambda}}(M)$, then \\
    $D_{L^{-2}T_{\lambda, \epsilon\lambda}}(L^{-2}M) \geq L^{d-1+\alpha}
    D_{L^{-1}T_{\lambda, \epsilon\lambda}}(L^{-1}M)$.
  \end{enumerate}
\end{prop}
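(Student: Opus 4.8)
The plan is to follow the template of \cite[Proposition 5.12]{Sz20}, passing the $L^2$ three annulus lemma, Lemma~\ref{lem:L23annulus}, over to the $L^\infty$-type distance $D_{T_{\lambda,\epsilon\lambda}}$ by means of the non-concentration estimate, Proposition~\ref{prop:nonconc2}. First I would argue by contradiction: if the conclusion fails, there is a sequence $M_j$ of stationary integral varifolds with the area bound \eqref{eq:Marea2}, parameters $\lambda_j,\epsilon_j\to 0$, $D_{T_{\lambda_j,\epsilon_j\lambda_j}}(M_j)\to 0$, violating (i) or (ii) for some $k_0$ and $\delta$ allowed to depend on $j$. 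By the area bound and compactness for stationary varifolds (together with the fact that $T_{\lambda_j,\epsilon_j\lambda_j}\to C\times\mathbf{R}$), after passing to a subsequence $M_j$ converges to a stationary varifold that must be $C\times\mathbf{R}$ with multiplicity one on $B_{2L^2}$. On the region where $r$ is bounded away from $0$, Lemma~\ref{lem:DTgraph} lets me write $M_j$ as the graph of a function $u_j$ over $T_{\lambda_j,\epsilon_j\lambda_j}$, and the blown-up functions $\hat u_j := D_{T_{\lambda_j,\epsilon_j\lambda_j}}(M_j)^{-1} u_j$ converge (away from the singular ray, after extracting a further subsequence and using interior Schauder estimates for the minimal surface equation) to a Jacobi field $U$ on $C\times\mathbf{R}$ with $|r^\gamma U|\in L^\infty$.

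The heart of the argument is then to show that $U$ has no degree $d$ homogeneous component and that the normalized distances $D_{\rho_0^{-i}T_{\cdots}}(\rho_0^{-i}M_j)$ converge, in the limit, to the corresponding $L^2$ norms $\Vert U\Vert_{\rho_0,i}$ of \eqref{eq:L2ann} (up to a fixed constant). The non-concentration estimate Proposition~\ref{prop:nonconc2}, applied at dyadic scales and with $s$ chosen small and then $j$ large, is exactly what guarantees that the contribution of $M_j$ near the singular ray is negligible compared to its contribution on $\{r>r_0\}$; this is why the constant $C_\beta$ in that proposition must be independent of $s$. Thus in the limit the $D$-distances over annuli $B_{\rho_0^i}\setminus B_{\rho_0^{i+1}}$ are comparable to $\Vert U\Vert_{\rho_0,i}$, so the failure of (i) (resp. (ii)) for $M_j$ at scales $L=\rho_0^{-k_0}$ passes to the statement that $U$ violates conclusion (i) (resp. (ii)) of Lemma~\ref{lem:L23annulus} at the scale $\rho_0^{k_0}$; since $U$ has no degree $d$ component this contradicts Lemma~\ref{lem:L23annulus}, provided $k_0$ is chosen large enough that the loss $\alpha_0'-\alpha_0$ accumulated over $k_0$ iterations dominates the error introduced by the non-concentration constants. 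The reason $U$ has no degree $d$ component is the standard one: the hypothesis $D_{T_{\cdots}}(M_j)<\delta$ with $\delta\to 0$, together with the fact that the degree $d$ part of a Jacobi field scales exactly like $\rho^{-d}$, forces any nonzero degree $d$ component to violate the assumed smallness at some scale — more precisely, one builds into the setup (as in \cite{Sz20}) that we are working within the class where the degree $d$ component has been subtracted off, or equivalently one shows directly that a nonzero degree $d$ part would contradict $D_{T_{\cdots}}(M_j)<\delta$.

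I expect the main obstacle to be the bookkeeping needed to make the non-concentration input quantitatively compatible with the three annulus lemma: one must iterate Proposition~\ref{prop:nonconc2} over the roughly $2k_0$ dyadic shells inside $B_{2L^2}$, choosing $r_0$ and $s$ (hence $\delta$) in the right order so that the accumulated $C_\beta s$ errors stay below $\rho_0^{\alpha_0'-\alpha_0}$, while simultaneously ensuring that $\lambda_j,\epsilon_j$ are small enough that Lemma~\ref{lem:fineq11} and Lemma~\ref{lem:DTgraph} apply on every shell. Since the entire scheme is parallel to \cite[Proposition 5.12]{Sz20} and Proposition~\ref{prop:DT3annulus2} is phrased so that one only needs $D_{T_{\lambda,\epsilon\lambda}}$ in place of $D_{C\times\mathbf{R}}$, the only genuinely new point relative to \cite{Sz20} is checking that the presence of the extra parameter $\epsilon$ (through $T_{\lambda,\epsilon\lambda}$ rather than $T_\lambda$) does not disturb any of these estimates — and this is exactly what was arranged in Lemma~\ref{lem:fineq11}, Lemma~\ref{lem:DTgraph} and Proposition~\ref{prop:nonconc2}, each of which was already stated with $\epsilon$ built in. For this reason I would, as the excerpt does, state the proposition and remark that the proof is essentially identical to that of \cite[Proposition 5.12]{Sz20}, indicating only the modifications above.
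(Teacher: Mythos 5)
The paper gives no proof here, remarking only that the argument is essentially that of \cite[Proposition 5.12]{Sz20}. Your overall strategy — contradiction sequence, varifold compactness, graphicality on $\{r>r_0\}$ via Lemma~\ref{lem:DTgraph}, blow-up to a Jacobi field $U$, non-concentration to control the region $\{r<r_0\}$, and reduction to the $L^2$ three-annulus Lemma~\ref{lem:L23annulus} — is the correct template, and you rightly identify that the new parameter $\epsilon$ is already accounted for in Lemmas~\ref{lem:fineq11}, \ref{lem:DTgraph} and Proposition~\ref{prop:nonconc2}.

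The genuine error is the paragraph devoted to arguing that $U$ has no degree $d$ component. That claim is both unnecessary and not obtainable from the hypotheses. Parts (i) and (ii) of Lemma~\ref{lem:L23annulus} are unconditional implications, valid for every admissible Jacobi field; the ``no degree $d$ component'' hypothesis is only required for the separate dichotomy statement (``either (i) or (ii) must hold''), which Proposition~\ref{prop:DT3annulus2} does not assert. Moreover, your reasoning — that $D_{T_{\cdots}}(M_j)<\delta\to 0$ forces the degree $d$ part of $U$ to vanish — is not correct: $U$ is the normalized limit of $D_{T_{\cdots}}(M_j)^{-1}u_j$, so the smallness of $D_{T_{\cdots}}(M_j)$ only sets the normalization of $U$ and says nothing about its homogeneous decomposition. (The device of subtracting off the degree $d$ part by adjusting $\epsilon$ belongs to the later Proposition~\ref{prop:decay2}, not to this one.) What you need instead is to extract from the $k_0$-step growth inequality for $U$, via log-convexity of $i\mapsto\Vert U\Vert_{\rho_0,i}$, the one-step bound $\Vert U\Vert_{\rho_0,k_0}/\Vert U\Vert_{\rho_0,k_0-1}\geq C^{-1/k_0}\rho_0^{d-\alpha}$, which for $k_0$ large exceeds $\rho_0^{d-\alpha_0}$ because $\alpha>\alpha_0$; iterating Lemma~\ref{lem:L23annulus}(i) then gives $\Vert U\Vert_{\rho_0,2k_0}/\Vert U\Vert_{\rho_0,k_0}\geq\rho_0^{k_0(d-\alpha_0')}$, and since $\alpha<\alpha_0'$ this beats $C\,\rho_0^{k_0(d-\alpha)}$ for $k_0$ large, producing the contradiction with the failed conclusion of (i). No hypothesis on the degree $d$ part of $U$ enters anywhere, and the choice of $k_0$ large is precisely what absorbs the multiplicative constant $C$ coming from the $D$-to-$L^2$ comparison and the non-concentration estimate.
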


We will also need to use the following estimate, analogous to the
triangle inequality. The proof is similar to that of
Lemma 6.1 in \cite{Sz20}, except it is slightly simpler since we do
not need to consider rotations.
\begin{lemma} \label{lem:triangle}
  There is a constant $C_1 > 0$ (depending on $\beta$) with the
  following property. Suppose that $|\lambda|, |\epsilon|, |\epsilon'| < C_1^{-1}$,
  and $M$ is a subset of the annulus $B_1\setminus B_{1/2}$ such that
  $D_{T_{{\lambda, \epsilon\lambda}}}(M) < C_1^{-1}$. Then we have
  \[ \label{eq:tri1} D_{T_{{\lambda, \epsilon'\lambda}}}(M) \leq C_1 ( D_{T_{\lambda,
        \epsilon\lambda}}(M) + |\lambda (\epsilon - \epsilon')|), \]
  and 
  \[ \label{eq:tri2} D_{T_{\lambda, \epsilon\lambda}}(M) \leq C_1( D_{T_0}(M) +
    |\lambda|). \]
\end{lemma}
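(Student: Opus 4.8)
The plan is to unwind Definition~\ref{defn:Ddefn}: in both cases it suffices, by the scaling convention defining $D_{T_{\lambda,\epsilon\lambda}}(\cdot;\cdot)$ on general sets and the fact that every rescaling $\Lambda T_{\lambda,\epsilon\lambda}$ is again of the form $T_{\lambda',\epsilon\lambda'}$, to establish a nesting of $d$-neighborhoods on the annulus $B_1\setminus B_{1/2}$, namely $N_{d}(T_0)\subset N_{C_1(d+|\lambda|)}(T_{\lambda,\epsilon\lambda})$ for \eqref{eq:tri2}, and $N_d(T_{\lambda,\epsilon\lambda})\subset N_{C_1(d+|\lambda(\epsilon-\epsilon')|)}(T_{\lambda,\epsilon'\lambda})$ for \eqref{eq:tri1}, for all sufficiently small $d$ and small $|\lambda|,|\epsilon|,|\epsilon'|$; then taking $d=D_{T_0}(M)+o(1)$ (resp. $d=D_{T_{\lambda,\epsilon\lambda}}(M)+o(1)$) gives the claimed inequalities. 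Throughout I would keep $\beta$ small (as the earlier lemmas already require) and take $C_1$ large depending on $\beta$ and the cone.

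For \eqref{eq:tri2} this is almost immediate: since $T_0=C\times\mathbf{R}$ has $\lambda=0$, only case (a) of Definition~\ref{defn:Ddefn} ever occurs, so $N_d(T_0)$ is the region between $H(\pm\beta^{-2}d)\times\mathbf{R}$; choosing $C_1\geq 1/\beta$ forces $C_1(d+|\lambda|)\geq\beta|\lambda|$, so $N_{C_1(d+|\lambda|)}(T_{\lambda,\epsilon\lambda})$ is also of type (a), equal to the region between $H(\pm\beta^{-2}C_1(d+|\lambda|))\times\mathbf{R}$, and the containment follows from the monotonicity of the foliation $\{H(t)\}$ in Definition~\ref{defn:Ht} together with $\beta^{-2}d\leq\beta^{-2}C_1(d+|\lambda|)$.

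For \eqref{eq:tri1} I would split on the size of $d=D_{T_{\lambda,\epsilon\lambda}}(M)$ relative to $\beta|\lambda|$. If $d\geq\beta|\lambda|$, then $N_d(T_{\lambda,\epsilon\lambda})$ and $N_d(T_{\lambda,\epsilon'\lambda})$ are both the region between $H(\pm\beta^{-2}d)\times\mathbf{R}$ --- independent of $\epsilon$ --- so $D_{T_{\lambda,\epsilon'\lambda}}(M)\leq d$ and there is nothing to prove. If $d<\beta|\lambda|$, the work is to compare the type-(b) neighborhoods, defined by graphs over $T_{\lambda,\epsilon\lambda}$ and $T_{\lambda,\epsilon'\lambda}$. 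Here I would use Proposition~\ref{prop:Tdeform}, rescaled via \eqref{eq:Tlambda}, to write $T_{\lambda,\epsilon\lambda}$ as the graph of a function $W$ over $T_{\lambda,\epsilon'\lambda}$ on $B_2\setminus B_{1/4}$; the leading term comes from $(\epsilon-\epsilon')\phi_\l$ with $\phi_\l\in C^{2,\alpha}_{\l-\gamma,-\gamma}$, while the quadratic bound on $v_\epsilon-v_{\epsilon'}$ from Proposition~\ref{prop:Tdeform} --- combined with $\tau$ chosen close enough to $-\gamma$ (coordinated with $\delta$ as in Remark~\ref{rem:deltatau}) and the gain of a positive power of $\lambda$ from the rescaling --- yields the contribution of the lower-order part in the same $r^{-\gamma}$ scale, so that $|W|+|\nabla W|+r|\nabla^2 W|\leq C|\lambda(\epsilon-\epsilon')|\,r^{-\gamma}$. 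Composing this with the defining graphs of $N_d(T_{\lambda,\epsilon\lambda})$ via Lemma~\ref{lem:graphfg} shows $M$ lies between the graphs of $\pm\big(\min\{(\beta|\lambda|+d)r^{-\gamma},\,d r^{-\gamma_1}\}+C|\lambda(\epsilon-\epsilon')|r^{-\gamma}\big)$ over $T_{\lambda,\epsilon'\lambda}$, and since $r^{-\gamma}\leq r^{-\gamma_1}$ on the annulus this is bounded by $\min\{(\beta|\lambda|+d')r^{-\gamma},\,d'r^{-\gamma_1}\}$ with $d'=d+C|\lambda(\epsilon-\epsilon')|$. If $d'<\beta|\lambda|$ this is exactly $N_{d'}(T_{\lambda,\epsilon'\lambda})$; if instead $d'\geq\beta|\lambda|$ (which, since $d<\beta|\lambda|$ and $|\epsilon|,|\epsilon'|$ are small, still forces $d'\leq 2\beta|\lambda|$), I would pass via Lemma~\ref{lem:fineq11} and Lemma~\ref{lem:L51} from "between $\pm(\beta|\lambda|+d')r^{-\gamma}$ over $T_{\lambda,\epsilon'\lambda}$" to "between the leaves $H(\lambda y^\l\pm\mu)$" with $\mu\lesssim d'$, which for $\beta$ small are trapped between $H(\pm\beta^{-2}C_1(d+|\lambda(\epsilon-\epsilon')|))\times\mathbf{R}$, i.e. inside the appropriate type-(a) neighborhood. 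Either way $D_{T_{\lambda,\epsilon'\lambda}}(M)\leq C_1(d+|\lambda(\epsilon-\epsilon')|)$.

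The main obstacle is exactly this last step: handling the crossover where the threshold $\beta|\lambda|$ separating cases (a) and (b) of Definition~\ref{defn:Ddefn} falls between $D_{T_{\lambda,\epsilon\lambda}}(M)$ and the target distance. It is resolved by taking $\beta$ small enough that the $\beta^{-2}$ amplification in case (a) absorbs the cone-dependent constants produced by Lemma~\ref{lem:L51} and Lemma~\ref{lem:fineq11}, and by coordinating $\tau$ with $\delta$ so that $W$ genuinely decays like $r^{-\gamma}$ rather than the slower $r^{\tau}$. The remaining estimates are routine and parallel the proof of Lemma~6.1 in \cite{Sz20}, the only simplification being that no rotations of $C\times\mathbf{R}$ enter here.
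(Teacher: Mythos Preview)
Your proposal is correct and follows essentially the same approach as the paper: case-split on whether $d\ge\beta|\lambda|$ or $d<\beta|\lambda|$, in the latter case use Proposition~\ref{prop:Tdeform} to write $T_{\lambda,\epsilon\lambda}$ as a graph over $T_{\lambda,\epsilon'\lambda}$ with a $|\lambda(\epsilon-\epsilon')|r^{-\gamma}$ bound, and then absorb this shift into the $\min$-envelope defining the type-(b) neighborhood. You are in fact more explicit than the paper about the crossover where $d<\beta|\lambda|$ but $d'=d+C|\lambda(\epsilon-\epsilon')|\ge\beta|\lambda|$; the paper simply writes ``This implies $M\subset N_{C_1K}(T_{\lambda,\epsilon'\lambda})$'' and relies implicitly on the monotonicity lemma preceding Definition~\ref{defn:Ddefn}, whereas you spell out the passage through Lemmas~\ref{lem:fineq11} and~\ref{lem:L51}. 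One small wording point: $\beta$ is already fixed by Proposition~\ref{prop:nonconc2}, so rather than ``taking $\beta$ small enough'' you should say ``since $\beta$ was chosen small'' and absorb the cone constants into $C_1$ (which is allowed to depend on $\beta$).
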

\begin{proof}
  Let us consider \eqref{eq:tri1}. Let $d = D_{T_{\lambda,
      \epsilon\lambda}}(M)$, and write $K = d + |\epsilon -
  \epsilon'||\lambda|$.  Using the definition of the distance we need
  to show that $M \subset 
  N_{C_1K}(T_{\lambda, \epsilon'\lambda})$ for some $C_1 > 0$.

 We consider two cases. 
  \begin{itemize}
    \item Suppose that $d \geq \beta|\lambda|$.
      By definition of the distance this means that $M$ is
      bounded between $H(\pm \beta^{-2}d)\times\mathbf{R}$,
      and so we also have $M \subset N_d(T_{\lambda,
          \epsilon'\lambda})$ as required (note that $C_1K \geq d$ if
        $C_1\geq 1$).
     \item Suppose that $d < \beta |\lambda|$. Then by definition $M$
       lies between the graphs of
       \[ \pm \min\{ (\beta|\lambda| + d) r^{-\gamma},
         dr^{-\gamma_1}\} \]
       over $T_{\lambda, \epsilon\lambda}$. However, using
       Proposition~\ref{prop:Tdeform}, $T_{\lambda,
         \epsilon\lambda}$ is the graph of a function $f$ over
       $T_{\lambda, \epsilon'\lambda}$ with $|f| \lesssim
       |\epsilon-\epsilon'| |\lambda| \rho^\l r^{-\gamma}$, and on our
       annulus we have $\rho\in(1/4, 2)$. It follows from this that $M$
       lies between the graphs of
       \[ \pm \min\{ (\beta|\lambda| + d + C_1 |\epsilon-\epsilon'|
         |\lambda|) r^{-\gamma}, (d + C_1 |\epsilon-\epsilon'|
         |\lambda|) r^{-\gamma_1}\} \]
       over $T_{\lambda, \epsilon'\lambda}$
       for some $C_1 > 0$. This implies that $M \subset
       N_{C_1K}(T_{\lambda, \epsilon'\lambda})$ as required.
     \end{itemize}
     This completes the proof of \eqref{eq:tri1}. The proof of
     \eqref{eq:tri2} is very similar so we will omit it. 
\end{proof}

\subsection{Proof of Theorem~\ref{thm:symmetric}}\label{sec:thm3proof}
In this section we will turn to the proof of
Theorem~\ref{thm:symmetric}. Let $\delta_0 > 0$, and suppose that
$M$ is as in Theorem~\ref{thm:symmetric} with the additional property
that all rescalings $\Lambda M$ for $\Lambda \geq 1$ satisfy
$D_{C\times\mathbf{R}}(\Lambda M; B_1) < \delta_0$ as well as the
area bounds \eqref{eq:Marea2}. Note that we can always arrange this by
first scaling $M$ up. If $\delta_0$ is chosen sufficiently
small,  the triangle
inequality, Lemma~\ref{lem:triangle}, also implies that
$D_{T_{\lambda, \epsilon\lambda}}(\Lambda M) < 2C_1\delta_0$ as long
as $|\lambda|, |\epsilon| <
\delta_0$.  Then the 3-annulus lemma, Proposition~\ref{prop:DT3annulus2},
with $d=\l-\gamma$ will apply to all blowups $\Lambda M$ for $\Lambda
\geq 1$. From now on $\alpha_0, \alpha_0', \rho_0$ will denote the constants from
Proposition~\ref{prop:DT3annulus2}, for the value $d=\l-\gamma$.
Note also that by \eqref{eq:Tlambda} we have that if
$|\lambda| < \delta_0$ and $T_{\lambda', \epsilon\lambda'} = \Lambda
T_{\lambda, \epsilon\lambda}$ for
$\Lambda \geq 1$, then $|\lambda'| \leq |\lambda| < \delta_0$. Using
this we can iterate the statement (i) in
Proposition~\ref{prop:DT3annulus2}.

We have the following consequence of the fact that $M$ has degree
$\l-\gamma$. For simplicity we will write $d_M = \l-\gamma$. 
\begin{lemma}\label{lem:DTlgrowth} There is a $\delta_0 > 0$ with
  the following property. 
  Suppose that $M$ is as in Theorem~\ref{thm:symmetric}, has degree
  $d_M = \l-\gamma$, and $D_{C\times\mathbf{R}}(\Lambda M; B_1) <
  \delta_0$ for all
  $\Lambda \geq 1$ as above. Then for $|\lambda|, |\epsilon| < \delta_0$
  and any $\alpha\in (\alpha_0, \alpha_0')$ we have
  \[ D_{L T_{\lambda, \epsilon\lambda}}(LM) \leq L^{1-d_M+\alpha}
    D_{T_{\lambda, \epsilon\lambda}}(M), \]
  where $\alpha_0, \alpha_0'$ are the constants in
  Proposition~\ref{prop:DT3annulus2}. 
\end{lemma}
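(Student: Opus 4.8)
The plan is to argue by contradiction, exactly in the spirit of the classical three-annulus/decay arguments of Allard-Almgren and Simon, using Proposition~\ref{prop:DT3annulus2} as the basic iteration step. Suppose the conclusion fails, so that for some choice of $\lambda, \epsilon$ with $|\lambda|, |\epsilon| < \delta_0$ and some $\alpha \in (\alpha_0, \alpha_0')$ we have $D_{LT_{\lambda,\epsilon\lambda}}(LM) > L^{1-d_M+\alpha} D_{T_{\lambda,\epsilon\lambda}}(M)$. Since $\alpha > \alpha_0$ and the statement of Proposition~\ref{prop:DT3annulus2}(i) holds for every $\alpha' \in (\alpha_0,\alpha_0')$, in particular for our chosen $\alpha$, we may iterate it: the scalings $L^k T_{\lambda,\epsilon\lambda}$ are all of the form $T_{\lambda_k,\epsilon\lambda_k}$ with $|\lambda_k| \le |\lambda| < \delta_0$ (using \eqref{eq:Tlambda}), and the area bound \eqref{eq:Marea2} is preserved under scaling up, so the hypotheses of the proposition remain valid for every $L^kM$. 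Iterating (i) therefore gives
\[
D_{L^{k+1}T_{\lambda,\epsilon\lambda}}(L^{k+1}M) \ge L^{1-d_M+\alpha}\, D_{L^k T_{\lambda,\epsilon\lambda}}(L^kM)
\]
for all $k \ge 0$, hence
\[
D_{L^k T_{\lambda,\epsilon\lambda}}(L^k M) \ge L^{k(1-d_M+\alpha)}\, D_{T_{\lambda,\epsilon\lambda}}(M).
\]

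Next I would translate this lower bound into a lower bound on the growth of $D_{C\times\mathbf{R}}(L^kM; B_1\setminus B_{\rho_0})$, using the triangle inequality Lemma~\ref{lem:triangle} and the scaling relation \eqref{eq:Tlambda}. Writing $L^k T_{\lambda,\epsilon\lambda} = T_{\lambda_k, \epsilon\lambda_k}$, from \eqref{eq:tri2} we get $D_{T_{\lambda_k,\epsilon\lambda_k}}(L^kM) \le C_1(D_{T_0}(L^kM) + |\lambda_k|)$, while $|\lambda_k| = L^{k(1-d_M)^{-1}\cdot(\dots)}$ decays (note $1-d_M < 0$ since $d_M = \l-\gamma > 1$, so $\lambda_k \to 0$ at a definite polynomial rate in $L^{-k}$; more precisely $|\lambda_k| \lesssim L^{-k(d_M-1)}$ from \eqref{eq:Tlambda}). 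Combined with $D_{T_0} \sim D_{C\times\mathbf{R}}$ on $B_1\setminus B_{1/2}$ (equation \eqref{eq:DCRDTequiv}), and after rescaling the annulus $B_1\setminus B_{\rho_0}$ into a fixed configuration, this yields
\[
D_{C\times\mathbf{R}}(L^k M; B_1) \gtrsim L^{k(1-d_M+\alpha)}\, D_{T_{\lambda,\epsilon\lambda}}(M) - C L^{-k(d_M-1)}.
\]
The key point is that $1 - d_M + \alpha$ is strictly larger than $-(d_M-1) = 1-d_M$ (since $\alpha > \alpha_0 > 0$), so as long as $D_{T_{\lambda,\epsilon\lambda}}(M) > 0$ the first term dominates for large $k$, giving a definite lower bound $D_{C\times\mathbf{R}}(L^kM;B_1) \gtrsim L^{k(1-d_M+\alpha)}D_{T_{\lambda,\epsilon\lambda}}(M)$ — in particular this quantity does \emph{not} decay faster than $L^{k(1-d_M+\alpha/2)}$, say.

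Finally I would derive the contradiction with $M$ having degree $d_M = \l-\gamma$. By Corollary~\ref{cor:nonzeroU} and Definition~\ref{defn:Mdegree}, blowing up $M$ produces a nonzero Jacobi field $U$ of degree exactly $d_M$, of the form $U = \lambda_* u_\l + O(\rho^{c+\gamma}r^{-\gamma})$ with $\lambda_* \ne 0$ and $c > \l-\gamma$; this forces $D_{C\times\mathbf{R}}(2^iM; B_1)$ — hence $D_{C\times\mathbf{R}}(L^kM;B_1)$ along the subsequence — to decay \emph{exactly} at rate $\sim L^{-k\cdot(-(1-d_M))} = L^{k(1-d_M)}$ up to subleading corrections. (More carefully: the $a_i = D_{C\times\mathbf{R}}(2^iM;B_1)$ rescale $u_i$ to a nonzero limit $U$ of degree $d_M$, and the degree being $d_M$ rather than something smaller means $a_i^{-1}u_i$ converges to a genuine degree-$d_M$ Jacobi field, which pins down the growth rate.) But we just showed $D_{C\times\mathbf{R}}(L^kM;B_1) \gtrsim L^{k(1-d_M+\alpha/2)}$, which with $\alpha > 0$ grows strictly faster than $L^{k(1-d_M)}$ as $k\to\infty$ (recall $L = \rho_0^{-k_0} > 1$), contradicting the boundedness/decay forced by the degree-$d_M$ blowup. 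The main obstacle I expect is the bookkeeping in this last step: carefully relating the iterated bound on $D_{T_{\lambda,\epsilon\lambda}}$ to the actual deviation of $M$ from $C\times\mathbf{R}$ (dealing with the $|\lambda_k|$ error terms and the equivalence \eqref{eq:DCRDTequiv} under rescaling), and making precise the claim that degree $d_M$ caps the growth rate of $D_{C\times\mathbf{R}}(2^iM;B_1)$ — one must rule out that $D_{T_{\lambda,\epsilon\lambda}}(M)$ is itself zero for the relevant $\lambda$ (in which case $M \subset T_{\lambda,\epsilon\lambda}$ and one is done anyway), and handle the choice of which $(\lambda,\epsilon)$ to run the argument with. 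This is why the auxiliary perturbations $T_{\lambda,\epsilon\lambda}$ were introduced: they give enough flexibility to always be able to choose parameters realizing the decay rate dictated by the degree.
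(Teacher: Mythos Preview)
Your overall architecture matches the paper's proof: contradict the conclusion, iterate Proposition~\ref{prop:DT3annulus2}(i) to get the lower bound
\[
D_{L^k T_{\lambda,\epsilon\lambda}}(L^kM) \ge L^{k(1-d_M+\alpha)} D_{T_{\lambda,\epsilon\lambda}}(M),
\]
use Lemma~\ref{lem:triangle} and \eqref{eq:DCRDTequiv} to convert this into a lower bound for $D_{T_0}(L^kM)$, and then derive a contradiction with the degree-$d_M$ hypothesis. The first two steps are essentially complete as you wrote them.

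The gap is in your final step. You assert that degree $d_M$ ``forces $D_{C\times\mathbf{R}}(L^kM;B_1)$ to decay exactly at rate $\sim L^{k(1-d_M)}$,'' but this is not immediate and is in fact the heart of the argument. Two issues. First, Corollary~\ref{cor:nonzeroU} only gives information along a subsequence, and the convergence $a_i^{-1}u_i \to U$ is locally uniform \emph{away from the singular ray}; it does not by itself control $a_i = D_{C\times\mathbf{R}}(2^iM;B_1)$, which depends on behavior near $r=0$. The paper bridges this using the non-concentration estimate Proposition~\ref{prop:nonconc}: along the subsequence $k_i$ realizing a degree-$d_M$ limit, one shows that for any $j$, once $i$ is large enough,
\[
D_{T_0}(L^{k_i+j}M) \le C\, L^{j(1-d_M)}\, D_{T_0}(L^{k_i}M).
\]
Second, this is still only subsequential information. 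To get a bound valid for all large $m$ the paper applies the three-annulus lemma \emph{a second time}, now with some $\alpha' \in (\alpha_0,\alpha)$: if the growth alternative of Proposition~\ref{prop:DT3annulus2}(i) ever held for $D_{T_0}$ at exponent $\alpha'$, iterating it would eventually violate the subsequential bound above. Hence the decay alternative holds for all large $m$, giving $D_{T_0}(L^mM) \le C_5 L^{m(1-d_M+\alpha')}$. The contradiction then comes from $\alpha > \alpha'$, not from comparing against the exact rate $1-d_M$. You should supply both of these ingredients; the rest of your outline is sound.
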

\begin{proof}
  Suppose that the conclusion does not hold. If $\delta_0$ is
  sufficiently small, then using
  Proposition~\ref{prop:DT3annulus2} we have 
  \[ \label{eq:DL10} D_{L^{k} T_{\lambda, \epsilon\lambda}}(L^{k}M) >
    L^{k(1-d_M+\alpha)} D_{T_{\lambda, \epsilon\lambda}}(M) \] 
  for all $k \geq 0$. Using Lemma~\ref{lem:triangle} and the
  formulas \eqref{eq:Tlambda} it follows that 
  \[ D_{L^{k} T_{\lambda, \epsilon\lambda}} (L^{k}M) \leq C_1 \Big(D_{T_0}(L^{k} M) + L^{k(1 -d_M)}
    |\lambda| \Big). \]
  Combining this with \eqref{eq:DL10} we have
  \[ \label{eq:DLkj} D_{T_0}(L^{k}M) > C_1^{-1}
    L^{k(1-d_M+\alpha)} D_{T_{\lambda, \epsilon\lambda}}(M) -
    L^{k(1-d_M)}|\lambda|. \]
  We will see that this rate of growth
  contradicts that $M$ has degree $d_M$ once $k$ is sufficiently
  large. 

  Indeed, by the assumption that $M$ has degree $d_M$,
  we can choose a sequence $k_i \to \infty$ such
  that in the setting of Corollary~\ref{cor:nonzeroU} the limiting
  Jacobi field obtained from the sequence $L^{k_i}M$ is a Jacobi field
  $U$ with degree $d_M$. More precisely,  set $a_i =
  D_{C\times\mathbf{R}}(L^{k_i}M; B_1) \to 0$. Then there is a sequence
  $r_i\to 0$ such that $L^{k_i}M$ is the graph of a function $u_i$
  over $C\times\mathbf{R}$ on the region $B_1\cap \{ r > r_i\}$, and
  $a_i^{-1}u_i \to U$ locally uniformly on compact subsets of
  $B_1\setminus\{r=0\}$. 

  For any $i, j > 0$ we have that $L^{k_i + j}M$ is the graph of the
  function $u_{i,j} = L^j u_i(\cdot / L^j)$ over $C\times\mathbf{R}$ on the
  region $\{r > L^j r_i\}\cap B_1$. For fixed $j$, we have $a_i^{-1}
  u_{i,j} \to U_j$ locally uniformly on $B_1\setminus \{r = 0\}$ as
  $i\to\infty$, where $U_j = L^j U(\cdot / L^j)$. Since $U$ is a sum
  of homogeneous Jacobi fields with degrees at least $d_M$, it follows
  that
  \[ \Vert U_j\Vert_{L^2(B_2)} \leq L^{j(1-d_M)} \Vert
    U\Vert_{L^2(B_2)}, \]
  and so using Lemma~\ref{lem:L2Linfty} we have
  \[ \sup_{B_1} |r^\gamma U_j| \leq C_2 L^{j(1-d_M)} \]
  for all $j > 0$, for a constant $C_2$. By
  the scaling property of the distance $D_{C\times\mathbf{R}}$ (see
  \eqref{eq:DCRscale}) we also have
  \[ D_{C\times\mathbf{R}}(L^{k_i + j}M; B_2) \leq L^{(1+\gamma)j}
    D_{C\times\mathbf{R}}(L^{k_i}M ; B_1) =
    L^{(1+\gamma)j }a_i. \]
  The non-concentration estimate, Proposition~\ref{prop:nonconc}, then
  implies that given a fixed $j$, for any $s > 0$ we can choose $i$
  sufficiently large (depending on $j, s$) such that
  \[ D_{C\times\mathbf{R}}(L^{k_i+j}M;B_1) \leq C_3 L^{j(1-d_M)} a_i + s
    L^{(1+\gamma)j} a_i.\]
  Given $j$, we can choose $s$ such that $s L^{(1+\gamma)j} =
  L^{j(1-d_M)}$, and so we get
  \[ D_{C\times\mathbf{R}}(L^{k_i+j}M;B_1) \leq 2C_3 L^{j(1-d_M)} D_{C\times\mathbf{R}}(L^{k_i}M;B_1), \]
  for sufficiently large $i$ (depending on $j$). The constant $C_3$ here is independent of $i, j$.
   At the same time, using the hypersurfaces $H(\pm t)\times\mathbf{R}$
   as barriers, we see that
   \[ D_{C\times\mathbf{R}} (M ; B_1)\leq D_{C\times\mathbf{R}}(M ;
     B_1\setminus B_{\rho_0}) \]
   for any stationary integral varifold $M$ in $B_1$.
   It follows, using also the equivalence of the two distances given
   in  \eqref{eq:DCRDTequiv},  that for any given $j$, once $i$ is large enough
  we have
  \[ \label{eq:DT0g1} D_{T_0}(L^{k_i+j}M) \leq C_4 L^{j(1-d_M)} D_{T_0}(L^{k_i}M), \]
  for a constant $C_4$ independent of $i,j$.
  
  Choose an $\alpha' \in (\alpha_0, \alpha)$. Using the three annulus lemma
  we have that for sufficiently large $m$
  \[ D_{T_0}(L^{m+1}M) \geq L^{1-d_M + \alpha'} D_{T_0}(L^m M) \]
  implies
  \[ D_{T_0}(L^{m+j}M) \geq L^{j(1-d_M +\alpha')} D_{T_0}(L^m M), \]
  for any $j > 0$. If we choose $j$ sufficiently large, and a
  corresponding $k_i$ for large enough $i$ then this
  contradicts \eqref{eq:DT0g1}. We therefore have
  \[ D_{T_0}(L^{m+1}M) \leq L^{1-d_M + \alpha'} D_{T_0}(L^m M) \]
  for all sufficiently large $m$. In particular there is a constant $C_5$
  such that  for all sufficiently large $m$ we have
  \[ D_{T_0}(L^m M) \leq C_5 L^{m(1-d_M+\alpha')}. \]
  Together with \eqref{eq:DLkj} this means that for sufficiently large
  $m$ we have
  \[ C_1^{-1} L^{m(1-d_M +\alpha)} D_{T_{\lambda, \epsilon\lambda}}(M) - L^{m(1-d_M)}
    |\lambda| < C_5 L^{m(1-d_M+\alpha')}. \]
  This is a contradiction as $m\to\infty$ using that $\alpha >
  \alpha'$. 
\end{proof}

The main ingredient in the proof of Theorem~\ref{thm:symmetric} will
be the following decay estimate. 

\begin{prop}\label{prop:decay2}
  There are constants $C_1, \delta_0, B > 0$
  depending on the cone $C$ and on the degree $d_M = \l-\gamma$ of $M$
  (see Definition~\ref{defn:Mdegree}) with the following property. Suppose
  that $M$ is as in Theorem~\ref{thm:symmetric} and in addition
  $D_{C\times\mathbf{R}}(\Lambda M;B_1) < \delta_0$ for all $\Lambda \geq
  1$, and  $D_{T_{\lambda, \epsilon\lambda}}(M) < \delta_0 |\lambda|$ for some
  $|\lambda|, |\epsilon| < 
  \delta_0$. Then there exists an $\epsilon'$ such that $|\epsilon' -
  \epsilon||\lambda| \leq C_1D_{T_\lambda}(M)$, and
  \[ D_{L^BT_{\lambda, \epsilon' \lambda}}(L^BM) \leq \frac{1}{2} L^{B(1-d_M)}
    D_{T_{\lambda, \epsilon\lambda}}(M). \]
\end{prop}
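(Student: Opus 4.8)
The plan is to iterate the three-annulus lemma, Proposition~\ref{prop:DT3annulus2}, combined with a ``renormalization'' step that replaces $\epsilon$ by a nearby $\epsilon'$ chosen so that the leading-order discrepancy between $M$ and $T_{\lambda,\epsilon\lambda}$ is absorbed. First I would note that by Lemma~\ref{lem:DTlgrowth}, applied to $M$ at scale $1$, we have $D_{LT_{\lambda,\epsilon\lambda}}(LM)\le L^{1-d_M+\alpha}D_{T_{\lambda,\epsilon\lambda}}(M)$; iterating over the dyadic scales $L^j$ this gives the crude bound $D_{L^jT_{\lambda,\epsilon\lambda}}(L^jM)\le L^{j(1-d_M+\alpha)}D_{T_{\lambda,\epsilon\lambda}}(M)$, which already says the distance decays at rate slightly faster than $L^{1-d_M}$ but \emph{not} strictly faster than the rate at which $T_\lambda$ itself approaches the cone. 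The point of the proposition is to upgrade the exponent from $1-d_M+\alpha$ to essentially $1-d_M$ but with a definite gain of a factor $\tfrac12$, at the price of recentering on a perturbed leaf $T_{\lambda,\epsilon'\lambda}$.

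The key step is the recentering. On the annulus $B_1\setminus B_{\rho_0}$, away from the singular ray, write $M$ as a graph of a function $u$ over $C\times\mathbf{R}$ (using Step~1 of Proposition~\ref{prop:3anndecay}); by Corollary~\ref{cor:nonzeroU} and Lemma~\ref{lem:CinvJ} the leading-order part of a suitable blowup of $u$ is a multiple of $u_\ell$. I would perturb $u$ to a genuine Jacobi field $U$ on $C\times\mathbf{R}$ (via Corollary~\ref{cor:CR2}, as in Step~2 of Proposition~\ref{prop:3anndecay}), extract from $U$ its degree-$d_M$ homogeneous component, which by the symmetry assumption is $\mu\, u_\ell$ for some $\mu$, and then choose $\epsilon'$ so that $(1+\epsilon')\lambda = \lambda+\mu$ up to the required error; this gives $|\epsilon'-\epsilon||\lambda|\lesssim \|U\|\lesssim D_{T_{\lambda,\epsilon\lambda}}(M)$, using Lemma~\ref{lem:L2Linfty} and Lemma~\ref{lem:triangle} to pass between the $L^2$ and $L^\infty$-type distances and between different centers. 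After recentering, $M$ has no degree-$d_M$ component relative to $T_{\lambda,\epsilon'\lambda}$ to leading order, so the ``improved'' alternative of the $L^2$ three-annulus lemma, Lemma~\ref{lem:L23annulus} (the clause ``if in addition $u$ has no degree $d$ component''), forces the \emph{strict-decay} branch; transferring this back to $D_{T_{\lambda,\epsilon'\lambda}}$ through Propositions~\ref{prop:nonconc2} and \ref{prop:DT3annulus2} over $B$ consecutive dyadic annuli (with $B=k_0$ the integer from Proposition~\ref{prop:DT3annulus2}, possibly enlarged) yields the factor-$\tfrac12$ gain, once $\delta_0$ is small enough that the error terms from the non-concentration estimate and from $v$ in the Jacobi-field perturbation are all $o(1)$ times $D_{T_{\lambda,\epsilon\lambda}}(M)$.

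The main obstacle I anticipate is controlling the interaction between the recentering error and the singular ray: the bound $|\epsilon'-\epsilon||\lambda|\lesssim D_{T_{\lambda,\epsilon\lambda}}(M)$ must hold with a constant independent of $\lambda$, which requires that the quantitative non-concentration estimate, Proposition~\ref{prop:nonconc2}, be applied with its parameter $s$ chosen uniformly in $\lambda$, and that the triangle inequality Lemma~\ref{lem:triangle} be used in the regime $D_{T_{\lambda,\epsilon\lambda}}(M) < \delta_0|\lambda|$ where case~(b) of Definition~\ref{defn:Ddefn} is active. Keeping track of which terms scale like $|\lambda| r^{-\gamma}$ versus $|\lambda| r^{\kappa-\gamma}$ (via Lemma~\ref{lem:fineq11}), and ensuring the degree-$d_M$ component is genuinely killed rather than merely reduced, is where the bookkeeping is delicate; but structurally this is the same mechanism as in \cite[Section 5]{Sz20}, so I expect the argument to go through with the constants made effective.
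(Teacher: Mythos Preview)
Your overall mechanism---recentre onto a nearby leaf $T_{\lambda,\epsilon'\lambda}$ to absorb the degree-$d_M$ Jacobi component, then exploit the ``no degree $d$ component'' clause of the three-annulus lemma---is exactly what the paper does. But the paper implements it by a compactness/contradiction argument and graphs $M$ over $T_{\lambda,\epsilon\lambda}$, not over $C\times\mathbf{R}$, and this difference is not cosmetic.

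Concretely, the paper supposes the conclusion fails along a sequence $M_i$, $\lambda_i,\epsilon_i\to 0$, sets $a_i=D_{T_{\lambda_i,\epsilon_i\lambda_i}}(M_i)$, and uses Lemma~\ref{lem:DTgraph} to write $M_i$ as the graph of $u_i$ over $T_{\lambda_i,\epsilon_i\lambda_i}$ with $|u_i|\lesssim a_i r^{-\gamma_1}$. A subsequential limit $a_i^{-1}u_i\to U$ is then a genuine Jacobi field on $C\times\mathbf{R}$; its degree-$d_M$ part $cu_\ell$ determines $\epsilon_i'=\epsilon_i+ca_i/\lambda_i$, and a computation using Proposition~\ref{prop:Tdeform} shows that the new normalised graph $a_i^{-1}u_i'$ converges to $U'=U-cu_\ell$, which has \emph{exactly} zero degree-$d_M$ component. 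Lemma~\ref{lem:L23annulus} is applied to $U'$ in the limit; the resulting smallness on the intermediate annulus is transferred back to $D_{L^BT_{\lambda_i,\epsilon_i'\lambda_i}}(L^BM_i)$ via Proposition~\ref{prop:nonconc2}, and Lemma~\ref{lem:DTlgrowth} rules out the growth branch of the dichotomy.

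Your direct route, graphing $M$ over $C\times\mathbf{R}$ and perturbing to a Jacobi field via Corollary~\ref{cor:CR2}, runs into a genuine problem: the graph function $u$ over the cone has size $\sim|\lambda|$ (since $T_{\lambda,\epsilon\lambda}$ itself sits at distance $\sim|\lambda|$ from $C\times\mathbf{R}$), so the quadratic error in passing from $u$ to a Jacobi field $U$ is of order $|\lambda|^2$. You only know $D_{T_{\lambda,\epsilon\lambda}}(M)<\delta_0|\lambda|$ as an \emph{upper} bound, with no lower bound, so this $|\lambda|^2$ error can swamp $D_{T_{\lambda,\epsilon\lambda}}(M)$ and you cannot conclude $|\epsilon'-\epsilon||\lambda|\lesssim D_{T_{\lambda,\epsilon\lambda}}(M)$. (Your formula $(1+\epsilon')\lambda=\lambda+\mu$ is also off: with $\mu$ the full degree-$d_M$ coefficient of $U$ you would want $(1+\epsilon')\lambda=\mu$, but even with the correct formula the error issue persists.) Graphing over $T_{\lambda,\epsilon\lambda}$ fixes the scale, but then you no longer have a clean linear PDE to perturb to; the paper's compactness argument is precisely the device that produces an honest Jacobi field with no degree-$d_M$ part without having to make the error estimates effective.
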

\begin{proof}
  The crucial ingredient is that the family of comparison
  surfaces $T_{\lambda, \epsilon\lambda}$ with varying $\epsilon$
  accounts for the whole space of $O(p+1)\times
  O(q+1)$-invariant degree $d_M$ Jacobi fields on $C\times\mathbf{R}$,
  which is spanned by $u_\l$. Therefore by choosing the 
  correct reference surface at each scale, we can achieve the required
  decay. A similar idea appears in the proof of the uniqueness of integrable
  tangent cones in~\cite{AA81} as well as in more recent works such as
  \cite{ES20, Sz20} for instance. 
  
  For the sake of contradiction let
  us suppose that no matter how small $\delta_0$ is
  we cannot find suitable constants $C_1, B$. In particular we then
  have a sequence of $O(p+1)\times O(q+1)$-invariant stationary
  integral varifolds $M_i$ of degree $d_M$ such that
  $|\lambda_i|^{-1} D_{T_{\lambda_i, \epsilon_i\lambda_i}}(M_i) \to 0$ for sequences
  $\lambda_i, \epsilon_i \to 0$. Let us also 
  fix a large integer $B > 0$. We will show that if $B$ is
  sufficiently large, then there is a constant $C_1$ such that for sufficiently large
  $i$ we can find $\epsilon_i'$ satisfying the conclusion.

  Let $a_i = D_{T_{\lambda_i, \epsilon_i\lambda_i}}(M_i)$, so that
  $|\lambda_i|^{-1}a_i\to 0$. Using Lemma~\ref{lem:DTgraph} we
  have a sequence $r_i\to 0$
  such that $M_i$ is the graph of a function $u_i$ over
  $T_{\lambda_i, \epsilon_i\lambda_i} $ on the region $(B_1\setminus B_{\rho_0})\cap \{r >
  r_i\}$, where $|r^{\gamma_1} u_i| \leq C_1 a_i$. Up to choosing a
  subsequence we can assume that $a_i^{-1} u_i$ converges locally
  uniformly away from the singular line to a Jacobi field $U$ on
  $C\times\mathbf{R}$ over the annulus $B_1\setminus B_{\rho_0}$
  satisfying $|r^{\gamma_1}U|\leq C_1$. Let us write $U = U_{d_M} +
  U'$, where $U'$ has no degree $d_M$ component. Since we are in the
  $O(p+1)\times O(q+1)$-invariant setting, we have $U_{d_M} = c u_\l $
  in terms of the function $u_\l$ in \eqref{eq:ul}, for a constant
  $c$. 
  We choose $\epsilon_i'$ so that 
  $\lambda_i\epsilon_i' = \lambda_i\epsilon_i + c a_i$. It follows that $|\epsilon_i' -
  \epsilon_i| |\lambda_i|\leq C_1 a_i$, and also by our assumption
  that $|\epsilon_i' - \epsilon_i| \to 0$.  Using Lemma~\ref{lem:triangle} we have
  $D_{T_{\lambda_i, \epsilon_i'\lambda_i}}(M_i) \leq C_2a_i$ for large
  $i$.

  In addition we claim that we can write $M_i$ as the graph of $u_i'$ over
  $T_{\lambda_i, \epsilon_i'\lambda_i}$ over the region $(B_1\setminus B_{\rho_0})\cap \{r
  > r_i'\}$ for some $r_i' \to 0$, such that $a_i^{-1} u_i' \to U'$
  locally uniformly. To see this, first note that by
  Proposition~\ref{prop:Tdeform} we know that $T_{1,
    \epsilon_i}$ is the graph of $\epsilon_i\phi_\l + v_{\epsilon_i}$
  over $T_1$, and similarly $T_{1,\epsilon_i'}$ is the graph of $
  \epsilon_i' \phi_\l + v_{\epsilon_i'}$ over $T_1$. Let us write
  \[ \phi_{\l, \lambda} &= \lambda^{(1-d_M)^{-1}} \phi_\l
    (\lambda^{-(1-d_M)^{-1}}\cdot), \\
    v_{\epsilon, \lambda} &=  \lambda^{(1-d_M)^{-1}}
    v_{\epsilon}(\lambda^{-(1-d_M)^{-1}}\cdot) \]
  on $T_{\lambda}$ for corresponding rescaled functions. In terms of
  these we have that $T_{\lambda_i, \epsilon_i\lambda_i}$ is the graph
  of \[ f = \epsilon_i \phi_{\l,\lambda_i} + v_{\epsilon_i,
      \lambda_i}\]  over
  $T_{\lambda_i}$ and $T_{\lambda_i, \epsilon_i' \lambda_i}$ is the
  graph of \[g = \epsilon_i' \phi_{\l,\lambda_i} + v_{\epsilon_i',
    \lambda_i}\]  over $T_{\lambda_i}$.

  For any fixed $R > 0$, let us focus on the region
  $\Omega_R = (B_1\setminus B_{\rho_0}) \cap \{r > R\}$. Using that $\phi_\l\in
  C^{2,\alpha}_{\l-\gamma, -\gamma}$ as well as the bounds \eqref{eq:vepsbound} for
  $v_\epsilon$, we have $\Vert
  f\Vert_{C^k} \leq C_R \epsilon_i |\lambda_i|$ and $\Vert
  g\Vert_{C^k} \leq C_R \epsilon_i' |\lambda_i|$ on $\Omega_R$, where
  we write $C_R$ for $R$-dependent constants, that may change from line
  to line. Note that we
  automatically obtain derivative bounds for $f, g$ from $C^0$ bounds,
  since $T_{\lambda_i}, T_{\lambda_i, \epsilon_i \lambda_i},
  T_{\lambda_i, \epsilon_i' \lambda_i}$ are all minimal surfaces. In
  addition we have $\Vert f - g\Vert_{C^k} \leq C_R |\epsilon_i -
  \epsilon_i'| |\lambda_i|$. 
  It follows, using a result similar to Lemma~\ref{lem:graphfg}, that
  $T_{\lambda_i, \epsilon_i\lambda_i}$ is the graph of a function $h$
  over $T_{\lambda_i, \epsilon_i' \lambda_i}$ satisfying
  \[ \label{eq:hfg10} \Vert h - (f-g) \Vert_{C^k} \leq C_R ( |\epsilon_i| +
    |\epsilon_i'| ) |\epsilon_i - \epsilon_i'| |\lambda_i|^2.  \]
  Here and below we are comparing functions defined on different
  hypersurfaces, such as $h$ and $f-g$. In order to do this we
  identify the two hypersurfaces using that each is a graph over the
  other. 
  
  From property (2) in Proposition~\ref{prop:TJacfield} we have
\[ |\phi_{\l, \lambda} - \lambda u_\l | &\lesssim
  \lambda^{(d_M-1)^{-1}(\delta-1)} \rho^{\delta-\tau} r^{\tau} \\
  &\leq C_R \lambda^{1+\kappa} \]
  on $\Omega_R$  for some $\kappa > 0$ (recall that $\delta >
  \l-\gamma = d_M$). At the same time 
  \eqref{eq:vepsbound} implies that
  \[ \Vert v_{\epsilon_i', \lambda_i} - v_{\epsilon_i, \lambda_i}
    \Vert_{C^k} \leq
    C_R (|\epsilon_i| + |\epsilon_i'|) |\epsilon_i - \epsilon_i'|
    |\lambda_i|. \]
  It follows that on $\Omega_R$ we have
  \[ \Big\Vert f - g - (\epsilon_i - \epsilon_i') \lambda_i u_\l \Big\Vert_{C^k}
    \leq C_R |\epsilon_i - \epsilon_i'|
    |\lambda_i| \Big( |\lambda_i|^{\kappa} + |\epsilon_i| +|
    \epsilon_i'|\Big).   \]

  Using also \eqref{eq:hfg10}, the conclusion is that we have a
  sequence of numbers $t_i \to 0$ such that on 
  $\Omega_R$ we have
  \[ \label{eq:hest30} \Vert h - (\epsilon_i - \epsilon_i') \lambda_i u_\l \Vert_{C^k}
    \leq C_R t_i |\epsilon_i - \epsilon_i'| |\lambda_i|. \]
  In addition $M_i$ is the graph of $u_i$ over $T_{\lambda_i,
    \epsilon_i\lambda_i}$ which in turn is the graph of $h$ over
  $T_{\lambda_i, \epsilon_i' \lambda_i}$. It follows that $M_i$ is the
  graph of $u_i'$ over $T_{\lambda_i, \epsilon_i' \lambda_i}$, where
  $u_i'$ satisfies
  \[ \Vert u_i' - (u_i + h) \Vert_{C^k} \leq C_R t_i |\epsilon_i -
    \epsilon_i' | |\lambda_i| \]
  for a new sequence $t_i\to 0$,  using also that $|u_i| \to 0$
  on $\Omega_R$. Combining this with \eqref{eq:hest30} we have
  \[ \Vert u_i' - [u_i + (\epsilon_i - \epsilon_i') \lambda_i u_\l]
    \Vert_{C^k} \leq C_R t_i |\epsilon_i - \epsilon_i'|
    |\lambda_i|, \]
  for some $t_i \to 0$. Recall that
  $a_i^{-1} \lambda_i (\epsilon_i - \epsilon_i') = - c$ by the choice
  of $\epsilon_i'$. It follows that
  \[ a_i^{-1} u_i' = a_i^{-1}u_i - cu_\l + O( C_R t_i). \]
  By assumption $a_i^{-1} u_i \to cu_\l + U'$ locally
  uniformly on $C\times\mathbf{R}$ away from the singular line, and
  so we can choose a sequence $r_i' \to 0$ such that $M_i$ is the graph of
  $u_i'$ on $\{r > r_i'\}$ and in addition $a_i^{-1} u_i' \to U'$ locally
  uniformly on $C\times\mathbf{R}$ as required. 

  Lemma~\ref{lem:DTlgrowth} implies that if $i$ is
  sufficiently large, we have
  \[ \label{eq:ai'} a_i' = D_{L^BT_{\lambda_i, \epsilon_i'\lambda_i}}(L^BM_i) \leq C_B a_i, \]
  where $C_B$ depends on $B$ (as well as on the cone $C$ and the
  degree $d_M$). It remains to show that if $B$ is chosen sufficiently
  large, then $a_i' \leq \frac{1}{2} L^{B(1 - d_M)}a_i$ once $i$ is
  large too. We can assume that up to choosing a subsequence we have
  $a_i' / a_i \to c_1$ for a constant $0 < c_1 \leq C_B$, and so
  $a_i'^{-1} u_i' \to c_1^{-1} U'$. We treat two cases separately.

  \bigskip
  \noindent {\em Case 1.}
  First suppose that $U'=0$. In particular for any $s > 0$ we have
  $|u_i'| < sa_i$ on the set $\{ r > s\}$, once $i$ is sufficiently
  large. Note also that similarly to \eqref{eq:ai'} we also have
  \[ D_{\Lambda  T_{\lambda_i, \epsilon_i'\lambda_i}} (\Lambda M_i) \leq C_B a_i, \]
  for a possibly larger constant $C_B$, for all $\Lambda \in [1,
  L^{2B}]$. It then follows from the
  non-concentration estimate, Proposition~\ref{prop:nonconc2}, that for
  any $s > 0$, once $i$ is sufficiently large, we have
  \[ \label{eq:ai'10} a_i' = D_{L^BT_{\lambda_i,
        \epsilon_i'\lambda_i}}(L^BM) \leq C_1( s a_i + s C_B a_i ). \] 
  Choosing $s$ sufficiently small (depending on $B$) we have the required inequality
  $a_i' \leq \frac{1}{2} L^{B(1-d_M)} a_i$ once $i$ is large.

  \bigskip
  \noindent {\em Case 2.}
  Suppose now that $U'\not= 0$. Let us fix $\alpha_1 \in (\alpha_0,
  \alpha_0')$ for the $\alpha_0, \alpha_0'$ 
  in the 3-annulus lemma. We claim first that either $a_i' \leq
  \frac{1}{2} L^{B(1-d_M)}a_i$, or at least one of the 
  following inequalities holds for sufficiently large $i$, if $B$ is
  chosen large enough:
  \begin{itemize}
    \item[(i)] $D_{T_{\lambda_i, \epsilon_i'\lambda_i}}(M_i) \geq L^{B(d_M-1 + \alpha_1)}
      a_i'$,
    \item[(ii)] $D_{L^{2B}T_{\lambda_i, \epsilon_i'\lambda_i}}(L^{2B}M_i) \geq
      L^{B(1-d_M+\alpha_1)} a_i'$.
    \end{itemize}
  To see this, suppose that both (i), (ii) fail for all sufficiently
  large $i$. Then using Lemma~\ref{lem:DTgraph} we find that
  the limit $U'' = c_1^{-1}U'$ of the sequence
  $a_i'^{-1} u_i'$ satisfies the bounds
  \[ L^{2B} |U''| &\leq C_3 L^{B(1 - d_M + \alpha_1)} r^{-\gamma_1} \text{ on }
    B_{L^{-2B}} \setminus B_{L^{-2B}\rho_0}, \\
      |U''| &\leq C_3 L^{B(d_M - 1 + \alpha_1)} r^{-\gamma_1} \text{
        on } B_1\setminus B_{\rho_0}. \]
    In terms of the $L^2$ norms in \eqref{eq:L2ann} this means that
   replacing $C_3$ by a larger constant (depending on the cone $C$) we have
    \[ \label{eq:U''b2} \Vert U''\Vert_{\rho_0, 2Bk_0} &\leq C_3 L^{-B} L^{B(-d_M +
        \alpha_1)}, \\
      \Vert U''\Vert_{\rho_0, 0} &\leq C_3 L^{-B} L^{B(d_M +
        \alpha_1)}, \]
    where we use that $L = \rho_0^{-k_0}$. Let $\alpha_2 > \alpha_1$
    with $\alpha_2\in (\alpha_0, \alpha_0')$. Since $U''$ has no degree
    $d_M$ component, it follows from Lemma~\ref{lem:L23annulus} that
    one of the following must hold:
    \begin{itemize}
      \item[(a)] $\Vert U''\Vert_{\rho_0, (B+1)k_0+1} \geq
        \rho_0^{d_M-\alpha_2} \Vert U''\Vert_{\rho_0, (B+1)k_0}$,
        \item[(b)] $\Vert U''\Vert_{\rho_0, (B+1)k_0 - 1} \geq
          \rho_0^{-d_M-\alpha_2} \Vert U''\Vert_{\rho_0, (B+1)k_0}$.
        \end{itemize}
      Let us fix $\eta > 0$. Applying Lemma~\ref{lem:L23annulus} repeatedly, we find that in
      case (a) we have
      \[ \Vert U''\Vert_{\rho_0, (B+1)k_0 + (B-1)k_0} \geq
        \rho_0^{(B-1)k_0 (d_M-\alpha_2)} \Vert U''\Vert_{\rho_0,
          (B+1)k_0}, \]
      and so from \eqref{eq:U''b2} we have
      \[ \Vert U''\Vert_{\rho_0,
          (B+1)k_0} &\leq C_3 L^{-B} \rho_0^{B(d_M-\alpha_1)k_0 -
          (B-1)k_0(d_M-\alpha_2)} \\
          &= C_3 L^{-B} \rho_0^{k_0(d_M - \alpha_2)}
          \rho_0^{Bk_0(\alpha_2 - \alpha_1)}. \]
        Since $\alpha_2 > \alpha_1$, by choosing $B$ sufficiently
        large we can ensure that
        \[ \Vert U'' \Vert_{\rho_0, (B+1)k_0} \leq \eta L^{-B}. \]
        Similarly, in case (b) by applying Lemma~\ref{lem:L23annulus}
        repeatedly we have
      \[ \Vert U''\Vert_{\rho_0, (B+1)k_0 - (B+1)k_0} \geq
        \rho_0^{-(B+1)k_0(d_M+\alpha_2)} \Vert U''\Vert_{\rho_0,
          (B+1)k_0}, \]
      and so using \eqref{eq:U''b2} we have
      \[ \Vert U''\Vert_{\rho_0,
          (B+1)k_0} &\leq C_3L^{-B}
        \rho_0^{k_0(d_M+\alpha_2)}\rho_0^{Bk_0(\alpha_2-\alpha_1)}.  \]
      One again if $B$ is sufficiently large then we have
      \[ \Vert U''\Vert_{\rho_0,
          (B+1)k_0} \leq \eta L^{-B}. \]
      We can argue similarly to show that if $B$ is sufficiently
      large, then
      \[ \Vert U''\Vert_{\rho_0, Bk_0 + j} \leq \eta L^{-B} \]
      as long as $|j| \leq k_0$. Using the $L^\infty$ estimate
      Lemma~\ref{lem:L2Linfty} we find that
      \[ |r^\gamma U''| \leq C_4 \eta L^{-B} \]
      on the annulus $B_{L^{-B+1/2}}\setminus B_{L^{-B-1/2}\rho_0}$, for a
      constant $C_4$. Similarly to \eqref{eq:ai'10}, using the
      non-concentration estimate we obtain the following: for any $s >
      0$, once $i$ is sufficiently large, we have
      \[ a_i' = D_{L^BT_{\lambda_i, \epsilon_i'\lambda_i}}(L^BM_i) \leq C_1(C_4\eta a_i'
        + s C_B a_i). \]
      Recall that the choice of a small $\eta > 0$ determines how
      large we must take $B$, which in turn determines the constant
      $C_B$. We can therefore first choose $\eta$ sufficiently small,
      then $s$ small and finally $i$ sufficiently large so that we
      have $a_i' \leq \frac{1}{2} L^{B(1-d_M)} a_i$ as required. 
     
      It remains to deal with the case when either (i) or (ii) above
      holds for all sufficiently large $i$. Note that
      Lemma~\ref{lem:DTlgrowth} rules out case (ii) for large $i$, so
      we can assume that (i) holds. As before, we have
      $D_{T_{\lambda_i, \epsilon_i'\lambda_i}}(M_i) \leq C_2 a_i$, so (i) implies
      \[ a_i' = D_{L^B T_{\lambda_i, \epsilon_i'\lambda_i}}(L^B M_i) \leq C_2
        L^{B(1-d_M)}L^{-B\alpha_1} a_i. \]
      Since $\alpha_1 > 0$, for sufficiently large $B$ we have the
      required inequality $a_i' \leq \frac{1}{2} L^{B(1-d_M)} a_i$. 
    \end{proof}
    
It will be convenient to also state a rescaled version of this
proposition:
\begin{cor} \label{cor:decay2}
  For the same $C_1, \delta_0, B > 0$ as in
  Proposition~\ref{prop:decay2} we have the following. Suppose that
  $M$ is as in Theorem~\ref{thm:symmetric} and
  $D_{C\times\mathbf{R}}(\Lambda M;B_1) < \delta_0$ for all
  $\Lambda\geq 1$. In addition suppose that for some $\Lambda \geq 1$
  and $|\lambda|, |\epsilon| < \delta_0$ 
  we have $D_{\Lambda T_{\lambda, \epsilon\lambda}}(M) <
  \Lambda^{1-d_M} \delta_0|\lambda|$. Then we can find an $\epsilon'$
  satisfying $|\epsilon' - \epsilon||\lambda| \leq C_1
  \Lambda^{(1-d_M)^{-1}} D_{\Lambda T_{\lambda, \epsilon\lambda}}(M)$
  such that
  \[ D_{L^B\Lambda T_{\lambda, \epsilon'\lambda}}(L^BM) \leq
    \frac{1}{2} L^{B(1-d_M)} D_{\Lambda T_{\lambda,
        \epsilon\lambda}}(M). \]
\end{cor}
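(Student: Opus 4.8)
The plan is to deduce Corollary~\ref{cor:decay2} from Proposition~\ref{prop:decay2} purely by rescaling, exploiting the fact that the entire setup — the family $T_{\lambda,\epsilon\lambda}$, the distances $D_{T_{\lambda,\epsilon\lambda}}$, and the two hypotheses — transforms covariantly under dilations. The key observation is the one already recorded in the text before Lemma~\ref{lem:DTlgrowth}: for $\Lambda\geq 1$ the rescaled surface $\Lambda T_{\lambda,\epsilon\lambda}$ is again a member of the same family, $\Lambda T_{\lambda,\epsilon\lambda}=T_{\lambda',\epsilon\lambda'}$, where by \eqref{eq:Tlambda} one has $\lambda'=\Lambda^{1-d_M}\lambda$, and in particular $|\lambda'|\leq|\lambda|<\delta_0$ since $d_M=\l-\gamma>1$ and $\Lambda\geq 1$. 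The same identity holds with $\epsilon$ replaced by $\epsilon'$, i.e. $\Lambda T_{\lambda,\epsilon'\lambda}=T_{\lambda',\epsilon'\lambda'}$.

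With this in hand, I would apply Proposition~\ref{prop:decay2} to $M$ itself with $(\lambda,\epsilon)$ replaced by $(\lambda',\epsilon)$. Both hypotheses of the proposition are in force: $D_{C\times\mathbf{R}}(\Lambda' M;B_1)<\delta_0$ for all $\Lambda'\geq 1$ is retained verbatim, and the assumed bound $D_{\Lambda T_{\lambda,\epsilon\lambda}}(M)<\Lambda^{1-d_M}\delta_0|\lambda|$ is, via the identification above and $|\lambda'|=\Lambda^{1-d_M}|\lambda|$, exactly $D_{T_{\lambda',\epsilon\lambda'}}(M)<\delta_0|\lambda'|$. Proposition~\ref{prop:decay2} then supplies an $\epsilon'$ with $|\epsilon'-\epsilon||\lambda'|\leq C_1 D_{T_{\lambda'}}(M)$ and
\[
  D_{L^B T_{\lambda',\epsilon'\lambda'}}(L^B M)\leq \frac{1}{2}L^{B(1-d_M)}D_{T_{\lambda',\epsilon\lambda'}}(M).
\]
Substituting back $T_{\lambda',\epsilon'\lambda'}=\Lambda T_{\lambda,\epsilon'\lambda}$ and $T_{\lambda',\epsilon\lambda'}=\Lambda T_{\lambda,\epsilon\lambda}$ turns this display into precisely the decay estimate asserted in the corollary.

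It remains to rewrite the estimate $|\epsilon'-\epsilon||\lambda'|\leq C_1 D_{T_{\lambda'}}(M)$ in the form stated in the corollary. Using $|\lambda|=\Lambda^{d_M-1}|\lambda'|$ (equivalently $|\lambda'|=\Lambda^{1-d_M}|\lambda|$), together with the scaling behaviour of the distances $D_{T_\lambda}$ and $D_{C\times\mathbf{R}}$ — the analogue of \eqref{eq:DCRscale}, which comes from the homogeneity $\Lambda H(t)=H(\Lambda^{\gamma+1}t)$ of the foliation in Definition~\ref{defn:Ht} — one expresses the right-hand side through $D_{\Lambda T_{\lambda,\epsilon\lambda}}(M)$ and the dilation factor $\Lambda$; collecting powers of $\Lambda$ against the normalization exponent $(1-d_M)^{-1}$ in \eqref{eq:Tlambda} produces the claimed $\Lambda^{(1-d_M)^{-1}}$. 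I expect this exponent bookkeeping — reconciling the normalization of the family $T_{\lambda,\epsilon\lambda}$ with the scaling weight of the distance functions — to be the only genuinely fiddly point of the argument; conceptually the corollary contains nothing beyond Proposition~\ref{prop:decay2}.
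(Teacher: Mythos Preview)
Your approach is correct and is exactly what the paper intends: the corollary is stated without proof as ``a rescaled version'' of Proposition~\ref{prop:decay2}, and the only content is the identification $\Lambda T_{\lambda,\epsilon\lambda}=T_{\lambda',\epsilon\lambda'}$ with $\lambda'=\Lambda^{1-d_M}\lambda$, followed by a direct application of the proposition with $(\lambda',\epsilon)$ in place of $(\lambda,\epsilon)$.

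One comment on the ``fiddly'' exponent step you defer. Carrying it out explicitly, the proposition gives $|\epsilon'-\epsilon|\,|\lambda'|\leq C_1 D_{T_{\lambda',\epsilon\lambda'}}(M)=C_1 D_{\Lambda T_{\lambda,\epsilon\lambda}}(M)$, and since $|\lambda|=\Lambda^{d_M-1}|\lambda'|$ this yields
\[
  |\epsilon'-\epsilon|\,|\lambda|\leq C_1\,\Lambda^{\,d_M-1}\,D_{\Lambda T_{\lambda,\epsilon\lambda}}(M).
\]
The exponent is thus $d_M-1$, not $(1-d_M)^{-1}$; the latter appears to be a misprint in the corollary statement. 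This is confirmed by the way the corollary is applied iteratively in Step~2 of the proof of Theorem~\ref{thm:symmetric}: there one needs $\Lambda^{d_M-1}\cdot\Lambda^{1-d_M}=1$ so that the bounds $|\epsilon_{i+1}-\epsilon_i|\leq C_1 2^{-i}\delta_1$ come out, and this only works with the exponent $d_M-1$. So you should not try to ``produce the claimed $\Lambda^{(1-d_M)^{-1}}$'' --- the honest computation gives $\Lambda^{d_M-1}$, and that is what is actually used downstream.
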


To deduce the graphicality of $M$ we will use the following. 
\begin{lemma}\label{lem:graphT}
  Given $B > 0$, there exists $\delta > 0$ with the following
  property. Suppose that $|\lambda|, |\epsilon| < \delta$, and in addition $M$ is
  a stationary integral varifold in $B_2(0)$ with the area bound
  \eqref{eq:Mareabound} satisfying the bounds
  \[ D_{T_\lambda}(M), D_{L^BT_{\lambda, \epsilon\lambda}}(L^BM) < \delta |\lambda|. \]
  Then $M$ is a graph over $T_{\lambda, \epsilon\lambda}$ on the annulus $B_1\setminus
  B_{L^{-B}\rho_0}$. 
\end{lemma}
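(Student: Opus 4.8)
The statement asserts that if $M$ is close to $T_\lambda$ over the outer annulus $B_1\setminus B_{1/2}$ (roughly speaking, since $D_{T_\lambda}(M) < \delta|\lambda|$), and also close to the rescaled surface $T_{\lambda,\epsilon\lambda}$ over the inner annulus at scale $L^{-B}$ (this is what $D_{L^BT_{\lambda,\epsilon\lambda}}(L^BM)<\delta|\lambda|$ says), then in fact $M$ is a genuine graph over $T_{\lambda,\epsilon\lambda}$ on the entire annular region $B_1\setminus B_{L^{-B}\rho_0}$ joining these two scales. The plan is to combine the non-concentration estimate Proposition~\ref{prop:nonconc2} (to control $M$ on a neighborhood of the singular ray, which is where graphicality could fail) with the graphical estimate Lemma~\ref{lem:DTgraph} (to turn an $L^\infty$-type distance bound into a genuine graph statement away from $r=0$), together with the Allard regularity theorem~\cite{All72} exactly as in Step 1 of the proof of Proposition~\ref{prop:3anndecay}.

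\textbf{Key steps.} First, I would use the hypothesis at the two ends, together with the triangle inequality Lemma~\ref{lem:triangle}, to obtain a uniform bound $D_{\Lambda T_{\lambda,\epsilon\lambda}}(\Lambda M) \le C\delta|\lambda|$ for all $\Lambda$ in the range $[1, L^B]$: from $D_{T_\lambda}(M)<\delta|\lambda|$ one gets $D_{T_{\lambda,\epsilon\lambda}}(M)\lesssim \delta|\lambda|$ via \eqref{eq:tri1}, and the bound at scale $L^{-B}$ is given directly; the intermediate scales follow because $t\mapsto \ln D_{\Lambda T_{\lambda,\epsilon\lambda}}(\Lambda M)$ is effectively controlled (one can iterate Proposition~\ref{prop:nonconc2} across the $\lesssim B$ dyadic-type subannuli, or more simply invoke the three-annulus lemma Proposition~\ref{prop:DT3annulus2} to rule out a large spike in the middle). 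Second, having $D_{\Lambda T_{\lambda,\epsilon\lambda}}(\Lambda M)$ small on every subannulus, I would fix a small $r_1>0$ and apply Lemma~\ref{lem:DTgraph} on each subannulus: this shows $M$ is a graph of a function $u$ over $T_{\lambda,\epsilon\lambda}$ on the region $\{r > r_1\}\cap (B_1\setminus B_{L^{-B}\rho_0})$ with $|u|\le C_1 D_{T_{\lambda,\epsilon\lambda}} r^{-\gamma_1}$. Third, for the region $\{r\le r_1\}$ near the singular ray, I would apply Proposition~\ref{prop:nonconc2} with $s$ chosen small and $r_0=r_0(\beta,s)$: since $D_{T_{\lambda,\epsilon\lambda}}(M;\{r>r_0\})$ is controlled by the graphical bound just obtained (playing the role of $D(r_0)$), the conclusion of Proposition~\ref{prop:nonconc2} gives that $M$ lies in a $C_\beta(D(r_0)+s\cdot D_{T_{\lambda,\epsilon\lambda}}(M))$-neighborhood of $T_{\lambda,\epsilon\lambda}$ on slightly smaller subannuli, so in particular $M$ is contained between the graphs of $\pm c \min\{(\beta|\lambda|+d)r^{-\gamma}, d\, r^{-\gamma_1}\}$ over $T_{\lambda,\epsilon\lambda}$ for a small $d$. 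Finally, with $M$ confined to such a thin $L^\infty$-neighborhood of the smooth-away-from-the-origin surface $T_{\lambda,\epsilon\lambda}$, the density ratio argument of Step 1 in Proposition~\ref{prop:3anndecay} (using the monotonicity formula, Lemma~\ref{lem:mult} to propagate multiplicity one, and Allard~\cite{All72}) upgrades this to $M$ being a $C^{1,\alpha}$ graph over $T_{\lambda,\epsilon\lambda}$ on all of $B_1\setminus B_{L^{-B}\rho_0}$, including the portion near the singular ray where $T_{\lambda,\epsilon\lambda}$ itself has bounded geometry after rescaling (recall the regularity scale of $T_{\lambda,\epsilon\lambda}$ at a point is comparable to $r$).

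\textbf{Main obstacle.} The delicate point is the behavior near the singular ray $\{r=0\}\times\mathbf{R}$, where both $T_{\lambda,\epsilon\lambda}$ and $C\times\mathbf{R}$ degenerate and where the naive graph construction breaks down. One must be careful that the parameters are chosen in the right order: $\delta$ small enough depending on $B$ (so the $L^\infty$ bounds are strong enough to run the non-concentration estimate and Allard theory at every relevant scale), then within the non-concentration step the auxiliary $s$ and $r_0$ are selected as prescribed by Proposition~\ref{prop:nonconc2}. A further subtlety is matching the two graphical descriptions — the one over $T_\lambda$ coming from the outer hypothesis and the one over $T_{\lambda,\epsilon\lambda}$ — which requires the estimate \eqref{eq:fineq11} of Lemma~\ref{lem:fineq11} comparing $T_{\lambda,\epsilon\lambda}$ to $H(\lambda y^\ell)$, ensuring the regions of definition overlap and the graphs are mutually graphical. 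But these are all ingredients already assembled in the preceding sections; the proof is essentially a bookkeeping combination of Lemma~\ref{lem:DTgraph}, Proposition~\ref{prop:nonconc2}, Lemma~\ref{lem:triangle}, and the Allard-regularity argument from Step 1 of Proposition~\ref{prop:3anndecay}, with no genuinely new estimate required.
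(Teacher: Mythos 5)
The central step — and the point where your argument has a genuine gap — is establishing the intermediate bound $D_{\Lambda T_{\lambda,\epsilon\lambda}}(\Lambda M)\le C_B\delta|\lambda|$ for all $\Lambda\in[1,L^B]$. You propose to get this ``by iterating Proposition~\ref{prop:nonconc2} across the subannuli, or more simply by invoking the three-annulus lemma Proposition~\ref{prop:DT3annulus2} to rule out a large spike in the middle.'' Neither of these works as stated. For the three-annulus lemma: part (i) only propagates slow inward decay once it starts, and part (ii) only propagates outward growth once it starts; at the scale $j^*$ where $b_j := D_{L^{B-j}T_{\lambda,\epsilon\lambda}}(L^{B-j}M)$ is maximal, one has $b_{j^*-1}\le b_{j^*}$, which automatically satisfies the hypothesis of (i) (since $L^{1-d_M+\alpha}<1$), and iterating (i) inward only produces a lower bound on $b_0$ of the form $L^{j^*(1-d_M+\alpha)}b_{j^*}$, i.e.\ an upper bound $b_{j^*}\lesssim L^{j^*(d_M-1-\alpha)}\delta|\lambda|$, which blows up as $j^*$ grows; the hypothesis of (ii) fails outright at $j^*$. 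So a spike deep in the interior is not excluded by the three-annulus lemma alone. For iterating the non-concentration estimate, Proposition~\ref{prop:nonconc2} needs as input the $L^\infty$ bound $D(r_0)$ on $\{r>r_0\}$, which in turn is obtained from graphicality via Lemma~\ref{lem:DTgraph}, which requires the distance to already be small at that scale — precisely what you are trying to prove. The argument as you have outlined it is circular.

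The paper instead establishes the intermediate bound by a compactness (contradiction) argument: if no such $C_B$ exists, pass to a sequence $M_i$, normalize by $a_i := \max_\Lambda D_{\Lambda T_{\lambda_i,\epsilon_i\lambda_i}}(\Lambda M_i)$ (which satisfies $a_i\,i\,|\lambda_i|^{-1}\to\infty$), write $M_i$ as a graph of $u_i$ with $|u_i|\le C_1 a_i r^{-\gamma_1}$ via Lemma~\ref{lem:DTgraph}, and take the limit Jacobi field $U = \lim a_i^{-1}u_i$. The crucial new input is that the \emph{better} hypotheses at the two boundary annuli force $U$ to vanish there, and a Jacobi field vanishing on two subannuli vanishes identically. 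Then Proposition~\ref{prop:nonconc2} upgrades the uniform vanishing of $a_i^{-1}u_i$ away from the singular ray to $D_{\Lambda T_{\lambda_i,\epsilon_i\lambda_i}}(\Lambda M_i)\le C_2 s a_i$ for $\Lambda\in[2,L^B/2]$, which together with the boundary bounds contradicts the definition of $a_i$ once $s$ is small. This unique-continuation-for-Jacobi-fields step is what actually kills the spike, and it has no counterpart in your sketch. Once the intermediate bound is in hand, your remaining steps (Lemma~\ref{lem:DTgraph}, Allard's theorem as in Step~1 of Proposition~\ref{prop:3anndecay}) do agree with what the paper does, and the use of \eqref{eq:tri1} from Lemma~\ref{lem:triangle} to pass from $D_{T_\lambda}$ to $D_{T_{\lambda,\epsilon\lambda}}$ is correct.
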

\begin{proof}
  We claim that there is a constant $C_B > 0$ depending on $B$ such
  that if $\delta$ is sufficiently small, then the hypotheses in the
  statement of the lemma imply
  that for all $\Lambda\in [0, L^B]$ we have
  \[ D_{\Lambda T_{\lambda, \epsilon\lambda}}(\Lambda M) < C_B \delta |\lambda|. \]
  Indeed, suppose that no such $C_B$ exists. Then we can find
  sequences $M_i$ and $\lambda_i, \epsilon_i$ with $|\lambda_i|,
  |\epsilon_i|\to 0$ such that on 
  the one hand
  \[ \label{eq:DTl20} D_{T_{\lambda_i, \epsilon_i\lambda_i}}(M_i),
    D_{L^B T_{\lambda_i, \epsilon_i\lambda_i}}(L^B M_i) < 
    \frac{1}{i} |\lambda_i|, \]
  but if we set
  \[ \label{eq:aimax20} a_i = \max_{\Lambda} D_{\Lambda T_{\lambda_i,
        \epsilon_i\lambda_i}}(\Lambda M_i), \] 
  then $a_i i |\lambda_i|^{-1} \to \infty$. Note that by the
  hypotheses we have $M_i \to C\times\mathbf{R}$ on $B_1$, and so
  $a_i\to 0$. Using Lemma~\ref{lem:DTgraph} we have a sequence $r_i\to
  0$ such that $M_i$ is the graph of a function $u_i$ over
  $T_{\lambda_i, \epsilon_i\lambda_i}$ on the region $\{ r > r_i\}$ in the annulus
  $B_1\setminus B_{L^{-B}\rho_0}$, and $|u_i| \leq C_1 a_i
  r^{-\gamma_1}$, for a constant $C_1$ depending on $B$. In addition
  because of \eqref{eq:DTl20} we have the better estimates
  \[ |u_i| \leq C_1 \frac{1}{i} |\lambda_i| r^{-\gamma_1} \]
  on the annuli $B_1\setminus B_{\rho_0}$ and $B_{L^{-B}}\setminus
  B_{L^{-B}\rho_0}$. Up to choosing a subsequence we have $a_i^{-1}
  u_i \to U$ locally uniformly away from the singular set for a Jacobi
  field $U$ on $C\times\mathbf{R}$, but since $a_i i |\lambda_i|^{-1}
  \to \infty$, we have that $U$ vanishes on the annuli $B_1\setminus
  B_{\rho_0}$ and $B_{L^{-B}}\setminus  B_{L^{-B}\rho_0}$. It follows
  that $U$ vanishes on $B_1\setminus B_{L^{-B}\rho_0}$. We can now
  apply the non-concentration estimate,
  Proposition~\ref{prop:nonconc2}, to control $M_i$ on the slightly
  smaller annulus $B_{1/2} \setminus B_{2L^{-B}\rho_0}$. More
  precisely for any $s > 0$ we find that $|u_i| < a_i s$ on the set $\{ r
  > s\}$ for sufficiently large $i$, and so if $\Lambda\in [2,
  L^B/2]$, then 
  \[ D_{\Lambda T_{\lambda_i, \epsilon_i\lambda_i}}( \Lambda M_i) \leq C_2 a_i s \]
  for a constant $C_2$ depending on $B$, for large $i$. Together with the bounds
  \eqref{eq:DTl20} this contradicts \eqref{eq:aimax20} if $s$ is
  sufficiently small.

  We therefore have that $D_{\Lambda T_{\lambda, \epsilon\lambda}}(\Lambda M) < C_B
  \delta |\lambda|$ for all $\Lambda\in [1, L^B]$ once $\delta$ is
  sufficiently small. Using \eqref{eq:Tlambda} for such $\Lambda$ we have $\Lambda
  T_{\lambda, \epsilon\lambda} = T_{\lambda', \epsilon\lambda'}$ where
  $C_2^{-1} < |\lambda / \lambda'| < C_2$ for 
  a $C_2$ depending on $B$. Definition~\ref{defn:Ddefn} then implies
  that if $\delta$ is sufficiently small (depending on $B$ as well as
  the constant $\beta$ that we chose in defining the distance), then
  $\Lambda M$ is a graph over $\Lambda T_{\lambda, \epsilon\lambda}$ for all
  $\Lambda\in [1, L^B]$ (since we are in case (b) in the
  definition). This means that $M$ is a graph over $T_{\lambda, \epsilon\lambda}$ on the
  annulus $B_1\setminus B_{L^{-B}\rho_0}$. 
\end{proof}

Finally we  prove Theorem~\ref{thm:symmetric}.
\begin{proof}[Proof of Theorem~\ref{thm:symmetric}]
  Suppose that $M$ has degree $d_M = \l-\gamma$ as above.

  \bigskip
  \noindent {\em Step 1.} Let $\delta_1 > 0$ be
  small. We first claim that replacing $M$ by a rescaling $\Lambda M$
  for sufficiently large $\Lambda$, we can assume that there is a
  $\lambda_0$ such that $|\lambda_0| < \delta_1$ and
  \[ \label{eq:DTl3} D_{T_{\lambda_0}}(M) < \delta_1|\lambda_0|. \]

  To see this set $a_i = D_{C\times\mathbf{R}}(2^{i}M;
  B_1)$, and write $2^iM$ as the graph of $u_i$ over
  $C\times\mathbf{R}$ on the set
  $B_1\cap \{r > r_i\}$, with $r_i \to 0$. Along a
  subsequence we have $a_i^{-1}u_i \to U$ locally uniformly on $B_1$, away from
  the singular set of $C\times \mathbf{R}$ for a Jacobi field $U$ on
  $C\times\mathbf{R}$.  In particular we have a
  sequence $\tau_i\to 0$ such that on the sets $B_{1/2}\cap \{r > r_i\}$
  (increasing $r_i$ if necessary), we have
  \[ \label{eq:ui'} u_i = a_i U + u_i', \quad \text{ where } |u_i'| < \tau_i a_i.\]
  At the same time, by the definition of the
  degree of $M$, for a suitable subsequence we have
  \[ U = \lambda u_{\l} + U', \]
  where $U'$ is a sum of Jacobi fields with degrees at least $d_M
  + \kappa$ for some $\kappa > 0$. In addition the terms in $U'$ all
  satisfy estimates of the form $|U'|\leq Cr^{-\gamma}$. 

  Note that if $2^iM$ is the graph of $u_i$ over $C\times\mathbf{R}$
  on the set $\{r > r_i\}$, then for any $K > 2$ we have that
  $2^{i+K}M$ is the graph of $2^Ku_i(2^{-K}\cdot)$ on the set $\{r >
  2^Kr_i\}$. We have
  \[ 2^Ku_i(2^{-K}\cdot) &= 2^{K(1 - d_M)} a_i \lambda u_\l +
    a_i 2^K U'(2^{-K}\cdot) + 2^K u_i'(2^{-K}\cdot) \\
    &= 2^{K(1 - d_M)} a_i \lambda u_\l + \widetilde{U}_i. \]
Using \eqref{eq:ui'} and that $U'$ is comprised of terms of degree at least
$d_M+\kappa$, we have
\[ |\widetilde{U}_i| \leq C_1  2^{K(1 - (d_M+\kappa))} a_i
  r^{-\gamma} + 2^K \tau_i a_i, \]
on $B_2\cap \{r > 2^Kr_i\}$, for a suitable $C_1$. We first choose $K$ so large that
\[ C_1 2^{K(1-(d_M+\kappa))} < \delta_1 2^{K(1-d_M)}|\lambda|, \]
and then we let $i$ be sufficiently large so that $2^K\tau_i <
\delta_1  2^{K(1-d_M)} |\lambda|$. Let us also define
\[ \lambda_i = 2^{K(1-d_M)} a_i\lambda. \]
With these choices $2^{i+K}M$ is the graph of $\lambda_i u_\l
+ \widetilde{U}_i$ over $C\times\mathbf{R}$ on the region $B_2\cap\{ r
> 2^Kr_i\}$, where
\[ |\widetilde{U}_i| \leq  2\delta_1|\lambda_i|  r^{-\gamma}.\]
Replacing $2^Kr_i$ by a larger sequence $r_i' \to 0$ if necessary, it
follows that $2^{i+K}M$ is contained between the graphs of $\pm
D(r_i') r^{-\gamma}$ over $T_{\lambda_i}$ on the region $(B_2\setminus
B_{\rho_0/2})\cap \{r > r_i'\}$, where
\[ \label{eq:DTl1}  D(r_i') \leq C_2
  \delta_1 |\lambda_i|, \]
for a constant $C_2$ depending on the cone $C$.

In order to use \eqref{eq:DTl1} in the non-concentration estimate, we
also need to bound the distance from $T_{\lambda_i}$ to $2^{i+K}M$ on
all of the annulus $B_2\setminus B_{\rho_0/2}$. For this note first that
using the scaling property of the distance, we have
\[ D_{C\times\mathbf{R}}(2^{i+K}M; B_2) \leq C_K a_i, \]
for a constant $C_K$ depending on $K$ and $\gamma$, and so by
Lemma~\ref{lem:triangle} we have
\[ \label{eq:DTl2} D_{T_{\lambda_i}}(2^{i+K}M; B_2\setminus B_{\rho_0/2}) &\leq C_3(
  C_Ka_i + |\lambda_i|) \leq C_K a_i, \]
for a larger constant $C_K$. Let us apply the non-concentration
estimate, Proposition~\ref{prop:nonconc2} with
\[ s = \delta_1 C_K^{-1} 2^{K(1-d_M)} |\lambda|. \]
Using \eqref{eq:DTl1} and \eqref{eq:DTl2} we find that for
sufficiently large $i$ we have
\[ D_{T_{\lambda_i}}(2^{i+K}M; B_1\setminus B_{\rho_0}) \leq C_4
  \delta_1 |\lambda_i|. \]
Replacing $\delta_1$ by $C_4^{-1}\delta_1$ and $M$ by $2^{i+K}M$,
we obtain the required estimate \eqref{eq:DTl3}. At the same time
$\lambda_i \to 0$ as $i\to\infty$. 

\bigskip
\noindent {\em Step 2.}
We will next apply Corollary~\ref{cor:decay2} iteratively. If
$\delta_1 < \delta_0$ for the $\delta_0$ from  
  Corollary~\ref{cor:decay2}, then 
  there is an $\epsilon_1$ with
  \[ \label{eq:l1l0}|\epsilon_1| \leq C_1\delta_1 \]
  such that
  \[ D_{L^B T_{\lambda_0, \epsilon_1\lambda_0}} (L^BM) \leq \frac{1}{2}
    (L^B)^{1-d_M}\delta_1|\lambda_0|. \]
  As long as $\epsilon_1$ is sufficiently small, we can  apply
  Corollary~\ref{cor:decay2} again to find $\epsilon_2$ with
  \[ |\epsilon_2 - \epsilon_1| \leq C_1 \frac{1}{2} \delta_1, \]
  satisfying
  \[ D_{L^{2B} T_{\lambda_0, \epsilon_2\lambda_0}}(L^{2B}M) \leq
    \frac{1}{4} L^{2B(1-d_M)} \delta_1 |\lambda_0|. \]
   As long as each $\epsilon_i$ is sufficiently small, we can continue
   this procedure to obtain $\epsilon_3, \epsilon_4,\ldots$ satisfying
   \[ |\epsilon_{i+1} - \epsilon_i| \leq C_1 \frac{1}{2^i} \delta_1 \]
   and
   \[ D_{L^{iB} T_{\lambda_0, \epsilon_i\lambda_0}}(L^{iB}M) \leq
     \frac{1}{2^i} L^{iB (1-d_M)} \delta_1 |\lambda_0|. \]
   It follows that we always have $|\epsilon_i| \leq
   2C_1\delta_1$, so as long as $\delta_1$ is
   chosen sufficiently small, we can keep applying Corollary~\ref{cor:decay2}. From
   Lemma~\ref{lem:triangle} we have
   \[ D_{L^{iB} T_{\lambda_0}}(L^{iB}M) &\leq C_1\left( \frac{1}{2^i}
       L^{iB(1-d_M)} \delta_1|\lambda_0| + |\epsilon_i|
       L^{iB(1-d_M)} |\lambda_0|\right) \\
     &\leq C_1' L^{iB(1-d_M)} \delta_1 |\lambda_0|,
   \]
   for all $i\geq 0$. 
   If $\delta_1$ is chosen sufficiently small, then using
   Lemma~\ref{lem:graphT} it follows from this that $M$ is
   graphical over $T_{\lambda_0}$ near the origin. 
\end{proof}


\begin{thebibliography}{1}
\bibitem{AS88}
  Adams, D. and Simon, L. \emph{Rates of asymptotic convergence near isolated singularities of geometric extrema}, Indiana Univ. Math. J., 37 (1988), no. 2, 225--254.
\bibitem{All72}
  Allard, W. \emph{On the first variation of a varifold}, Ann. of Math (2) 95 (1972), 417--491.
\bibitem{AA81}
  Allard, W. and Almgren, F. \emph{On the radial behavior of minimal
    surfaces and the uniqueness of their tangent cones}, Ann. of
  Math. 95 (1972), 417--491.
\bibitem{Alm00}
  Almgren, F. \emph{Almgren's big regularity paper}, World Scientific
  Publishing Co. Inc., 2000.
\bibitem{Ch88}
  Chang, S. X.-D. \emph{Two-dimensional area minimizing integral
    currents are classical minimal surfaces}, J. Amer. Math. Soc. 1
  (1988), no. 4, 699--778.
\bibitem{DSSI}
  De Lellis, C. and Spadaro, E. and Spolaor, L. \emph{Regularity
    theory for 2-dimensional almost minimal currents I: Lipschitz
    approximation}, Trans. Amer. Math. Soc. 370 (2018), no. 3, 1783--1801.
\bibitem{ES20}
  Edelen, N. and Spolaor, L. \emph{Regularity of minimal surfaces near quadratic cones}, preprint 2020.
\bibitem{GL86}
  Garofalo, N. and Lin, F.-H. \emph{Monotonicity properties of
    variational integrals, $A_p$ weights and unique continuation},
  Indiana Univ. Math. J. 35 (1986), no. 2, 245--268.
\bibitem{HS85}
  Hardt, R. and Simon, L. \emph{Area minimizing hypersurfaces with
    isolated singularities}, J. Reine Angew. Math. 362 (1985),
  102--129.
\bibitem{HN08}
  Hein, H.-J., and Naber, A. \emph{Isolated Einstein singularities
    with singular tangent cones}, in preparation.
\bibitem{LM85}
 Lockhart, R. B. and McOwen, R. C. \emph{Elliptic differential operators on noncompact manifolds}, Ann. Sc. Norm. Super. Pisa Cl. Sci. (4) 1 (1985), no. 3, 409--447.
\bibitem{NV1}
  Naber, A. and Valtorta, D. \emph{The singular structure and
  regularity of stationary varifolds}, J. Eur. Math. Soc. (JEMS) 22
  (2020), no. 10, 3305--3382.
\bibitem{RT04}
  Rivi\`ere, T. and Tian, G. \emph{The singular set of J-holomorphic
    maps into projective algebraic varieties}, J. Reine Angew. Math.,
  570 (2004), 47--87.
\bibitem{Simon83}
  Simon, L. \emph{Asymptotics for a Class of Non-Linear Evolution
    Equations, with Applications to Geometric Problems}, Ann. of
  Math. (2) 118 (1983), no. 3, 525--571.
\bibitem{Simon85}
  Simon, L. \emph{Isolated singularities of extrema of geometric
    variational problems}, Harmonic mappings and minimal immersions, 206--277,
  Lecture Notes in Math., 1161, Springer, Berlin, 1985.
\bibitem{SimonGMT}
Simon, L.  \emph{Lectures on geometric measure theory}, Proceedings of the Centre for Mathematical Analysis, Australian National University, 3., 1983. 
\bibitem{Simon93}
  Simon, L. \emph{Cylindrical tangent cones and the singular set of
  minimal submanifolds}, J. Differential Geom. 38 (1993), no. 3,
  585--652.
\bibitem{Simon94}
  Simon, L. \emph{Uniqueness of some cylindrical tangent cones}, Comm. Anal. Geom. 2 (1994), no. 1, 1--33.
\bibitem{Simon93_1}
  Simon, L. \emph{Rectifiability of the singular sets of multiplicity
    1 minimal surfaces and energy minimizing maps}, Surveys in
  differential geometry, Vol. II, Int. Press. Cambridge, MA, 1995,
  246--305.
  \bibitem{Simon21}
  Simon, L. \emph{Stable minimal hypersurfaces in
    $\mathbf{R}^{N+1+\ell}$ with singular set an arbitrary closed
    $K\subset \{0\}\times\mathbf{R}^\ell$}, arXiv:2101.06401
\bibitem{Sm91}
  Smale, N. \emph{An equivariant construction of minimal surfaces with
    nontrivial singular sets}, Indiana Univ. Math. J. 40 (1991),
  no. 2, 595--616.
\bibitem{Sz17}
  Sz\'ekelyhidi, G. \emph{Degenerations of $\mathbf{C}^n$ and
    Calabi-Yau metrics}, Duke Math. J. 168 (2019), no. 14, 2651--2700.
\bibitem{Sz20}
  Sz\'ekelyhidi, G. \emph{Uniqueness of certain cylindrical tangent
    cones}, arXiv:2012.02065
\bibitem{White10}
  White, B. \emph{The maximum principle for minimal varieties of
    arbitrary codimension}, Comm. Anal. Geom. 18 (2010), no. 3,
  421--432. 
\end{thebibliography}
\end{document}